\documentclass[11pt,reqno]{amsart}
\pdfoutput=1
\usepackage{amssymb}
\usepackage{graphicx}

\catcode`\@=11

\long\def\@savemarbox#1#2{\global\setbox#1\vtop{\hsize\marginparwidth 
  \@parboxrestore\tiny\raggedright #2}}
\marginparwidth .75in \marginparsep 7pt

\newcommand\lref[1]{\ref{#1}%
\@ifundefined{r@DisplaY #1}{}{ (#1)}}

\newcommand\fakelabel[2]{\@bsphack\if@filesw {\let\thepage\relax
   \newcommand\protect{\noexpand\noexpand\noexpand}%
\xdef\@gtempa{\write\@auxout{\string
      \newlabel{#1}{{#2}{\thepage}}}}}\@gtempa
   \if@nobreak \ifvmode\nobreak\fi\fi\fi\@esphack}

\def\SL@margintext#1{{\showlabelsetlabel{\tiny\{\SL@prlabelname{#1}\}}}}
\catcode`\@=12

\def\Empty{}
\newcommand\oplabel[1]{
  \def\OpArg{#1} \ifx \OpArg\Empty {} \else
        \label{#1}
  \fi}
%
\newtheorem{theoremSt}{Theorem}[section]

\newtheorem{exampleSt}[theoremSt]{Example}
\newtheorem{exerciseSt}[theoremSt]{Exercise}

%

%
\newcommand\MakeStEnv[1]{
  \newenvironment{#1}[1]{
  \begin{#1St} \oplabel{##1}%
  \global\def\CrntSt{\thetheoremSt}%
}{ 
  \end{#1St} }
  \newenvironment{#1+}[1]{
  \begin{#1St} \label{##1}%
  \label{DisplaY ##1}%
  \global\def\CrntSt{\thetheoremSt}%
  \def\Labl{##1}\ifx\Labl\Empty{} \else {\em (\Labl)\,}\fi%
}{ 
  \end{#1St} }
}
\MakeStEnv{theorem}
\MakeStEnv{corollary}
\MakeStEnv{proposition}
\MakeStEnv{lemma}
\MakeStEnv{definition}
\MakeStEnv{conjecture}
\MakeStEnv{problem}
\MakeStEnv{question}


\long\def\state#1#2{
\medskip\par\noindent
{\bf #1} 
{\it #2}
\par\medskip
}

%

\long\def\realfig#1#2{
\begin{figure}[htbp]
\includegraphics{#1}
\caption[#1]{#2}
\oplabel{#1}
\end{figure}}

%
\newlength{\saveu}





%

%
\newenvironment{pf*}[1]{%
 \begin{proof}[#1]%
}{ 
 \end{proof}
}

\newcommand{\finishproof}[1]{ 
  \def\FPArg{#1}
  \ifx\FPArg\Empty
        \newcommand\FPArg{\CrntSt}  \fi
  \smallbreak\noindent\makebox[\textwidth]{\hfill\fbox{\FPArg}}
  \medbreak\noindent
}


\newcommand\CC{{\mathcal C}}

\newcommand\FF{{\mathcal F}}

\newcommand\LL{{\mathcal L}}
\newcommand\MM{{\mathcal M}}

\newcommand\PP{{\mathcal P}}

\newcommand\TT{{\mathcal T}}
\newcommand\UU{{\mathcal U}}
\newcommand\VV{{\mathcal V}}
\newcommand\WW{{\mathcal W}}

\newcommand\PMF{{\PP\kern-2pt\MM\FF}}

\newcommand\PML{{\PP\kern-2pt\MM\LL}}

\newcommand\half{{\textstyle{\frac12}}}

\newcommand\ep{\epsilon}

\newcommand\hhat{\widehat}

\newcommand\union{\cup}
\newcommand\intersect{\cap}
\newcommand\bbR{{\mathord{\text{I\kern-2pt R}}}}        
\newcommand\bbH{{\mathord{\text{I\kern-2pt H}}}}        

\newcommand\Z{{\mathbb Z}}
\newcommand\R{{\mathbb R}}

\newcommand\Hyp{{\mathbb H}}



\newcommand\bigrightarrow[1]{\hbox to #1{\rightarrowfill}}
\newcommand\bigleftarrow[1]{\hbox to #1{\leftarrowfill}}

\newcommand\boundary{\partial}
\newcommand\semidir{\mathrel{\hbox{\vrule depth-.03ex height1.1ex\kern-0.15em$\times$}}}

\newcommand\del{\nabla}

\newcommand{\diam}{\operatorname{diam}}

\numberwithin{equation}{section}


\catcode`\@=11

%
%
\def\subsection{\@startsection{subsection}{2}%
  \z@{.5\linespacing\@plus.7\linespacing}{.5em}%
  {\normalfont\bfseries\centering}}

%
%
\def\section{\@startsection{section}{1}%
  \z@{.7\linespacing\@plus\linespacing}{.5\linespacing}%
  {\normalfont\large\bfseries\centering}}

%
%
\def\subsubsection{\@startsection{subsubsection}{3}%
  \z@{.5\linespacing\@plus.7\linespacing}{-.5em}%
  {\normalfont\bfseries}}

\catcode`\@=12


\newcommand{\collar}{\operatorname{\mathbf{collar}}}

\newcommand{\fsubd}{\mathrel{{\scriptstyle\searrow}\kern-1ex^d\kern0.5ex}}
\newcommand{\bsubd}{\mathrel{{\scriptstyle\swarrow}\kern-1.6ex^d\kern0.8ex}}
\newcommand{\fsubeq}{\mathrel{\raise-.7ex\hbox{$\overset{\searrow}{=}$}}}
\newcommand{\bsubeq}{\mathrel{\raise-.7ex\hbox{$\overset{\swarrow}{=}$}}}
\newcommand{\tw}{\operatorname{tw}}

\newcommand{\bbar}{\overline}

\newcommand{\EL}{\mathcal{EL}}

\newcommand{\tsh}[1]{\left\{\kern-.9ex\left\{#1\right\}\kern-.9ex\right\}}

\newcommand\Teich{{\mathcal T}}


\newcommand\interior{{\rm int}}

\newcommand\qsim[1]{\stackrel{\scriptscriptstyle{#1}}{\sim}}

\newcommand{\bt}{{\mathbf t}}
\newcommand{\bl}{{\boldsymbol \lambda}}
\newcommand{\ba}{{\boldsymbol \alpha}}
\newcommand{\bb}{{\boldsymbol \beta}}
\newcommand{\bc}{{\bf c}}

\newcommand{\bU}{{\bf U}}
\newcommand{\bV}{{\bf V}}
\newcommand{\bW}{{\bf W}}
\newcommand{\bC}{{\bf C}}
\newcommand{\bD}{{\bf D}}
\newcommand{\cP}{{\mathcal P}}
\newcommand{\id}{{\operatorname{id}}}

\begin{document}

\title[Local topology in deformation spaces]{Local topology in deformation spaces of hyperbolic 3-manifolds}

\author[J. Brock]{Jeffrey F. Brock}
\address{Brown University}
\author[K. Bromberg]{Kenneth W. Bromberg}
\address{University of Utah}
\author[R. Canary]{Richard D. Canary}
\address{University of Michigan}
\author[Y. Minsky]{Yair N. Minsky}
\address{Yale University}
\date{\today}
\thanks{Bromberg was partially supported by NSF grants DMS-0554569 and DMS-0504877,
Brock was partially supported by DMS-0553694 and DMS-0505442, Canary was partially supported by DMS-0504791 and DMS-0554239, and Minsky
was partially supported by DMS-0504019 and DMS-05504321}

\begin{abstract}
We prove that  the deformation space $AH(M)$ of marked hyperbolic 3-manifolds
homotopy equivalent to a fixed compact 3-manifold $M$  with
incompressible boundary is locally connected at  minimally parabolic points.
Moreover, spaces of Kleinian surface groups are locally connected
at quasiconformally rigid points.
Similar results are obtained for deformation spaces of acylindrical 3-manifolds
and Bers slices.
\end{abstract}

\maketitle

\setcounter{tocdepth}{1}
\tableofcontents

\newcommand\epzero{\ep_0}
\newcommand\epone{\ep_1}
\newcommand\epotal{\ep_{\rm u}}
\newcommand\kotal{k_{\rm u}}
\newcommand\Kmodel{K_0}
\newcommand\Kone{K_1}
\newcommand\Ktwo{K_2}
\newcommand\bdry{\partial} 
\newcommand\stab{\operatorname{stab}}
\newcommand\nslices[2]{#2|_{#1}}
\newcommand\ME{M\kern-4pt E}
\newcommand\bME{\overline{M\kern-4pt E}}
\renewcommand\del{\partial}
\newcommand\s{{\mathbf s}}
\newcommand\pp{{\mathbf p}}
\newcommand\qq{{\mathbf q}}
\newcommand\uu{{\mathbf u}}
\newcommand\vv{{\mathbf v}}
\newcommand\zero{{\mathbf 0}}
\newcommand{\cB}{{\mathcal B}}
\newcommand\mm{\operatorname{\mathbf m}}
\newcommand{\dehntw}{\theta}

\section{Introduction}

The conjectural picture for the topology of the deformation space
$AH(M)$ of all (marked) hyperbolic 3-manifolds homotopy equivalent to
a fixed compact 3-manifold $M$ has evolved from one of relative
simplicity to one far more complicated in recent years.  Indeed, the
interior of this space has been well-understood since the late
1970's. Roughly, components of $AH(M)$ are enumerated by (marked)
homeomorphism types of compact 3-manifolds homotopy equivalent to $M$,
and each component is a manifold parameterized by natural conformal
data.  In the last decade, however, a string of results has
established that the topology of $AH(M)$ itself is not well-behaved.
In particular, $AH(M)$ fails to be locally connected when $M$ is an
untwisted $I$-bundle over a closed surface (\cite{bromberg-PT},
\cite{Magid}), and a new conjectural picture in which such pathology
is prevalent has replaced the old.

The present paper clarifies the role that the geometry and topology of
3-manifolds associated to points in the boundary of $AH(M)$ plays in
the local topology at such points.  In particular, we show that the
topology of $AH(M)$ is well-behaved at many points; if $M$ has
incompressible boundary, then $AH(M)$ is locally connected at
``generic'' points in the boundary.  When $M$ is acylindrical or an
untwisted $I$-bundle we obtain finer results.  

Central to the present discussion are recent fundamental improvements
in the understanding of the internal geometry and topology of ends of
hyperbolic 3-manifolds.  Via the Ending Lamination Theorem of
\cite{ELC1,ELC2} and the model manifold developed in its proof, the
Tameness Theorem of \cite{agol,calegari-gabai} and the Density Theorem
\cite{ELC2,namazi-souto,ohshika-density}, we develop a more complete
picture of the topological complexity at the boundary of deformation
spaces.

\bigskip

Our first theorem extracts consequences for the local structure of
deformation spaces in terms of the topology of $M$ and the presence of
parabolic elements for an element $\rho$ in the boundary of $AH(M)$.

Two components $B$ and $C$ of ${\rm int}(AH(M))$ are said to
{\em  bump} at $\rho\in \partial AH(M)$ if $\rho\in \overline{B}\cap
\overline{C}$.  A component $B$ of ${\rm int}(AH(M))$ is said to
{\em  self-bump} at $\rho\in \partial B$ if there exists a neighborhood
$W$ of $\rho$ such that if $V$ is a neighborhood of $\rho$ which is
contained in $W$, then $V\cap B$ is disconnected. A point
$\rho\in \partial AH(M)$ is said to be {\em uniquely approachable} if
there is no bumping or self-bumping at $\rho$. 
The Density Theorem \cite{ELC2,namazi-souto,ohshika-density} 
asserts that
$AH(M)$ is the closure of its interior, so $AH(M)$ is
locally connected at all uniquely approachable points.

\begin{theorem}{nocuspscase}{}
  Let $M$ be a compact 3-manifold with incompressible boundary and
  $\rho\in \partial AH(M)$.  If every parabolic element of
  $\rho(\pi_1(M))$ lies in a rank-two free abelian subgroup, then
  $\rho$ is uniquely approachable. In particular, $AH(M)$ is locally
  connected at $\rho$.
\end{theorem}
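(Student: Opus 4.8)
The plan is to verify the two conditions in the definition of unique approachability --- no bumping and no self-bumping at $\rho$ --- since local connectivity then follows from the Density Theorem, as noted above. The engine for both is a single neighborhood $W$ of $\rho$ in $AH(M)$ equipped with a \emph{local model}: a description of $W\cap{\rm int}(AH(M))$ explicit enough that its component structure near $\rho$ is transparent.

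First I would fix the topology at $\rho$. By the Tameness Theorem, $N_\rho=\Hyp^3/\rho(\pi_1(M))$ is the interior of a compact $3$-manifold, and since $\partial M$ is incompressible one may choose a relative compact core $\CC$ with incompressible boundary so that each non-cuspidal end of $N_\rho$ faces exactly one non-toroidal component $S$ of $\partial\CC$. The hypothesis that every parabolic lies in a rank-two abelian subgroup means all cusps are the torus cusps coming from $\partial M$; hence every end is either geometrically finite, recording a point $X_S\in\Teich(S)$, or geometrically infinite, recording (via the Ending Lamination Theorem and the theory of ends) an ending lamination $\mu_S$ that fills $S$. Thus the end invariants of $\rho$ split cleanly across the surfaces of $\partial\CC$, with no rank-one cusps and no partial pinchings --- precisely the structural feature absent at the cusped points of the $I$-bundle examples of \cite{bromberg-PT,Magid}.

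The heart of the matter is the construction of the model. Let $\rho_n\to\rho$ with $\rho_n\in{\rm int}(AH(M))$; each $\rho_n$ is geometrically finite and minimally parabolic, so its conformal boundary ranges over $\prod_{S\subset\partial\CC}\Teich(S)$. The new phenomenon near $\rho$ is the appearance of short geodesics in $N_{\rho_n}$ running out the geometrically infinite ends of $\rho$. Using the model manifold of the Ending Lamination Theorem together with the continuity of ending invariants under geometric limits, I would show that for $n$ large these short curves lie on a fixed finite collection $\Gamma$ of curves, determined by $\rho$ alone, and then invoke the Drilling Theorem of Brock and Bromberg to drill $\Gamma$: this produces geometrically finite minimally parabolic groups $\hat\rho_n$ on a \emph{fixed} drilled manifold $\hat M$, with uniformly bounded bi-Lipschitz models, and the $\hat\rho_n$ converge. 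The $\hat\rho_n$ all lie in one (path-connected) component of ${\rm int}(AH(\hat M))$; and because $\rho$ has no rank-one cusps, the operation of refilling $\Gamma$ to recover $\rho_n$ from $\hat\rho_n$ is canonical --- there is no ambiguity of the kind that, together with a cusp and an essential annulus, produces bumping and self-bumping in the $I$-bundle case --- so the component $B$ of ${\rm int}(AH(M))$ containing $\rho_n$ is eventually constant and depends only on $\rho$. This gives no bumping. The same construction joins any two $\rho_n,\rho_m$ near $\rho$ by a path in $B$ --- drill $\Gamma$, move within the Teichm\"uller parameters of $\hat M$, refill --- lying inside a prescribed neighborhood of $\rho$; to upgrade this to genuine non-self-bumping (arbitrarily small $V\subset W$ with $V\cap B$ connected) I would feed in local connectivity of Kleinian surface group deformation spaces at quasiconformally rigid points, applied at the geometrically infinite ends, whose laminations $\mu_S$ are filling and hence quasiconformally rigid.

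The hard part will be the uniform geometric control underlying the middle step: proving that groups near $\rho$ carry \emph{exactly} the expected short curves --- with no stray accidental parabolics or extra degenerations --- that the bi-Lipschitz constants of the Drilling Theorem stay bounded as $\rho_n\to\rho$, and that the refilling combinatorics is rigid, so that the model is genuinely a product over the ends of $\rho$. This is where the full force of the Ending Lamination Theorem and the model manifold is needed, and it is exactly here that the hypotheses are used: incompressibility of $\partial M$ makes each end an honest surface-group end carrying a lamination or a Teichm\"uller invariant, and minimal parabolicity removes the rank-one cusps that, in the $I$-bundle examples, create the ``walls'' in parameter space responsible for self-bumping.
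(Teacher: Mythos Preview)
Your proposal has a genuine gap at the drilling step. The claim that the short geodesics of $N_{\rho_n}$ eventually lie on a \emph{fixed finite} collection $\Gamma$ determined by $\rho$ alone is false when $\rho$ has geometrically infinite ends. As $\rho_n\to\rho$, the conformal structure on a surface $S$ bounding a degenerate end moves toward the ending lamination $\mu_S$ at infinity in $\CC(S)$, and the short curves of $N_{\rho_n}$ associated to that end march off to $\mu_S$ as well --- they do not stabilize on any finite set. Hence there is no single drilled manifold $\hat M$ in which all the $\hat\rho_n$ live, and the argument that they lie in a common component of ${\rm int}(AH(\hat M))$, together with the refilling step, collapses. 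The appeal at the end to local connectivity of surface-group spaces at quasiconformally rigid points is also circular in this context: that is a separate (and harder) theorem of the paper, not an available ingredient.

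The paper's route is entirely different and much more direct; it uses neither drilling nor short-curve analysis. For no bumping, the minimal-parabolicity hypothesis forces algebraic convergence to $\rho$ to be \emph{strong} (Anderson--Canary plus Tameness), and then a compact core of $N_\rho$ pushes forward to a compact core of $N_{\rho_n}$ for large $n$, so the marked homeomorphism type $\Theta$ is locally constant. For no self-bumping, given two sequences $\{\rho_n\},\{\rho'_n\}$ in the (now unique) component $B\cong\prod_i\Teich(S_i)$, one joins them by the Teichm\"uller geodesic $\gamma_n$ in this product. On the geometrically finite factors the endpoints both converge to $\tau_i$ (by strong convergence and Ohshika/Kerckhoff--Thurston), so $\gamma_n$ shrinks there; on the degenerate factors the $\CC(S_i)$-projections of both endpoints converge to $\lambda_i$ (Theorem \ref{endlams}), and Teichm\"uller geodesics track curve-complex geodesics, so the whole path projects near $\lambda_i$. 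Subsequential limits exist by Thurston/Ohshika compactness, and the Ending Lamination Theorem identifies every accumulation point of $\{\gamma_n\}$ with $\rho$. That is the entire argument.
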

\noindent {\bf Remark:}
Such points $\rho$ are generic in the
boundary of $AH(M)$ in the sense of Lemma
4.2 in \cite{canary-hersonsky}. \smallskip

Recall that if $\rho\in AH(M)$, then
$N_\rho=\mathbb{H}^3/\rho(\pi_1(M))$ is a hyperbolic 3-manifold
homotopy equivalent
to $M$. If $\Omega(\rho)$ is the domain of discontinuity for the
action of $\rho(\pi_1(M))$ on $\widehat{\bf C}$, then
$\partial_cN_\rho=\Omega(\rho)/\rho(\pi_1(M))$ is a Riemann surface
called the {\em conformal boundary} of $N_\rho$.  In order to rule out
bumping in the presence of parabolics we place the additional
restriction on $\rho$ that every component of its conformal boundary
is a thrice-punctured sphere.  Such a $\rho$ is called {\em
  quasiconformally rigid}.  Notice that this includes the case that
the conformal boundary is empty.

\begin{theorem}{nobumping}{}
Let $M$ be a compact 3-manifold. If $\rho$ is a quasiconformally
rigid point in $\partial AH(M)$, then there is no bumping at $\rho$.
\end{theorem}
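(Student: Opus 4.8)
The plan is to deduce ``no bumping at $\rho$'' from the stronger assertion that a quasiconformally rigid $\rho\in\partial AH(M)$ determines the marked homeomorphism type of any component of $\interior(AH(M))$ in whose closure it lies. Recall that, by the deformation theory of geometrically finite Kleinian groups (Ahlfors, Bers, Kra, Marden, Maskit, Sullivan, Thurston), each component $B$ of $\interior(AH(M))$ carries a well-defined marked homeomorphism type $(M_B,h_B)$ --- a compact hyperbolizable $3$-manifold $M_B$ together with a marking $h_B\colon M\to M_B$ --- and the assignment $B\mapsto (M_B,h_B)$ is injective, since the representations in $B$ are exactly those uniformizing $\interior(M_B)$ compatibly with $h_B$, and these form a connected quasiconformal deformation space. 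Thus, given components $B$ and $C$ with $\rho\in\overline B\cap\overline C$, it suffices to prove $(M_B,h_B)=(M_C,h_C)$.

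First I would isolate the data that $\rho$ itself supplies. Choose geometrically finite $\beta_i\in B$ with $\beta_i\to\rho$ and, after passing to a subsequence, assume that $\beta_i(\pi_1 M)$ converges geometrically to a group $\widehat\Gamma\supseteq\rho(\pi_1M)$, so that there is a locally isometric covering $p\colon N_\rho\to\widehat N=\Hyp^3/\widehat\Gamma$. By the Tameness Theorem, $N_\rho$ is topologically tame; let $\CC_\rho$ be a compact core of $N_\rho$ relative to its cusps. The isomorphism $\pi_1 M\to\pi_1\CC_\rho$ induced by $\rho$ determines, independently of $B$ and of the approximating sequence, a preferred homotopy class of homotopy equivalences $\mu\colon M\to\CC_\rho$; and $\rho$ determines the end invariants of $N_\rho$ as well as which boundary components of $\CC_\rho$ face thrice-punctured-sphere ends, simply degenerate ends, or rank-one cusps.

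The heart of the matter is to reconstruct $(M_B,h_B)$ from precisely these data. Comparing the algebraic limit $N_\rho$ with the geometric limit $\widehat N$ (J{\o}rgensen--Marden, Thurston, Anderson--Canary, Evans) and applying the Covering Theorem (Thurston, Canary) to control $p$ over the non-cuspidal part, one shows that each end of $N_\rho$ facing a boundary component $S$ of $\CC_\rho$ is of one of two types. If the end is geometrically finite then, since $\rho$ is quasiconformally rigid, the associated component of $\partial_cN_\rho$ is a thrice-punctured sphere, hence rigid, and it matches the corresponding conformal boundary piece of $M_B$ with no moduli of freedom. If the end is simply degenerate, then the conformal structures that $h_B$ carries onto $\partial M_B$ must have degenerated along $S$, and --- using topological tameness of the $N_{\beta_i}$ and the Covering Theorem --- the way $\CC_\rho$ is modified to produce $M_B$ near $S$ is forced by the parabolic locus and the ending lamination of that end. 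In every case the passage from $(\CC_\rho,\mu)$ to $(M_B,h_B)$ is governed by a recipe referring only to $\rho$. Quasiconformal rigidity is exactly what eliminates the genuine source of ambiguity present in general, namely non-rigid conformal boundary that can be deformed and re-glued in topologically distinct ways, as in the page-rearranging examples of Anderson--Canary.

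Running the same argument with geometrically finite $\gamma_i\in C$, $\gamma_i\to\rho$, produces $(M_C,h_C)$ from $\rho$ by the identical recipe, whence $(M_C,h_C)=(M_B,h_B)$ and $C=B$; so no two distinct components bump at $\rho$. I expect the main obstacle to be the algebraic-versus-geometric limit comparison of the previous paragraph: the covering $p$ may be infinite-to-one and $\widehat N$ may carry extra ends and cusps invisible to $\rho$, so one must show that the organization of the geometrically finite and simply degenerate ends of $N_\rho$ inside $\CC_\rho$ --- together with the location of the new parabolics --- still pins down a single marked homeomorphism type. This is exactly where the Covering Theorem, topological tameness, and the rigidity of thrice-punctured spheres have to be combined with care.
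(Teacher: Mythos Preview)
Your overall strategy matches the paper's: show that the marked homeomorphism type $\Theta$ is locally constant at $\rho$, so that $\Theta(\rho_n)=\Theta(\rho)$ for any sequence $\rho_n\to\rho$ in $AH(M)$. But the execution has a genuine gap precisely at the step you yourself flag as the main obstacle.

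The paper's argument is shorter and more direct than your end-by-end analysis. The key input you are missing is Proposition~3.2 (and Remark~3.3) of Anderson--Canary--Culler--Shalen: when the algebraic limit $\rho$ is quasiconformally rigid (so the convex core boundary is totally geodesic), the convex core $C(N_\rho)$ \emph{embeds} in the geometric limit $\hat N$ under the covering map. This is Proposition~\ref{convexembed} in the paper. Once you have this embedding, take a compact core $C\subset C(N_\rho)$; its image $\pi(C)$ in $\hat N$ is carried by the bilipschitz comparison maps $f_n$ of geometric convergence to submanifolds $C_n\subset N_{\rho_n}$, which (by the argument of Canary--Minsky, Proposition~3.3) are compact cores of $N_{\rho_n}$ for large $n$. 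Since $(f_n\circ\pi)_*$ realizes $\rho_n\circ\rho^{-1}$ up to conjugacy, $\Theta(\rho_n)=\Theta(\rho)$ follows immediately.

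Your proposed route --- the Covering Theorem together with a separate analysis of each end of $N_\rho$ --- does not get there. The Covering Theorem controls $p\colon N_\rho\to\hat N$ over neighborhoods of simply degenerate ends, but says nothing useful near rank-one cusps or thrice-punctured-sphere ends, where the covering can be genuinely infinite-to-one and the geometric limit can acquire new topology. Your claim that ``the way $\CC_\rho$ is modified to produce $M_B$ near $S$ is forced by the parabolic locus and the ending lamination'' is exactly the statement that needs proof, and it is not clear how to make it precise without first knowing that the compact core injects into $\hat N$. In fact no modification is needed: the ACCS embedding lets you transport a compact core of $N_\rho$ intact into $N_{\rho_n}$, which is both what you want and the reason quasiconformal rigidity is the right hypothesis --- it is what forces the convex core boundary to be totally geodesic and hence makes the ACCS embedding result apply.
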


In order to rule out self-bumping, we make additional
restrictions on the topology of $M$.

\begin{theorem}{noselfbump}{}
 Let $M$ be a compact 3-manifold which is either acylindrical or
  homeomorphic to $S\times I$, for a closed surface $S$. If $\rho$ is
  a quasiconformally rigid point in $\partial AH(M)$ then there is no
  self-bumping at $\rho$.
\end{theorem}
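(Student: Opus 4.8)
\emph{The plan.} Fix a component $B$ of $\mathrm{int}(AH(M))$ with $\rho\in\partial B$ and a neighborhood $W$ of $\rho$; it is enough to produce a neighborhood $V\subseteq W$ of $\rho$ for which $V\cap B$ is connected. For the manifolds considered here $\mathrm{int}(AH(M))$ is connected --- it is $\mathrm{GF}(M)$ when $M$ is acylindrical and $\mathrm{QF}(S)$ when $M=S\times I$ --- and Ahlfors--Bers theory identifies it with a quotient of a Teichm\"uller space of conformal boundary data. Let $P\subset\partial M$ be the parabolic locus of $\rho$, a finite system of essential simple closed curves. Quasiconformal rigidity means the conformal boundary $\partial_c N_\rho$ is a (possibly empty) union of thrice-punctured spheres and so has no moduli; hence $N_\rho$ is determined up to isometry by the topology of $M$ together with $P$ and the ending laminations of its degenerate ends, by Sullivan's rigidity theorem on the geometrically finite part and the Ending Lamination Theorem \cite{ELC1,ELC2} on the degenerate ends. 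Consequently a point $\rho'\in B$ near $\rho$ differs from $\rho$ only by \emph{opening} the rank-one cusps of $\rho$ along the curves of $P$ --- each replaced by a short geodesic of small complex translation length --- together with a small deformation of the remaining, rigid ends.

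\emph{A connected local model.} I would next package this into a connected local parameterization of $B$ near $\rho$, using the Drilling Theorem of Brock--Bromberg together with the deformation theory of geometrically finite cone-manifolds (Hodgson--Kerckhoff, Bromberg). These identify a neighborhood of $\rho$ in $AH(M)$, bi-Lipschitzly away from Margulis tubes, with a neighborhood of the drilled representation in the deformation space of $N_\rho$ cut along $P$; the subset of this neighborhood meeting $B$ is then swept out by a connected parameter family --- roughly a cell, given as a product over the curves of $P$ of half-disks of complex translation lengths times the fixed ending data. Since this family is connected, the theorem reduces to showing it is \emph{cofinal} at $\rho$: that every $\rho'\in B$ sufficiently close to $\rho$ already lies in the image of one fixed connected parameter neighborhood, with no further sheets of the parameterization accumulating at $\rho$. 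Equivalently, one must bound --- on $V\cap B$ for $V$ small --- the amount of twisting of the data of $\rho'$ about the curves of $P$, and the winding of that data about the ending laminations of the degenerate ends of $\rho$.

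\emph{The cofinality step, and where the topology of $M$ enters.} This boundedness is the crux, for unbounded twisting of exactly this kind is the mechanism behind the self-bumping and non-local-connectedness found by Bromberg \cite{bromberg-PT} and McMullen; the two hypotheses on $M$ serve to rule it out, by different arguments. When $M$ is acylindrical, $AH(M)=\overline{\mathrm{GF}(M)}$ is compact by Thurston's work, the absence of essential annuli prevents the conformal boundary from winding around the cusps forming along $P$, and Sullivan's rigidity theorem forces every subsequential limit of geometrically finite structures $\rho_{X_n}$, with $X_n$ converging in the augmented Teichm\"uller space, to be the unique geometrically finite structure realizing the limiting pinching; together these identify $V\cap B$, for $V$ small, with a connected neighborhood of a single point of the augmented Teichm\"uller space intersected with Teichm\"uller space, hence connected. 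When $M=S\times I$, pinching is genuinely discontinuous, and one argues instead with the model manifold of \cite{ELC1,ELC2}: a sequence in $\mathrm{QF}(S)$ converging to $\rho$ has ending invariants converging to those of $\rho$, and the a priori Lipschitz bounds on the subsurface coefficients of these invariants --- notably the annular coefficients about the curves of $P$ --- bound the twisting by a constant depending only on $\rho$, so that again $V\cap B$ lies in a fixed bounded part of the model and is connected.

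\emph{Conclusion and the main obstacle.} In either case, for a small enough neighborhood $V\subseteq W$ of $\rho$, $V\cap B$ is a connected neighborhood inside the connected model family, so $B$ does not self-bump at $\rho$; as $B$ and $W$ were arbitrary, $\rho$ is not a self-bumping point. The reduction of the first two paragraphs is largely bookkeeping of known deformation theory; the genuine difficulty --- the step I expect to be hardest --- is the cofinality and boundedness of the twisting, and it is precisely the failure of that boundedness in general that forces the restriction to acylindrical manifolds and untwisted $I$-bundles.
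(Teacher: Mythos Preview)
Your approach is genuinely different from the paper's and the setup has some merit, but the crucial step --- the one you yourself flag as hardest --- is not actually carried out, and the sketch you give for it is incorrect.

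\textbf{What the paper does instead.} The paper never invokes drilling or cone-manifold theory. It builds an explicit neighborhood basis for $\rho$ in $B$ out of conditions of the form ``$l_{\alpha_j}(\rho')<\delta$ in the 3-manifold'' together with ``$\pi_{S_i}(X)\in U_i$'' for the degenerate-end subsurfaces (Lemmas~\ref{U is nbhd}, \ref{acylnbhd}, \ref{U is nbhd QF}). It then proves a ``twist and shrink'' deformation lemma (Lemma~\ref{twist and shrink}) which, given that $\mm_{\alpha_j}(X,Y)$ is large, produces a path in $\TT(S)$ shrinking each $\alpha_j$ on the conformal boundary while keeping all relevant subsurface projections within bounded distance. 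The order in which the $\alpha_j$ are shrunk is dictated by the partial order $\prec$ on subsurfaces of Lemma~\ref{partialorder}; getting this order right is essential. In the acylindrical case Thurston's Bounded Image Theorem controls the skinning map; in the $S\times I$ case a separate argument (Lemma~\ref{alpha below Z}, Lemma~\ref{hotsidehot}) controls the relative ordering of the top and bottom cusps during the deformation.

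\textbf{Where your argument breaks.} In the $S\times I$ case you assert that ``a priori Lipschitz bounds on the subsurface coefficients \ldots\ notably the annular coefficients about the curves of $P$ \ldots\ bound the twisting by a constant depending only on $\rho$.'' This is false: as $Q(X,Y)\to\rho$ the annular coefficients $d_{\alpha_j}(X,Y)$ for the cusping curves are unbounded (indeed $\mm_{\alpha_j}(X,Y)\to\infty$ is forced by Theorem~\ref{length and projections}), and the Fenchel--Nielsen twists of $X$ about $\alpha_j$ can be arbitrary. So there is no bounded ``sheet'' of the parameterization containing $V\cap B$; unbounded twisting must be accommodated, and the content of the proof is to show one can connect through it. In the acylindrical case your claim that ``$V\cap B$ [is] a connected neighborhood of a single point of the augmented Teichm\"uller space intersected with Teichm\"uller space'' is exactly what needs to be proved: the preimage in $\TT(\partial M)$ of a small neighborhood of $\rho$ is described by 3-manifold length bounds, not boundary length bounds, and these are not the same. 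Bridging them requires the Bounded Image Theorem and the twist-and-shrink machinery; your appeal to ``absence of essential annuli prevents winding'' and Sullivan rigidity does not supply this. Finally, the drilling/filling model you set up is designed for the geometrically finite case; when $\rho$ has degenerate ends you have not explained how nearby quasifuchsian groups --- which have \emph{no} ending laminations --- are placed in your ``product \ldots\ times the fixed ending data''.
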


We may combine Theorems \ref{nobumping} and \ref{noselfbump} to
establish the following corollary.

\begin{corollary}{}{}
  Let $M$ be a compact 3-manifold which is either acylindrical or
  homeomorphic to $S\times I$, for a closed surface $S$. If $\rho$ is
  a quasiconformally rigid point in $\partial AH(M)$ then $\rho$ is
  uniquely approachable. In particular, $AH(M)$ is locally connected
  at $\rho$.
\end{corollary}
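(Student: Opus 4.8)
The plan is to deduce the corollary directly by combining Theorems \ref{nobumping} and \ref{noselfbump} with the consequence of the Density Theorem already recorded in the introduction, namely that $AH(M)$ is locally connected at every uniquely approachable point. The argument is essentially bookkeeping: I will verify that the hypotheses of the corollary feed into both theorems, conclude that $\rho$ is uniquely approachable, and then invoke the already-noted passage from unique approachability to local connectivity.

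First I would note that under the hypotheses of the corollary, $M$ is in particular a compact 3-manifold and $\rho$ is quasiconformally rigid, so Theorem \ref{nobumping} applies and there is no bumping at $\rho$: the point $\rho$ lies in the closure of at most one component of ${\rm int}(AH(M))$. Second, since $M$ is assumed acylindrical or homeomorphic to $S\times I$ for a closed surface $S$, and $\rho$ is quasiconformally rigid, Theorem \ref{noselfbump} applies and there is no self-bumping at $\rho$: for the unique component $B$ of ${\rm int}(AH(M))$ with $\rho\in\overline B$ (which exists because the Density Theorem forces $\rho\in\overline{{\rm int}(AH(M))}$), and for every neighborhood $W$ of $\rho$, there is a neighborhood $V\subseteq W$ of $\rho$ with $V\cap B$ connected. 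By the definition given in the introduction, the simultaneous absence of bumping and of self-bumping says precisely that $\rho$ is uniquely approachable.

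Finally, as observed just before Theorem \ref{nocuspscase}, the Density Theorem \cite{ELC2,namazi-souto,ohshika-density} implies that $AH(M)$ is the closure of its interior, and hence $AH(M)$ is locally connected at every uniquely approachable point; applying this to $\rho$ completes the proof. Since all of the geometric substance is carried by Theorems \ref{nobumping} and \ref{noselfbump}, the only point in the derivation that warrants any attention is confirming that the absence of bumping, together with $\rho\in\overline{{\rm int}(AH(M))}$, isolates a single component $B$ near $\rho$, so that the absence of self-bumping of $B$ genuinely provides arbitrarily small connected neighborhoods of $\rho$ in $AH(M)$; this is exactly the content of the remark that unique approachability implies local connectivity, and is the only ``obstacle'' here, such as it is.
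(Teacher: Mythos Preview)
Your proof is correct and is exactly the argument the paper intends: the corollary is stated immediately after Theorems \ref{nobumping} and \ref{noselfbump} with the remark that one ``may combine'' them, and the passage from unique approachability to local connectivity via the Density Theorem is precisely the one recorded before Theorem \ref{nocuspscase}. There is nothing to add.
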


If $M = S\times I$, then ${\rm int}(AH(S\times I))$ is the 
{\em  quasi-Fuchsian locus}, denoted $QF(S)$, and is naturally identified
with $\mathcal{T}(S)\times \mathcal{T}(S)$.  Given 
$Y\in \mathcal{T}(S)$, the {\em Bers slice} $B_Y$ of
$QF(S)$ is the slice $\mathcal{T}(S)\times \{Y\}$ in the
product structure.  If $\rho$ lies in the boundary of a Bers slice
$B$, then its conformal boundary always has a component homeomorphic
to $S$ (see Bers \cite[Theorem 8]{bers-slice}).  In this setting, we
say that $\rho$ is {\em quasiconformally rigid in $\bdry B$} if every other
component of its conformal boundary is a thrice-punctured sphere. We
say a Bers slice {\em self-bumps} at a point $\rho\in\partial B$ if there
exists a neighborhood $W$ of $\rho$ in the closure $\overline B$ of
$B$ (within $AH(S\times I)$) such that if $V$ is a neighborhood of
$\rho$ in $\overline B$ which is contained in $W$, then $V\cap B$ is
disconnected.

\begin{theorem}{bersslice}{}
  Let $B$ be a Bers slice of $QF(S)$ for some closed surface $S$.  If
  $\rho\in \partial B$ and $\rho$ is quasiconformally rigid in
  $\partial B$, then $B$ does not self-bump at $\rho$. In particular,
  its closure $\bar B$ is locally connected at $\rho$.
\end{theorem}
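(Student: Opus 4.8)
The plan is to run the argument of Theorem~\ref{noselfbump} in the case $M=S\times I$; the only feature of a point $\rho\in\partial B$ that is quasiconformally rigid in $\partial B$ and is not already handled there is the presence of the single distinguished geometrically finite end of $N_\rho$ carrying the fixed conformal datum $Y$. Let $\gamma=\gamma_1\cup\dots\cup\gamma_k$ be the multicurve on $S$ whose components are parabolic in $\rho$. Quasiconformal rigidity in $\partial B$ says that every component of $\partial_cN_\rho$ other than the distinguished one is a thrice-punctured sphere; so, apart from the distinguished $Y$-end, every end of $N_\rho$ is either degenerate or a thrice-punctured-sphere end, exactly as for a quasiconformally rigid point in $\partial AH(S\times I)$, while along the slice $B$ the distinguished end persists rigidly and hence should play no role in self-bumping. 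The goal is: given a neighborhood $W$ of $\rho$ in $\overline B$, produce a smaller neighborhood $V\subseteq W$ of $\rho$ with $V\cap B$ connected. One takes $V$ to be a suitable ``geometric'' neighborhood, chosen so that, under the Bers parameterization $X\mapsto Q(X,Y)$ of $B$ by $\mathcal T(S)$, the slice $V\cap B$ corresponds to a set $\Xi\subseteq\mathcal T(S)$ consisting of those $X$ for which $Q(X,Y)$ has the curves of $\gamma$ very short and the remainder of its thick part within bounded bilipschitz distance of that of $N_\rho$; then it is enough to show $\Xi$ is connected.

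To do this one uses, exactly as in the proof of Theorem~\ref{noselfbump}, the bilipschitz model manifold of the Ending Lamination Theorem together with the drilling and filling estimates of Brock--Bromberg. Drilling the short geodesics representing $\gamma$ out of $Q(X,Y)$ (for $X\in\Xi$) produces a geometrically finite hyperbolic structure on the $3$-manifold $P$ obtained from $S\times I$ by removing a neighborhood of $\gamma$; by the drilling estimates this structure lies in a small neighborhood, in the geometrically finite deformation space of $P$, of a fixed region determined by $N_\rho$, and it carries the datum $Y$ on the distinguished boundary component. The non-distinguished boundary components of $P$ are either thrice-punctured spheres --- rigid, so contributing a single conformal structure --- or supports of the degenerate ends of $N_\rho$, on each of which the set of conformal structures approximating (in the model) the corresponding ending lamination of $\rho$ is connected; so the relevant set of drilled structures, being a product of these, is connected, and one may join any two of them by a path holding the $Y$-coordinate fixed. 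Applying the filling map (with cone angle $2\pi$ throughout) to such a path gives a path in $B$ joining the two given points $Q(X_1,Y)$ and $Q(X_2,Y)$, and the filling estimates keep the whole path inside $W$; this shows $\Xi$, hence $V\cap B$, is connected. Local connectivity of $\overline B$ at $\rho$ then follows formally from the absence of self-bumping together with the density of $B$ in $\overline B$.

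The step I expect to be the main obstacle is obtaining this geometric control uniformly along the path: one must choose $V$ small enough that \emph{every} intermediate manifold on the interpolating path has exactly the curves of $\gamma$ as its very short curves and the rest of its geometry pinned to the model of $N_\rho$, so that the drilling and filling operations are legitimate at every stage; in particular one must rule out a \emph{new} short curve developing on the degenerating side during the interpolation, and must identify and absorb any extra cusps or ends appearing in geometric limits of sequences $Q(X_n,Y)\to\rho$. This is precisely where quasiconformal rigidity in $\partial B$ is essential: it forces the non-distinguished ends of $N_\rho$ to be degenerate or thrice-punctured-sphere ends, leaving no room for an extra deformation and making the drilled manifold $P$ rigid away from $\gamma$. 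A secondary, bookkeeping, point is to verify that holding the $Y$-coordinate fixed along $B$ is compatible with drilling --- whose hypotheses and estimates only concern the geometry near the drilled curves, and for which a neighborhood of $\gamma$ may be taken disjoint from the distinguished $Y$-end --- so that the $S\times I$ argument applies essentially verbatim to the degenerating side while $Y$ rides along.
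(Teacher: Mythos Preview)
Your approach is genuinely different from the paper's, which uses no cone-manifold or drilling/filling machinery at all. The paper's argument is combinatorial: it builds a neighborhood system $\UU(\delta,\bU)$ of $\rho$ in $B_Y$ using length bounds on the cusp curves $\alpha_j$ in the manifold together with curve-complex neighborhoods $U_i$ of the ending laminations $\lambda_i$ (Lemma~\ref{U is nbhd}), and then shows that any point of a small such neighborhood can be pushed, by a controlled Fenchel--Nielsen deformation of $X$, into a region $\WW(\delta/2,\bU')$ where the $\alpha_j$ are short on the conformal boundary itself. Connectivity of the latter region is Lemma~\ref{W connected}. The deformation is Lemma~\ref{twist and shrink}: it shrinks the $\alpha_j$ on the boundary one at a time, in an order dictated by the partial order $\prec$ on subsurfaces (Lemma~\ref{partialorder}), so that each $\mm_{\alpha_j}(X_t,Y)$ stays large (hence $\alpha_j$ stays short in the manifold) while projections to complementary subsurfaces move by at most a uniform constant.

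Your sketch has a genuine gap at the step you yourself flag. After drilling you assert that, on each subsurface $S_i$ bounding a degenerate end, ``the set of conformal structures approximating the corresponding ending lamination of $\rho$ is connected,'' and that the product of these is connected. This is neither obvious nor easier than the original problem: the projection $\pi_{S_i}:\mathcal T(S_i)\to\mathcal C(S_i)$ is only coarsely defined, and the preimage of a neighborhood of $\lambda_i\in\partial_\infty\mathcal C(S_i)$ has no reason to be connected. The paper's Lemma~\ref{W connected} establishes precisely the needed connectivity (for suitably nested $U_i'\subset U_i$), via the Deligne--Mumford bordification and the fact that Teichm\"uller geodesics shadow $\mathcal C(S_i)$-geodesics; you would need this lemma or an equivalent. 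Even granting it, you have not supplied any mechanism to prevent a \emph{new} curve from becoming short along the interpolating path, which would invalidate the drilling/filling estimates and could push the path out of $W$. In the paper this is exactly what the careful ordering in Lemma~\ref{twist and shrink} and the $\mm_\gamma$-bounds from Theorem~\ref{length and projections} accomplish; your sketch contains no analogue, and the Brock--Bromberg estimates alone do not provide one.
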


\noindent {\bf History.}
The Ending Lamination Theorem \cite{ELC1,ELC2,ELC3} asserts that 
hyperbolic 3-manifolds in $AH(M)$ are classified
by their (marked) homeomorphism type
and ending invariants which encode the asymptotic
geometry of their ends.  As points in the interior are
parametrized by Teichm\"uller space(s) and ending laminations are
associated to points on the boundary, a tenuous analogy between
deformation spaces and Thurston's compactification of Teichm\"uller
spaces by the sphere of projective measured laminations clouded the
picture of the topological structure of deformation spaces for many
years.  The non-continuity of the action of the mapping class group on
Bers compactification \cite{kerckhoff-thurston}, illustrated some
initial failings of this analogy, and elucidated a central example of
J\o rgenson (see \cite{marden-bulletin}) concerning the disparity between algebraic
and geometric convergence that underlies the present discussion.

Anderson and Canary \cite{ACpages} showed that the (marked)
homeomorphism type need not vary continuously over $AH(M)$, while
Brock \cite{brock-invariants} showed that ending laminations do not
vary continuously in any of the usual topologies, even in the closure
of a Bers slice. These results make it clear that the parameterization
of $AH(M)$ must be much more complicated than one might naively hope.

Bumping phenomena in deformation spaces were first discovered by
Anderson and Canary \cite{ACpages}. Anderson, Canary and McCullough
\cite{ACM} characterized exactly which components of 
${\rm  int}(AH(M))$ bump when $M$ has incompressible boundary.  McMullen
\cite{mcmullenCE} showed that $QF(S)$ self-bumps,
while Bromberg and Holt \cite{bromberg-holt}
showed that every component of ${\rm int}(AH(M))$ self-bumps whenever
$M$ contains a primitive essential annulus. For a more complete overview
of recent results on the pathology of the topology of $AH(M)$ see
\cite{canary-bumponomics}.

All known bumping and self-bumping results make use of the
``wrapping'' construction from \cite{ACpages} which requires the
presence of a primitive essential annulus. It is not yet known whether
self-bumping can occur in $AH(M)$ when $M$ does not contain primitive
essential annuli or in the closure of a Bers slice.  However, Bromberg
\cite{bromberg-PT} conjectures that if $S$ is a closed surface of
genus at least 2, then the closure of every Bers Slice of $QF(S)$ is
not locally connected.  In the case of Bers slices of the space of
punctured torus groups, Minsky \cite{minskyPT} showed that the closure
of every Bers slice is a disk and hence locally connected. We
conjecture, similarly, that $AH(M)$ is not locally connected whenever
$M$ has a boundary component of genus at least two.

\subsection*{Outline of the Argument}

In section 3, we rule out bumping in the setting of Theorems
\ref{nocuspscase} and \ref{nobumping}. In each case, the point is to
rule out change of marked homeomorphism type in a sequence approaching
the point in question.  The hypotheses allow for the key use of the
core embedding results of Anderson-Canary-Culler-Shalen \cite{ACCS}.

In section 4, we rule out self-bumping in the setting of Theorem
\ref{nocuspscase}.  By hypothesis, we consider a point $\rho$ with no
extra parabolics and some degenerate ends. To rule out self-bumping at
$\rho$ it suffices to consider two sequences $\{\rho_n\}$ and
$\{\rho'_n\}$ in $\interior(AH(M))$ converging to $\rho$, and show that they
can be connected by a sequence of paths $\{\gamma_n\}$, also in
$\interior(AH(M))$, which accumulate only on $\rho$. Non-bumping implies
that $\rho_n$ and $\rho'_n$ are quasiconformally conjugate, so the
paths can be chosen as Teichm\"uller geodesics in the associated
quasiconformal deformation space. We can control the behavior of the
ending invariants of these sequences, and use the Ending Lamination
Theorem to show that any accumulation point of these paths is $\rho$.

The proof of Theorem \ref{bersslice} (the Bers slice case) is given in
Section \ref{bers slice}, using results from Sections \ref{section: FN projections}
and \ref{deformations}.  For clarity, consider first the case of a
point $\rho\in\partial B$ which is a {\em maximal cusp}; that is,
where a maximal curve system $\ba$ on the base surface $S$ is
represented by parabolics.

There is a neighborhood basis of $\rho$ in $B$ consisting of sets of
the form
$$
U(\delta)  = \{\rho'\in B: l_{\alpha_j}(\rho') < \delta \quad \forall \alpha_j\in\ba\}
$$
where $l_{\alpha_j}(\rho')$ is the translation distance in hyperbolic
space of $\rho'(\alpha_j)$, for a component $\alpha_j$ of $\ba$.  To
show no self-bumping occurs at $\rho$, then, we must show that for any
$\ep>0$ there is a $\delta>0$ such that any two points in $U(\delta)$
can be joined by a path in $U(\ep)$.

To show this would be straightforward if all components of $\ba$ were
already short on the top conformal boundary of our group:
Fenchel-Nielsen coordinates for the Teichm\"uller space of the varying
conformal boundary component in the Bers slice can be used directly to
obtain a path in which the lengths of components of $\ba$ are
controlled.

In general, however, curves in $\ba$ can have very short geodesic
representatives deep inside the convex core of the manifold, while on
the boundary they are extremely long.  To obtain geometric control
over the interior of the convex core via boundary geometry requires
tools from the solution of the Ending Lamination Conjecture in
\cite{ELC1} and \cite{ELC2}.  Lemma \ref{twist and shrink} gives the
final statement needed, namely that when the geodesic representatives of
$\ba$ are  very short in the manifold, there is a continuous path in $B$
terminating at a point where $\ba$ is short in the
conformal boundary and the geodesic representatives of $\ba$ are short
in all the corresponding hyperbolic manifolds along the deformation.

We develop the necessary machinery for finding such a path in Sections
\ref{section: FN projections} and \ref{deformations}.  Recall first
from \cite{ELC1} that short length for a curve $\gamma$ in a surface
group corresponds to {\em large projection coefficients} for some
subsurface $W$ with $\gamma\subset \boundary W$.  That is, for each
subsurface $W$ we project the ending invariants of the group to the
curve complex $\CC(W)$ and measure the distance between them.  Then a curve
$\gamma \in \CC(S)$ is short in the hyperbolic 3-manifold if and only
if it is short in the conformal 
boundary or one of these coefficients is large for a subsurface with
$\gamma$ in its boundary (see Theorem \ref{length and projections} for
a precise statement).

In section \ref{section: FN projections} we examine Fenchel-Nielsen
coordinates and their effect on subsurface projections. In particular
we prove in Theorem \ref{FN projections} that, given a curve system
$\ba$ and a point $X$ in Teichm\"uller space, we can deform the length
and twist parameters of $X$ as much as we want without changing by
more than a bounded amount projections to subsurfaces disjoint from
$\alpha$.

In Section \ref{deformations} we perform the deformation. The
trickiest issue is that we must adjust the components of the curve
system in an order reflecting their arrangement in the manifold, with
the curves ``closest'' to the top boundary being adjusted first. In
particular, to each component $\alpha_i$ of $\ba$ we associate a
subsurface $W_i$ with $\alpha_i$ in its boundary, whose projection
coefficient is large enough to be responsible for $\alpha_i$ being
short. To each $W_i$ is associated a certain geometric region in the
manifold, and these regions are partially ordered in terms of their
separation properties in the manifold. In order not to disturb the
projection coefficients of the other surfaces while adjusting each
$\alpha_i$, we need to start with the highest ones.

In practice we detect this partial order in a combinatorial way, by
looking at the projections of the subsurface boundaries to each
other's curve complexes. These ideas come from \cite{masur-minsky} and
\cite{ELC2}, and are also exploited in \cite{BKMM} and elsewhere.  The
details of this are discussed in Section \ref{partial orders}, and
Lemma \ref{partialorder} in particular.

In the general case of Theorem \ref{bersslice}, we must consider a
representation $\rho$ with a mix of parabolics (a non-maximal system $\ba$)
and degenerate ends. By the Ending Lamination Theorem such representations are
uniquely determined by their ending invariants, and we can determine a
neighborhood system for $\rho$ by considering constraints not just on the
lengths of the curves in $\ba$ but on the projections of the ending data to
the subsurfaces associated to the degenerate ends. The appropriate statement is
given in Lemma \ref{U is nbhd}, which relies on a Theorem
\ref{endlams},  whose proof will appear in \cite{pull-out}. 

The acylindrical case of Theorem \ref{noselfbump} is handled in
Section \ref{acylindrical}. This is quite similar to the Bers slice
case, with Thurston's Bounded Image Theorem providing control on the
lower conformal boundary of each boundary subgroup.

Finally, the general surface group case of Theorem \ref{noselfbump} is
completed in Section \ref{general surface}.  In this case parabolics
and degenerate ends can occur on both top and bottom.  We deform one
end and then the other, taking care to preserve the order of the ends
(and, in particular, the order of the curves becoming parabolic).

\medskip
\noindent {\bf Acknowledgements.}  The authors gratefully acknowledge
the support of the National Science Foundation and the support of
their NSF FRG grant in particular.  We also thank Francis Bonahon for suggesting an
initial form of the argument for Theorem~\ref{nocuspscase}.

\section{Background}
\label{background}

In this section, we recall some of the key tools and results which
will be used in the paper. (A few new technical lemmas will be derived
in sections \ref{partial orders} and \ref{geometric limits}).

In section \ref{ELT} we survey the Ending Lamination Theorem
which provides a classification of hyperbolic 3-manifolds
with finitely generated fundamental group in terms of their
ending invariants. In section \ref{AHback}, we recall basic facts about
deformation spaces of hyperbolic 3-manifolds, for example the parameterization
of the interior of $AH(M)$ and Thurston's Bounded Image Theorem.
In section \ref{ending invariants and geometry}, we recall results
which explain how the internal geometry of hyperbolic 3-manifolds
can be detected from its ending invariants, via subsurface projections.
In section \ref{partial orders}, we introduce the partial order on
(certain) subsurfaces discussed in the outline of argument and relate
it to the ordering of curves in the hyperbolic 3-manifold. In section
\ref{geometric limits}, we recall basic facts about geometric limits
and derive consequences of the core embedding results of \cite{ACCS}.

\subsection{Ending invariants and the Ending Lamination Theorem}
\label{ELT}

We recall (see \cite{BP} for example)
that there exists a Margulis constant $\mu>0$, such that if
$\epsilon<\mu$ and 
$$N_{{\rm thin}(\epsilon)}=\{x\in N\ |\ {\rm  inj}_N(x)<\epsilon\},$$
then every component of $N_{{\rm thin}(\epsilon)}$ is either a solid torus,
metric neighborhood of a closed geodesic in $N$ or a ``cusp'', i.e. a quotient
of a horoball in ${\bf H}^3$ by a group of parabolic transformations, which
is homeomorphic to $T\times (0,\infty)$ where $T$ is either a torus or
an open annulus. We pick a uniform $\epsilon_0<\mu$ which will
be used throughout the paper.

If $\rho\in AH(M)$, let $N_\rho={\bf H}^3/\rho(\pi_1(M))$ and let
$N_\rho^0$ be obtained from $N_\rho$ by removing all the cusps
of $(N_\rho)_{{\rm thin}(\epsilon_0)}$. A {\em compact core} for a hyperbolic
3-manifold $N$ is a compact submanifold $C$ such that the inclusion of $C$
into $N$ is a homotopy equivalence. A {\em relative compact core} $M_\rho$ for
$N_\rho$ is  a compact core for $N_\rho^0$ which intersects every component
of $\partial N_\rho^0$ in a compact core for that component.
(The existence of a relative compact core is due to Kulkarni-Shalen
\cite{kulkarni-shalen} and McCullough \cite{mcculloughRCC}.)
Let $P_\rho=M_\rho\cap\partial N_\rho^0$.
There exists a well-defined, up to homotopy,
homotopy equivalence $h_\rho:M\to M_\rho$ in the
homotopy class determined by $\rho$, and a well-defined identification of the
conformal boundary $\partial_cN_\rho$ 
with a collection of components of $\partial M_\rho-P_\rho$.
The Tameness Theorem  of Agol \cite{agol} and 
Calegari-Gabai \cite{calegari-gabai} assures us that we may choose $M_\rho$
so that $N_\rho^0-M_\rho$ is homeomorphic to 
$(\partial M_\rho-P_\rho)\times (0,\infty)$.

If a component $S$ of $\partial M_\rho-P_\rho$ is identified with a component of
$\partial_cN_\rho$, it is called {\em geometrically finite} and inherits a natural conformal structure, regarded as a point in  $\mathcal{T}(S)$.
Otherwise, the component $S$ is called {\em geometrically infinite} and it 
bounds a neighborhood of a geometrically infinite end. There exists a collection
of simple closed curves $\{\alpha_i\}$ on $S$, whose geodesic representatives
lie in the component of $N_\rho^0-M_\rho$ bounded by $S$ and leave every compact
set. Regarded as a sequence of projective measured laminations, $\{\alpha_i\}$ converges to
$\mu\in PL(S)$. The support $\lambda$ of $\mu$, regarded as
a geodesic lamination, is called the {\em ending lamination} associated to $S$.
The ending lamination $\lambda$ lies in the set $\EL(S)$ of
geodesic laminations admitting measures of full support which {\em
  fill the surface:}  every component of their complement is a disk or
a peripheral annulus.
(See Thurston \cite{thurston-notes},   Bonahon \cite{bonahon-preprint} and
Canary \cite{canary-ends} for a discussion of geometrically infinite ends
and their ending laminations).
The Ending Lamination Theorem (see Minsky \cite{ELC1} 
and Brock-Canary-Minsky \cite{ELC2,ELC3}) tells us
that this information determines the manifold up to isometry.

\state{Ending Lamination Theorem}{
Suppose that $\rho_1,\rho_2\in AH(M)$, then
$\rho_1=\rho_2$ if and only if there exists an orientation-preserving homeomorphism of pairs
$g:(M_{\rho_1},P_{\rho_1})\to (M_{\rho_2},P_{\rho_2})$ such that
\begin{enumerate}
\item
$g\circ h_{\rho_1}$ is homotopic to $h_{\rho_2}$,
\item
$g$ is a conformal homeomorphism from the geometrically finite components
of $\partial M_{\rho_1}-P_{\rho_1}$ to the geometrically finite components of
$\partial M_{\rho_2}-P_{\rho_2}$, and
\item $g$ takes the ending lamination of any geometrically infinite component of
$\partial M_{\rho_1}-P_{\rho_1}$ to the ending lamination of the image
geometrically infinite component of
$\partial M_{\rho_2}-P_{\rho_2}$.
\end{enumerate}
}

\subsection{Deformation spaces of hyperbolic 3-manifolds}
\label{AHback}

We begin by reviewing the classical deformation theory of the interior
of $AH(M)$. (See section 7  of Canary-McCullough \cite{canary-mccullough} for
a complete treatment of this theory and its history.)
Let $\mathcal{A}(M)$ denote the set of (marked) homeomorphism types of
compact, oriented hyperbolisable 3-manifolds homotopy equivalent to $M$. We recall
that $\mathcal{A}(M)$ is the set of pairs $(M',h')$ where $M'$ is an oriented,
hyperbolisable compact 3-manifold and $h:M\to M'$ is a homotopy equivalence,
where $(M_1,h_1)$ and $(M_2,h_2)$ are said to be equivalent if there exists
an orientation-preserving homeomorphism $j:M_1\to M_2$ such that
$j\circ h_1$ is homotopic to $h_2$. We get a well-defined map 
$$\Theta:AH(M)\to \mathcal{A}(M)$$
given by taking $\rho$ to the equivalence class of $(M_\rho,h_\rho)$.
This map is surjective and the components of the interior of $AH(M)$ are exactly
the pre-images of points in $\mathcal{A}(M)$. 

If $M$ has incompressible boundary, equivalently if $\pi_1(M)$ is freely indecomposable,
then  points in $\Theta^{-1}(M',h')\cap {\rm int}(AH(M))$ give rise
to well-defined conformal structures on $\partial_TM'$, where $\partial_TM'$ is the
set of non-toroidal boundary components of $\partial M'$. Moreover, every possible
conformal structure arises and the conformal structure determines the manifold.
Therefore, we may identify the component $\Theta^{-1}(M',h')\cap {\rm int}(AH(M))$
with $\mathcal{T}(\partial_TM')$.

The Density Theorem asserts that $AH(M)$ is the closure of its interior.
If $M$ has incompressible boundary, the Density Theorem follows from
the Ending Lamination Theorem \cite{ELC1,ELC2},
Bonahon's Tameness Theorem \cite{bonahon}
and convergence results of Thurston \cite{thurston2,thurston3}.  (A discussion
of the history of this proof in the general case is contained in
\cite{canary-bumponomics}.)
There is an alternate approach, using cone-manifold deformation theory, pioneered
by Bromberg \cite{bromberg-density} and Brock-Bromberg \cite{brock-bromberg}
and completed by Bromberg-Souto \cite{bromberg-souto}. 

The majority of this paper will be concerned with the case where $M=S\times I$
and $S$ is a closed surface. In this case,
$\mathcal{A}(S\times I)$ is a single point, and the interior $QF(S)$  of $AH(S\times I)$
(which is often abbreviated to $AH(S)$) is identified with
$\mathcal{T}(S)\times\mathcal{T}(S)$. If $\rho\in AH(S)$, then $M_\rho$ is identified
with $S\times [0,1]$. (Here we are implicitly identifying
$\mathcal{T}(\overline{S})$ with $\mathcal{T}(S)$ where $\overline{S}$ is $S$
with the opposite orientation. Formally, the conformal structure on $S\times \{ 0\}$
lies in $\mathcal{T}(\overline{S})$.)
The orientation on $S$ allows us to identify one component $\partial_1M_\rho$
as the top, or {\em upward pointing} component and the other component $\partial_0M_\rho$ as the bottom
or {\em downward pointing} component. If $\rho\in QF(S)$ has conformal structure
$X$ on $\partial_1M_\rho$ and $Y$ on $\partial_0M_\rho$,
we will use the notation $\rho=Q(X,Y)$.
In general, $P\cap \partial_1M_\rho$ may be
identified with the regular neighborhood of a collection $\ba$
of simple closed curves on $S$ and  $P\cap \partial_0M_\rho$ may also be identified with the regular neighborhood of a collection
$\bb$ of simple closed curves on $S$. We say that the components of $\ba$ are
associated to upward-pointing cusps, while the components of $\bb$
are associated to downward-pointing cusps.
Similarly the components of $\partial_1M_\rho \smallsetminus P$ are
said to  bound upward-pointing ends, and the components of
$\partial_0M_\rho \setminus P$ are said to bound downward-pointing ends.
If $\rho\in AH(S)$ is quasiconformally rigid, a
component of $\partial M_\rho-P_\rho$ is geometrically finite if and only if it is a thrice-punctured sphere,
while the remaining components each bound neighborhoods of degenerate ends and inherit an ending lamination.

\medskip

We recall that a Bers slice $B_Y$ of $QF(S)$ is a set of the form
$\mathcal{T}(S)\times \{ Y\}$ where $Y\in\mathcal{T}(S)$.
If $B_Y$ is a Bers slice and $\rho\in \overline{B_Y}$ (the closure of $B_Y$ in
$AH(S)$), then the bottom boundary
component of $M_\rho$ is geometrically finite and has conformal structure $Y$
(see Bers \cite[Theorem 8]{bers-slice}).
If $\rho$ is quasiconformally rigid in $B_Y$,
one then obtains a collection $\ba$ of curves on the
top boundary component whose regular neighborhood is $P_\rho$,
and an ending lamination on every upward-pointing
component of $\partial M_\rho-P_\rho$ which is not a thrice-punctured sphere.

\medskip

The other special  case  we will consider is when $M$ is acylindrical.
Johannson \cite{johannson} showed that any homotopy equivalence
from an acylindrical manifold to a compact 3-manifold is homotopic
to a homeomorphism, so $\mathcal{A}(M)$ has two component
(one associated to each possible orientation on $M$).
So, ${\rm int}(AH(M))$ has  two components and it follows from \cite{ACM}
that $\Theta$ is locally constant. Thurston \cite{thurston1} showed
that $AH(M)$ is compact if $M$ is acylindrical. 

Our proof of Theorem \ref{noselfbump} in the acylindrical case we will
make crucial use of Thurston's Bounded Image Theorem
(see Kent \cite{kent} for a proof.) If $B$ is a component of ${\rm int}(AH(M))$
then $B$ is identified with $\mathcal{T}(\partial_T M)$. If $S$ is a component
of $\partial_T M$, then there is a natural map $r_S:B\to AH(S)$ given
by restriction, whose image lies in $QF(S)$. If $\tau\in\mathcal{T}(\partial M)$,
then $r_S(\tau)$ is a well-defined point
$(\tau|_S,\sigma_S(\tau))$ where $\sigma_S(\tau)\in\mathcal{T}(S)$. Letting
$S$ vary over all components of $\partial_T M$, we get a well-defined
map 
$$\sigma:\mathcal{T}(\partial_T M)\to \mathcal{T}(\partial_T M)$$
called the skinning map. Thurston's Bounded Image Theorem
simply asserts that $\sigma$ has bounded image in $\mathcal{T}(\partial M)$.

\subsection{The conformal boundary of a hyperbolic 3-manifold and its
internal geometry}
\label{ending invariants and geometry}

In this section, we review a variety of results which relate the geometry
of the conformal boundary to the geometry of the hyperbolic 3-manifold.
Most classically, a result of Bers \cite{bers-slice} shows that lengths of
curves in the conformal boundary provide upper bounds for lengths in
the manifold. To set notation, if $\rho\in AH(M)$ and $\alpha$ is a (homotopically
non-trivial) closed
curve in $M$, then $l_\rho(\alpha)$ is the length of the geodesic
representative $\alpha^*$ of $h_\rho(\alpha)$ in $N_\rho$ (with $l_\rho(\alpha)=0$ if 
$h_\rho(\alpha)$
is homotopic into a cusp of $N_\rho$). Similarly, if $X\in \mathcal{T}(S)$ and
$\alpha$ is a closed curve on $X$,
then $l_X(\alpha)$ is the length of the geodesic representative of $\alpha$
on $X$.

\begin{lemma}{berslemma}{\rm (Bers \cite[Theorem 3]{bers-slice})}
If $\rho=Q(X,Y)\in QF(S)$, then 
$$l_\rho(\alpha)\le 2l_X(\alpha)$$ for
any closed curve $\alpha$ on $X$. 
\end{lemma}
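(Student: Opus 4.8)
The plan is to realize $l_\rho(\alpha)$ as a translation length in $\mathbb{H}^3$ and to bound it by pushing the geodesic representative of $\alpha$ from the conformal boundary into the interior by a $\Gamma$-equivariant map that expands $\sigma_{\Omega_X}$-length by a factor at most $2$. Concretely, write $\Gamma=\rho(\pi_1(S))$ and let $\Omega_X\subset\widehat{\mathbb{C}}$ be the component of the domain of discontinuity with $\Omega_X/\Gamma = X$. Since $\rho\in QF(S)$, the component $\Omega_X$ is a Jordan domain and is the universal cover of $X$, so the hyperbolic metric of $X$ is the quotient of the Poincar\'e metric $\sigma_{\Omega_X}$ of $\Omega_X$. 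Because $\rho$ is quasi-Fuchsian, $g:=\rho(\alpha)$ is loxodromic and $l_\rho(\alpha)$ equals its translation length $\ell(g)=\inf_{q\in\mathbb{H}^3} d_{\mathbb{H}^3}(q,g q)$; moreover $g$ preserves $\Omega_X$ and acts on $(\Omega_X,\sigma_{\Omega_X})\cong\mathbb{H}^2$ as a hyperbolic isometry whose invariant axis $c_g$ descends to the geodesic representative of $\alpha$ on $X$. In particular, a fundamental arc of $c_g$ joining a point $w$ to $g(w)$ has $\sigma_{\Omega_X}$-length $l_X(\alpha)$.

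The geometric input I would use is the nearest-point retraction. Let $\mathcal{D}_X\subset\mathbb{H}^3$ be the boundary of the hyperbolic convex hull of $\widehat{\mathbb{C}}\setminus\Omega_X$ --- the ``dome'' over $\Omega_X$, i.e. the lift of the component of the convex-core boundary of $N_\rho$ facing $X$ --- and let $r\colon\Omega_X\to\mathcal{D}_X$ be the nearest-point retraction. This map is $\Gamma$-equivariant, since it depends only on $\Omega_X$. The key fact I would invoke is Sullivan's theorem, in the explicit form of Epstein--Marden: $r$ is $2$-Lipschitz from $(\Omega_X,\sigma_{\Omega_X})$ to $\mathcal{D}_X$ with its induced path metric, and hence $2$-Lipschitz as a map into $(\mathbb{H}^3,d_{\mathbb{H}^3})$.

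Granting this, the conclusion is immediate: set $p=r(w)$. Applying $r$ to the fundamental arc of $c_g$ from $w$ to $g(w)$ gives a path in $\mathbb{H}^3$ from $p$ to $r(g(w))=g(r(w))=g(p)$ of $d_{\mathbb{H}^3}$-length at most $2\,l_X(\alpha)$, so
\[
l_\rho(\alpha)=\ell(g)\le d_{\mathbb{H}^3}(p,g(p))\le 2\,l_X(\alpha).
\]

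The only substantive step is the $2$-Lipschitz estimate for the retraction; everything else is bookkeeping, and this is precisely where the constant $2$ in the statement comes from, so I expect it to be the main point to get right (or to cite cleanly). If a self-contained argument is wanted, one can instead use the Epstein map associated to the Poincar\'e metric $\sigma_{\Omega_X}$, namely the envelope of the horoballs dual to $\sigma_{\Omega_X}$, which is a $\Gamma$-equivariant map $\Omega_X\to\mathbb{H}^3$ of Lipschitz constant at most $2$ with respect to $\sigma_{\Omega_X}$. I would caution that the naive map sending $z$ to the apex of the largest hemisphere of $\mathbb{H}^3$ inscribed under $\Omega_X$ through $z$ does \emph{not} work, since the sharp Koebe lower bound $\sigma_{\Omega_X}\ge \tfrac14\,\mathrm{dist}(\cdot,\partial\Omega_X)^{-1}$ is too weak to give the factor $2$. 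This statement is Bers' Theorem~3 in \cite{bers-slice}, and the argument above is a streamlined version of the classical one.
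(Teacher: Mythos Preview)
The paper does not actually prove this lemma; it simply records it with a citation to Bers \cite[Theorem~3]{bers-slice}. So there is nothing in the paper to compare your argument against directly.

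Your argument is correct. The only substantive step, as you say, is the $2$-Lipschitz bound for the nearest-point retraction $r:(\Omega_X,\sigma_{\Omega_X})\to\mathcal D_X$, and this is exactly the Epstein--Marden result (the paper itself invokes the length-comparison consequence of \cite[Theorem~3.1]{EMM} in the proof of Lemma~\ref{boundary order}). Once that is in hand, your equivariance-plus-translation-length computation is clean and complete.

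It is worth noting, though, that this is \emph{not} Bers' original argument, despite your closing remark. Bers' 1970 proof predates the Epstein--Marden convex-hull machinery and is purely two-dimensional: normalize so that $g=\rho(\alpha)$ acts by $z\mapsto\lambda z$ with $|\lambda|=e^{l_\rho(\alpha)}$; then $\Omega_X/\langle g\rangle$ is a conformal annulus of modulus $\pi/l_X(\alpha)$ which embeds in the torus $\mathbb C^*/\langle g\rangle\cong\mathbb C/(2\pi i\,\mathbb Z+\log\lambda\,\mathbb Z)$, and a one-line extremal-length comparison against the flat metric on the torus gives $\pi/l_X(\alpha)\le 2\pi/l_\rho(\alpha)$. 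Your convex-hull route has the advantage of being manifestly geometric in $\mathbb H^3$ and of fitting naturally with the rest of the paper (which already cites \cite{EMM}); Bers' route has the advantage of being elementary and self-contained, requiring only the modulus formula for a hyperbolic annulus and no three-dimensional input.
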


\subsubsection*{Subsurface projections and the curve complex}
The proof of the Ending Lamination Theorem develops
more sophisticated information about the relationship
between the geometry of a hyperbolic 3-manifold and its ending invariants.
This information is typically expressed in terms of projections onto
curve complexes of subsurfaces of the boundary.

Recall from \cite{masur-minsky2} the curve complexes $\CC(W)$ where
$W\subseteq S$ is an essential subsurface. When $W$ is not an annulus,
the vertices of $\CC(W)$ are homotopy classes of simple closed
nonperipheral curves in $W$. When $W$ is an annulus, vertices are
homotopy classes rel endpoints of arcs connecting the boundaries of
the compactified annulus cover $\hhat W\to S$ associated to $W$. 
Edges in these complexes correspond to pairs of vertices with
representatives that intersect in the minimal possible number of points
allowed by $W$. $\CC(W)$ is endowed with the path metric $d_{\CC(W)}$ assigning
length 1 to each edge. 
If $W$ is a three-holed sphere then $\CC(W)$ is empty, and from now on
we implicitly ignore this case. 

If $\CC(S,W)$ denotes the set of curves in $S$ which intersect $W$
essentially, we have, also as in \cite{masur-minsky2}, 
{\em subsurface  projection maps}
$$
\pi_W:\CC(S,W)\to\CC(W).
$$
If $W$ is not an annulus then $\pi_W(\alpha)$ is obtained by selecting
(any) arc of the essential intersection of $\alpha$ with $W$, and
doing surgery with $\boundary W$ to obtain a closed curve. When $W$ is
an annulus we take more care: we consider the
annular cover $\hat W$ of $S$ associated to $W$ and lift $\alpha$ to
an arc connecting the two boundaries. All the choices involved in
these constructions differ  by bounded distance in the image, and in
our applications this ambiguity will not matter. 
Define, for $\alpha,\beta\in\CC(S,W)$, 
$$
d_W(\alpha,\beta )  = d_{\CC(W)}(\pi_W(\alpha),\pi_W(\beta)).
$$

All of these notions can be applied to points in $\Teich(S)$ as well,
giving a map 
$$
\pi_W: \Teich(S) \to \CC(W)
$$
defined as follows: Given $X\in\TT(S)$ let $\alpha$ be a curve of
minimal length in $X$ intersecting $W$ essentially and let $\pi_W(X) =
\pi_W(\alpha)$. Except when $W$ is an annulus (or a three-holed
sphere, which we always exclude), the length of $\alpha$
has a uniform upper bound known as the Bers constant of $S$ (see \cite{bers-constant}).
Indeed the shortest maximal curve system has a uniform upper length bound,
and one of those curves must intersect $W$. 
Any non-uniqueness in the choice of $\alpha$ leads to values for
$\pi_W(X)$ that differ by a uniformly bounded amount. 

If $W$ is an annulus whose core $\gamma$ has extremely short length in
$X$, then the shortest curve crossing $\gamma$ will be 
long; however, the ambiguity in the definition of $\pi_W$ will still
be uniformly bounded. To see this, note that if two curves
$\beta_1$ and $\beta_2$ crossing
$\gamma$ have projections with distance greater than 2 in
$\CC(\gamma)$, then there exists a pair of arcs $b_1$ and $b_2$ in
$\beta_1$ and $\beta_2$ respectively with common endpoints whose
concatenation is homotopic into $\gamma$. Exchange of these arcs, and
smoothing,  will strictly shorten at least one of $\beta_1$ or
$\beta_2$, so they cannot both have minimal length in $X$. 
(The same argument actually works for non-annular $W$ as well).

\subsubsection*{Lengths in Kleinian surface groups}
In the case of a quasifuchsian hyperbolic manifold $Q(X,Y)$, a curve is short
if and only if it is either short in the conformal boundary or there is
a subsurface with the curve in its boundary such that $d_W(X,Y)$ is large.
To be more explicit,
given a simple closed curve $\gamma$ in $S$ and $X,Y\in\Teich(S)$, we
define 
$$
\mm_\gamma(X,Y) = 
\max\left(
\frac{1}{l_\gamma(X)},\frac{1}{l_\gamma(Y)},
\sup_{\gamma\subset\boundary W} d_W(X,Y)
\right).
$$
The supremum is over all essential subsurfaces in $S$ whose boundary
contains a curve parallel to $\gamma$.  The following theorem is a
re-statement (and special case) of the Length Bound Theorem from
Brock-Canary-Minsky \cite{ELC2}.

\begin{theorem}{length and projections}
Given $\ep>0$ there exists $M$ such that, for any $Q(X,Y)\in QF(S)$, and
simple closed curve $\gamma $ in $S$, 
$$
\mm_\gamma(X,Y) > M \implies l_\gamma(Q(X,Y)) < \ep.
$$
Conversely, given $M'$ there exists $\ep'>0$ such that
$$
 l_\gamma(Q(X,Y)) < \ep' \implies \mm_\gamma(X,Y) > M'.
$$
\end{theorem}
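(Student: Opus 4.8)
The plan is to reduce the statement to the Length Bound Theorem of \cite{ELC2}, after identifying the ending invariants of $Q(X,Y)$ and separating off the contributions of the geometrically finite boundary. First I would observe that $N_{Q(X,Y)}$ has exactly two ends, both geometrically finite, carrying the conformal structures $X$ and $Y$, with no degenerate ends and no cusps. Consequently the ending data to which the Length Bound Theorem applies is precisely the pair $(X,Y)\in\Teich(S)\times\Teich(S)$, and the subsurface projections appearing in that theorem are exactly the maps $\pi_W$ defined above for points of $\Teich(S)$, well defined up to uniformly bounded error (including the annular case, as discussed above). Thus for every essential $W$ with $\gamma\subset\partial W$ --- among them the annulus with core $\gamma$ --- the coefficient $d_W(X,Y)$ is a legitimate, uniformly well-defined quantity, and the supremum in the definition of $\mm_\gamma(X,Y)$ makes sense.

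For the forward implication, suppose $\mm_\gamma(X,Y)>M$. If the maximum defining $\mm_\gamma(X,Y)$ is attained by $1/l_\gamma(X)$ or by $1/l_\gamma(Y)$, then $\gamma$ has length less than $1/M$ in one of the conformal boundary components, and Lemma \ref{berslemma} gives $l_\gamma(Q(X,Y))\le 2/M$. Otherwise $d_W(X,Y)>M$ for some essential $W$ with $\gamma\subset\partial W$, and the Length Bound Theorem furnishes a universal function $f$ with $f(t)\to0$ as $t\to\infty$ such that $l_\gamma(Q(X,Y))\le f(d_W(X,Y))$, uniformly over all $Q(X,Y)$, all $\gamma$, and all such $W$ (this covers the case where $W$ is the annulus about $\gamma$, in which a large coefficient is large relative twisting and does force $\gamma$ to be short). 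Choosing $M$ so that $2/M<\ep$ and $f(M)<\ep$ yields the conclusion.

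For the converse, I argue by contrapositive: assume $\mm_\gamma(X,Y)\le M'$. Then $l_\gamma(X),l_\gamma(Y)\ge 1/M'$ and $d_W(X,Y)\le M'$ for every essential $W$ with $\gamma\subset\partial W$. The Length Bound Theorem asserts that $\gamma$ can be short in $Q(X,Y)$ only if it is short in a geometrically finite boundary component or else some coefficient $d_W$ with $\gamma\subset\partial W$ is large; in its uniform form it provides $\ep'=\ep'(M')>0$ with $l_\gamma(Q(X,Y))\ge\ep'$ whenever all of these quantities are at most $M'$. Hence $l_\gamma(Q(X,Y))\ge\ep'$, as required.

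The substantive point, and the one that needs care, is the translation of the general hierarchy-based Length Bound Theorem of \cite{ELC2} into this compact form: verifying that the only combinatorial data capable of forcing $\gamma$ to be short in $Q(X,Y)$ are the lengths of $\gamma$ in the geometrically finite boundary together with the coefficients $d_W$ for $W$ having $\gamma$ on its boundary, and that the annular term (essentially the relative twisting of $X$ and $Y$ about $\gamma$) can be incorporated uniformly alongside $1/l_\gamma(X)$ and $1/l_\gamma(Y)$. All of this is contained in \cite{ELC2}; the remaining ingredients --- Bers's inequality (Lemma \ref{berslemma}) and the uniformly bounded ambiguity in the projections $\pi_W$ --- are routine.
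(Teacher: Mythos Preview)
Your proposal is correct and matches the paper's treatment: the paper does not give a proof but simply states that this theorem is ``a re-statement (and special case) of the Length Bound Theorem from Brock--Canary--Minsky \cite{ELC2}.'' Your reduction to that theorem, together with Bers's inequality (Lemma \ref{berslemma}) for the boundary-length terms, is exactly the intended justification.
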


\subsection{Partial orders}
\label{partial orders}

In view of Theorem \ref{length and projections}, those subsurfaces $W$
where $d_W(X,Y)$ is large are important because their
boundaries correspond to short curves in $Q(X,Y)$. If the
curves are sufficiently short then
Otal \cite{otal-knotting} shows that their
associated Margulis tubes are {\em unlinked},
meaning they are isotopic to level curves in a product
structure on $Q(X,Y)$, and hence admit a natural partial order.

If $\alpha,\beta\in \mathcal{C}(S)$
and $i(\alpha,\beta)\ne 0$, then we say that $\alpha$ {\em lies above} $\beta$,
and that $\beta$ {\em lies below} $\alpha$, in $N_\rho\in AH(S)$ if
their geodesic  representatives $\alpha^*$ and $\beta^*$ are disjoint
and $\alpha^*$ may be homotoped to $+\infty$ in the complement of
$\beta^*$ (that is, there is a proper map $F:S^1\times [0,\infty)\to N_\rho$ such
that $F|_{S^1\times \{0\}}=\alpha^*$, $\beta^*\cap F(S^1\times [0,\infty))=\emptyset$,
and $F(S^1\times\{t\})$ is a family of curves exiting the upward-pointing end
of $N_\rho$). 
If $l_\rho(\alpha)=0$, then $\alpha$ lies above $\beta$ if $\alpha$ is associated
to an upward-pointing cusp.
See Section 3.1 of \cite{ELC2} for further discussion of
this topological partial order.

There is a closely related combinatorial partial order, 
which originates in the ``hierarchy path'' construction of \cite{masur-minsky2}.

For $(X,Y)$ an ordered pair of points in Teichm\"uller space
and $c>0$, define the following collection of (isotopy
classes of) essential
subsurfaces of $S$:
$$
\LL_c(X,Y) = \{ W\subset S: d_W(X,Y) > c \}.
$$
We say two subsurfaces or curves in $S$ {\em overlap} if they
intersect essentially and neither is contained in the other.

The lemma below can be extracted from Lemmas 4.18, 6.1 and 6.2 of
\cite{masur-minsky2} (see also section 4.1 of Behrstock-Kleiner-Minsky-Mosher
\cite{BKMM}). 

\begin{lemma}{partialorder}
There is a constant $m_1$ such that, if $c > m_1$ then
$\LL_c \equiv \LL_c(X,Y)$ admits a partial order $\prec$, such that 
any $U,V\in \LL_c$ which overlap are ordered, and $U\prec V$ implies
that 
\begin{enumerate}
\item
$ d_U(\boundary V,X)\le m_1$,
\item 
$d_U(\boundary V,Y) > c-m_1,$
\item
$  d_V(Y,\boundary U) \le m_1,$ and
\item
$d_V(\boundary U,X) > c-m_1.$
\end{enumerate}
Moreover, if $U\in\LL_c(X,Y)$,  $c>2m_1$, $V$ overlaps $U$, and
$
d_V(X,\boundary U) > m_1,
$
then
$$(5)\ \ d_V(X,Y) > c-m_1$$
and 
$$(6)\  \ U\prec V $$
with respect to the order on $\LL_{c-m_1}(X,Y)$. 
\end{lemma}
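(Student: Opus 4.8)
The statement to prove is Lemma~\ref{partialorder}, the existence of the partial order $\prec$ on $\LL_c(X,Y)$ together with properties (1)--(6). The strategy is to import the relevant machinery from \cite{masur-minsky2}: a hierarchy $H$ connecting markings adapted to $X$ and $Y$ supplies, for each subsurface $W$ with $d_W(X,Y)$ large, a geodesic $g_W$ in $\CC(W)$ that appears as a component domain of $H$, and the Large Link Lemma (Lemma 6.2 of \cite{masur-minsky2}) guarantees that every $W\in\LL_c(X,Y)$ occurs this way once $c$ exceeds a threshold $m_1$ depending only on $S$. The order $\prec$ is then the order of appearance of these geodesics along the main geodesic of $H$, equivalently the time-order coming from the ``$\prec_t$'' relation on component domains; Lemma 4.18 of \cite{masur-minsky2} (the sequencing/order lemma) shows that any two domains that overlap are comparable in this order. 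So the first step is simply to set up this dictionary and define $\prec$.

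**Key steps.** First I would recall the hierarchy $H = H(X,Y)$ and the notion of component domains and their geodesics $g_W$, and state precisely the Large Link Lemma: if $d_W(X,Y)>m_1$ then $W$ is the support of some $g_W$ in $H$, and moreover the endpoints of $g_W$ are within bounded distance (in $\CC(W)$) of $\pi_W(X)$ and $\pi_W(Y)$ respectively, with $X$ on the ``initial'' side and $Y$ on the ``terminal'' side. Second, define $U\prec V$ to mean $g_U$ precedes $g_V$ in the time order on $H$; invoke Lemma 4.18 to get that overlapping domains are ordered. Third, derive (1)--(4): if $U\prec V$, then $\boundary V$ is (coarsely) a vertex of $H$ lying ``after'' $g_U$ finishes, so its projection to $\CC(U)$ is close to the terminal end of $g_U$, hence close to $\pi_U(Y)$ — this gives (3) $d_V(Y,\boundary U)\le m_1$ and, symmetrically, (1) $d_U(\boundary V, X)\le m_1$. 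Then (2) and (4) follow by the triangle inequality in $\CC(U)$ and $\CC(V)$ respectively: $d_U(\boundary V, Y) \ge d_U(X,Y) - d_U(\boundary V, X) > c - m_1$, and likewise $d_V(\boundary U, X) \ge d_V(X,Y) - d_V(\boundary U, Y) > c - m_1$, after absorbing constants into $m_1$. Fourth, for (5)--(6): given $U\in\LL_c(X,Y)$, $V$ overlapping $U$ with $d_V(X,\boundary U) > m_1$, one uses the fact that $\boundary U$ is coarsely represented in $H$ and that $g_V$, being a component-domain geodesic, has its initial endpoint close to $\pi_V(X)$; the hypothesis $d_V(X,\boundary U)>m_1$ forces $\boundary U$ to lie past the start of $g_V$, which combined with Lemma 4.18 forces $U\prec V$ (in $\LL_{c-m_1}$), and then (5) is again a triangle-inequality consequence: $d_V(X,Y) \ge d_V(X,\boundary U) - (\text{bounded}) \ge \ldots$, or more directly from $U\prec V$ applied with the already-established (4).

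**The main obstacle.** The genuinely delicate point is the bookkeeping of constants and the precise form of the Large Link Lemma: one must check that a \emph{single} constant $m_1$ (depending only on the topological type of $S$) simultaneously serves as the threshold for $W$ to appear in $H$, as the bound in the ``footprint'' estimates relating $\boundary V$ to the endpoints of $g_U$, and as the slack in the order comparisons; this requires tracking the additive errors in Lemmas 4.18, 6.1, 6.2 of \cite{masur-minsky2} and the Behrstock inequality from \cite{BKMM}, and verifying that asking $c>m_1$ (resp.\ $c>2m_1$ for (5)--(6)) really is enough. A secondary subtlety is the asymmetry in (1)--(4): properties (1) and (4) involve $X$ while (2) and (3) involve $Y$, so one must be careful that $\prec$ is defined with the correct orientation (``$U\prec V$'' should mean $U$ is encountered \emph{before} $V$ as one travels the hierarchy from the $X$-side to the $Y$-side), since getting this backwards swaps all four inequalities. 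Beyond that, the argument is a fairly mechanical translation of the cited lemmas, so I would keep the proof short, stating the needed consequences of \cite{masur-minsky2} as black boxes and doing only the triangle-inequality derivations of (2), (4), (5) explicitly.
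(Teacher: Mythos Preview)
Your approach is exactly what the paper does: it gives no self-contained proof but simply states that the lemma ``can be extracted from Lemmas 4.18, 6.1 and 6.2 of \cite{masur-minsky2} (see also section 4.1 of \cite{BKMM}),'' which is precisely the hierarchy/time-order machinery you invoke.

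One point to fix: your orientation is reversed. In the paper's conventions (see the paragraph following the lemma and Lemma~\lref{porderandtoporder}), $U\prec V$ means that in traveling from $Y$ to $X$ one encounters $U$ first and then $V$; equivalently $\boundary U$ lies \emph{below} $\boundary V$ in $Q(X,Y)$, with $X$ on top. Thus if the hierarchy has $X$ as the initial marking and $Y$ as the terminal one, $U\prec V$ corresponds to $g_V$ preceding $g_U$ in the time order, not the other way around. Concretely, your derivation says ``$\boundary V$ lies after $g_U$ finishes, so $\pi_U(\boundary V)$ is close to $\pi_U(Y)$,'' but (1) asserts $\pi_U(\boundary V)$ is close to $\pi_U(X)$; your sketch then jumps to (3), which is a statement in $\CC(V)$, not $\CC(U)$. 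Once you fix the orientation (or equivalently build the hierarchy from $Y$ to $X$), the footprint/Large-Link arguments you describe go through and the triangle-inequality derivations of (2), (4), (5) are correct.
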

One way to make sense of these inequalities is to interpret a large
value for $d_U(\boundary V,X)$ to mean that $U$ is ``between'' $V$ and
$X$. In \cite{masur-minsky2} this had a literal meaning, because a
large value for $d_U(\boundary V,X)$ meant that any hierarchy path
connecting a bounded-length marking on $X$ to a marking containing
$\boundary V$ would have to pass through markings
containing $\boundary U$. 

Thus, informally (2) says that $U$ is between $V$ and $Y$, 
but (1) says that $U$ is {\em not}
between $V$ and $X$, and so on. 
Together these inequalities say that, in ``traveling'' from $Y$ to $X$, we
must first pass through $U$ and then through $V$.

Theorem \ref{length and projections} implies  that 
subsurfaces in $\LL_c(X,Y)$, for suitable $c$, have short boundary
curves in $Q(X,Y)$, and therefore are topologically ordered as
above. 
Lemmas 2.2 and 4.9 and the Bilipschitz Model Theorem from \cite{ELC2}
combine to show that, indeed,  the partial order $\prec$ determines the topological
ordering of the boundary components of the subsurfaces when $c$ is
large. In particular the combinatorial notion of ``betweenness'' translates
to a topological statement, that in a suitable product structure on
the manifold,  one level surface lies at a height between two others. The
following statement will suffice for us:

\begin{lemma}{porderandtoporder}
There exists $c_0>m_1$ such that if  $c>c_0$, $U,V\in \LL_c(X,Y)$,
and  $U\prec V$, then if  a boundary
component $\alpha$ of $U$ overlaps a boundary component 
$\beta$ of $V$, then $\alpha$ lies below $\beta$ in $Q(X,Y)$.
\end{lemma}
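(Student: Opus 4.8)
The plan is to combine the combinatorial partial order from Lemma \ref{partialorder} with the geometric translation provided by the Bilipschitz Model Theorem of \cite{ELC2}, exactly as advertised in the paragraph preceding the statement. First I would fix $c_0$ large enough to simultaneously satisfy three demands: (i) $c_0 > m_1$ so that Lemma \ref{partialorder} applies; (ii) by the converse direction of Theorem \ref{length and projections}, $c_0$ is large enough that $d_W(X,Y) > c_0$ forces $l_{\boundary W}(Q(X,Y))$ to be smaller than the Otal constant, so that all the boundary curves of subsurfaces in $\LL_{c_0}(X,Y)$ have Margulis tubes that are unlinked in the sense of Otal \cite{otal-knotting}, hence simultaneously isotopic to level curves in a product structure on $Q(X,Y)$; and (iii) $c_0$ is large enough to invoke the relevant consequences of the Bilipschitz Model Theorem (via Lemmas 2.2 and 4.9 of \cite{ELC2}), which say that the bilipschitz model map sends the Margulis tube of such a short curve to the model tube sitting at the combinatorially-prescribed height. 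All three are uniform constraints, so a single $c_0$ works.

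Next, given $U, V \in \LL_c(X,Y)$ with $U \prec V$ and $c > c_0$, and given boundary components $\alpha$ of $U$ and $\beta$ of $V$ that overlap, I would argue as follows. Since $\alpha$ and $\beta$ overlap, $i(\alpha,\beta) \ne 0$, so the notion of one lying above or below the other in $Q(X,Y)$ is defined. Because $U$ and $\alpha$ and $V$ and $\beta$ all have projection coefficients exceeding $c > c_0$ (for $U$, $V$ directly; for their boundary curves note $d_\alpha(X,Y)$ is large whenever some $W$ with $\alpha \subset \partial W$ has $d_W$ large, so in particular $\alpha, \beta$ themselves are short in $Q(X,Y)$ by Theorem \ref{length and projections}), the Margulis tubes $\T_\alpha$ and $\T_\beta$ are both present in $Q(X,Y)$, are disjoint, and in the product structure furnished by Otal are isotopic to level surfaces $S \times \{t_\alpha\}$ and $S \times \{t_\beta\}$. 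The combinatorial inequalities (1)--(4) of Lemma \ref{partialorder}, fed through the dictionary of the Bilipschitz Model Theorem, assert that the model-manifold height of the block associated to $U$ is less than that of the block associated to $V$, where "less" is oriented so that increasing height corresponds to exiting the upward-pointing end (here I would pin down the orientation convention by tracking how $d_W(X,Y)$, with $X$ on top and $Y$ on bottom, records the position of $W$ between the bottom invariant $Y$ and the top invariant $X$ — "traveling from $Y$ to $X$ we pass $U$ before $V$", as the remark after Lemma \ref{partialorder} says). Transporting this inequality across the bilipschitz model map gives $t_\alpha < t_\beta$, i.e. $\alpha^*$ can be homotoped to $+\infty$ in the complement of $\beta^*$, which is precisely the statement that $\alpha$ lies below $\beta$ in $Q(X,Y)$.

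The main obstacle, and the step deserving the most care, is the passage from the purely combinatorial "betweenness" of Lemma \ref{partialorder} to the genuine topological statement about heights of level surfaces in the product structure on $Q(X,Y)$ — that is, making precise the cited claim that "Lemmas 2.2 and 4.9 and the Bilipschitz Model Theorem from \cite{ELC2} combine to show" that $\prec$ computes the topological order. One must check that the model map can be taken to respect the product structures on both sides up to isotopy, that the Margulis tubes $\T_\alpha, \T_\beta$ correspond to the model tubes of $\alpha, \beta$ (not some other short curves), and that the coarse ordering of model blocks/tubes by the hierarchy is fine enough to resolve the order of these two specific tubes — this is where the overlap hypothesis $i(\alpha,\beta)\ne 0$ is essential, since it guarantees the tubes are genuinely linked in the "height" sense and cannot be isotoped past one another, so the coarse combinatorial data suffices to pin down which is higher. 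Everything else is bookkeeping with the constants already assembled in step one.
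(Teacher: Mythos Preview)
Your proposal is correct and matches the paper's approach: the paper does not give a detailed proof but simply states that Lemmas~2.2 and~4.9 together with the Bilipschitz Model Theorem of \cite{ELC2} combine to show that the combinatorial order $\prec$ determines the topological ordering of the boundary curves, and your outline is a faithful expansion of exactly that sketch. One small slip: the parenthetical ``$d_\alpha(X,Y)$ is large whenever some $W$ with $\alpha\subset\partial W$ has $d_W$ large'' is not literally true; what you need (and what you in fact use) is that $\mm_\alpha(X,Y)\ge d_U(X,Y)>c$, which via Theorem~\ref{length and projections} gives $l_\alpha(Q(X,Y))$ small.
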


It is a simple observation that a curve $\alpha$ which is short in  the top conformal boundary
lies above any curve $\beta$ which is short in the manifold, if
$i(\alpha,\beta)> 0$. 

\begin{lemma}{boundary order}
If $l_{\alpha}(X)<\ep_0$ and $l_\beta(Q(X,Y))<\ep_0$ and $\alpha$ and
$\beta$ overlap
then $\alpha$ lies above $\beta$ in $Q(X,Y)$. Similarly, if $l_{\beta}(Y)<\ep_0$
and $l_\alpha(Q(X,Y))<\ep_0$ and $\alpha$ and $\beta$ intersect, 
then $\alpha$ lies above $\beta$ in $Q(X,Y)$

\end{lemma}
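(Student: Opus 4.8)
We prove the first statement; the second follows at once by reversing the orientation of $N_\rho$, which interchanges the upward‑ and downward‑pointing ends and the two boundary structures, while interchanging the relations ``lies above'' and ``lies below.''

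\emph{First reduction: $\alpha^*$ and $\beta^*$ are disjoint.} By Bers' Lemma~\ref{berslemma}, $l_\rho(\alpha)\le 2\,l_X(\alpha)<2\ep_0<\mu$ (we assume throughout, as we may, that the universal constant $\ep_0$ was chosen $\le\mu/2$ and small enough for the estimates below), and $l_\rho(\beta)<\ep_0<\mu$ by hypothesis. Since $i(\alpha,\beta)>0$, the geodesics $\alpha^*$ and $\beta^*$ are not freely homotopic, so they lie in distinct components $T_\alpha$ and $T_\beta$ of the $\mu$-thin part $N_{{\rm thin}(\mu)}$, and each of these is a Margulis tube (not a cusp) about the corresponding geodesic. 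As distinct components of the thin part are disjoint, $\alpha^*\cap\beta^*=\emptyset$, which is the first of the two conditions in the definition of ``$\alpha$ lies above $\beta$.''

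\emph{Second step: a proper homotopy of $\alpha^*$ to $+\infty$ missing $\beta^*$.} Let $\CC$ be the convex core of $N_\rho$. Since $Q(X,Y)$ is quasifuchsian, $\partial\CC$ has a single upward‑pointing component $F$, facing the conformal structure $X$; the region $E$ of $N_\rho$ on the far side of $F$ (a neighborhood of the upward‑pointing end) is homeomorphic to $F\times[0,\infty)$ with $F=F\times\{0\}$, it is disjoint from $\CC$, and every closed geodesic of $N_\rho$ — in particular $\beta^*$ — lies in $\CC$. The nearest‑point retraction $X\to F$ is Lipschitz with a universal constant $L$, so $\alpha$ has length $<L\ep_0$ in the intrinsic hyperbolic metric of $F$; for $\ep_0$ small this produces an embedded collar $C_\alpha\subset F$ about $\alpha$, every point of which has injectivity radius $<\mu$ in $N_\rho$ and hence lies in $T_\alpha$. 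Thus $T_\alpha$ meets $F$; being open, and $F$ separating $E$ from the interior of $\CC$, it also meets $E$. Consequently $T_\alpha\cup E$ is connected, contains $\alpha^*$, and is disjoint from $\beta^*$ (which lies in $T_\beta\subset N_{{\rm thin}(\mu)}\ssm T_\alpha$ and in $\CC$, hence avoids both $T_\alpha$ and $E$). Now homotope $\alpha^*$ inside the solid torus $T_\alpha$ to the core of $C_\alpha$ — legitimate since both curves generate $\pi_1(T_\alpha)\cong\Z$ — and then push this curve out to $+\infty$ through the product $E\cong F\times[0,\infty)$. The resulting proper map $S^1\times[0,\infty)\to N_\rho$ has image in $T_\alpha\cup E$, hence misses $\beta^*$ and exits the upward‑pointing end, so $\alpha$ lies above $\beta$.

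The only nontrivial point is the claim that $T_\alpha$ protrudes past the convex‑core boundary $F$ into the end $E$: this is exactly what the hypothesis that $\alpha$ is short on the top conformal boundary provides, via Lipschitz control of the nearest‑point retraction together with the choice of $\ep_0$ small; everything else is soft topology of the product region $N_\rho\cong S\times\R$. (Alternatively one could invoke Otal's unknotting of short curves to place all the relevant tubes at well‑defined heights and then argue that a curve short on $X$ sits above any curve it crosses, but that route requires essentially the same geometric input.)
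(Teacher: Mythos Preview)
Your argument is correct and is essentially the same as the paper's: both use the length bound on $\alpha$ in the top convex-core boundary $F$ (the paper cites Epstein--Marden--Markovic for the constant $2$, while you invoke the Lipschitz property of the nearest-point retraction, which is the same result), then homotope $\alpha^*$ to $F$ within its Margulis tube and push out to $+\infty$ through the product end, observing that $\beta^*$ stays in its own tube inside the convex core. One minor overclaim: you assert that \emph{every point} of the collar $C_\alpha\subset F$ lies in $T_\alpha$, but you only need (and only justify) this for the core curve of $C_\alpha$; that suffices for the homotopy, so the proof stands.
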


\begin{proof}{}
We give the proof in the case that $l_\alpha(X)<\ep_0$. 
A result of Epstein-Marden-Markovic \cite[Theorem 3.1]{EMM}
implies that $\alpha$ has length at most $2\ep_0$ in the top boundary component of the convex core of $Q(X,Y)$.
Therefore, one may isotope the geodesic representative of $\alpha$ onto
the top boundary component of the convex core entirely within the Margulis tube of $\alpha$.
One may then isotope it to $+\infty$ in the complement of the
convex core. The geodesic representative $\beta^*$ of $\beta$ is contained in
the convex core, and since it has length less than $\ep_0$ it is
contained in its own Margulis tube which is disjoint from that of $\alpha$.
It follows that the homotopy does not intersect $\beta^*$.
\end{proof}

The lemma below will be used in the proof of Theorem \ref{noselfbump}
in the surface group case to control the impact of changing the top conformal
structure on the ordering of the short curves and on related features. It is really just a
repackaging of the preceding sequence of lemmas. It says that, if $\alpha$ is
known to be short in $Q(X,Y)$, and $Z$ is ``between'' $\alpha$ and the top
conformal structure $X$ in the combinatorial sense discussed above,
then indeed $\boundary Z$ is also short in $Q(X,Y)$, and each of its
components that overlap $\alpha$ are topologically ordered above it. 

\begin{lemma}{alpha below Z}
There exists $d_0$ and $\delta_0\in (0,\epsilon_0)$  such that,
if $l_\alpha(Q(X,Y))<\delta_0$, $\alpha\in\mathcal{C}(S)$ overlaps $Z$
and $d_Z(X,\alpha) > d_0$, then  $l_{\partial Z}(Q(X,Y))<\delta_0$ and
each component of $\partial Z$ which overlaps $\alpha$ lies above $\alpha$. 
\end{lemma}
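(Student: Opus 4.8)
The plan is to unwind the hypothesis $l_\alpha(Q(X,Y))<\delta_0$ through the Length Bound Theorem~\ref{length and projections}, split into three cases according to which term of $\mm_\alpha(X,Y)$ is large, discard the one case that cannot occur, and in the two surviving cases extract a large lower bound on $d_Z(X,Y)$ which is then fed into the order Lemmas~\ref{partialorder}, \ref{porderandtoporder} and \ref{boundary order}. The constants are chosen in the following order: first $M'$ large in terms of $m_1$, $c_0$, $\ep_0$; then $\delta_0\le\min(\ep_0,\ep')$, where $\ep'$ is the constant furnished by the converse half of Theorem~\ref{length and projections} for the chosen $M'$; then $d_0$ large in terms of $\delta_0$, $m_1$ and two universal constants that appear below.

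First I record the trichotomy: since $l_\alpha(Q(X,Y))<\delta_0\le\ep'$, Theorem~\ref{length and projections} gives $\mm_\alpha(X,Y)>M'$, so at least one of \emph{(a)} $l_\alpha(X)<1/M'$; \emph{(b)} $l_\alpha(Y)<1/M'$; \emph{(c)} there is an essential subsurface $W$ with $\alpha\subset\partial W$ and $d_W(X,Y)>M'$, must hold. Case (a) I rule out. Since $\alpha$ overlaps $Z$ it crosses $Z$; and once $1/M'$ is below a fixed threshold, the curve realizing $\pi_Z(X)$ cannot cross $\alpha$ (otherwise it would traverse the wide embedded collar of $\alpha$, hence be longer than $\alpha$, which itself already crosses $Z$). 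Being disjoint from $\alpha$, it projects to within a universal distance of $\pi_Z(\alpha)$, so $d_Z(X,\alpha)$ is at most a universal constant --- contradicting $d_Z(X,\alpha)>d_0$ once $d_0$ exceeds that constant. The very same collar estimate handles case (b): there it gives $d_Z(Y,\alpha)$ bounded, so $d_Z(X,Y)\ge d_Z(X,\alpha)-d_Z(\alpha,Y)$ is at least $d_0$ minus a universal constant.

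Case (c) is where the order machinery enters. Since $\alpha\subset\partial W$ overlaps $Z$, the subsurfaces $W$ and $Z$ overlap, and $d_Z(X,\partial W)$ differs from $d_Z(X,\alpha)$ by at most a universal constant, so in particular $d_Z(X,\partial W)>m_1$. The ``moreover'' clause of Lemma~\ref{partialorder}, applied with $U=W$, $V=Z$ and $c=M'$, then yields $W\prec Z$ in $\LL_{M'-m_1}(X,Y)$ (and incidentally $d_Z(X,Y)>M'-m_1$). I do \emph{not} use $d_Z(X,Y)>M'-m_1$ to control $\partial Z$; instead, clause (3) of Lemma~\ref{partialorder} applied to the ordered pair $W\prec Z$ gives $d_Z(Y,\partial W)\le m_1$, hence $d_Z(Y,\alpha)\le m_1$ plus a universal constant, hence $d_Z(X,Y)\ge d_Z(X,\alpha)-d_Z(\alpha,Y)$ is again at least $d_0$ minus a universal constant. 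So in both surviving cases $d_Z(X,Y)>M(\delta_0)$ once $d_0$ is large enough, where $M(\cdot)$ is the constant from the direct half of Theorem~\ref{length and projections}; and since every component $\beta$ of $\partial Z$ satisfies $\mm_\beta(X,Y)\ge d_Z(X,Y)$, Theorem~\ref{length and projections} gives $l_\beta(Q(X,Y))<\delta_0$. Finally, for a component $\beta$ of $\partial Z$ overlapping $\alpha$: in case (b), $l_\alpha(Y)<1/M'<\ep_0$ and $l_\beta(Q(X,Y))<\delta_0<\ep_0$ and $\alpha$, $\beta$ intersect, so Lemma~\ref{boundary order} puts $\beta$ above $\alpha$; in case (c), the inequality $M'-m_1>c_0$ lets Lemma~\ref{porderandtoporder} act on $W\prec Z$, and since $\alpha\subset\partial W$ overlaps $\beta\subset\partial Z$ it places $\alpha$ below $\beta$, i.e.\ $\beta$ above $\alpha$.

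The main obstacle is not any one estimate but the discipline of the constants, and in particular the realization that the bound $d_Z(X,Y)>M'-m_1$ handed over directly by Lemma~\ref{partialorder} is \emph{too weak} to force $l_{\partial Z}(Q(X,Y))<\delta_0$: the threshold $M'$ is itself controlled by $\delta_0$ --- indeed one is forced to have $M(\delta_0)\ge M'$ --- so the partial order alone makes $\partial Z$ only ``somewhat short''. What rescues the statement is that $d_0$ may be chosen arbitrarily large \emph{after} $\delta_0$ is fixed, and the betweenness estimate $d_Z(Y,\partial W)\le m_1$ of clause (3) is precisely the mechanism converting the large lower bound $d_0$ on $d_Z(X,\alpha)$ into an equally large lower bound on $d_Z(X,Y)$; the same latitude in $d_0$ is what disposes of case (a).
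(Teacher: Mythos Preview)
Your proof is correct and follows essentially the same approach as the paper's: use Theorem~\ref{length and projections} to force $\mm_\alpha(X,Y)$ large, split into cases according to which term is large, rule out the case $l_\alpha(X)$ small by bounding $d_Z(X,\alpha)$, and in the remaining cases convert the hypothesis $d_Z(X,\alpha)>d_0$ into a large lower bound on $d_Z(X,Y)$ (in the $W$-case via clauses (3) and (6) of Lemma~\ref{partialorder}), then invoke Theorem~\ref{length and projections}, Lemma~\ref{boundary order}, and Lemma~\ref{porderandtoporder} for the two conclusions. Your explicit discussion of the constant dependencies, in particular the point that $d_0$ must be chosen after $\delta_0$ so that the bound $d_Z(X,Y)>d_0-O(1)$ beats the threshold $M(\delta_0)$, is a helpful clarification of something the paper leaves implicit.
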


\begin{proof}{}
Applying Theorem \ref{length and projections} we may choose $\delta_0\in (0,\ep_0)$
so that
$$l_\alpha(Q(X,Y))<\delta_0 \implies
\mm_\alpha(X,Y) > \max\{c_0,2m_1\}.$$
Applying the other direction of Theorem \ref{length and projections},
we  choose $d_0>c_0+m_1+2$ so that  if $W\subset S$, then 
$$d_W(X,Y)>d_0-m_1-2 \implies
l_{\partial W}(Q(X,Y))<\delta_0.$$

We  note that 
if $l_\alpha(X)<\delta_0$, then $d_Z(X,\alpha)\le 2<d_0$,
so we may assume that $l_\alpha(X)\ge \delta_0$.

If $l_\alpha(Y)<\delta_0$, then Lemma \ref{boundary order} implies that
each component of $\partial Z$ which overlaps $\alpha$ lies above $\alpha$. 
Moreover, $d_Z(Y,\alpha)\le 2$, so 
$$d_Z(X,Y)\ge d_Z(X,\alpha)-d_Z(\alpha,Y)>d_0-2$$
so $l_{\partial Z}(Q(X,Y))<\delta_0$. This completes the proof in this case.

Hence we can now assume $l_\alpha(Y)\ge\delta_0$. Now $\mm_\alpha(X,Y)
> 2m_1$ implies
that there exists an essential subsurface $W\subset S$
with $\alpha\subset \partial W$ such that $d_W(X,Y)> 2m_1$. 
Since $d_Z(X,\partial W)>d_0-1>m_1$ and $d_W(X,Y)>2m_1$, 
Lemma \ref{partialorder}(6) implies that $W\prec Z$ in $\LL_{c-m_1}(X,Y).$
Lemma \ref{partialorder}(3) implies that $d_Z(Y,\partial W)\le m_1$.
Therefore, 
$$d_Z(X,Y)\ge d_Z(X,\partial W)-d_Z(\partial W,Y)>d_0-1-m_1,$$
so $l_{\partial Z}(Q(X,Y))<\delta_0$.
Lemma \ref{porderandtoporder} then implies that each component of $\partial Z$
which overlaps $\alpha$ lies above $\alpha$. 
\end{proof}

\subsubsection*{Predicting geometrically infinite ends in an algebraic limit}

Geometrically infinite surfaces in the algebraic limit can be detected
by looking at the limiting behavior of the ending invariants. Recall that
Masur and Minsky \cite{masur-minsky} proved that if $W$ is an essential subsurface of $S$, then $\mathcal{C}(W)$
is Gromov hyperbolic  and Klarreich \cite{klarreich} (see also
Hamenstadt \cite{hamenstadt}) proved that if
$W$ is not an annulus or pair of pants, then its Gromov boundary
$\partial_\infty\mathcal{C}(W)$ is identified with $\EL(W)$.

\begin{theorem}{endlams}{\rm (\cite{pull-out})}{}
Let $\{\rho_n\}$ be a sequence in $AH(S)$ converging to $\rho$
such that the top ending invariant of $\rho_n$ is $X_n\in\mathcal{T}(S)$. If
$W$ is an essential subsurface of $S$, the following statements are
equivalent:
\begin{enumerate}
\item
$N_\rho^0$ has an upward-pointing end bounded by $W$ with ending lamination
$\lambda\in \EL(W)$. 
\item
$\{\pi_W(X_n)\}$ converges to $\lambda$.
\end{enumerate}
Moreover, if $\{\rho_n=Q(X_n,Y_n)\}$, then $\pi_W(Y_n)$ does not accumulate
at $\lambda$  if $\{\pi_W(X_n)\}$ converges to $\lambda$.

Similarly, we obtain an equivalence if upward is replaced by downward
and the roles of $X_n$ and $Y_n$ are interchanged. 
\end{theorem}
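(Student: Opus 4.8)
\medskip
\noindent\textbf{Proof strategy.}
The plan is to prove $(1)\Leftrightarrow(2)$ by using the hypothesis $\rho_n\to\rho$ to translate back and forth between the internal geometry of $N_{\rho_n}$ on the ``top side'' of its compact core and that of the algebraic limit $N_\rho$, and then to read off ending laminations combinatorially from Klarreich's identification of $\partial_\infty\CC(W)$ with $\EL(W)$ (\cite{klarreich}). For the set-up I would pass to a subsequence along which $\rho_n(\pi_1(S))$ converges geometrically to a group $\Gamma_G$, set $N_G=\mathbb H^3/\Gamma_G$, and use the covering $p\colon N_\rho\to N_G$ together with the orientation-preserving $(1+\epsilon_n)$-bilipschitz embeddings of larger and larger submanifolds of $N_G$ into $N_{\rho_n}$. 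The conceptual core of the statement is that, because the convergence is \emph{algebraic}, any product region over $W$ that grows without bound in the $N_{\rho_n}$ has the part near the non-escaping core of $N_\rho$ trapped there, so it survives as a genuine end of $N_\rho$ rather than disappearing into the larger group $\Gamma_G$ or escaping to infinity; conversely a genuine degenerate end of $N_\rho$ forces the corresponding growth in the $N_{\rho_n}$.

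First I would prove $(1)\Rightarrow(2)$. Suppose $N_\rho^0$ has an upward-pointing end $\mathcal E$ bounded by $W$ with ending lamination $\lambda$. By the theory of degenerate ends (Thurston \cite{thurston-notes}, Bonahon \cite{bonahon-preprint}, Canary \cite{canary-ends}) there are simple closed curves $\gamma_j$ in $W$ of uniformly bounded length, realized on pleated surfaces exiting $\mathcal E$, with $\gamma_j\to\lambda$ in $\overline{\CC(W)}$ and with depth in $\mathcal E$ tending to $\infty$. For each $j$ the geodesic representative of $\gamma_j$ lies in a fixed compact part of $N_\rho^0$, so it embeds into $N_{\rho_n}$ once $n$ is large; transporting a neighborhood of $\mathcal E$ this way, its image in $N_{\rho_n}$ is a product region over $W$ lying above the core and abutting the top conformal boundary $X_n$, with no room for unrelated structure over $W$ above it --- this last point uses that $\mathcal E$ is a genuine end, that $N_{\rho_n}\to N_G$ geometrically, and the a priori length and projection bounds of Theorem~\ref{length and projections} together with the Bilipschitz Model Theorem of \cite{ELC2}. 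Thus the ``combinatorial top'' $c_n$ of this embedded product region is a curve exiting $\mathcal E$ whose depth tends to $\infty$, so $c_n\to\lambda$ in $\overline{\CC(W)}$, while $d_W(X_n,c_n)$ stays uniformly bounded; since $\CC(W)$ is Gromov hyperbolic this forces $\pi_W(X_n)\to\lambda$.

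Next I would prove $(2)\Rightarrow(1)$, running the information in reverse. Assume $\pi_W(X_n)\to\lambda\in\EL(W)$, and suppose for now that $d_W(X_n,Y_n)\to\infty$. Then Theorem~\ref{length and projections} makes every component of $\partial W$, and every curve of $W$ lying combinatorially between $X_n$ and $\lambda$, arbitrarily short in $N_{\rho_n}$, while the Bilipschitz Model Theorem of \cite{ELC2} produces a product region over $W$ in $N_{\rho_n}$ of height tending to $\infty$, lying above the core, abutting the top, with combinatorial top $\pi_W(X_n)$. Because $\rho_n\to\rho$ algebraically, the portion of these product regions within bounded distance of the non-escaping core of $N_\rho$ converges, via $p$ and the bilipschitz embeddings, to an upward-pointing geometrically infinite end $\mathcal E$ of $N_\rho^0$ bounded by $W$: the curves of $\partial W$ are parabolic in $\rho$ (as $l_{\rho_n}(\partial W)\to 0$ forces $l_\rho(\partial W)=0$, and parabolicity of $\partial W$ forces $W$ to be a single component of $\partial M_\rho-P_\rho$), and the ending lamination of $\mathcal E$ is the limit in $\overline{\CC(W)}$ of the curves realized deep in the product region, which track its combinatorial top $\pi_W(X_n)$ and hence converge to $\lambda$; so by Klarreich's theorem the ending lamination is $\lambda$. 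To discharge the auxiliary assumption I would then prove the ``Moreover'' by contradiction: if $\pi_W(Y_{n_k})\to\lambda$ along a subsequence, then either $d_W(X_{n_k},Y_{n_k})$ stays bounded along a further subsequence --- forcing a mapping-class twisting incompatible with algebraic convergence (the classical obstruction of \cite{kerckhoff-thurston}) and contradicting $\rho_n\to\rho$ --- or $d_W(X_{n_k},Y_{n_k})\to\infty$, in which case applying the above both upward and downward to $\{\rho_{n_k}\}$ gives $N_\rho^0$ an upward \emph{and} a downward end bounded by $W$ with the same ending lamination $\lambda$, impossible for a Kleinian surface group (no subsurface can carry degenerate ends with equal laminations on both sides, by the Ending Lamination Theorem together with the nonexistence of a doubly degenerate structure whose two ending laminations coincide). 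The same dichotomy shows $d_W(X_n,Y_n)\to\infty$ whenever $\pi_W(X_n)\to\lambda\in\EL(W)$, discharging the assumption.

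The hard part will be the geometric-limit step inside $(2)\Rightarrow(1)$: upgrading the purely combinatorial input $\pi_W(X_n)\to\lambda$ to a degenerate end of the \emph{algebraic} limit $N_\rho$ --- and not merely of $N_G$ --- requires pinning down the location of the growing product regions relative to the non-escaping core and ruling out that the new geometry is swallowed by $\Gamma_G\setminus\rho(\pi_1(S))$. This is exactly where one needs the full strength of the Bilipschitz Model Theorem together with a careful comparison of the algebraic and geometric limits, and is why the complete argument is carried out in \cite{pull-out}.
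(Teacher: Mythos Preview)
The paper does not contain a proof of this theorem. It is stated with the attribution \cite{pull-out} (``Convergence properties of ending invariants,'' in preparation), and the introduction explicitly says that Lemma~\ref{U is nbhd} ``relies on Theorem~\ref{endlams}, whose proof will appear in \cite{pull-out}.'' So there is no proof in this paper to compare your proposal against.

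That said, your outline is broadly in the spirit of what one expects such an argument to look like, and you correctly identify the delicate point: in the direction $(2)\Rightarrow(1)$ one must show the growing product region over $W$ produces a degenerate end of the \emph{algebraic} limit $N_\rho$ rather than merely of the geometric limit $N_G$. Your sketch of this step is not yet a proof --- phrases like ``the portion of these product regions within bounded distance of the non-escaping core of $N_\rho$ converges \ldots\ to an upward-pointing geometrically infinite end'' hide real work, since the geometric limit can be strictly larger than the algebraic one and the product regions could in principle drift away from the image of any fixed compact core of $N_\rho$. You acknowledge this yourself in the final paragraph. Also, in your ``Moreover'' argument, the first horn of the dichotomy (bounded $d_W(X_{n_k},Y_{n_k})$ with both projections tending to $\lambda$) does not immediately reduce to the Kerckhoff--Thurston obstruction without further argument; you would need to explain more carefully why this configuration is incompatible with convergence in $AH(S)$.
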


A key tool in the proof of Theorem \ref{endlams} is the fact  that for
any non-annular subsurface $W$  the set of bounded length
curves in $Q(X,Y)$ project to a set of curves in $\mathcal{C}(W)$ which
are a bounded Hausdorff distance from  any geodesic in $\mathcal{C}(W)$ 
joining $\pi_W(X)$ to $\pi_W(Y)$. This result will be itself used in the proof of
Lemma \ref{U is nbhd QF}. We state the results in the special case
of quasifuchsian groups.

\begin{theorem}{bounded curves near  geodesic}{\rm (\cite{pull-out})}{}
Given $S$, there exists $L_0>0$ such that for all $L\ge L_0$, there
exists $D_0$, such that, if $X,Y\in\TT(S)$,
$\rho = Q(X,Y)$, $W\subset S$ is an essential subsurface and
$$
C(\rho,L) = \{\alpha\in\CC(S):l_\alpha(\rho) < L\},
$$ 
then $\pi_W(C(\rho,L)\cap \CC(S,W))$
has Hausdorff distance  at most $D_0$ from any geodesic in $\CC(W)$ joining
$\pi_W(X)$ to $\pi_W(Y)$.
Moreover if $d_W(X,Y) > D_0$ then 
$$C(W,\rho,L)= \{\alpha\in\CC(W):l_\alpha(\rho) < L\}$$
is nonempty and also has Hausdorff distance  at most $D_0$ from any geodesic 
in $\CC(W)$ joining $\pi_W(X)$ to $\pi_W(Y)$.
\end{theorem}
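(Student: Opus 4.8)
We want to show that short curves in $Q(X,Y)$ project near the curve-complex geodesic from $\pi_W(X)$ to $\pi_W(Y)$, and that when $d_W(X,Y)$ is large there are in fact short curves *inside* $W$ with the same property.

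The plan is to split this into the non-annular and annular cases, since the subsurface projections and the available combinatorial machinery behave differently. For non-annular $W$, I would build a hierarchy of tight geodesics (in the sense of Masur–Minsky \cite{masur-minsky2}) connecting a short marking on $X$ to a short marking on $Y$. The key point, already invoked in the excerpt just before the statement, is that the resolution of such a hierarchy through $\CC(W)$ is a reparametrized quasigeodesic that fellow-travels any geodesic from $\pi_W(X)$ to $\pi_W(Y)$; this is exactly the ``large link / no backtracking'' behavior of hierarchies. So it suffices to show every $\alpha$ with $l_\alpha(\rho)<L$ that crosses $W$ has $\pi_W(\alpha)$ within bounded distance of some vertex appearing along the hierarchy. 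For this I would use the Length Bound Theorem (Theorem \ref{length and projections}) in the direction $l_\alpha(\rho)<L \implies \mm_\alpha(X,Y)>M'$: a short curve $\alpha$ either is short in $X$ or $Y$ (hence its projection to $W$ is within $2$ of $\pi_W(X)$ or $\pi_W(Y)$, modulo the usual caveat when $\alpha$ is disjoint from $W$), or there is a subsurface $Z$ with $\alpha\subset\bdry Z$ and $d_Z(X,Y)$ large; in the latter case $Z$ is a domain of the hierarchy, $\bdry Z$ appears as a vertex of the hierarchy path in $\CC(W)$ (again using no-backtracking and the large-link lemma), and $\pi_W(\alpha)$ is within $2$ of $\pi_W(\bdry Z)$ since $\alpha$ and a component of $\bdry Z$ are disjoint. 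Collecting the finitely many ``types'' of contribution gives a uniform $D_0$.

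For the ``moreover'' clause, assume $d_W(X,Y)>D_0$. I would again look at the hierarchy: since $d_W(X,Y)$ is large, $W$ itself is a domain of some geodesic in the hierarchy, and the corresponding component-curves of the intermediate markings along the restriction of the hierarchy to $W$ are curves genuinely contained in $W$. Using the Bilipschitz Model Theorem from \cite{ELC2} (equivalently the model-manifold estimates underlying Theorem \ref{length and projections}), the geometry of $Q(X,Y)$ over the block associated to $W$ is controlled, and curves carried by the hierarchy over $W$ have uniformly bounded length — this produces the nonempty set $C(W,\rho,L)$. Its projection is then literally a subset of the hierarchy path in $\CC(W)$, so it lies within $D_0$ of the $\pi_W(X)$–to–$\pi_W(Y)$ geodesic by the same fellow-traveling fact; one may need to increase $D_0$ once more to absorb the bounded-length constant from the model. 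Choosing $L_0$ large enough (larger than the Bers-type length bound produced by the model block for $W$) makes everything uniform in $L\ge L_0$.

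The annular case requires separate, and somewhat delicate, treatment because $\pi_W$ into an annular complex measures twisting, and ``short in $Q(X,Y)$'' for a curve crossing the core of $W$ is not directly governed by a hierarchy path in $\CC(W)$ in the same naive way; here I would use the relationship between $d_W(X,Y)$ for annular $W$, the length of the core of $W$ in $Q(X,Y)$, and the Dehn-twisting parameter, again via the Length Bound Theorem and the model manifold, together with the elementary twisting estimate sketched in the discussion of $\pi_W$ for annuli in Section \ref{ending invariants and geometry}. I expect the main obstacle to be the bookkeeping in the non-annular case: making precise and uniform the statement that *every* bounded-length curve's projection is accounted for by *some* vertex of the hierarchy path in $\CC(W)$, i.e. controlling the interaction between the ``large-link'' combinatorics of \cite{masur-minsky2} and the geometric Length Bound Theorem, so that no short curve can project far from the path. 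This is precisely the kind of estimate whose careful proof the authors defer to \cite{pull-out}, so in this paper I would state it, indicate the two inputs (hierarchies fellow-travel geodesics; Length Bound Theorem controls which subsurfaces can host short curves), and refer forward for the full argument.
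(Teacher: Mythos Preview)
The paper does not prove this theorem at all: it is stated with attribution to the forthcoming paper \cite{pull-out}, and no argument is given. You correctly recognize this in your final paragraph, and your recommendation --- state the result, name the two main inputs (hierarchy paths fellow-travel $\CC(W)$-geodesics; the Length Bound Theorem constrains which subsurfaces can carry short curves), and defer to \cite{pull-out} --- is exactly what the paper does, except that the paper does not even include the informal sketch.

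Since there is no proof in the paper to compare against, let me just flag one soft spot in your sketch for when you do try to write this out. In the step ``there is $Z$ with $\alpha\subset\boundary Z$ and $d_Z(X,Y)$ large, hence $\boundary Z$ appears along the hierarchy path in $\CC(W)$'', you are implicitly assuming $Z$ overlaps $W$. But $\alpha$ can cross $W$ while the particular $Z$ witnessing $\mm_\alpha(X,Y)$ is disjoint from $W$, nested in $W$, or contains $W$; in those cases $\pi_W(\boundary Z)$ is either undefined or does not obviously sit on the $\CC(W)$-geodesic, and you need a separate argument (typically via the consistency/time-order inequalities for hierarchies, or by passing to a different witness $Z'$) to locate $\pi_W(\alpha)$. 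This is exactly the ``bookkeeping'' you anticipate being the main obstacle, and it is nontrivial enough that the authors chose to postpone it.
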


\subsection{Geometric limits}
\label{geometric limits}

A sequence $\{\Gamma_n\}$ of torsion-free 
Kleinian groups {\em converges geometrically} to
a torsion-free Kleinian group $\Gamma$ if $\Gamma$ is the set 
of all accumulation points
of sequences of elements $\{\gamma_n\in\Gamma_n\}$ and every $\gamma\in\Gamma$ is a
limit of a sequence of elements $\{\gamma_n\in\Gamma_n\}$; or in other
words if $\{\Gamma_n\}$ converges
to $\Gamma$ in the sense of Gromov-Hausdorff as subsets of ${\rm Isom}_+({\bf H}^3)$.
One may equivalently express this in terms of Gromov convergence of
the quotient hyperbolic 3-manifolds (see \cite{BP} for example).
If $N_n={\bf H}^3/\Gamma_n$ and $N={\bf H}^3/\Gamma$ and $x_n$ and $x_0$
denote the projections of the origin in ${\bf H}^3$, then $\{\Gamma_n\}$
converges geometrically to $\Gamma$ if and only if there exists a nested sequence
of compact submanifolds $\{X_n\}$ of $N$ which exhaust $N$  and $K_n$-bilipschitz diffeomorphisms
$f_n:X_n\to Y_n$ onto submanifolds of $N_n$ such that $f_n(x)=x_n$, $\lim K_n=1$ and 
$f_n$ converges uniformly on compact subsets of $N$ to an isometry
(in the $C^\infty$-topology).

Lemma 3.6 and Proposition 3.8 of Jorgensen-Marden \cite{JM} guarantee that if
$\{\rho_n\}$ is a sequence in $AH(M)$ converging to $\rho$, then
there is a subsequence of $\{\rho_n(\pi_1(M))\}$ which converges
geometrically to a torsion-free Kleinian group $\hat\Gamma$ such
that $\rho(\pi_1(M))\subset\hat\Gamma$. 

We say that a sequence $\{\rho_n\}$ in $AH(M)$ converges {\em strongly}
to $\rho\in AH(M)$ if it converges in $AH(M)$ and $\{\rho_n(\pi_1(M))\}$
converges geometrically to $\rho(\pi_1(M))$.
One may combine work of Anderson and Canary with the recent
resolution of Marden's Tameness Conjecture to show that  in the absence of
unnecessary parabolics, algebraic convergence implies strong convergence
(see also Theorem 1.2 of Brock-Souto \cite{brock-souto}).

\begin{theorem}{MPstrong} Let $M$ be a compact 3-manifold and let
$\{\rho_n\}$ be a sequence in $AH(M)$ converging to $\rho$ in $AH(M)$.
If every parabolic element of $\rho(\pi_1(M))$ lies in a rank two free abelian
subgroup, then $\{\rho_n\}$ converges strongly to $\rho$.
\end{theorem}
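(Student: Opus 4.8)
*Let $M$ be a compact 3-manifold and let $\{\rho_n\}$ be a sequence in $AH(M)$ converging to $\rho$ in $AH(M)$. If every parabolic element of $\rho(\pi_1(M))$ lies in a rank two free abelian subgroup, then $\{\rho_n\}$ converges strongly to $\rho$.*

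The plan is to reduce the statement to a claim about geometric limits along subsequences and then feed it into the core-embedding results of \cite{ACCS} together with the Tameness Theorem. Since convergence of subgroups in the Chabauty topology is metrizable, it suffices to show that whenever a subsequence of $\{\rho_n(\pi_1(M))\}$ converges geometrically to a Kleinian group $\hat\Gamma$, one has $\hat\Gamma=\Gamma:=\rho(\pi_1(M))$; granting this, every subsequence of $\{\rho_n\}$ has a further subsequence converging strongly to $\rho$, so $\{\rho_n\}$ does. By Jorgensen-Marden \cite{JM} such geometrically convergent subsequences exist and automatically satisfy $\Gamma\subseteq\hat\Gamma$, so fix one, set $N_\rho={\bf H}^3/\Gamma$ and $\hat N={\bf H}^3/\hat\Gamma$, and let $p\colon N_\rho\to\hat N$ be the associated covering. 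The goal is to show $p$ is a homeomorphism, i.e.\ $\Gamma=\hat\Gamma$.

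The parabolic hypothesis enters first, to control the cusps. Every parabolic subgroup of $\Gamma$ is rank-two abelian and hence also maximal among the parabolic subgroups of $\hat\Gamma$, so cusps of $\Gamma$ map under $p$ to rank-two cusps of $\hat\Gamma$; and by the standard analysis of how parabolics arise in a geometric limit \cite{JM}, any further parabolic of $\hat\Gamma$ would be a limit of elements $\rho_n(g_n)$ of translation length tending to zero, and would appear in the algebraic limit as a rank-one parabolic of $\Gamma$ --- which is excluded. So $\hat\Gamma$ has no new parabolics, $p$ is a homeomorphism over the cuspidal parts, and the question reduces to the non-cuspidal parts $N_\rho^0\to\hat N^0$. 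This minimal parabolicity is precisely the hypothesis under which the core-embedding results of \cite{ACCS} apply.

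Next I would bring in topology. By the Tameness Theorem of Agol and Calegari-Gabai \cite{agol,calegari-gabai}, $N_\rho$ is topologically tame, so $N_\rho^0$ has a relative compact core $M_\rho$ with finitely many product ends, each geometrically finite or simply degenerate. For a simply degenerate end $E$, the Covering Theorem of Thurston and Canary forces $p$ to be finite-to-one on a neighborhood of $E$ unless $\hat N$ has finite volume and admits a finite cover fibering over the circle, with $N_\rho$ covering $\hat N$ through the fiber subgroup; geometrically finite ends are covered finitely as well. Ruling out this exceptional fibered (``wrapping'') case is where the hypotheses, the parabolic control above, and the results of \cite{ACCS} do their real work, and I expect this to be the main obstacle. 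Granting it, $p$ is finite-to-one near every end, so by compactness of $M_\rho$ and the description of $p$ over cusps, $p$ is a finite-sheeted covering; in particular $\hat\Gamma$ is finitely generated and $\hat N$ is tame.

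Finally I would conclude that the covering is trivial. The core-embedding results of \cite{ACCS} provide a compact core $\hat C$ of $\hat N$ and a component $C$ of $p^{-1}(\hat C)$ such that $p|_C\colon C\to\hat C$ is a homeomorphism and $C$ is a compact core of $N_\rho$. Comparing fundamental groups through the inclusions, $\Gamma=\pi_1(C)\cong\pi_1(\hat C)=\hat\Gamma$, so $p$ has a single sheet over $\hat C$ and hence, $\hat N$ being connected, over all of $\hat N$; therefore $p$ is a homeomorphism and $\Gamma=\hat\Gamma$. Thus $\{\rho_n\}$ converges strongly to $\rho$. (Compare Theorem 1.2 of Brock-Souto \cite{brock-souto}.)
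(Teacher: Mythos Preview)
Your overall strategy---pass to a geometrically convergent subsequence with limit $\hat\Gamma\supseteq\Gamma$, then use tameness and the Covering Theorem to argue that the covering $N_\rho\to\hat N$ has degree one---is the right shape, and is essentially what the paper black-boxes by citing Theorem~3.1 of Anderson--Canary \cite{AC-cores2} together with Theorem~9.2 of \cite{canary-cover}, plus the Tameness Theorem. But two steps in your sketch do not go through as written.

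First, the parabolic paragraph is a genuine gap. You assert that any parabolic $\gamma\in\hat\Gamma\setminus\Gamma$ ``would appear in the algebraic limit as a rank-one parabolic of $\Gamma$,'' but $\gamma=\lim\rho_n(g_n)$ with $g_n\in\pi_1(M)$ allowed to vary with $n$, and nothing in what you wrote forces these to stabilize to a fixed $g$ with $\rho(g)$ parabolic. That $\hat\Gamma$ acquires no new parabolics is part of the \emph{conclusion} (strong convergence), not an input available at this stage. Second, you repeatedly invoke \cite{ACCS} for the core-embedding step, but the result there (the paper's Proposition~\ref{convexembed}) is stated for quasiconformally rigid limits---essentially the opposite extreme from the minimally parabolic hypothesis here. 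The relevant core result lives in \cite{AC-cores2}, and the way minimal parabolicity enters there is not via the parabolic bookkeeping you attempt; combined with the Covering Theorem (whose fibered exception you also leave as an unresolved ``main obstacle'') it yields directly that a topologically tame, minimally parabolic algebraic limit coincides with the geometric limit. The paper's own proof simply cites these two theorems together with Tameness and is done.
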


\begin{proof}{}
Theorem 3.1 of Anderson-Canary \cite{AC-cores2} and 
Theorem 9.2 of \cite{canary-cover} together
imply that if $\rho$ is topologically tame, then $\{\rho_n\}$ converges strongly to $\rho$.
The Tameness Theorem of Agol \cite{agol} and Calegari-Gabai \cite{calegari-gabai}
assures that $\rho$ is topologically tame, so our convergence is indeed strong.
\end{proof}

Proposition 3.2 of Anderson-Canary-Culler-Shalen \cite{ACCS} shows that whenever the
algebraic limit is a maximal cusp
(i.e. geometrically finite  and quasiconformally rigid), then
the convex core of the algebraic limit embeds in the geometric limit.
Remark 3.3 points out that the same argument applies
whenever the algebraic limit is topologically tame and its convex core has totally
geodesic boundary. In particular, the result holds when the limit is quasiconformally rigid.

\begin{proposition}{convexembed}{}{}
If $\rho$ is a quasiconformally rigid point in $\partial AH(M)$
and $\{\rho_n\}$ converges algebraically to $\rho$
and $\{\rho_n(\pi_1(M))\}$ converges geometrically to $\hat\Gamma$,
then the convex core of $N_\rho$ embeds in $\hat N=\Hyp^3/\hat\Gamma$
under the obvious covering map.
\end{proposition}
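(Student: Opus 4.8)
The plan is to reduce the statement to the core-embedding results of Anderson--Canary--Culler--Shalen \cite{ACCS}. Their Proposition~3.2 handles the case in which the algebraic limit is a maximal cusp, and Remark~3.3 notes that the same argument works whenever the algebraic limit is topologically tame and its convex core has totally geodesic boundary. So the only work is to check that a quasiconformally rigid $\rho\in\partial AH(M)$ meets these two hypotheses.

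Tameness is immediate: $\pi_1(M)$ is finitely generated, so by the Tameness Theorem of Agol \cite{agol} and Calegari--Gabai \cite{calegari-gabai} every point of $AH(M)$, and in particular $\rho$, is topologically tame. For the totally geodesic boundary I would argue as follows. The boundary $\partial C(N_\rho)$ of the convex core $C(N_\rho)$ of $N_\rho$ is a disjoint union of pleated surfaces, one facing each geometrically finite end and homeomorphic to the corresponding component of $\partial_c N_\rho$; the geometrically infinite ends lie outside $C(N_\rho)$ and contribute nothing. Since $\rho$ is quasiconformally rigid, every component of $\partial_c N_\rho$ is a thrice-punctured sphere, so every component of $\partial C(N_\rho)$ is a pleated thrice-punctured sphere. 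A thrice-punctured sphere has no essential simple closed curve and empty $\EL$, hence carries no nonempty measured geodesic lamination, so the bending lamination of each such pleated surface vanishes and the component is totally geodesic. (If $\partial_c N_\rho=\emptyset$ then $\partial C(N_\rho)=\emptyset$ and the condition holds vacuously; note $N_\rho$ may still have rank-one cusps, but these cause no trouble.) Hence $C(N_\rho)$ has totally geodesic boundary, and \cite[Proposition~3.2 and Remark~3.3]{ACCS} give the desired embedding $C(N_\rho)\hookrightarrow\hat N=\Hyp^3/\hat\Gamma$.

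For orientation I would also recall why the totally geodesic hypothesis is the crucial one, since this is where the only substantial difficulty lies. Lifting to $\Hyp^3$, the preimage of $C(N_\rho)$ is the convex hull $\operatorname{CH}(\Lambda_\rho)$ of the limit set of $\rho(\pi_1(M))$, and since $\rho(\pi_1(M))\subset\hat\Gamma$ the restriction of the covering $N_\rho\to\hat N$ to $C(N_\rho)$ is the natural map $\operatorname{CH}(\Lambda_\rho)/\rho(\pi_1(M))\to\Hyp^3/\hat\Gamma$. If it failed to be injective there would be $\hat\gamma\in\hat\Gamma\setminus\rho(\pi_1(M))$ with $\hat\gamma(\operatorname{CH}(\Lambda_\rho))$ meeting $\operatorname{CH}(\Lambda_\rho)$ in a set of nonempty interior; the totally geodesic faces of $\operatorname{CH}(\Lambda_\rho)$ provide a definite half-space-like collar of its boundary, and combining this with the minimality of $\Lambda_\rho$ among $\rho(\pi_1(M))$-invariant closed subsets of $\partial\Hyp^3$ and the fact that $\hat\gamma$ is a geometric limit of elements $\rho_n(g_n)$, one forces $\hat\gamma$ to preserve $\Lambda_\rho$, hence to normalize $\rho(\pi_1(M))$ inside the discrete group $\hat\Gamma$, and finally $\hat\gamma\in\rho(\pi_1(M))$ --- a contradiction. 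This last argument is precisely the content of \cite{ACCS}; in the present paper the task reduces to the routine verification, carried out above, that quasiconformal rigidity makes the convex core boundary totally geodesic.
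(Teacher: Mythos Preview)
Your approach is exactly the paper's: the proposition is not proved from scratch but reduced to \cite[Proposition~3.2 and Remark~3.3]{ACCS} by verifying tameness (via Agol and Calegari--Gabai) and that the convex core has totally geodesic boundary (because each conformal boundary component is a thrice-punctured sphere, which carries no measured lamination). Your supplementary sketch of the ACCS mechanism is reasonable but not part of the paper's treatment.

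One correction to your exposition: geometrically infinite ends lie \emph{inside} the convex core, not outside it. The convex core of $N_\rho$ is obtained from $N_\rho$ by removing neighborhoods of the geometrically finite ends only; a geometrically infinite end is entirely contained in $C(N_\rho)$ and therefore contributes no component to $\partial C(N_\rho)$. Your conclusion that such ends ``contribute nothing'' to $\partial C(N_\rho)$ is correct, but the stated reason is backwards. This does not affect the validity of the argument, since what matters is only that every component of $\partial C(N_\rho)$ faces a geometrically finite end and is hence a thrice-punctured sphere.
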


Proposition \ref{convexembed} will be used in section 3 to rule out bumping at quasiconformally
rigid points.
We will also use it to control the relative placement of closed curves in manifolds
algebraically near to a quasiconformally rigid manifold. Lemma \ref{hotsidehot} will only be needed in the quasifuchsian case discussed in section \ref{general surface}.

\begin{lemma}{hotsidehot}{}{}
If $\rho\in AH(S)$ is quasiconformally rigid,
$\alpha$ is an upward-pointing cusp in  $N_\rho$ and
$\beta$ is a  downward-pointing cusp in  $N_\rho$,
and $\alpha$ and $\beta$ intersect in $S$, then
there exists a neighborhood $U$ of $\rho$ in $AH(S)$ such that if $\rho'\in U$, then
$\alpha$ lies above $\beta \in N_{\rho'}$.
\end{lemma}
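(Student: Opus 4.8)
The plan is to argue by contradiction using the geometric limit and Proposition \ref{convexembed}. Suppose no such neighborhood exists; then there is a sequence $\rho_n\to\rho$ in $AH(S)$ such that in each $N_{\rho_n}$ the curve $\alpha$ does \emph{not} lie above $\beta$. Passing to a subsequence, by Jorgensen--Marden we may assume $\{\rho_n(\pi_1(S))\}$ converges geometrically to $\hat\Gamma\supseteq\rho(\pi_1(S))$, and by Proposition \ref{convexembed} the convex core $C(N_\rho)$ embeds in $\hat N=\Hyp^3/\hat\Gamma$ under the covering $p:N_\rho\to\hat N$. Since $\alpha$ and $\beta$ are cusps of $N_\rho$, their rank-one cusp regions lie in $C(N_\rho)$, hence embed in $\hat N$; moreover, using the geometric convergence, for large $n$ the approximating bilipschitz maps $f_n:X_n\to Y_n\subset N_{\rho_n}$ carry these cusp regions (and a large compact neighborhood of the relevant part of the core) to regions of $N_{\rho_n}$ that are bilipschitz-close to their images in $\hat N$.

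Next I would set up the ``lies above'' picture in $\hat N$. In $N_\rho$, since $\alpha$ is an upward-pointing cusp and $\beta$ is a downward-pointing cusp, there is a properly embedded (or mapped) half-cylinder from the horocycle representing $\alpha$ exiting the upward-pointing end, disjoint from the horocyclic loop representing $\beta$; concretely one can take a product neighborhood structure near the relevant end of $N_\rho^0$ (from the Tameness Theorem) and run $\alpha$ out to $+\infty$ in the complement of $\beta$. Because $C(N_\rho)$ embeds in $\hat N$, and the end structure near the upward-pointing end of $N_\rho$ corresponds under $p$ to the appropriate end of $\hat N$ (the covering is an embedding on the core, so the complementary product region also maps in a controlled way), this homotopy to $+\infty$ persists in $\hat N$: the loop $p(\alpha)$ can be homotoped to the upward-pointing end of $\hat N$ in the complement of the loop $p(\beta)$. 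The key point is that all of this happens inside a \emph{fixed compact part} of $\hat N$ together with a fixed product end-neighborhood; only the compact part needs to be transported back to $N_{\rho_n}$.

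Finally I would transport this configuration back to $N_{\rho_n}$ via $f_n$. For $n$ large, $f_n$ is defined on a compact set $X_n$ large enough to contain the relevant compact portion of the ``above'' homotopy in $\hat N$, together with enough of the product end-neighborhood to see that $f_n(p(\alpha))$ exits the upward-pointing end of $N_{\rho_n}$ in the complement of $f_n(p(\beta))$; and $f_n$ carries the geodesic/cusp representatives of $\alpha$ and $\beta$ in $N_{\rho_n}$ to curves freely homotopic (with bounded tracks, since $f_n\to$ isometry) to $p(\alpha),p(\beta)$. Hence for large $n$, $\alpha$ lies above $\beta$ in $N_{\rho_n}$, contradicting the choice of the sequence. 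The main obstacle, and the step requiring the most care, is the ``persistence'' argument in the middle: making precise that the embedding of the convex core in the geometric limit, combined with tameness, lets one push the ``homotopy to $+\infty$ in the complement of $\beta^*$'' from $N_\rho$ into $\hat N$ and then into $N_{\rho_n}$ using only a compact exhaustion piece — i.e., checking that the upward-pointing end of $N_\rho$ really is carried to the upward-pointing end of the approximates and that the disjointness from $\beta^*$ is an essentially compact condition (which it is, since $\beta^*$ lies in the compact core region and the homotopy can be taken to leave every compact set only in its tail).
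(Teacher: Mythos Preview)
Your overall architecture---contradiction, pass to a geometric limit, invoke Proposition~\ref{convexembed}, then transport via the bilipschitz maps $f_n$---matches the paper exactly. The gap is in what you choose to transport.

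You try to carry a homotopy of $\alpha$ to $+\infty$ (a noncompact half-cylinder) through the geometric limit. This is where the argument breaks down. Proposition~\ref{convexembed} only embeds the \emph{convex core} of $N_\rho$ into $\hat N$; you have no control over what $p$ does outside the core, and in particular no reason to think the upward-pointing product end of $N_\rho$ maps to an identifiable end of $\hat N$ (which may have extra ends, nontrivial covering behavior, etc.). Likewise the maps $f_n$ are only defined on compact exhaustion pieces, so ``exiting the upward-pointing end'' cannot be read off directly. Your own final paragraph flags exactly this step as the obstacle, and it is a genuine one: the persistence argument as sketched does not go through.

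The paper's fix is to make everything compact and replace the ``homotopy to $+\infty$'' criterion by a \emph{separating surface} criterion, using the thick/thin decomposition. Concretely: take an embedded surface $F\subset C(N_\rho)$ that is a compact core (so $F\cong S$, separating), let $\epsilon<\epsilon_0$ be a lower bound for the injectivity radius on $F$, and attach embedded annuli $A$ (resp.\ $B$) in $C(N_\rho)$ running from $F$ into the $\alpha$-cusp (resp.\ $\beta$-cusp) so that the far boundary component of $A$ (resp.\ $B$) has length $<\epsilon/4$. The compact set $C=F\cup A\cup B$ lies in $C(N_\rho)$, hence embeds in $\hat N$, hence pulls back via $f_n$ to $C_n=F_n\cup A_n\cup B_n\subset N_{\rho_n}$ with $F_n$ a compact core for $N_{\rho_n}$. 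Now the geodesic representative $\alpha^*$ in $N_{\rho_n}$ is short (length $\to 0$), and the far boundary of $A_n$ has length $<\epsilon/2$, so both lie in the $\epsilon/2$-Margulis tube of $\alpha$ and can be joined by an annulus inside that tube. This annulus cannot meet $F_n$, since $F_n$ sits in the $\epsilon/2$-thick part. Hence $\alpha^*$ is on the same side of $F_n$ as the far end of $A_n$---the top side. Symmetrically $\beta^*$ is below $F_n$. That is exactly ``$\alpha$ lies above $\beta$''. The missing idea in your proposal is this thick/thin trick: it converts the noncompact end condition into a compact one.
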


\begin{proof}{}
Find an embedded surface $F$  in $C(N_\rho)$ which is a compact core for
$C(N_\rho)$. Let $\epsilon<\epsilon_0$
be a lower bound for the injectivity radius of $N$ on $F$. Let $A$ be an embedded
annulus in $C(N_\rho)$,
intersecting $F$ only in one boundary component and whose other boundary
component is  curve in the homotopy class of $\alpha$ 
with length at most $\epsilon/4$.
Let $B$ be an embedded annulus in $C(N_\rho)$,
intersecting $F$ only in one boundary component and whose other boundary
component is  curve in the homotopy class of $\alpha$ with length at most $\epsilon/4$.

If the lemma fails we may produce a sequence
$\{\rho_n\}$  converging to $\rho$ such that $\alpha$ does not lie above $\beta$ in any $N_{\rho_n}$.
We may again pass to a subsequence such that $\{\rho_n(\pi_1(M))\}$ converges geometrically to $\hat\Gamma$ and 
$\rho(\pi_1(M))\subset \hat\Gamma$.  Let $\hat N={\bf H}^3/\hat\Gamma$
and let $\pi:N_\rho\to \hat N$ be the natural covering map.

By Proposition \ref{convexembed},
$\pi$ embeds $C=F\cup A\cup B$  in $\hat N$.
Then, for all large enough $n$, one can pull
$C$ back to $C_n=F_n\cup A_n\cup B_n$ by an orientation-preserving 2-bilipschitz map and $F_n$ is a compact core
for $N_{\rho_n}$ (as in the proof of Proposition 3.3 in Canary-Minsky \cite{canary-minsky}). One may join the
geodesic representative of $\alpha$  in $N_{\rho_n}$ to $\partial A_n-F_n$ 
by an annulus contained entirely
within the $\epsilon/2$-Margulis tube associated to $\alpha$. It follows that
this annulus cannot intersect $F_n$ (since $F_n$ is contained entirely in the $\epsilon/2$-thick part
of $N_{\rho_n}$) so we see that the geodesic representative
of $\alpha$ in $N_{\rho_n}$ lies above $F_n$. Similarly, the geodesic representative of $\beta$ in
$N_{\rho_n}$ lies below $F_n$. Therefore, for sufficiently large $n$, $\alpha$ lies above $\beta$
in $N_{\rho_n}$. This contradiction establishes the result.
\end{proof}

\section{Ruling out bumping}

In this section, we will show that there is no bumping at points
with no unnecessary parabolics or at quasiconformally rigid points.
The first case gives the non-bumping portion of Theorem \ref{nocuspscase},
while the second case is Theorem \ref{nobumping}.
In each case, we do so by showing that the (marked) homeomorphism
type is locally constant at $\rho$, which immediately implies that there is no
bumping at $\rho$ .
Note that in this section it will never be necessary to assume that $M$
has incompressible boundary.

The case where $\rho$ contains no unnecessary parabolics is especially
easy, since any sequence converging algebraically to $\rho$ converges
strongly.

\begin{proposition}{noparnobump}{}{}
Let $M$ be a compact 3-manifold and  $\rho\in \partial AH(M)$.
If every parabolic element of $\rho(\pi_1(M))$ lies in a rank two free abelian
subgroup, then $\Theta$ is locally constant at $\rho$. In particular, there is
no bumping at $\rho$.
\end{proposition}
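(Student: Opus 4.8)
The plan is to reduce the statement to Theorem \ref{MPstrong} (strong convergence in the absence of unnecessary parabolics) together with the Tameness Theorem and standard facts about cores of geometrically convergent sequences. Suppose $\{\rho_n\}$ converges to $\rho$ in $AH(M)$; I want to show $\Theta(\rho_n) = \Theta(\rho)$ for all large $n$, i.e.\ that the marked homeomorphism types $(M_{\rho_n}, h_{\rho_n})$ eventually agree with $(M_\rho, h_\rho)$. First I would invoke Theorem \ref{MPstrong}: since every parabolic of $\rho(\pi_1(M))$ lies in a rank-two free abelian subgroup, the convergence $\rho_n \to \rho$ is strong, meaning $\{\rho_n(\pi_1(M))\}$ converges geometrically to $\rho(\pi_1(M))$ itself (no extra limit group appears). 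By the Tameness Theorem of Agol and Calegari--Gabai, $N_\rho$ is topologically tame, so it has a well-defined relative compact core $M_\rho$, and $N_\rho^0 - M_\rho$ is a product $(\partial M_\rho - P_\rho)\times(0,\infty)$.

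The key step is to transport the compact core across the geometric convergence. Because the convergence is strong, there are $K_n$-bilipschitz diffeomorphisms $f_n : X_n \to Y_n \subset N_{\rho_n}$ from an exhaustion $\{X_n\}$ of $N_\rho$, with $K_n \to 1$. Choosing the exhaustion so that each $X_n$ contains the (compact) relative compact core $M_\rho$ and a product collar of the cusps, $f_n(M_\rho)$ is a submanifold of $N_{\rho_n}$; the usual argument (as in the proof of Proposition 3.3 of \cite{canary-minsky}, or Anderson--Canary \cite{AC-cores2}) shows that for $n$ large $f_n(M_\rho)$ is itself a relative compact core for $N_{\rho_n}$, and that $f_n$ respects the cusp structure, so it carries $P_\rho$ to $P_{\rho_n}$ up to isotopy. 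Moreover $f_n \circ h_\rho$ is homotopic to $h_{\rho_n}$: both are homotopy equivalences $M \to N_{\rho_n}$ inducing, on $\pi_1$, the maps $\rho_n$ and $f_{n*}\circ\rho$ respectively, and under the identification coming from strong convergence these $\pi_1$-representations agree for large $n$ (the generators of $\rho(\pi_1(M))$ are approximated by those of $\rho_n(\pi_1(M))$, and $f_{n*}$ realizes precisely this approximation). Hence $f_n : M_\rho \to M_{\rho_n}$ is an orientation-preserving homeomorphism of pairs with $f_n \circ h_\rho \simeq h_{\rho_n}$, which is exactly the statement that $(M_{\rho_n}, h_{\rho_n})$ is equivalent to $(M_\rho, h_\rho)$ in $\mathcal{A}(M)$, i.e.\ $\Theta(\rho_n) = \Theta(\rho)$.

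Since this holds for every sequence $\rho_n \to \rho$, the map $\Theta$ is locally constant at $\rho$. Finally, if $B$ and $C$ are distinct components of $\interior(AH(M))$, they are distinct fibers $\Theta^{-1}(\cdot)$; local constancy of $\Theta$ at $\rho$ gives a neighborhood on which $\Theta$ takes only the value $\Theta(\rho)$, so $\rho$ cannot lie in the closure of two different components, and it cannot lie in the closure of any component other than possibly $\Theta^{-1}(\Theta(\rho))$ --- in particular there is no bumping at $\rho$. The main obstacle is the bookkeeping in the key step: making precise that the bilipschitz approximations $f_n$ can be chosen to carry the relative compact core (with its peripheral structure $P_\rho$) to an actual relative compact core of $N_{\rho_n}$ compatibly with the markings; this is where one leans on the cited core-comparison arguments of Anderson--Canary and the product structure supplied by the Tameness Theorem, rather than on any new computation.
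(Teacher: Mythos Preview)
Your proof is correct and follows essentially the same route as the paper: invoke Theorem \ref{MPstrong} to upgrade algebraic convergence to strong convergence, then use the geometric-limit machinery to push a compact core of $N_\rho$ into $N_{\rho_n}$ for large $n$ and conclude $\Theta(\rho_n)=\Theta(\rho)$. The paper compresses your core-transport step into a citation of Canary--Minsky \cite{canary-minsky} and Ohshika \cite{ohshika-limits} (which in fact yield a homeomorphism $h_n:N_\rho\to N_{\rho_n}$ of the full open manifolds, slightly more than you need), whereas you unpack the mechanism via the bilipschitz approximations and Proposition 3.3 of \cite{canary-minsky}; the content is the same. One minor note: you track the pared structure $P_\rho\to P_{\rho_n}$, but $\Theta$ records only the marked homeomorphism type of the compact core, so this bookkeeping is unnecessary (and slightly delicate, since $\rho_n$ could in principle have extra parabolics), though it does no harm.
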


\begin{proof}{}
Let  $\{\rho_n\}$ be a sequence  in $AH(M)$ which converges to $\rho$.
Theorem \ref{MPstrong} implies that $\{\rho_n\}$ converges strongly to $\rho$.
Results of Canary-Minsky \cite{canary-minsky} and
Ohshika \cite{ohshika-limits}, then imply that for all large enough $n$ there exists
a homeomorphism $h_n:N_\rho \to N_{\rho_n}$ in the homotopy class determined
by $\rho_n\circ \rho^{-1}$. It follows that $\Theta(\rho_n)=\Theta(\rho)$ for all large enough $n$, which completes the proof.
\end{proof}

\noindent
{\bf Remark:}  If we assume that $\{\rho_n\}\subset {\rm int}(AH(M))$, then strong
convergence follows from Theorem 1.2 of Brock-Souto \cite{brock-souto}. Consideration
of this case would suffice to establish that there is no bumping at $\rho$.

\medskip

If $\rho$ is a quasiconformally rigid point in $\partial AH(M)$, then sequences
of representations converging to $\rho$ need not converge strongly. However,
by Proposition \ref{convexembed}, the convex core of $N_\rho$ embeds in the geometric
limit of any sequence in $AH(M)$ converging to $\rho$, which will
suffice to complete the proof.
Proposition \ref{lcatqr} immediately implies Theorem \ref{nobumping}

\begin{proposition}{lcatqr}{}{}
If $M$ is a compact 3-manifold and  $\rho\in \partial AH(M)$ is quasiconformally rigid,
then $\Theta$ is locally constant at $\rho$. In particular, there is no bumping at $\rho$.
\end{proposition}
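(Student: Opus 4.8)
The plan is to show that the marked homeomorphism type $\Theta$ is locally constant at a quasiconformally rigid point $\rho\in\partial AH(M)$, which immediately rules out bumping: if $\rho$ were in the closure of two distinct components $B,C$ of $\interior(AH(M))$, those components correspond to distinct points of $\AAA(M)$, contradicting local constancy. So let $\{\rho_n\}$ be any sequence in $AH(M)$ converging algebraically to $\rho$; I must produce, for all large $n$, a homeomorphism of pairs $(M_\rho,P_\rho)\to(M_{\rho_n},P_{\rho_n})$ compatible with the markings, so that $\Theta(\rho_n)=\Theta(\rho)$.

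The key tool is Proposition \ref{convexembed}. First I would pass to a subsequence so that $\{\rho_n(\pi_1(M))\}$ converges geometrically to a torsion-free Kleinian group $\hat\Gamma$ with $\rho(\pi_1(M))\subset\hat\Gamma$ (Jorgensen-Marden), and let $\pi:N_\rho\to\hat N=\Hyp^3/\hat\Gamma$ be the covering map. By Proposition \ref{convexembed}, $\pi$ embeds the convex core $C(N_\rho)$ into $\hat N$. Next, using the bilipschitz approximating maps $f_n:X_n\to Y_n\subset N_{\rho_n}$ coming from geometric convergence (which converge to an isometry on compact sets), I would pull back a compact core $F$ for $C(N_\rho)$ — more precisely a slightly enlarged compact piece of $N_\rho^0$ together with collar neighborhoods of the cusps — to a submanifold $F_n\subset N_{\rho_n}$ by an orientation-preserving bilipschitz embedding, for all large $n$. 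The standard argument (as in Canary-Minsky, or the proof of Lemma \ref{hotsidehot} above) shows $F_n$ is a compact core for $N_{\rho_n}$, and because $\rho$ is quasiconformally rigid, $C(N_\rho)$ has totally geodesic boundary consisting of thrice-punctured spheres, so the embedded core $F_n$ already accounts for all of $N_{\rho_n}$ up to the ends: the complement $N_{\rho_n}\setminus F_n$ can have no geometrically finite pieces other than thrice-punctured spheres and no room for extra topology, forcing $h_n:=f_n\circ h_\rho$ (suitably adjusted near the cusps) to be a homotopy equivalence realized by a homeomorphism of pairs in the class of $\rho_n\circ\rho^{-1}$.

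The main obstacle is the step where one upgrades the embedded core $F_n$ to a genuine \emph{homeomorphism} $M_\rho\to M_{\rho_n}$ matching the parabolic loci $P_\rho\to P_{\rho_n}$. Geometric convergence and the core embedding give us that $F_n$ carries $\pi_1$ and sits correctly, but ruling out that $N_{\rho_n}$ has additional ends or a different topological type near the cusps requires knowing that no ``hidden'' topology appears in the geometric limit outside $\pi(C(N_\rho))$ that would be visible algebraically — here quasiconformal rigidity is essential, since the totally geodesic boundary means $C(N_\rho)$ is all of $N_\rho$ up to the product ends, and the thrice-punctured-sphere components are rigid and persist. I would handle the cusp matching by noting that the rank-one cusps of $N_\rho$ correspond to the thrice-punctured sphere boundary components, whose horocyclic structure is rigid and transported by $f_n$, while rank-two cusps are detected algebraically and preserved under strong-on-the-core convergence. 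Concluding, $\Theta(\rho_n)=\Theta(\rho)$ for all large $n$ in the subsequence; since this holds for every subsequence, $\Theta$ is locally constant at $\rho$, and hence there is no bumping at $\rho$, proving both the proposition and Theorem \ref{nobumping}.
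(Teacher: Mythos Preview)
Your approach is essentially the same as the paper's: pass to a geometric limit $\hat\Gamma$, use Proposition \ref{convexembed} to embed $C(N_\rho)$ in $\hat N$, and transport a compact core $C$ of $C(N_\rho)$ to $N_{\rho_n}$ via the bilipschitz maps $f_n$, invoking Canary--Minsky to see that $C_n=f_n(\pi(C))$ is a compact core for $N_{\rho_n}$.

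However, you are making the argument harder than it needs to be, and the ``main obstacle'' you identify is not an obstacle at all. The map $\Theta$ records only the marked homeomorphism type $(M_\rho,h_\rho)$ in $\AAA(M)$ --- it does \emph{not} record the pared structure $P_\rho$. So you do not need a homeomorphism of pairs $(M_\rho,P_\rho)\to(M_{\rho_n},P_{\rho_n})$; you only need a marked homeomorphism of compact cores. Once Canary--Minsky gives you that $C_n$ is a compact core for $N_{\rho_n}$, the diffeomorphism $f_n\circ\pi:C\to C_n$ is already an orientation-preserving homeomorphism, and $(f_n\circ\pi)_*$ is the isomorphism induced by $\rho_n\circ\rho^{-1}$, so $\Theta(\rho_n)=[(C_n,h_{\rho_n})]=[(C,h_\rho)]=\Theta(\rho)$ immediately. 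There is no need to worry about matching cusps, ruling out hidden topology, or accounting for extra ends --- all of that is irrelevant to $\Theta$. The paper's proof is accordingly a short paragraph.
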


\begin{proof}{}
If $\Theta$ is not locally constant, then there is a sequence $\{\rho_n\}$ such that
$\Theta(\rho_n)\ne \Theta(\rho)$ for all $n$. 
We may pass to a subsequence, still called $\{\rho_n\}$, such that
$\{\rho_n(\pi_1(M))\}$ converges geometrically to $\hat\Gamma$ and 
$\rho(\pi_1(M))\subset \hat\Gamma$.  Let $\hat N={\bf H}^3/\hat\Gamma$
and let $\pi:N_\rho\to \hat N$ be the natural covering map. Proposition
\ref{convexembed} implies that $\pi$ embeds the convex core $C(N_\rho)$ into $\hat N$.

Let $C$ be a compact core for $C(N_\rho)$. 
We recall that for all sufficiently large $n$, there exists a $K_n$-blipschitz
diffeomorphism $f_n:X_n\to N_{\rho_n}$
from a compact submanifold $X_n$ of $\hat N$ which contains
$\pi(C)$ onto a compact submanifold of $N_{\rho_n}$.
The arguments of Proposition 3.3 of Canary-Minsky  \cite{canary-minsky} go through directly to
show that, again for large enough $n$, $C_n=f_n(\pi(C))$ is a compact core for $N_{\rho_n}$.
Moreover,  $(f_n\circ \pi)_*:\pi_1(C)\to \pi_1(C_n)$ is the same isomorphism, up to conjugacy,
induced by $\rho_n\circ\rho^{-1}$.  It follows that
$\Theta(\rho_n)=[(C_n,h_{\rho_n})]=[(C,h_\rho)]=\Theta(\rho)$. 
\end{proof}

\section{Ruling out self-bumping in the absence of parabolics}

In this section we rule out self-bumping at points in $\partial AH(M)$ with
no unnecessary parabolics when $M$ has incompressible boundary.
Proposition \ref{noparnobump} and \ref{noparnoselfbump} combine
to establish Theorem \ref{nocuspscase}.

\begin{proposition}{noparnoselfbump}{}{}
Let $M$ be a compact 3-manifold with incompressible boundary and
$\rho\in \partial AH(M)$.
If every parabolic element of $\rho(\pi_1(M))$ lies in a rank two free abelian
subgroup, then there is no self-bumping at $\rho$.
\end{proposition}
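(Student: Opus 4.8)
The plan is to follow the outline sketched in the introduction: given $\rho \in \partial AH(M)$ with all parabolics in rank-two abelian subgroups, and two sequences $\{\rho_n\}, \{\rho'_n\} \subset \interior(AH(M))$ converging to $\rho$, I want to join $\rho_n$ to $\rho'_n$ by a path $\gamma_n$ in $\interior(AH(M))$ so that any accumulation point of $\bigcup_n \gamma_n$ is $\rho$; this immediately rules out self-bumping. The first step is to invoke Proposition \ref{noparnobump}: since $\Theta$ is locally constant at $\rho$, for large $n$ we have $\Theta(\rho_n) = \Theta(\rho'_n) = \Theta(\rho) = (M',h')$. Because $M$ has incompressible boundary, the component $\Theta^{-1}(M',h') \cap \interior(AH(M))$ is identified with $\Teich(\partial_T M')$, so $\rho_n$ and $\rho'_n$ correspond to points $X_n, X'_n \in \Teich(\partial_T M')$, and $\rho_n, \rho'_n$ are quasiconformally conjugate. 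I take $\gamma_n$ to be (the image under this parametrization of) the Teichm\"uller geodesic from $X_n$ to $X'_n$ in $\Teich(\partial_T M')$, which stays in $\interior(AH(M))$.

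The heart of the argument is showing these geodesics accumulate only at $\rho$. Suppose $\sigma$ is an accumulation point; passing to a subsequence, $\sigma = \lim \sigma_n$ with $\sigma_n \in \gamma_n$. By the Jorgensen-Marden compactness statement in Section \ref{geometric limits}, after a further subsequence $\{\sigma_n(\pi_1 M)\}$ converges geometrically to some $\hat\Gamma \supseteq \sigma(\pi_1 M)$, and likewise $\{\rho_n(\pi_1 M)\}$ and $\{\rho'_n(\pi_1 M)\}$ converge geometrically, necessarily to groups containing $\rho(\pi_1 M)$; by Theorem \ref{MPstrong} the latter two convergences are in fact strong, so those geometric limits equal $\rho(\pi_1 M)$ exactly. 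I then want to control the geometry of $N_{\sigma_n}$ using that it is quasiconformally conjugate to $N_{\rho_n}$ with conjugating conformal data lying on the Teichm\"uller segment between $X_n$ and $X'_n$. The key geometric input is that the degenerate ends of $N_\rho$ force the geodesic representatives of the corresponding ending laminations (approximated by curves of bounded length in $N_{\rho_n}$, hence, by Bers' inequality Lemma \ref{berslemma} and its analogue, curves whose lengths on $X_n$ and $X'_n$ both stay bounded) to remain bounded along the whole Teichm\"uller geodesic — so these curves have bounded length in every $N_{\sigma_n}$ as well. Together with strong convergence pinning down the geometrically finite and cusp data, this lets me identify the ending invariants of $\sigma$: its homeomorphism type is $(M',h')$, its geometrically finite boundary has the same conformal structure as $\rho$ (since on those components $X_n = X'_n$ is eventually constant equal to the data of $\rho$), and on each degenerate end the curves $\{\alpha_i\}$ exiting the end of $N_\rho$ also exit the corresponding end of $N_\sigma$. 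Applying the Ending Lamination Theorem then gives $\sigma = \rho$.

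The step I expect to be the main obstacle is the geometric control along the Teichm\"uller geodesic: concluding that a curve which is short in $N_{\rho_n}$ (because it is nearly the ending lamination of a degenerate end of $N_\rho$) remains short in $N_{\sigma_n}$ for every $\sigma_n$ on the segment. For the geometrically finite boundary components this is trivial since $X_n$ and $X'_n$ agree there for large $n$, so the segment is constant on those coordinates and Bers' Lemma \ref{berslemma} applies uniformly. For the degenerate ends one must argue that a curve whose lengths at both endpoints $X_n, X'_n$ of a Teichm\"uller geodesic are bounded has bounded length at every interior point — this follows from the fact that extremal length (hence hyperbolic length up to a factor) of a fixed curve is a logarithmically convex, hence quasi-convex, function along a Teichm\"uller geodesic, so it is bounded by its endpoint values — and then feed this into Bers' inequality again. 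The remaining work is bookkeeping: checking that strong convergence of $\{\rho_n\}$ makes the algebraic/geometric pictures agree well enough that the ending data of $\sigma$ can be read off and matched to that of $\rho$, which is exactly the kind of argument already used in Propositions \ref{noparnobump} and \ref{lcatqr} and the cited work of Canary-Minsky and Ohshika.
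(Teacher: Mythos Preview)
Your overall architecture---take Teichm\"uller geodesics between $\rho_n$ and $\rho'_n$ and show any limit of points on them is $\rho$ via the Ending Lamination Theorem---matches the paper. But the way you control the geometry along $\gamma_n$ has genuine gaps.

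First, you never establish that an arbitrary sequence $\nu_n\in\gamma_n$ has a convergent subsequence \emph{in $AH(M)$}. Jorgensen--Marden only gives geometric limits once you already have algebraic convergence; it does not prevent $\nu_n$ from leaving every compact set. The paper fills this with Thurston's Double Limit Theorem (when $M=S\times I$) or Ohshika's generalization, and this step is essential: without it, knowing that every algebraic accumulation point equals $\rho$ does not yet rule out self-bumping.

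Second, your treatment of both types of ends is off. For the geometrically finite ends you assert that the conformal data ``is eventually constant equal to the data of $\rho$''; this is false. The conformal structures $\tau_i^n$ and $(\tau_i^n)'$ each \emph{converge} to $\tau_i$ (by strong convergence plus Ohshika / Kerckhoff--Thurston), but they need not coincide. The paper uses that convergence of both endpoints forces the Teichm\"uller segment between them to converge as well. For the degenerate ends you try to go from ``$\alpha$ is short in $N_{\rho_n}$'' to ``$\alpha$ is short on $X_n$'' via Bers' Lemma \ref{berslemma}; but that inequality points the other way ($l_\rho(\alpha)\le 2 l_X(\alpha)$), and in fact curves can be arbitrarily short in the manifold while long on the conformal boundary---this is exactly the phenomenon the paper spends Sections \ref{section: FN projections}--\ref{deformations} controlling. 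Your convexity-of-extremal-length idea is a valid fact about Teichm\"uller geodesics, but you cannot feed manifold lengths into it. What the paper does instead is use Theorem \ref{endlams} to see that $\pi_{S_i}(X_n)$ and $\pi_{S_i}(X'_n)$ both converge to $\lambda_i$ in $\partial_\infty\CC(S_i)$, and then invoke the Masur--Minsky fact that a Teichm\"uller geodesic projects into a bounded neighborhood of a $\CC(S_i)$-geodesic, so the whole segment projects near $\lambda_i$. Finally, the paper's identification of $\nu$ with $\rho$ requires more than ``bookkeeping'': one must contend with the possibility that $\nu$ has extra parabolics, so strong convergence of $\nu_n$ to $\nu$ is not available, and the paper handles this via a $K_n$-quasiconformal modification, Bers' Theorem 5, the Covering Theorem, and Waldhausen.
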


\begin{proof}{}
Let $M_\rho$ be a relative compact core for $N_\rho^0$ and let $\{S_1,\ldots,S_r\}$ denote
the non-toroidal components of $\partial M_\rho$.
We may order the boundary components so that 
$\{S_1,\ldots,S_k\}$ correspond to geometrically finite
ends of $N_\rho^0$ while $\{ S_{k+1},\ldots, S_r\}$ correspond to geometrically
infinite ends of $N_\rho^0$. Let $\{ \tau_1,\ldots,\tau_k,\lambda_{k+1},\ldots
\lambda_{r}\}$ be the end invariants of $\rho$ where $\tau_i\in \mathcal{T}(S_i)$
for all $i\le k$ and $\lambda_i\in\mathcal{EL}(S)$ for all $i> k$.

Let $B$ be the component of ${\rm int}(AH(M))$ corresponding to $[(M_\rho,h_\rho)]$.
Since $\Theta$ is locally constant at $\rho$, by Proposition \ref{noparnobump},
$B$ is the only component of ${\rm int}(AH(M))$ containing $\rho$ in
its closure.
We may identify $B$ with $\mathcal{T}(S_1)\times\cdots\times\mathcal{T}(S_r)$.
Let $\{\rho_n=(\tau_1^n,\ldots,\tau_r^n)\}$ be a sequence in $B$ converging to $\rho$.
Theorem \ref{endlams}
implies that $\{\pi_{S_i}(\tau_i^n)\}\subset \mathcal{C}(S_i)$ converges to
$\lambda_k\in \partial_\infty \mathcal{C}(S_i)$ for all $i>k$. Theorem \ref{MPstrong}
implies that $\{\rho_n\}$ converges strongly
to $\rho$. Then, a result of Ohshika \cite{ohshika-caratheodory}
(see also Kerckhoff-Thurston \cite[Corollary 2.2]{kerckhoff-thurston})
implies that $\{\tau_i^n\}$ converges to $\tau_i$ for all $i\le k$.

Let $\{\rho_n=(\tau_1^n,\ldots,\tau_r^n)\}$ and
$\{\rho_n'=((\tau_1^n)',\ldots,(\tau_r^n)')\}$ be two sequences in
$B$ converging to $\rho$. In order to rule out self-bumping at $\rho$,
it suffices to construct  paths $\gamma_n$ in $B$ joining $\rho_n$ to $\rho_n'$
such that if $\nu_n\in\gamma_n$, then $\{\nu_n\}$ converges to $\rho$.
We choose $\gamma_n$ to be the Teichm\"uller geodesic in 
$\mathcal{T}(S_1)\times\cdots\times\mathcal{T}(S_r)$ joining $\rho_n$ to $\rho_n'$.
If $\{\nu_n=(\mu_1^n,\ldots,\mu_r^n)\in\gamma_n\}$ is a sequence,
then, for all $i\le k$, since both $\{\tau_i^n\}$ and $\{(\tau_i^n)'\}$
converge to $\tau_i$, $\{\mu_i^n\}$ also converges to $\tau_i$.
In \cite{masur-minsky} (see Theorems 2.3 and 2.6),
it is shown that a Teichm\"uller
geodesic in $\mathcal{T}(S_i)$ projects into a $c_2$-neighborhood of a geodesic
in $\mathcal{C}(S_i)$ (for some uniform choice of $c_2$). Therefore, since
$\{\pi_{S_i}(\tau_i^n)\}$ and $\{\pi_{S_i}((\tau_i^n)')\}$ both converge to
$\lambda_i\in\partial_\infty\mathcal{C}(S_i)$ for all $i>k$, we see that
$\{\pi_{S_i}(\mu^n_i)\}$ converges to $\lambda_i$ for all $i>k$.

If $M=S\times I$ for a closed surface $S$, then Thurston's Double Limit Theorem \cite{thurston2} implies
that every subsequence of $\{\nu_n\}$ has a convergent subsequence.
If $M$ is not homeomorphic to $S\times I$,
then the main result of Ohshika \cite{ohshika-qc}
(which is itself derived by combining results of Thurston \cite{thurston2,thurston3})
implies that every subsequence of  $\{\nu_n\}$ has a convergent subsequence.

Let $\nu$ be a limit of a subsequence of $\{\nu_n\}$, still
denoted $\{\nu_n\}$, in $AH(M)$. In order to complete the proof, it suffices
to show that $\nu=\rho$.  We do so by invoking the Ending Lamination Theorem.
The main difficulty here is that we do not know that $\nu$ does not contain
any unnecessary parabolics, so we cannot immediately conclude that $\{\nu_n\}$
converges strongly to $\nu$.

Let $h:M_\rho\to M_\nu$ be a homotopy equivalence such that
$h\circ h_\rho$ is homotopic to $h_\nu$.   Consider the sequence 
$\{\nu_n'=(\tau_1,\ldots,\tau_{k},\mu^n_{k+1},\ldots,\mu_r^n)\}$.
There exists a sequence of $K_n$-quasiconformal map conjugating
$\nu_n$ to $\nu_n'$ with $K_n\to 1$.
It follows that $\{\nu_n'\}$ also converges to $\nu$. Theorem 5 in Bers \cite{bers-slice}
implies that, for all $i\le k$,
the  sequence of components $\{\Omega^n_{i}\}$  of $\Omega(\nu_n')$
associated to $\nu_n(\pi_1(S_i))$ (where we have chosen a fixed subgroup in
the conjugacy class of subgroups associated to $\pi_1(S_{i})$) converges
in the sense of Caratheodory to a  component $\Omega_{i}$ of
$\Omega(\nu(\pi_1(S_i)))$ such that
$\Omega_{i}/\nu(\pi_1(S_{i}))$ is homeomorphic to $S_{i}$ with
conformal structure $\tau_{i}$.  It then follows (again from Ohshika
\cite{ohshika-caratheodory}) that $\Omega_i$ is a component of the domain of discontinuity
of any geometric limit of $\{\nu_n'(\pi_1(M))\}$. Therefore, $\Omega_i$ is
a component of $\Omega(\nu)$ and the stabilizer of $\Omega_i$ in $\nu(\pi_1(M))$
contains $\nu(\pi_1(S_i))$ as a finite index subgroup. Therefore, we may homotope
$h$ so that, for all $i\le k$, $h|_{S_{i}}$  is an orientation-preserving covering map of
a component of $\partial M_\nu$ which is locally  conformal.

If $i>k$, Theorem \ref{endlams} implies that the cover $(N_\nu)_i$ of
$N_\nu$ assocated to $\pi_1(S_i)$ has a geometrically infinite end $\tilde E_i$ with
ending lamination $\lambda_i$. Moreover, if the orientation on $S_i$ is chosen
so that the geometrically infinite end in  $M_\rho$ is upward-pointing,
then $\tilde E_i$ is also upward-pointing
in $(N_\nu)_i$.  The Covering Theorem
(see \cite{thurston-notes} or \cite{canary-cover}) then implies that the covering
map $p_i:(N_\nu)_i\to N_\nu$ is finite-to-one on a neighborhood of $\tilde E_i$.
Therefore, we may homotope $h$ so that $h|_{S_i}$ is an orientation-preserving covering map with image a component of $\partial M_\nu$. If $T_j$ is a toroidal component of $\partial M_\rho$, then, since all incompressible tori are peripheral
in $M_\nu$, $h|_{T_j}$ can again be homotoped to a covering map onto a toroidal
component of $\partial M_\nu$. Therefore, we may assume that $h$ is a covering
map on each component of $\partial M_\rho$ and is orientation-preserving on
each non-toroidal component.

Waldhausen's Theorem \cite{waldhausen} now implies that $h$ is homotopic
to an orientation-preserving homeomorphism
$h':M_\rho\to M_\nu$, by a homotopy keeping $h|_{\partial M_\rho}$
constant.
It follows that $(M_\nu,h_\nu)$ is equivalent to $(M_\rho,h_\rho)$
and that the ending invariants are identified. The Ending Lamination Theorem
then implies that $\nu=\rho$. It follows that $\{\nu_n\}$ converges to $\rho$
as desired.

\end{proof}

\section{Fenchel-Nielsen coordinates and projection coefficients}
\label{section: FN projections}

In this section we discuss and compare {\em length-twist parameters} for
$\TT(S)$. For traditional Fenchel-Nielsen twist parameters based on a
maximal curve system $\ba$ (also known as a pants decomposition),  we
will see how the twist parameters compare 
with coarse twist parameters coming 
from projections to the annulus complexes associated to each curve in
$\ba$. More generally for a curve system $\ba$ that may not be maximal,
Theorem  \ref{FN projections}
allows us to vary arbitrarily the length and twist parameters of a
curve system $\ba$, while (coarsely) fixing all subsurface projections in the
complement of $\ba$.

To state the main theorem of this section we fix notation for the parameter spaces as
follows.  Given a curve system $\ba=\alpha_1\union\cdots\union\alpha_m$, 
define $T_\ba = \R^m$, 
$L_\ba = \R_+^m$, and
$V_\ba = T_\ba \times L_\ba$. For each component $\alpha_j$ of $\ba$ we have a
geodesic length function $l_{\alpha_j}:\TT(S)\to\R_+$, and we let
$$
l_\ba:\TT(S)\to L_\ba
$$
denote $(l_{\alpha_1},\ldots,l_{\alpha_m})$.

\begin{theorem}{FN projections}
Let $\ba$ be a curve system in $S$. For any $X\in\TT(S)$ there is a
continuous map
$$\Phi:V_\ba \to \TT(S)$$
such that $X\in \Phi(V_\ba)$, and such that
\begin{enumerate}
\item $l_\ba\circ \Phi(\bt, \bl) = \bl$
\item $|\tw_\ba(X, \Phi(\bt, \bl)) - \bt| < m_2$
and 
\item
for any essential subsurface $W\subset S$ disjoint from $\ba$
(except annuli parallel to components of $\ba$),
$$
\diam_{\CC(W)}(\Phi(V_\ba)) < m_2
$$
\end{enumerate}
where $m_2$ depends only on $S$. 
\end{theorem}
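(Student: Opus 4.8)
The plan is to build $\Phi$ explicitly out of classical Fenchel--Nielsen surgery, using the hyperbolic geometry of $X$ as a seed, and then to verify the three conclusions by separate means: (1) will be automatic from the construction, (2) will follow from the definition of the twist parameter up to a universal additive error coming from how much the twist coordinate is only coarsely defined, and (3) is the substantive point, which I would reduce to a statement about quasiconformal maps that are conformal off a neighborhood of $\ba$ and the coarse Lipschitz behavior of subsurface projection under such maps.

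First I would set up coordinates. Complete $\ba$ to a maximal curve system (pants decomposition) $\bP = \ba \cup \bb$ of $S$, and fix the Fenchel--Nielsen coordinates $(\bt_{\bP},\bl_{\bP})$ of $X$ with respect to $\bP$. Define $\Phi(\bt,\bl)$ to be the surface whose Fenchel--Nielsen coordinates relative to $\bP$ agree with those of $X$ on all curves of $\bb$, and on the curves of $\ba$ have length coordinates equal to $\bl$ and twist coordinates equal to $\bt$ (measured so that $\tw_\ba(X,X)=0$; I will absorb the basepoint-dependence of the twist origin into the constant $m_2$ in (2)). Continuity of $\Phi$ and conclusion (1) are then immediate from continuity and surjectivity of Fenchel--Nielsen coordinates, and $X = \Phi(\bt_{\bP}|_\ba,\bl_{\bP}|_\ba)$ gives $X\in\Phi(V_\ba)$. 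Conclusion (2) holds because $\tw_\ba(X,\Phi(\bt,\bl))$ differs from the difference of FN twist coordinates by a bounded amount: the comparison between the (combinatorially defined, annular-projection) twist and the analytic FN twist is known to be coarsely isometric with a universal additive constant depending only on the topology of $S$ — this is exactly the content alluded to in the section introduction, and I would cite the Masur--Minsky-style comparison and fold its constant into $m_2$.

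The heart is conclusion (3). Fix $W\subset S$ essential and disjoint from $\ba$, not an annulus parallel to a component of $\ba$, and fix two points $Y_0 = \Phi(\bt^0,\bl^0)$ and $Y_1 = \Phi(\bt^1,\bl^1)$ in the image. Since $W$ is disjoint from $\ba$, a curve of minimal length in $Y_i$ meeting $W$ essentially can be taken to lie in $S\setminus\ba$ — indeed I would produce a single curve $\delta$ (for instance a component of $\bb$, or a shortest curve in the relevant complementary subsurface, of length bounded by a Bers-type constant that does not depend on the parameters, since the geometry of $S\setminus\ba$ is controlled by the $\bb$-coordinates which are frozen) with $\delta\in\CC(S,W)$ and $d_W(\pi_W(Y_i),\pi_W(\delta))$ uniformly bounded. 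Then it suffices to bound $d_W(\pi_W(\delta),\pi_W(\delta))=0$, i.e. to observe that the \emph{same} curve $\delta$ works for every point of $\Phi(V_\ba)$: because altering only the length and twist parameters along $\ba$ does not change the isotopy class of $\delta$ nor of $\boundary W$, and $\delta$ remains a uniformly short curve crossing $W$ throughout. More carefully, the issue is that ``minimal length curve crossing $W$'' could jump; to handle this I would invoke the uniform bound (stated in Section~\ref{ending invariants and geometry}) that any two candidate minimizers give $\pi_W$-values within a universal constant, so it is enough to exhibit, for each parameter value, \emph{some} curve crossing $W$ of length below a fixed threshold — and the frozen $\bb$-coordinates supply one. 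This pins $\pi_W(\Phi(\bt,\bl))$ to within $m_2$ of the fixed value $\pi_W(\delta)$, giving the diameter bound.

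The step I expect to be the main obstacle is the last one: making rigorous that varying the $\ba$-parameters genuinely does not move subsurface projections to $W$, as opposed to moving them only a bounded amount per bounded parameter change. The naive worry is a long twist along some $\alpha_j\subset\boundary W$ — but by hypothesis $W$ is disjoint from $\ba$, so no $\alpha_j$ lies on $\boundary W$ in a way that a twist about it changes $\pi_W$; the only $\alpha_j$ that can be isotopic into $\boundary W$ are boundary-parallel, and twisting about a boundary curve of $W$ acts trivially on $\CC(W)$. Spelling this out — that the mapping-class-group element realizing a large $\ba$-twist is supported in $S\setminus W$ up to boundary twists of $W$, hence acts with bounded displacement (in fact trivially) on $\CC(W)$ — is where I would spend the real work, and it is also where the exclusion ``except annuli parallel to components of $\ba$'' in the statement is used.
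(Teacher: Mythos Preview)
Your approach has a genuine gap. The assertion that ``the geometry of $S\setminus\ba$ is controlled by the $\bb$-coordinates which are frozen'' is false: each component of $S\setminus\ba$ is a hyperbolic surface with geodesic boundary equal to components of $\ba$, and those boundary lengths are precisely the coordinates you are varying. As some $l_{\alpha_j}\to\infty$, the pairs of pants adjacent to $\alpha_j$ become arbitrarily long and the short arcs you are implicitly relying on disappear. Your proposed witness curve $\delta$ does not always exist in the place you need it. The sharpest failure is when $W$ is the annulus about some $\beta\in\bb$: no curve of $\bb$ crosses $\beta$, so your argument produces no $\delta$ at all, and one is left having to bound $\tw_\beta(X,\Phi(\bt,\bl))$ directly. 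But that quantity depends on \emph{which} Fenchel--Nielsen section you used; for a section that winds around $\beta$ as $l_\alpha$ varies (and such sections exist), freezing the $\beta$-twist coordinate does not freeze $\tw_\beta$, and (3) fails outright. Even claim (2) leans on a comparison between FN twist and the coarse annular twist that is only available \emph{after} one has chosen the section correctly.

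The paper confronts exactly this issue, and its proof is structured around it. It does \emph{not} freeze the $\bb$-coordinates. Instead it reduces the theorem to three lemmas: one controlling $\tw_\ba$ under earthquakes along $\ba$, one controlling complementary projections under those earthquakes, and --- the real work --- one (Lemma~\ref{section}) constructing a section $\sigma:L_\ba\to\TT(S)$ through $X$ whose projections to every $\CC(W)$ with $W$ disjoint from $\ba$ have uniformly bounded diameter. That section is built by extending $\ba$ to a maximal system $\hat\ba$ \emph{chosen geometrically from $X$} (via an inductive short-arc construction in the complementary pieces), and then letting the $\hat\ba\setminus\ba$ lengths vary with $\bl_\ba$ in a controlled way dictated by the pants geometry (Lemma~\ref{section over pants}) so that the relevant short arcs stay short. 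The map is then $\Phi(\bt,\bl)=e_{\ba,\bt}(\sigma(\bl))$. So the architecture you sketched --- extend to a pants system and use FN-type coordinates --- is right, but the step you treated as routine (choosing the section and verifying (3), especially for annuli about the extra pants curves) is where essentially all the content lives.
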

We will precisely define $\tw_\ba$ below but roughly speaking it is an
$m$-tuple of signed distances between the projections to the annular
complexes associated to the curve system $\alpha$.

Throughout this section an inequality of the form
$|\bt| < K$ for an $m$-tuple $\bt$ refers to the sup norm on $\bt$, so that
we are just bounding each component individually.

Theorem \ref{FN coordinates} will state the special case of Theorem
\ref{FN projections} when $\ba$ is a maximal curve system, 
namely that Fenchel-Nielsen coordinates can be chosen so that their twist
parameters agree roughly with the parameters given by $\tw_\ba$.

At the end of the section we will prove Lemma \ref{W connected}, which
is a connectivity result for a region in $\TT(S)$ given by bounding
the lengths of a curve system and restricting the structures in
the complementary subsurfaces to certain neighborhoods of points at
infinity. This lemma will be used in the last steps of the proofs of
Theorems \ref{bersslice}, \ref{acylcase},  and \ref{qfcase}.

\subsection{Coarse twist parameters}
An annulus complex is quasi-isometric to $\Z$. This allows us to define a signed version of distance. If $\alpha$ is the core curve of an annulus $W$ we denote $\CC(\alpha)=\CC(W)$,
$\pi_\alpha=\pi_W$, $d_\alpha = d_W$, and $\CC(S,\alpha)=\CC(S,W)$.

Given two elements $a$ and $b$ in $\CC(\alpha)$ we let $i_\alpha(a,
b)$ be the algebraic interersection of $a$ and $b$.  We then define 
$$\tw_\alpha: \CC(S, \alpha) \times \CC(S, \alpha) \to \Z$$
by $\tw_\alpha(\gamma, \beta) = i_\alpha(\pi_\alpha(\gamma),
\pi_\alpha(\beta))$. (If $a$ and
  $b$ have endpoints in common then algebraic intersection number is
not w-defined. We  correct this, in this special case, by taking the algebraic
  intersection of arcs in the homotopy class of $a$ and $b$ with
  minimal {\em geometric} intersection.)

There are two important properties of
$\tw_\alpha$ that we will use repeatedly: 
\begin{enumerate}
\item $d_\alpha(\gamma, \beta) = |\tw_\alpha(\gamma, \beta)| + 1$ if
  $\gamma \neq \beta$, and
\item $|\tw_\alpha(\gamma, \beta) + \tw_\alpha(\beta, \zeta) - \tw_\alpha(\gamma, \zeta)| \le 1$.
\end{enumerate}
(see \cite{ELC1} \S 4 for closely related properties).

Recall, that in Section \ref{ending invariants and geometry}, we defined $\pi_\alpha(X)$,
for $X \in \TT(S)$, by setting $\pi_\alpha(X) = \pi_\alpha(\beta)$ 
where $\beta$ is a shortest curve in $X$ that intersects $\alpha$. 
Abusing notation, we define
$$\tw_\alpha: \TT(S) \times \TT(S) \to \Z$$
by letting $\tw_\alpha(X,Y) = \tw_\alpha(\pi_\alpha(X), \pi_\alpha(Y))$. As we saw in Section \ref{ending invariants and geometry} if $\beta$ and $\beta'$ are both shortest length curves in $X$ that cross $\alpha$ then 
$$|\tw_\alpha(\beta, \beta')| + 1 = d_{\alpha}(\beta, \beta') \le 2.$$ Therefore $\tw_\alpha$ is w defined up to a uniform bound.

Furthermore since length functions are continuous on $\TT(S)$ the
function on $\TT(S)$ which gives back the length of the shortest curve
that crosses $\alpha$ is continuous. We also note that that the length
spectrum on a hyperbolic surface $X$, the values of lengths of curves
on a $X$, is discrete. These two facts allow us to find a neighborhood
$U$ of $X$ in $\TT(S)$ such that for every $Y\in U$ any shortest
length curve in $Y$ that crosses $\alpha$ is also a shortest length curve in
$X$ which crosses $\alpha$. It follows that $\tw_\alpha$ is {\em coarsely continuous}: there
is a constant $C$ such that every pair $(X,Y) \in \TT(S) \times
\TT(S)$ has a neighborhood $U$ such that $\diam(\tw_\alpha(U))<C$.

If $\ba = \alpha_1 \cup \dots \alpha_m$ is a curve system then $\tw_\ba$ takes values in $\Z^m$.

\subsection{Earthquakes and Fenchel-Nielsen coordinates}
\label{fenchel-nielsen}
For a curve $\alpha$, $s\in\R$ and $X\in\TT(S)$, a {\em right
  earthquake} of magnitude $s$ along $\alpha$  is obtained by cutting $X$ along the
geodesic representative of $\alpha$ and shearing to the right by
signed distance $s$ before regluing (so negative $s$ corresponds to
left shearing). See \cite{wpt:earthquakes,kerckhoff:nielsen}.
Let $e_{\alpha,t}(X)$ denote the result of a right
earthquake of magnitude $t l_\alpha(X)$, so that in particular
$$
e_{\alpha,1}(X) = \dehntw_\alpha(X)
$$
where $\dehntw_\alpha$ is a left Dehn-twist on $X$. The equivalence of
left twists with right shears corresponds to the fact that a mapping
class $f$ acts on $\TT(S)$ by precomposing the marking with $f^{-1}$.

For a curve system $\ba$ and $\bt\in T_\ba$, with
components $t_{\alpha_j} = t_j$, note that the shears
$e_{\alpha_j,t_j}$ commute and define
$$
e_{\ba,\bt} = e_{\alpha_1,t_1}\circ\cdots\circ e_{\alpha_m,t_m}.
$$
This earthquake map defines a free action of $T_\ba$ on $\TT(S)$ which fixes the fibers of the length map $l_\ba$. 

Now suppose that $\ba$ is a maximal curve system. Then the action on the fibers is also transitive and gives $\TT(S)$ the structure of principal $\R^m$-bundle over $L_\ba$. A choice of section of this bundle determines {\em Fenchel-Nielsen coordinates} for $\TT(S)$. More explicitly if 
$$\sigma : L_\ba \to \TT(S)$$
is a section then we can define a {\em Fenchel-Nielsen map}
$$F: V_\ba \to \TT(S)$$ by
$$F(\bt, \bl) = e_{\ba,\bt}(\sigma(\bl)).$$
This map will be a homeomorphism and give {\em Fenchel-Nielsen coordinates} for $\TT(S)$.
There are a number of concrete constructions for sections and Fenchel-Nielsen coordinates, but none are particularly canonical.

\subsection{Proof of Theorem \ref{FN projections}}
In this subsection we reduce the proof of Theorem \ref{FN projections} to three lemmas. We will prove these lemmas in the sections that follow.

It is not hard to measure how the twist parameter changes under powers
of Dehn twists. In particular,
$$|\tw_\alpha(X, \theta^n_\alpha(X)) - n|$$
is uniformly bounded. Rather than prove this directly we replace the Dehn twist with the earthquake map which allows us to replace the integer $n$ with a real number $t$. The first lemma generalizes the above bound for Dehn twists and is considerably more subtle to prove.
\begin{lemma}{coarse twist and earthquake}
There exists a constant $m_3$ such that
$$|\tw_\ba(X, e_{\ba, \bt} (X)) - \bt| \le m_3.$$
\end{lemma}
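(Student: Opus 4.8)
The plan is to reduce the estimate to a single curve and to a clean geometric statement about how an earthquake displaces a fixed transversal. By the almost-additivity property (2) of $\tw_\alpha$ and the fact that the shears $e_{\alpha_j,t_j}$ commute and act independently (each $e_{\alpha_j,t_j}$ changes only the geometry near $\alpha_j$, leaving the projections $\pi_{\alpha_i}$ for $i\ne j$ coarsely fixed — this needs a short lemma, essentially that an earthquake along $\alpha_j$ moves $\pi_{\alpha_i}$ a bounded amount when $i(\alpha_i,\alpha_j)=0$, which is immediate since $\alpha_i$ has a representative disjoint from $\alpha_j$), it suffices to prove the scalar bound
$$
|\tw_\alpha(X, e_{\alpha,t}(X)) - t| \le m_3'
$$
for a single curve $\alpha$ and all $t\in\R$, $X\in\TT(S)$, with $m_3'$ depending only on $S$. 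Summing the $m$ scalar estimates and absorbing the $O(m)$ error from repeated application of (2) then gives the $m$-tuple bound in the sup norm, with $m_3$ depending only on $S$ (since $m \le 3g-3+p$ is bounded in terms of $S$).

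For the scalar estimate I would argue directly in the annular cover $\hat W \to S$ associated to $\alpha$. Fix a shortest curve $\beta$ in $X$ crossing $\alpha$ (so $\pi_\alpha(X)$ is represented by a lift $\hat\beta$). The earthquake $e_{\alpha,t}$ cuts along $\alpha^*$ and shears by signed distance $tl_\alpha(X)$; in the annular cover this has the effect of translating one side of the lifted transversal relative to the other by exactly $t$ full Dehn-twist increments, up to an additive error controlled by the width of the $\mu$-thin collar of $\alpha$ and by the fact that $\beta$ need not be perpendicular to $\alpha^*$ where it crosses. Concretely: the image curve $e_{\alpha,t}(\beta)$, read in $\hat W$, is obtained from $\hat\beta$ by precomposing with a shear, and its algebraic intersection with the core differs from $i_\alpha(\hat\beta,\hat\beta) + t$ by a bounded amount — the bound coming from the collar lemma (the geodesic for $\alpha$ of length $l_\alpha(X)$ has an embedded collar of definite width, and the two arcs of $\beta$ entering the collar have endpoints whose angular positions are pinned up to $O(1)$). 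Finally one must check that $\pi_\alpha(e_{\alpha,t}(X))$ is coarsely represented by $e_{\alpha,t}(\beta)$ rather than by some other short transversal: but any two shortest-length transversals in a hyperbolic surface have $\pi_\alpha$-projections within distance $2$ (as recalled in the excerpt), and $e_{\alpha,t}$ is an isometry from $X$ to $e_{\alpha,t}(X)$ carrying shortest transversals to shortest transversals, so this contributes only a further additive constant.

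The step I expect to be the main obstacle is making the annular-cover displacement computation genuinely uniform in $X$ — that is, showing the additive error does not blow up as $l_\alpha(X)\to 0$ or as $X$ degenerates elsewhere. The point is that as $\alpha$ becomes short its collar becomes wide, so one might worry that the transversal's behavior in the collar becomes hard to control; but in fact the earthquake magnitude $tl_\alpha(X)$ scales with $l_\alpha(X)$ precisely so that $t$ counts full twists regardless of the collar width, and the relevant error is the \emph{angular} discrepancy of $\beta$ at the two ends of the collar, which is bounded by a universal constant independent of $l_\alpha(X)$ (one can take $\beta$ to meet $\alpha^*$ with bounded obliqueness since a badly oblique crossing would make $\beta$ unnecessarily long). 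Cross-referencing \cite{ELC1} \S4, where the analogous bound for integer Dehn twists $|\tw_\alpha(X,\theta_\alpha^n(X)) - n|$ is established, should let me import the hard analytic content and extend it from $n\in\Z$ to $t\in\R$ by continuity of $e_{\alpha,t}$ together with coarse continuity of $\tw_\alpha$; indeed $t\mapsto \tw_\alpha(X,e_{\alpha,t}(X))$ is coarsely continuous and agrees with $n\mapsto\tw_\alpha(X,\theta_\alpha^{-n}(X))$ up to $O(1)$ at integers, which forces the linear growth with the stated additive defect.
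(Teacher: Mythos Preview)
Your reduction to the single-curve case hides the main difficulty. The ``short lemma'' you invoke --- that an earthquake along $\alpha_j$ moves $\pi_{\alpha_i}$ by only a bounded amount when $i(\alpha_i,\alpha_j)=0$ --- is \emph{not} immediate from the disjointness of $\alpha_i$ and $\alpha_j$. The projection $\pi_{\alpha_i}(Y)$ is defined via the \emph{shortest transversal} to $\alpha_i$ in $Y$, and that transversal may cross $\alpha_j$ many times; after a large shear along $\alpha_j$ the shortest transversal to $\alpha_i$ is a genuinely different isotopy class, and there is no one-line reason why its $\alpha_i$-annulus projection stays close to the old one. In fact this statement is exactly the annular case of Lemma~\ref{complementary subsurface}, whose proof in the paper goes by enlarging the curve system and \emph{applying} Lemma~\ref{coarse twist and earthquake} for the enlarged system. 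So as written your reduction is circular.

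The paper's route is the opposite of yours: rather than reducing to fewer curves, it \emph{enlarges} $\ba$ to a maximal system $\hat\ba$. The point is that once the complementary pieces are pairs of pants, every arc of a shortest transversal lying outside $\collar(\hat\ba)$ has uniformly bounded length, and this is precisely the hypothesis needed to invoke Lemma~\ref{collar restriction}, which compares $\tw_\alpha$ with the collar-restricted version $\tw_\alpha^\bc$. The collar-restricted twist is then computed directly from the affine shear model of the earthquake. The non-maximal case is recovered at the end by padding $\bt$ with zeros.

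A second, smaller gap: your closing interpolation argument (coarse continuity plus agreement at integers forces $|\tw_\alpha(X,e_{\alpha,t}(X))-t|$ bounded for real $t$) is not sound as stated. Coarse continuity gives neighborhoods of bounded diameter, but the number of such neighborhoods needed to cover $[n,n+1]$ can depend on $X$ --- in particular on $l_\alpha(X)$, since the shear distance on that interval is $l_\alpha(X)$ --- so you do not get a uniform bound this way. The paper's collar computation avoids this by working directly with the geometric shear and never appealing to an integer-to-real extension.
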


Next we see that projections to subsurfaces disjoint from $\ba$ remain coarsely constant when we earthquake along $\ba$.
\begin{lemma}{complementary subsurface}
There exists an $m_4$ such that for any essential subsurface $W \subset S$ disjoint from $\ba$ (except annuli parallel to components of $\ba$), and any $\bt \in T_\ba$
$$d_W(X , e_{\ba, \bt}(X)) < m_4$$
where $m_4$ only depends on $S$.
\end{lemma}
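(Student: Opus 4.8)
The plan is to work in the annular cover of $S$ associated to $W$ (when $W$ is non-annular, in an appropriate cover, but the essential case is really the annular one, so let me focus there) — or, more robustly, to use the definition of $d_W$ via shortest curves and track what the earthquake does to them. First I would choose a curve $\beta$ realizing $\pi_W(X)$, i.e.\ a shortest curve in $X$ crossing $W$ essentially (or a shortest arc, in the annular case). The key geometric fact is that $e_{\ba,\bt}$ is supported on annular collars of the components of $\ba$: away from these collars, the hyperbolic metric on $e_{\ba,\bt}(X)$ agrees with that of $X$. Since $W$ is disjoint from $\ba$ (apart from parallel annuli, which we may discard as they contribute nothing new to $\CC(W)$-distances beyond a bounded amount), $W$ itself can be isotoped off the support of the earthquake, so the restriction of the metric to (a collar-avoiding representative of) $W$ is unchanged.

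The subtlety is that $\beta$ may cross the support of the earthquake many times — indeed, after a large earthquake the image $e_{\ba,\bt}(\beta)$ can be very long and wrap many times around components of $\ba$. So I cannot simply say $\beta$ is preserved. Instead the point is: the image $e_{\ba,\bt}(\beta)$ is still \emph{some} curve crossing $W$, so $d_W\bigl(X, e_{\ba,\bt}(X)\bigr) \le d_W\bigl(\pi_W(\beta), \pi_W(e_{\ba,\bt}(\beta))\bigr) + (\text{bounded slack for non-uniqueness of }\pi_W)$. And $\pi_W$ depends only on how the curve sits \emph{inside} $W$ — i.e.\ on the isotopy classes of the arcs $\beta \cap W$, up to the boundary. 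Since $W$ lies in the complement of the earthquake support, the intersection pattern $\beta \cap W$ is literally unchanged (as a collection of properly embedded arcs in $W$, up to homotopy rel $\partial W$) by $e_{\ba,\bt}$. Hence $\pi_W(e_{\ba,\bt}(\beta)) = \pi_W(\beta)$ up to the usual bounded ambiguity, and we get $d_W(X, e_{\ba,\bt}(X)) < m_4$ with $m_4$ absorbing only these fixed projection-ambiguity constants (which, as recalled in Section \ref{ending invariants and geometry}, are uniform, even for annular $W$ with short core).

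I would carry this out in three steps: (i) reduce to $W$ either an annulus with core disjoint from $\ba$ or an essential non-annular subsurface isotoped into the complement of a collar neighborhood of $\ba$, discarding the annuli parallel to components of $\ba$; (ii) observe that $e_{\ba,\bt}$ is the identity outside these collars, so any curve can be homotoped so that it and its image agree on $W$ (more precisely: the arcs of intersection with $W$ have the same homotopy classes rel $\partial W$); (iii) conclude that $\pi_W$ of a shortest $X$-curve crossing $W$ and $\pi_W$ of its image agree up to the bounded ambiguity inherent in the definition of $\pi_W$ on $\TT(S)$, and set $m_4$ to be (say) twice that ambiguity constant plus $1$. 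The main obstacle is purely bookkeeping: one must be careful that ``shortest curve crossing $W$ in $X$'' and ``shortest curve crossing $W$ in $e_{\ba,\bt}(X)$'' need not be the same curve, so the argument must route through the observation that \emph{whatever} two curves are chosen, their $W$-projections are forced to be close because each realizes a projection controlled by the $W$-arc-pattern, and that pattern is invariant under the earthquake; no estimate on how long the image curve becomes is needed.
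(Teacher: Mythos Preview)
Your intuition is right --- the earthquake is an isometry on the complement of $\collar(\ba)$ --- but the argument as written has a real gap, and it is not just bookkeeping.

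The problematic step is the displayed inequality
\[
d_W\bigl(X,e_{\ba,\bt}(X)\bigr)\ \le\ d_W\bigl(\pi_W(\beta),\pi_W(e_{\ba,\bt}(\beta))\bigr) + (\text{bounded slack}).
\]
Here $\beta$ realizes $\pi_W(X)$, so the left-hand side is $d_{\CC(W)}(\pi_W(\beta),\pi_W(\gamma))$ where $\gamma$ is a shortest curve in $e_{\ba,\bt}(X)$ crossing $W$. The earthquake preserves homotopy classes, so ``$e_{\ba,\bt}(\beta)$'' is just $\beta$ again and $\pi_W(\beta)=\pi_W(e_{\ba,\bt}(\beta))$ is automatic. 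Your inequality therefore amounts to the bare assertion that $\pi_W(\gamma)$ is uniformly close to $\pi_W(\beta)$. But the ``bounded slack for non-uniqueness of $\pi_W$'' quoted from Section~\ref{ending invariants and geometry} compares two \emph{shortest} curves in the \emph{same} metric; it says nothing about comparing $\pi_W(\gamma)$ with $\pi_W$ of an arbitrary (possibly very long) curve like $\beta$ in the $e_{\ba,\bt}(X)$-metric. Your final paragraph recognizes this, but the proposed resolution --- ``each realizes a projection controlled by the $W$-arc-pattern, and that pattern is invariant under the earthquake'' --- does not close the gap: the arc pattern of the \emph{fixed} curve $\beta$ is invariant (trivially), but $\gamma$ is a genuinely different homotopy class with a different arc pattern in $W$, and nothing you have said bounds $d_W(\beta,\gamma)$.

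The missing mechanism is a length argument, not an arc-pattern-invariance argument: $\beta$ has bounded length in $X$, hence its arcs in $W^\bc$ have bounded length there, and since the earthquake is an isometry on $W^\bc$ these same arcs have the same bounded length in $e_{\ba,\bt}(X)$. Now one compares $\beta$ and $\gamma$ \emph{inside} $W^\bc$ in the common metric: both have short arcs there, hence bounded intersection, hence bounded $d_W$. This is exactly Lemma~\ref{bounding projections}, and is how the paper handles the non-annular case. For annular $W$ with core $\zeta$ the paper does something different and slicker: it enlarges $\ba$ to $\hat\ba=\ba\cup\zeta$, extends $\bt$ by $0$ in the $\zeta$-coordinate, observes $e_{\hat\ba,\hat\bt}=e_{\ba,\bt}$, and reads off $|\tw_\zeta(X,e_{\ba,\bt}(X))|\le m_3$ directly from Lemma~\ref{coarse twist and earthquake}. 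Your outline supplies neither of these two ingredients.
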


Finally we will construct a section of the bundle $l_\ba : \TT(S) \to L_\ba$ such the projection of all subsurfaces disjoint from $\ba$ is coarsely constant.

\begin{lemma}{section}
There exists an $m_5$ depending only on $S$ such that the following holds.
For any $X\in\mathcal{T}(S)$ there exists a section
$$\sigma: L_\ba \to \TT(S)$$
such that $X \in \sigma(L_\ba)$ and if $W\subset S$ is an essential subsurface
disjoint from $\ba$, then 
$$\diam_{C(W)}(\sigma(L_\ba)) < m_5.$$
\end{lemma}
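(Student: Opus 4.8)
The goal is to construct a section $\sigma\colon L_\ba\to\TT(S)$ of the length-fibration whose image has uniformly bounded projection to every essential subsurface $W$ disjoint from $\ba$, and which passes through a prescribed point $X$. The plan is to build $\sigma$ by deforming $X$ in a way that only touches length parameters and never introduces uncontrolled twisting across any complementary subsurface. The clean way to do this is to decompose the deformation into two kinds of moves: (i) pure twisting along curves of $\ba$, which is handled by Lemma \ref{complementary subsurface} and hence is automatically harmless for complementary projections; and (ii) ``pure length change'' moves that adjust the length of a single $\alpha_j$ while fixing the lengths of the others and introducing only bounded twist. The section is then defined on $L_\ba$ by composing such length-change moves to go from $l_\ba(X)$ to an arbitrary target $\bl\in L_\ba$, and one must check the result is independent (up to a canonical choice) of the path taken, or simply fix a canonical path.

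\textbf{The length-change move.} First I would isolate the one-variable statement: given $X$ and a target length $\ell'>0$ for a single component $\alpha_j$ of $\ba$, there is a path in $\TT(S)$ from $X$ to some $X'$ with $l_{\alpha_k}(X')=l_{\alpha_k}(X)$ for all $k$ and $l_{\alpha_j}(X')=\ell'$, along which $d_W$ changes by a bounded amount for every $W$ disjoint from $\ba$, and moreover along which $\tw_{\alpha_k}(X,\cdot)$ stays bounded for $k\ne j$. The natural candidate is the path obtained by ``pinching/unpinching'' supported in a collar of $\alpha_j$: this is a Fenchel--Nielsen deformation that moves only the $j$-th length coordinate and keeps all twist coordinates fixed, in an appropriate local coordinate system. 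The key geometric input is that such a deformation can be realized by maps that are the identity outside a fixed-width collar neighborhood of $\alpha_j$; since any $W$ disjoint from $\ba$ can be isotoped off this collar, the shortest curve computing $\pi_W$ is (coarsely) unaffected, exactly as in the proof of Lemma \ref{complementary subsurface}. One should extract this from the same collar estimates, or cite Fenchel--Nielsen coordinate changes directly: moving along a length coordinate while freezing twist coordinates is a homeomorphism of the relevant fiber bundle and is supported near $\alpha_j$.

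\textbf{Assembling the section and controlling ambiguity.} Fix a linear order on the components $\alpha_1,\dots,\alpha_m$ of $\ba$. Given $\bl=(\ell_1,\dots,\ell_m)\in L_\ba$, define $\sigma(\bl)$ by starting at $X$ and successively applying the length-change move for $\alpha_1$ (to reach length $\ell_1$), then for $\alpha_2$, and so on; since each move leaves the other length coordinates untouched, the result has $l_\ba=\bl$, and by construction $X\in\sigma(L_\ba)$ (take $\bl=l_\ba(X)$). Continuity of $\sigma$ follows from continuity of each move in its parameters. For the projection bound: each of the $m$ moves changes $d_W$ by at most a constant $c(S)$ (by the collar argument above, uniformly in the move's parameters), so any two points $\sigma(\bl),\sigma(\bl')$ in the image differ in $\CC(W)$ by at most $2m\,c(S)$ after also accounting for the change along $\partial W$'s own projection; setting $m_5=2m\,c(S)+O(1)$ gives the claim. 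The main obstacle is making the length-change move genuinely \emph{not twist}: a priori, pinching $\alpha_j$ and re-opening it could accumulate a twist that grows as $\ell_j\to 0$, which would be fine for $\CC(W)$ with $W\ne$ annulus but would need care if one insisted on controlling twist coordinates too. Since Lemma \ref{section} only asks for the $\CC(W)$ bound, not a twist bound, I can sidestep this: I only need that the supporting maps are collar-supported, not that they are twist-free. So the real work is the lemma that a length-coordinate change in Fenchel--Nielsen coordinates (with twist coordinates frozen) is realizable by homeomorphisms supported in a bounded-width collar, together with the now-standard estimate that collar-supported changes move subsurface projections by $O(1)$; this is the step I expect to require the most care, and it is essentially a localized version of the argument already given for Lemma \ref{complementary subsurface}.
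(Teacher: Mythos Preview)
Your proposal has a genuine gap at its central geometric claim. You assert that a Fenchel--Nielsen length deformation of a single $\alpha_j$ (with all other coordinates frozen) ``can be realized by maps that are the identity outside a fixed-width collar neighborhood of $\alpha_j$.'' This is false, and it is the essential difference between twist and length deformations. An earthquake along $\alpha_j$ is a shear supported on the geodesic itself, so the hyperbolic metric is literally unchanged off $\alpha_j$. By contrast, changing $l_{\alpha_j}$ changes the isometry type of the adjacent pairs of pants (and hence of the entire complementary pieces): a hyperbolic pair of pants is rigidly determined by its three boundary lengths, so altering one boundary length forces a global change of the metric on that pair of pants, not a change confined to a collar. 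Concretely, if $l_{\alpha_j}$ is large the standard collar has width tending to zero, yet you are proposing to push an arbitrarily large length change through a vanishingly thin collar; and if $l_{\alpha_j}$ is tiny and the target length is large, you are turning a deep cusp-like region into a thick region, which cannot be an identity outside any fixed collar.

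The paper's own proof confronts exactly this obstacle (it remarks that ``as we vary the lengths of $\ba$ we can not hope to control the behavior of the collection of shortest curves, especially when all components of $\ba$ are very long''). Instead of trying to localize the deformation, it controls the projection $\pi_W$ via \emph{short arcs} rather than short curves, using Lemma~\ref{bounding projections}. The construction is: extend $\ba$ to a maximal system $\hat\ba$ chosen inductively so that each new curve cuts off a pair of pants containing a uniformly short essential arc $\kappa_i$ in $X$; then take the section $\sigma_{\hat\ba}$ from the maximal case (Lemma~\ref{pants section}) and precompose with a section $\psi:L_\ba\to L_{\hat\ba}$ built from the pants geometry (Lemma~\ref{section over pants}) so that those arcs $\kappa_i$ \emph{stay} uniformly short as $\bl$ varies. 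The bounded-arc-length condition then feeds into Lemma~\ref{bounding projections} to bound $d_W$. The point is that some nontrivial choice must be made for how the complementary hyperbolic structures move as $l_\ba$ varies --- your proposal never makes that choice, because the collar-support claim would have rendered it unnecessary.
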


Assuming these three lemmas it is easy to prove Theorem \ref{FN projections}.
\begin{proof}[Proof of Theorem \ref{FN projections}] We define the map $\Phi$ by
$$\Phi(\bt, \bl) = e_{\ba, \bt}(\sigma(\bl))$$
where $\sigma$ is the section given by Lemma \ref{section}. In
particular $l_\ba\circ \sigma(\bl) = \bl$. Since the earthquake maps
fix the lengths of $\ba$ we also have $l_\ba \circ \Phi(\bt, \bl) =
\bl$ and (1) holds. 

Let $m_2 = \max\{m_3 + m_5, m_4 +m_5\}$.
Note that
$$|\tw_\ba(X, \Phi(\bt, \bl)) - \tw_\ba(X, \sigma(\bl)) - \tw_\ba(\sigma(\bl), \Phi(\bt,\bl))| \le 1.$$
Lemma \ref{section} implies that $|\tw_\ba(X, \sigma(\bl))| + 1 < m_5$ and Lemma \ref{coarse twist and earthquake} implies that $|\tw_\ba(\sigma(\bl), \Phi(\bt, \bl)) - \bt| \le m_3$. Therefore
$$|\tw_\ba(X, \Phi(\bt, \bl)) - \bt| < m_3 + m_5 \le m_2$$
proving (2).

Let $W\subset S$ be an essential subsurface in $S$ disjoint from $\ba$
which is not an annulus parallel to a component of $\ba$.
By Lemma \ref{section}, 
$$d_W(X, \sigma(\bl)) < m_5,$$
 and Lemma
\ref{complementary subsurface} implies that 
$$d_W(\sigma(\bl),
\Phi(\bt, \bl)) < m_4.$$
Therefore 
$$d_W(X, \Phi(\bt, \bl)) < m_4 + m_5 \le m_2$$
proving (3).
\end{proof}

\subsection{Comparing twist coefficients}

To prove Lemma \ref{coarse twist and earthquake} we need an effective method of calculating $\tw_\ba$.
The map $\tw_\ba$ can be difficult to compute because, unlike other
subsurface projections, it is defined by lifting curves to a cover
rather than restricting them to a subsurface. We now describe a method
for approximating $\tw_\ba$ by restricting the curves to an annular
neighborhood of $\alpha$ (See  Minsky
\cite{ELC1} for a similar discussion.)

First, recall there is a uniform way to choose a regular neighborhood of a geodesic
in a hyperbolic surface. Namely there is a function $w:\R^+\to\R^+$
such that, for a simple closed geodesic $\gamma$ of length $l$ in any
hyperbolic surface, the neighborhood of radius $w(l)$, which we call
$\collar(\gamma)$, is an embedded annulus, and moreover
\begin{enumerate}
\item $\collar(\gamma)\intersect\collar(\beta) = \emptyset$ whenever
  $\gamma\intersect\beta = \emptyset$,
\item The length $l'$ of each component of $\boundary\collar(\gamma)$
  satisfies
$$
\max(a_0,l(\gamma)) < l' < l(\gamma) + a_1
$$
where $a_0, a_1$ are universal positive constants.
\end{enumerate}
See e.g. \cite{buser}. 
We can also define $\collar(\gamma)$ for a boundary
component of a surface, and extend the definition to give horocyclic
neighborhoods of cusps (here $l=0$ and
$w=\infty$) by requiring that the boundary length of the neighborhood
be fixed. 
If $\ba$ is a curve system then $\collar(\ba) =
\union_{\alpha_j\in \alpha} \collar(\alpha_j)$.

If $\alpha$ is a single curve and $a$ and $b$ are properly embedded
arcs in $\collar(\alpha)$ let $i^\bc_\alpha(a, b)$ be their algebraic
intersection. (When $a$ and $b$ have common endpoints we
  modify the definition just as we did for $i_\alpha(a,b)$.) If
$\gamma$ and $\beta$ are simple closed curves on $S$ that intersect
$\collar(\alpha)$ essentially and minimally in their homotopy class
define $\tw_\ba^\bc(\gamma, \beta) = i^\bc_\alpha(a,b)$ where $a$ and
$b$ are components of $\gamma \cap \collar(\alpha)$ and $\beta \cap
\collar(\alpha)$, respectively. As usual this definition depends on
the choice of component but only up to a bounded amount. Note that
while $\tw_\ba(\gamma, \beta)$ only depends on the homotopy classes of
$\gamma$ and $\beta$, $\tw^\bc_\alpha(\gamma, \beta)$ depends strongly
on the choice of curves. However, as we will see in the next lemma if
$\gamma$ and $\beta$ satisfy certain geometric conditions then
$\tw^\bc_\alpha(\gamma, \beta)$ is a good approximation for $\tw_\ba(\gamma, \beta)$.

\subsubsection*{Notation} 
To prevent a proliferation of constants throughout the remainder of this section we will use the following notation. The expression $x\sim y$ means that $|x-y| < c$ for some constant $c$ that depends only on $S$. We write $x \qsim K y$ if the constant depends on $S$ and some other constant $K$. For example, if $f \sim 0$ then the quantity $|f|$ is uniformly bounded.

\medskip

\begin{lemma}{collar restriction}
Let $\alpha$ be a curve in a curve system $\ba$ on $S$ and $X\in\TT(S)$.
Let $\gamma$ and $\beta$ be simple closed curves which intersect $\collar(\alpha)$
nontrivially, so that all components of their intersections with
$\collar(\ba)$ and with \hbox{$S\smallsetminus\collar(\ba)$} are essential.

Further assume that every component of $\gamma \cap ( S \setminus
\collar(\ba))$ that is adjacent to $\collar(\alpha)$ has length $<L$,
and similarly for $\beta$. 
Then
$$ \tw_\alpha(\gamma, \beta)  \qsim L \tw_\alpha^\bc(\gamma, \beta).$$
\end{lemma}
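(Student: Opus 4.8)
The plan is to compare the two twisting quantities by relating both of them to the geometry of the lifts of $\gamma$ and $\beta$ to the annular cover $\hhat\alpha$ of $S$ associated to $\alpha$. Recall that $\tw_\alpha(\gamma,\beta)$ is defined (up to bounded error) as the algebraic intersection number $i_\alpha(\til\gamma,\til\beta)$ of the preferred lifts $\til\gamma,\til\beta$ --- geodesic arcs crossing the entire compactified annular cover --- while $\tw_\alpha^\bc(\gamma,\beta)$ is the algebraic intersection of the arcs $a\subset\gamma\cap\collar(\alpha)$ and $b\subset\beta\cap\collar(\alpha)$. The key point is that $\collar(\alpha)$ lifts homeomorphically to a compact sub-annulus $A\subset\hhat\alpha$ containing the core geodesic, and the restrictions $\til a = \til\gamma\cap A$, $\til b = \til\beta\cap A$ are precisely the lifts of $a$ and $b$. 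So the claim $\tw_\alpha(\gamma,\beta)\qsim L\tw_\alpha^\bc(\gamma,\beta)$ amounts to: the portion of algebraic intersection of $\til\gamma$ and $\til\beta$ that happens \emph{outside} the lifted collar $A$ is bounded in terms of $S$ and $L$.

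First I would set up the annular cover carefully: choose the preferred geodesic representatives $\til\gamma,\til\beta$ in $\hhat\alpha$ and note that outside a compact core each of them is asymptotic to one of the two ideal boundary components in a controlled way, so all of their mutual intersection is concentrated in a compact region. Next I would decompose $\til\gamma$ as $\til a$ (inside $A$) together with the two complementary ``tails'' $\til\gamma\setminus A$, and likewise for $\til\beta$; then
$$
i_\alpha(\til\gamma,\til\beta) = i_\alpha(\til a,\til b) + (\text{intersections involving at least one tail}).
$$
The first term is exactly $\tw_\alpha^\bc(\gamma,\beta)$ (up to the same bounded ambiguity present in both definitions). So the entire lemma reduces to bounding the number of times a tail of one lift crosses the other lift.

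The main step --- and the main obstacle --- is bounding these tail intersections. Here is where the length hypothesis enters: each component of $\gamma\cap(S\setminus\collar(\ba))$ adjacent to $\collar(\alpha)$ has length $<L$, and the boundary curves of $\collar(\alpha)$ have length bounded below by $a_0$ (property (2) of the collar function). A short arc leaving $\collar(\alpha)$ into the bounded-geometry part $S\setminus\collar(\ba)$ can wrap around $\alpha$ only a bounded number of times before it must close up or exit, simply because a path of length $<L$ in a region where the relevant boundary has definite length can accumulate only $\simle L/a_0$ units of ``winding'' relative to $\alpha$; lifting this statement to $\hhat\alpha$, the tail of $\til\gamma$ travels a bounded horizontal distance (measured in the $\Z\simmult\CC(\alpha)$ coordinate) before terminating, and similarly for $\til\beta$. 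Since $\til a$ and $\til b$ already span $A$, the additional crossings contributed by tails are therefore $\qsim L 0$. I would make the ``bounded winding'' estimate precise using the standard geometry of the collar (the modulus of $\collar(\gamma)$ grows like $1/l(\gamma)$, so a short transversal arc cannot spiral much) together with the fact that on $S\setminus\collar(\ba)$ the injectivity radius is bounded below by a constant depending only on $S$. Assembling: $|\tw_\alpha(\gamma,\beta)-\tw_\alpha^\bc(\gamma,\beta)|$ is bounded by the number of tail crossings plus the two universal ambiguity constants, all of which are $\qsim L 0$, which is exactly the assertion.
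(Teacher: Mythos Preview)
Your strategy---compare the total twist $\tw_\alpha$ to the collar-restricted twist $\tw^\bc_\alpha$ by bounding what happens outside the collar---is exactly the paper's strategy, and your annular-cover formulation is equivalent to the paper's shear-coordinate formulation in $\Hyp^2$. But there is a genuine gap in the ``bounded winding'' step.

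You correctly note that the arc of $\gamma$ in $S\setminus\collar(\ba)$ adjacent to $\collar(\alpha)$ has length $<L$, and that an arc of length $<L$ against boundary curves of length $\ge a_0$ contributes at most $L/a_0$ units of horizontal travel in the annular cover. The problem is that the tail of $\til\gamma$ does not terminate after this arc: it then enters the lift of the \emph{next} collar (of some $\alpha'\in\ba$), and beyond that continues all the way to the ideal boundary of $\hhat\alpha$. Your argument, as written, gives no control over the horizontal displacement accumulated by the tail after it leaves this first $<L$-arc. In principle the geodesic could spiral many times inside the next collar or wander further, and you have no length bound there.

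The paper resolves this with a single geometric observation that you are missing. Once the lift $B$ of $\beta$ (or $\gamma$) leaves $\bC$ and traverses the length-$<L$ arc, it enters a lift $\bD$ of another component of $\collar(\ba)$, and since each arc of $\beta\cap\collar(\ba)$ is essential, $B$ crosses the core geodesic of $\bD$. The endpoint of $B$ at infinity therefore lies on the far side of $\bD$ from the axis $A$. Now because the collars of $\ba$ are disjointly embedded in $S$, the projection of $\bD$ (and hence of the entire half-plane beyond it) to $\partial\bC$ spans at most one fundamental domain of the $\langle\alpha\rangle$-action. So the total horizontal displacement of the tail is at most $L/a_0 + 1$ fundamental domains, which gives the estimate. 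If you insert this observation, your proof goes through and is essentially a restatement of the paper's in the annular-cover language.
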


\begin{proof}
We consider another measure of twisting.
For two intersecting simple closed curves $\alpha$ and $\beta$ and a hyperbolic
structure $X$, we define a geometric shear of $\beta$ about $\alpha$ in $X$,
$s_{\alpha,X}(\beta)\in\R$, as follows. Let $A$ be a lift of the
geodesic representative of $\alpha$ to $\Hyp^2$, let $B$ be a lift of
$\beta$ which crosses $A$, and let $s_{\alpha,X}(\beta)$ denote
$1/l_\alpha(X)$ times the signed distance along $A$ between the orthogonal projections
to $A$ of the endpoints of $B$.  The sign is chosen so that
a left-earthquake of $X$ along $\alpha$ will increase
$s_{\alpha,X}(\beta)$. 

Since any two lifts of $\beta$ are disjoint, the values they give for
$s_{\alpha,X}$ differ by at most 1
(see Farb-Lubotzky-Minsky \cite{farb-lubotzky-minsky} for a discussion along these
lines). Moreover, $s_{\alpha,X}$ measures roughly the (signed) number of
fundamental domains of $\alpha$ crossed by the lift of $\beta$, and
this means that a difference of shears 
$s_{\alpha,X}(\gamma) -s_{\alpha,X}(\beta)$ 
coarsely measures the algebraic intersection numbers of lifts of
$\gamma$ and $\beta$  to the annulus cover associated to $\alpha$. 
In other words, comparing this with the
definition of $\tw_\ba$ we can see that, for any
$X,\alpha$, and $\gamma,\beta$ both crossing $\alpha$,
\begin{equation}\label{shear difference is twist}
\tw_{\alpha}(\gamma,\beta) \sim s_{\alpha,X}(\beta) -
s_{\alpha,X}(\gamma).
\end{equation}

We now make a similar definition using only $\collar(\alpha)$. Let
$\bC$ be a neighborhood of $A$ in $\Hyp^2$ that is a lift of
$\collar(\alpha)$ and consider the arc $B\intersect \bC$. Let
$s^\bc_{\alpha, X}(\beta)$ denote $1/l_\alpha(X)$ times the signed
distance along $A$ between the orthogonal projections to $A$ of the
endpoints of $B\intersect \bC$. As for $s_{\alpha, X}(\beta)$ the
signs are chosen so that a left-earthquake of $X$ along $\alpha$ will
increase $s^\bc_{\alpha, X}(\beta)$. Using the same reasoning as above
we see that
$$\tw^\bc_\alpha(\gamma,\beta) \sim s^\bc_{\alpha,X}(\beta) - s^\bc_{\alpha,X}(\gamma).$$

Note that $s_{\alpha,X}(\beta)$ only depends on the homotopy class of $\beta$ and the choice of lift. On the other hand, $s^\bc_{\alpha,X}(\beta)$ depends strongly on the curve $\beta$. However, given the restrictions we have put on $\beta$ we claim
\begin{equation}\label{estimate s}
s_{\alpha,X}(\beta) \qsim L s^\bc_{\alpha,X}(\beta).
\end{equation}
The lemma follows from this estimate.

To establish claim \ref{estimate s} we further examine the lift $B$ of $\beta$. 
Let $x^\bc$ be an endpoint of $B\intersect \bC$. After leaving $\bC$
at $x^\bc$, $B$ must continue to another lift $\bD$ of a component of
$\collar(\ba)$, and terminate at infinity at a point $x$ on the other
side of $\bD$. The distance in $\partial \bC$ between $x^\bc$ and the
orthogonal projection of $x$ to $\partial \bC$ will be bounded by $L$
plus the diameter of the projection of $\bD$.  The latter projects to
at most one fundamental domain of $\bC$ because the collars of $\ba$
are embedded. The arc of length $L$ projects, on the boundary of
$\bC$,  to at most $L/a_0$
fundamental domains because the length of each of them is at least
$a_0$. The bound of $1+L/a_0$ fundamental domains therefore applies to the projection to
the axis $A$ as well.
Applying the same estimate to the other endpoints, 
(\ref{estimate s}) follows.
\end{proof}

We can now prove Lemma \ref{coarse twist and earthquake}.

\begin{proof}[Proof of Lemma \ref{coarse twist and earthquake}]
We first assume that $\ba$ is a maximal curve system.
Let $\alpha_j$ be a curve in $\ba$ and let $\beta$ be a shortest curve
in $X$ that crosses $\alpha_j$, chosen so that
$\pi_{\alpha_j}(\beta) = \pi_{\alpha_j}(X)$.
Note that
$\collar(\ba)$ has a canonical affine structure given by the orthogonal foliations consisting of vertical geodesics orthogonal to core geodesics and horizontal curves equidistant to the core curve. There is then a canonical map from $X$ to $e_{\ba,\bt}(X)$ that is an isometry on $X \setminus \collar(\ba)$ and is an affine shear on each component of $\collar(\ba)$. Let $\beta'$ be the image of $\beta$ under this map and let $\gamma = \pi_{\alpha_j}(X)$ be a shortest curve in $e_{\ba, \bt}(X)$ that crosses $\alpha_j$. Then $\tw_{\alpha_j}(X, e_{\ba, \bt}(X)) = \tw_{\alpha_j}(\beta, \gamma)$.

Since $\ba$ is maximal and $\beta$ is a shortest curve that crosses $\alpha_j$, the length of every component of $\beta \setminus \collar(\ba)$ in $X$ is uniformly bounded.
It follows that every arc in $\beta' \setminus \collar(\ba)$ is uniformly bounded in $e_{\ba, \bt}(X)$. Similarly every component of $\gamma \setminus \collar(\ba)$ has uniformly bounded length in $e_{\ba, \bt}(X)$. Therefore we can apply Lemma \ref{collar restriction} to $\beta'$ and $\gamma$.

Since $\beta$ is a shortest curve crossing $\alpha_j$ in $X$ there is a vertical arc $b$ in $\collar(\alpha_j)$ that is disjoint from a component of $\beta \cap \collar(\alpha_j)$. Let $b'$ be the image of $b$ under the affine shear determined by $e_{\ba, \bt}$. In particular $b'$ will be disjoint from a component of $\beta'$. Similarly there is a vertical arc $a$ disjoint from a component of $\gamma \cap \collar(\alpha_j)$. Therefore
$$|\tw^\bc_{\alpha_j}(\beta',\gamma) - i^\bc_{\alpha_j}(b',a)| \le 2.$$
From the construction of the earthquake map we also see that
$$|i^\bc_{\alpha_j}(b', a) - t_j| \le 1$$
and it follows that
$$|\tw^\bc_{\alpha_j}(\beta',\gamma) - t_j| \leq 3.$$

Lemma \ref{collar restriction} then gives us our desired estimate for $\tw_{\alpha_j}(X,e_{\ba,\bt}(X))$ and applying this estimate to each component of $\ba$ gives us the lemma when $\ba$ is maximal.

If $\ba$ is not maximal we extend it to a maximal system $\hat\ba$. Given \hbox{$\bt\in T_\ba$,}
we extend it to $\hat\bt\in T_{\hat\ba}$  by letting all the coordinates corresponding
to components of $\hat\ba-\ba$ be 0. We then have
\begin{eqnarray*}
|\tw_\alpha(X, e_{\ba, \bt}(X)) - \bt| &=& | \tw_\alpha(X, e_{\hat\ba, \hat\bt}(X)) - \bt| \\
& \le & |\tw_{\hat\alpha}(X,e_{\hat\ba, \hat\bt}(X)) - \hat\bt|.
\end{eqnarray*}
The desired bound then follows from the bound in the maximal case
since $e_{\hat\ba,\hat\bt}(X)=e_{\ba,\bt}(X)$.
\end{proof}

We can now prove a special case of Lemma \ref{section} when $\ba$ is a maximal curve system. This special case is required to prove the more general version of the lemma.

\begin{lemma}{pants section}
Let $\ba$ be a maximal curve system on $S$ and let $X \in \TT(S)$.
Then there exists a section
$$\sigma: L_\ba \to \TT(S)$$
such that $X \in \sigma(L_\ba)$ and
$$|\tw_\ba(X,Y)| \sim 0$$
for all $Y \in \sigma(L_\ba)$.
\end{lemma}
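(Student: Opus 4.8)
The plan is to construct $\sigma$ by a Fenchel–Nielsen-type section whose twist parameters are tuned so that the shortest curve crossing each $\alpha_j$ in every $\sigma(\bl)$ twists around $\alpha_j$ by a bounded amount relative to the one in $X$. Concretely, since $\ba$ is maximal, I would first fix any smooth section $\sigma_0 : L_\ba \to \TT(S)$ (for instance one arising from an explicit Fenchel–Nielsen coordinate system as in Section \ref{fenchel-nielsen}), normalized so that $\sigma_0(l_\ba(X)) = X$; this already puts $X$ in the image. Then I would compose with an earthquake correction: set $\sigma(\bl) = e_{\ba,\,\bt(\bl)}(\sigma_0(\bl))$, where $\bt(\bl)\in T_\ba$ is chosen so that, for each component $\alpha_j$, the coarse twist $\tw_{\alpha_j}(X, \sigma(\bl))$ is as close to $0$ as possible. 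Because $e_{\ba,\bt}$ preserves the fibers of $l_\ba$, this is still a section, and one checks it still contains $X$ by taking $\bt(l_\ba(X)) = \zero$.

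The key point making the correction well-defined and bounded is Lemma \ref{coarse twist and earthquake}, which says $|\tw_\ba(\sigma_0(\bl), e_{\ba,\bt}(\sigma_0(\bl))) - \bt| \le m_3$. Combined with the near-additivity property (2) of $\tw_\alpha$, this gives
$$
|\tw_\ba(X,\sigma(\bl))| \le |\tw_\ba(X,\sigma_0(\bl)) + \bt(\bl)| + m_3 + 1.
$$
So I want $\bt(\bl)$ to coarsely cancel $\tw_\ba(X,\sigma_0(\bl))$: concretely take $\bt_j(\bl) = -\tw_{\alpha_j}(X,\sigma_0(\bl))$ (rounded to an integer or real as convenient). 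Then the displayed right-hand side is bounded by $m_3 + 1$, uniformly in $\bl$, and applying the $\tw_\alpha$ triangle-type estimate componentwise gives $|\tw_\ba(X,Y)| \sim 0$ for all $Y = \sigma(\bl) \in \sigma(L_\ba)$, as required.

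The main obstacle I anticipate is \emph{continuity} of the resulting section: $\bt(\bl)$ is defined via $\tw_{\alpha_j}(X,\sigma_0(\bl))$, which is only coarsely continuous in $\bl$ (it can jump when the shortest curve crossing $\alpha_j$ in $\sigma_0(\bl)$ changes). A discontinuous $\bt(\bl)$ would give a discontinuous $\sigma$, which is not acceptable since $\sigma$ must be a genuine section. The fix is to not use the combinatorial quantity $\tw_{\alpha_j}$ directly in the definition, but rather a continuous real-valued proxy — e.g. the geometric shear $s_{\alpha_j, \sigma_0(\bl)}(\beta_j)$ from the proof of Lemma \ref{collar restriction} for a fixed topological choice of $\beta_j$ crossing $\alpha_j$, or simply a continuous Fenchel–Nielsen twist parameter of $\sigma_0(\bl)$ read off against that of $X$. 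These vary continuously in $\bl$ and differ from $\tw_{\alpha_j}(X,\sigma_0(\bl))$ by a bounded amount (this is exactly the content of relating shears to twists via \eqref{shear difference is twist}), so using them to define $\bt(\bl)$ yields a continuous section while still achieving the coarse cancellation above. Once continuity is secured, the bound on $|\tw_\ba(X,Y)|$ follows immediately from Lemma \ref{coarse twist and earthquake} and the coarse-additivity of $\tw_\alpha$, completing the proof.
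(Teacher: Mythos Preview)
Your proposal is correct and follows essentially the same approach as the paper: start with an arbitrary section through $X$, compose with an earthquake correction $e_{\ba,\bt(\bl)}$ chosen to cancel $\tw_\ba(X,\sigma_0(\bl))$, and invoke Lemma~\ref{coarse twist and earthquake} together with near-additivity of $\tw_\alpha$. The one difference is in resolving the continuity obstacle you correctly flag: rather than replacing $\tw_\ba$ by a geometric shear proxy, the paper simply notes that $g(\bl)=\tw_\ba(X,\hat\sigma(\bl))$ is coarsely continuous, triangulates $L_\ba$ sufficiently finely, sets $\hat g=g$ on the $0$-skeleton, and extends affinely to obtain a genuine continuous $\hat g$ with $|g-\hat g|$ uniformly bounded.
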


\begin{proof}
Let 
$$\hat\sigma: L_\ba \to \TT(S)$$
be an arbitrary choice of section. We will use Lemma \ref{coarse twist and earthquake} to
``twist'' $\hat\sigma$ to our desired section $\sigma$.

Define a function $g: L_\ba \to T_\ba$ by
$$g(\bl) = \tw_\ba(X, \hat\sigma(\bl)).$$
Since $\hat\sigma$ is
continuous and $\tw_\ba$ is coarsely continuous, the function $g$ is
coarsely continuous. Recall this means there exists a constant  $C>0$ such that any $\bl\in L_\ba$ has a neighborhood $U$ 
with $\diam(g(U)) < C$.  

In particular, there exists a continuous function $\hhat g:L_\ba\to
T_\ba$, such that $|g-\hhat g| < 2C$: Simply triangulate $L_\ba$
sufficiently finely, set $\hhat g = g$ on the 0-skeleton, and extend by affine
maps to each simplex. 

We now define $\sigma$ by setting
$$\sigma(\bl) = e_{\ba, -\hat g(\bl)}(\hat\sigma(\bl)).$$
Lemma \ref{coarse twist and earthquake} then implies that
$$|\tw_\ba(\hat\sigma(\bl), \sigma(\bl)) + \hat{g}(\bl)| < m_3.$$
Using the fact that
$$|\tw_\ba(X, \hat{\sigma}(\bl)) + \tw_\ba(\hat\sigma(\bl), \sigma(\bl)) - \tw_\ba(X,\sigma(\bl))| \leq 1$$
and the bound on the difference between $g$ and $\hat{g}$ we have
$$|\tw_\ba(X, \sigma(\bl))| < m_3 + 2C + 1.$$
\end{proof}

Note that if $\ba$ is a maximal curve system then Lemma
\ref{complementary subsurface} is vacuous. In particular we have already
proven Theorem \ref{FN projections} in this special case. As it
may be of independent interest we state it as a theorem here. 

\begin{theorem}{FN coordinates}
Let $\ba$ be a maximal curve system for $S$. For any $X \in \TT(S)$ there exist
Fenchel-Nielsen coordinates
$$F: V_\ba \to \TT(S)$$
such that
$$\tw_\ba(X, F(\bt, \bl)) \sim \bt$$
for all $\bt\in T_\ba$ and all $\bl\in L_\ba$.
\end{theorem}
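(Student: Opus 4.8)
The plan is to obtain Theorem \ref{FN coordinates} as the maximal-curve-system special case of Theorem \ref{FN projections}, assembling it directly from Lemma \ref{pants section} and Lemma \ref{coarse twist and earthquake}. First I would apply Lemma \ref{pants section} to the given $X$ to produce a section $\sigma : L_\ba \to \TT(S)$ with $X \in \sigma(L_\ba)$ and $|\tw_\ba(X,Y)| \sim 0$ for every $Y \in \sigma(L_\ba)$. Since $\ba$ is maximal, the discussion in Section \ref{fenchel-nielsen} shows that the earthquake action of $T_\ba = \R^m$ on the fibers of $l_\ba$ is free and transitive, so $\TT(S)$ is a principal $\R^m$-bundle over $L_\ba$ and $\sigma$ determines a Fenchel-Nielsen homeomorphism $F : V_\ba \to \TT(S)$ by $F(\bt,\bl) = e_{\ba,\bt}(\sigma(\bl))$. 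This is exactly the map $\Phi$ built in the proof of Theorem \ref{FN projections}, now packaged as genuine Fenchel-Nielsen coordinates.

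Next I would estimate $\tw_\ba(X, F(\bt,\bl))$ one component at a time. For each $\alpha_j\in\ba$, the coarse cocycle inequality (property (2) of $\tw_{\alpha_j}$), applied to the representative curves $\pi_{\alpha_j}(X)$, $\pi_{\alpha_j}(\sigma(\bl))$, $\pi_{\alpha_j}(F(\bt,\bl))$, gives
$$|\tw_{\alpha_j}(X, F(\bt,\bl)) - \tw_{\alpha_j}(X, \sigma(\bl)) - \tw_{\alpha_j}(\sigma(\bl), F(\bt,\bl))| \le 1.$$
The middle term is uniformly bounded by Lemma \ref{pants section}, and the last term equals $\tw_{\alpha_j}(\sigma(\bl), e_{\ba,\bt}(\sigma(\bl)))$, which differs from $t_j$ by at most $m_3$ by Lemma \ref{coarse twist and earthquake}. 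Combining the three bounds yields $|\tw_\ba(X, F(\bt,\bl)) - \bt| \sim 0$, i.e. $\tw_\ba(X, F(\bt,\bl)) \sim \bt$ for all $\bt\in T_\ba$ and $\bl\in L_\ba$, as claimed.

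I do not expect a genuine obstacle here: all the real work has already been isolated into Lemma \ref{coarse twist and earthquake} (the subtle comparison of earthquake magnitude with coarse twisting, via the collar estimate of Lemma \ref{collar restriction}) and Lemma \ref{pants section} (twisting an arbitrary section to kill $\tw_\ba(X,\cdot)$, using coarse continuity and an affine approximation $\hhat g$ of $g$). The only point requiring care is the observation that when $\ba$ is maximal there are no essential subsurfaces disjoint from $\ba$ other than annuli parallel to its components, so condition (3) of Theorem \ref{FN projections} is vacuous and Lemma \ref{complementary subsurface} is not needed; thus only conditions (1) and (2) remain, and (2) is precisely the content of Theorem \ref{FN coordinates}. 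Accordingly I would present the proof as the short two-step assembly described above rather than reproving anything.
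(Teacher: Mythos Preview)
Your proposal is correct and is essentially identical to the paper's own argument: the paper states this theorem as the maximal-curve-system special case of Theorem \ref{FN projections}, noting that Lemma \ref{complementary subsurface} is vacuous in that case, so the proof reduces exactly to combining Lemma \ref{pants section} with Lemma \ref{coarse twist and earthquake} via the coarse cocycle inequality, precisely as you outline.
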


\subsection{Proof of Lemma \ref{complementary subsurface}}

To prove Lemmas \ref{complementary subsurface} and \ref{section} we
need to control subsurface projections along subsurfaces in the
complement of the curve system $\ba$ as we twist along $\ba$ and as we
vary the length of $\ba$. The difficulty is that as we vary the lengths
of $\ba$ we can not hope to control the behavior of the collection of shortest curves,
especially when all components of $\ba$ are very long.
What we will do instead is control the lengths of arcs on
complementary subsurfaces and we will see that this is sufficient. The
following lemma contains a more precise statement. It will be used in
the proofs of both Lemmas \ref{complementary subsurface} and
\ref{section}. 

If $R\subset S$ is an essential non-anular subsurface and $X$ is a
given hyperbolic structure on $S$, let $R^\bc$ denote the component of
$S\setminus \collar(\boundary R)$ which is isotopic to $R$. 

\begin{lemma}{bounding projections}
Let $R \subseteq S$ be a non-annular essential subsurface and $W
\subseteq R$ an essential (possibly annular) subsurface nested in
$R$. Let $\kappa$ be an essential simple closed curve or properly
embedded arc in $R$  that intersects $W$ essentially and let $L>0$ be
a constant. If  $X$ and $Y$ are hyperbolic structures in $\TT(S)$ such
that the length of $\kappa\intersect R^\bc$ is bounded by $L$ in both $X$ and $Y$
then 
$$d_W(X,Y) \qsim L 0.$$
\end{lemma}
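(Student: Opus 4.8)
The statement is a "quasi-geodesic in the cover" type estimate: the curve $\kappa$, though possibly long in $X$ and $Y$, serves as a common reference along which to compare the two projections $\pi_W(X)$ and $\pi_W(Y)$. The key point is that $\pi_W$ of a hyperbolic structure is (coarsely) computed from any bounded-length curve or arc crossing $W$, and a short arc in $R^\bc$ can be completed inside $R^\bc$ to a bounded-length curve meeting $W$ essentially. So the plan is: (1) produce, from $\kappa \cap R^\bc$ together with bounded-length "closing up" arcs living in $\collar(\boundary R)$ and in $R^\bc$, a simple closed curve (or a bounded-complexity multicurve) $\kappa_X$ in $R$ with $l_X(\kappa_X) \qsim L 0$ and which still intersects $W$ essentially — and similarly $\kappa_Y$ in $Y$; (2) observe that since $\kappa_X$ has bounded length in $X$ and crosses $W$, one has $d_W(X, \kappa_X) \qsim L 0$ by the definition of $\pi_W(X)$ (the shortest curve crossing $W$ has length at most the Bers constant, and two bounded-length curves both crossing $W$ have uniformly bounded $d_W$-distance — the last paragraph of the "Subsurface projections and the curve complex" subsection gives exactly this, for annular $W$ as well); (3) conclude $d_W(X,Y) \le d_W(X,\kappa_X) + d_W(\kappa_X,\kappa_Y) + d_W(\kappa_Y,Y)$, where the outer two terms are $\qsim L 0$ by step (2), so it remains to bound $d_W(\kappa_X, \kappa_Y)$.

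For that last term, the point is that $\kappa_X$ and $\kappa_Y$ agree with $\kappa$ on $R^\bc$ — the only difference between them is how the arcs of $\kappa \cap R^\bc$ were closed up, i.e. the behavior inside $\collar(\boundary R)$, which is disjoint from $W$ when $W$ is nested in $R$ and not contained in a $\boundary R$-collar. Hence $\kappa_X$ and $\kappa_Y$ restrict to the \emph{same} collection of arcs in $W$ up to homotopy (both restrict to whatever arcs $\kappa$ cuts out in $W$), so $\pi_W(\kappa_X)$ and $\pi_W(\kappa_Y)$ are computed from the same data and differ by a bounded amount; thus $d_W(\kappa_X, \kappa_Y) \sim 0$. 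In the annular case one must be slightly more careful: there $\pi_W$ remembers the lift to the annular cover, so one checks that the closing-up arcs, being confined to a collar of $\boundary R$ and hence disjoint from $\kappa$'s essential intersection with the core annulus $W$, do not change the lift by more than a bounded amount (the same "two disjoint lifts differ by $\le 1$" principle).

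**Main obstacle.** The delicate step is (1): actually constructing $\kappa_X$ and controlling its length in terms of $L$. One has to close up each arc of $\kappa \cap R^\bc$ using arcs that run along $\boundary R^\bc$ (inside $\collar(\boundary R)$) of bounded length — there the subtlety is that the number of arcs of $\kappa \cap R^\bc$, and how they must be joined up to produce something that still crosses $W$ essentially, needs to be controlled; but the number of such arcs is bounded in terms of the topology of $S$ (an essential simple arc or curve in $R$ crosses $\collar(\boundary R)$ a bounded number of times), so each closing arc contributes a bounded amount and the total length is $\le L$ plus a constant, i.e. $\qsim L 0$. A secondary point is making sure the closing-up can be done so that $\kappa_X$ still intersects $W$ essentially; since $\kappa$ itself does and $W$ is nested in $R$ away from $\boundary R$, one can close up within a collar of $\boundary R$ without killing the essential intersection with $W$. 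Once $\kappa_X$ and $\kappa_Y$ are in hand, steps (2) and (3) are routine applications of the coarse well-definedness of subsurface projection already established in Section~\ref{ending invariants and geometry}.
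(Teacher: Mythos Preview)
Your overall strategy---use (an extension of) $\kappa$ as a common reference and bound $d_W(X,\cdot)$ and $d_W(\cdot,Y)$ separately---is the paper's approach, but you have introduced an unnecessary detour and in doing so a genuine gap.

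The gap is in step (1): you claim the closing-up arcs along $\partial R^\bc$ have bounded length, so that $\kappa_X$ has total length $\qsim L 0$ in $X$. But the circumference of a component of $\partial R^\bc$ is comparable to the $X$-length of the corresponding component of $\partial R$, and nothing in the hypotheses bounds that; $\partial R$ may be arbitrarily long in $X$. So $\kappa_X$ need not have bounded total length, and your step (2) fails as written. (The paragraph you cite about two \emph{shortest} curves having $d_\zeta \le 2$ does not help either, since $\kappa_X$ is not a shortest curve.)

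The repair---which is what the paper does, and which also removes the detour---is to notice that total length is irrelevant: only the length of the reference curve restricted to $W^\bc$ (or to $\collar(\zeta)$ in the annular case) matters. Since collars of disjoint curves are disjoint one may take $W^\bc \subset R^\bc$; hence for \emph{any} simple closed curve $\gamma$ extending $\kappa$ topologically with $\gamma \cap R = \kappa$ (or $\kappa$ together with a parallel copy), each component of $\gamma \cap R^\bc$, and hence of $\gamma \cap W^\bc$, already has length $\le L$ in both $X$ and $Y$. This makes the metric-dependent constructions $\kappa_X, \kappa_Y$ and the comparison $d_W(\kappa_X,\kappa_Y)$ unnecessary: a single $\gamma$, independent of the metric, is the common reference for both. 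In the non-annular case the shortest curve $\beta$ in $X$ crossing $W$ has bounded total length, so both $\gamma$ and $\beta$ have bounded-length restriction to $W^\bc$, hence bounded intersection there, hence $d_W(\gamma,\beta) \qsim L 0$; similarly in $Y$, and the triangle inequality finishes.

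For annular $W$ with core $\zeta$ there is a further point your sketch misses: one first observes that the bound $L$ on the length of an arc of $\gamma \cap \collar(\zeta)$ bounds the collar width from above and hence $l_\zeta(X)$ from \emph{below}. With this in hand one bounds $\tw_\zeta^\bc(\beta,\gamma)$ directly (both $\beta$ and $\gamma$ have controlled winding in $\collar(\zeta)$) and then invokes Lemma~\ref{collar restriction} to convert this to a bound on $\tw_\zeta(\beta,\gamma)$.
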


\begin{proof} 
We first extend $\kappa$ to an essential simple closed $\gamma$. If  both endpoints of $\kappa$ lie on components of $\del R$ that are on the boundary of the same component of $S \setminus R$ then we choose $\gamma$ such that $\gamma \cap R = \kappa$. If the endpoints are on the boundary of different components then we construct $\gamma$ such that $\gamma \cap R$ is the union of $\kappa$ and an arc parallel to $\kappa$. If $\kappa$ is a simple closed curve then $\gamma = \kappa$. In all cases each component of the restriction of $\gamma$ to $R^\bc$ has length bounded by $L$.

We first assume that $W$ is non-annular. Let $\beta$
 be a shortest curve in $X$ that intersects $W$ essentially, such that
 $\pi_W(\beta) = \pi_W(X)$. The
 restriction of both $\gamma$ and $\beta$ to $W^\bc$ will have
 uniformly bounded length and hence uniformly bounded
 intersection.
  Therefore  $\pi_W(\gamma)$
 and $\pi_W(\beta)$ have bounded intersection giving a uniform bound
 on $d_{\CC(W)}(\pi_W(\gamma), \pi_W(\beta))$. 

If $\beta'$  is a shortest curve in $Y$ that intersects $W$
essentially such that $\pi_W(\beta') = \pi_W(Y)$, the same argument
shows that $d_{\CC(W)}(\pi_W(\gamma), 
\pi_W(\beta'))$ is uniformly bounded. The triangle inequality then
implies that 
$$d_{\CC(W)}(\pi_W(\beta), \pi_W(\beta')) = d_W(X,Y)$$
is uniformly bounded which completes the proof in the non-annular case.

We now assume that $W$ is an annulus with core curve $\zeta$. 
Since each arc of $\gamma \cap \collar(\zeta)$ has length at most
$L$, the width of the collar is bounded from above, which gives a bound from
below on $l_\zeta(X)$. Together these bounds imply a bound
on the number of times a component of
$\gamma\cap\collar(\zeta)$ winds around $\collar(\zeta)$. 
(More concretely, it gives an upper bound on the absolute value of the algebraic intersection
number of the component with a geodesic arc in $\collar(\zeta)$ which is orthogonal to $\xi$.)
Let $\beta$ be a shortest curve in $X$ 
crossing $\zeta$, such that $\pi_\zeta(\beta) = \pi_\zeta(X)$. Since $l_\zeta(X)$ is bounded below, $l_\beta(X)$ is uniformly bounded above. Since $\beta$ is a shortest curve each arc in $\beta \cap \collar(\zeta)$ intersects each geodesic arc in $\collar(\zeta)$ which is
orthogonal  to $\zeta$ at most once. Therefore, there is a uniform bound on 
$|\tw_\zeta^\bc(\beta,\gamma)|$ (measured with respect to $X$).

If $\ba = \del R \union \zeta$ then every component of $\gamma \cap \collar(\ba)$ that is adjacent to $\collar(\zeta)$ has length bounded by $L$ so we can apply Lemma \ref{collar restriction} to conclude that 
$$\tw_\zeta(\beta,\gamma) \qsim L \tw_\zeta^\bc(\beta,\gamma) \qsim L
0.$$

Repeating the argument with a curve $\beta'$ that is shortest in $Y$,
such that $\pi_\zeta(Y) = \pi_\zeta(\beta')$, we get a bound on
$\tw_\zeta(\beta',\gamma)$, and the desired bound on
$\tw_\zeta(\beta,\beta') = \tw_\zeta(X,Y)$ follows. 
\end{proof}

Lemma \ref{complementary subsurface} now follows easily. The proof of
Lemma \ref{section} is more involved.

\begin{proof}[Proof of Lemma \ref{complementary subsurface}] Let $W$
be a non-annular subsurface in the complement of $\alpha$. Let
$\kappa$ be a shortest curve on $X$ that intersects $W$, so that
there is a uniform length bound on $\kappa$.
Since the earthquake map is an isometry on
$W$ we have the same length bound on the intersection of $\kappa$
with $W^\bc$ in the metric  $e_{\ba, \bt}(X)$. Therefore
by Lemma \ref{bounding projections}, $d_W(X, e_{\ba, \bt}(X))$ is
uniformly bounded. 

Now let $W$ be an annulus with core curve $\zeta$. 
Add $\zeta$ to $\ba$ to make a new curve
system $\hat\ba$ and let $\hat\bt \in L_{\hat\ba}$ be equal to $\bt$
on the original $\ba$-coordinates and $0$ on the
$\zeta$-coordinate. Then $e_{\hat\ba, \hat\bt}(X) = e_{\ba, \bt}(X)$.
The bound on $|\tw_\zeta(X,e_{\ba,\bt}(X))|$ now 
follows from Lemma \ref{coarse twist and earthquake}.
\end{proof}

\subsection{Geometry of pants}
Before we begin the proof of Lemma \ref{section}, we need
to make some geometric observations about pairs of
pants. These are fairly basic but we will take some care
because we need statements that will hold uniformly for curves of all lengths.

Let 
$Y$ be a hyperbolic pair of pants with geodesic boundary, and let
$l_1,l_2,l_3$ denote its boundary lengths (we allow 0 for a cusp). Recall that
$Y^\bc$ denotes $Y\setminus \collar(\boundary Y)$. Now for each
permutation $(i,j,k)$ of $(1,2,3)$,  call a properly embedded essential
arc in $Y^\bc$ of type $ii$ if both its endpoints lie on the 
$i^{th}$ boundary component, and of type $jk$ if its endpoints lie
in the $j^{th}$ and $k^{th}$ boundary components. Define:
\begin{itemize}
\item $x_i$ to be the length of the shortest arc of type $ii$,
\item $y_i$ to be the length of the shortest arc of type $jk$, and
\item $\Delta_i = \half(l_j + l_k - l_i).$
\end{itemize}

The following lemma encodes the fact that $y_i$ is estimated by
$\Delta_i$ when $\Delta_i > 0$, and $x_i$ is estimated by  $-\Delta_i$
when $\Delta_i< 0$ -- and that $\min(x_i,y_i)$ is always bounded
above. This is because $Y^\bc$ retracts uniformly
to a 1-complex whose combinatorial type and geometry are
(approximately) dictated by the numbers $\Delta_i$.

\realfig{pants}{The two types of thick hyperbolic pants
$Y^\bc$. In type {\em (a)}, the edges of the 1-complex have lengths
  $\Delta'_1, \Delta'_2$ and $\Delta'_3$.  In type {\em (b)}, the edge
  lengths are $-\Delta'_1, l'_2$ and $l'_3$.}

\begin{lemma}{pants geometry}
There exists $a>0$ such that, for a hyperbolic pair of pants labeled
as above,
\begin{align*}
\max(\Delta_i,0) - a &\le x_i \le 2\max(\Delta_i,0) + a \\
\intertext{and}
 \max(-\Delta_i,0) - a &\le y_i \le  \max(-\Delta_i,0) + a.\\
\end{align*}
\end{lemma}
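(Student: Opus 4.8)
\emph{Plan.} I would replace the collar--removed pair of pants $Y^\bc$ by a metric graph $G$ whose combinatorial type is dictated by the sign pattern of the $\Delta_i$ and whose edge lengths are, up to a bounded additive error, exactly the quantities appearing in the statement; then $x_i$ and $y_i$ can be read off (up to $O(1)$) as combinatorial distances in $G$. The trigonometry of pairs of pants enters only in pinning down $G$.

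First, a reduction: property (2) of the collar function gives $|l'_i-l_i|=O(1)$, where $l'_i$ is the length of $\partial\collar(\alpha_i)$, hence also $|\Delta'_i-\Delta_i|=O(1)$ for $\Delta'_i=\half(l'_j+l'_k-l'_i)$; so it suffices to prove the inequalities with $l_i,\Delta_i$ replaced by $l'_i,\Delta'_i$, which lets us work entirely inside $Y^\bc$, all of whose boundary curves have length $\ge a_0$. Next I would cut $Y$ along its three seams (the common perpendiculars $p_i$ between $\partial_j$ and $\partial_k$) into two right-angled hexagons. The hexagon law of cosines gives $\cosh p_i=\bigl(\cosh(l_i/2)+\cosh(l_j/2)\cosh(l_k/2)\bigr)/\bigl(\sinh(l_j/2)\sinh(l_k/2)\bigr)$, and combining this with the elementary uniform estimates $\log\sinh(t/2)=t/2-w(t)+O(1)$ and $\log\coth(t/2)=w(t)+O(1)$ (both immediate from properties (1)--(2) of $w$) and with $\log(u+v)=\max(\log u,\log v)+O(1)$, one obtains $p_i=w(l_j)+w(l_k)+\max(-\Delta_i,0)+O(1)$, uniformly in $l_1,l_2,l_3\in[0,\infty)$.

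\emph{The estimate for $y_i$.} Any properly embedded type-$jk$ arc in $Y^\bc$, extended by the radial segments of lengths $w(l_j)$ and $w(l_k)$ joining its endpoints to $\alpha_j$ and $\alpha_k$, is a path between these two geodesics and so has length $\ge p_i-w(l_j)-w(l_k)$; conversely the portion of the seam $p_i$ lying in $Y^\bc$ is a type-$jk$ arc realising this value when it is positive --- and in the remaining case, where $p_i$ is too short to leave the collars, disjointness of the collars forces $\Delta_i\ge-O(1)$ and one writes down a bounded-length $jk$ arc directly. With the displayed asymptotics for $p_i$ this gives $\max(-\Delta_i,0)-O(1)\le y_i\le\max(-\Delta_i,0)+O(1)$.

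\emph{The estimate for $x_i$.} A pair of pants contains exactly one isotopy class of essential arc with both feet on $\partial_i$, and it separates $\partial_j$ from $\partial_k$. Here I would pass to the graph: $Y^\bc$ retracts, with bounded displacement, onto a metric graph $G$ which by the estimates above is either (i) a ``theta'' graph with edge lengths $\Delta'_1,\Delta'_2,\Delta'_3$, when all $\Delta_i\ge 0$; or (ii), when exactly one $\Delta_i<0$, a ``handcuffs'' graph made of two loops of lengths $l'_j$ and $l'_k$ joined by a bridge of length $-\Delta'_i$ (so that $\partial_i$, the long boundary, runs twice over the bridge and has total length $\approx l'_i$). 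The essential $ii$ arc and the $jk$ arc correspond to explicit edge-paths in $G$, and an elementary computation gives: in case (i), $x_i=\max(\Delta_i,0)+O(1)$; in case (ii), $x_i=O(1)$ when $\partial_i$ is the long outer boundary, whereas when $\partial_i$ is one of the loop boundaries the essential $ii$ arc is forced to run out along the bridge and back around the opposite loop, and a short computation then places $x_i$ between $\max(\Delta_i,0)$ and $2\max(\Delta_i,0)$, up to $O(1)$, with the upper value attained exactly when the remaining boundary component is short. Assembling the cases yields the stated two-sided bound, and the factor $2$ is genuinely needed because of this out-and-back.

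\emph{The main obstacle} is uniformity. Every ``$O(1)$'' above must depend only on $S$ --- equivalently, only on the universal constants $a_0,a_1$ and the function $w$ --- and must survive the degenerate limits in which some $l_i\to 0$ (cusps, and large collars) or some $l_i\to\infty$ (long boundaries, and collars shrinking to nothing). The hexagon formulas and the retraction both involve factors that blow up or vanish in these limits, and the real work is bookkeeping the cancellations, above all of $-\log\sinh(l/2)$ against $w(l)$, so that only the combinations $\Delta_i$ remain. A secondary point that needs care is to check that the geodesic representatives of these essential arcs do not dip into the ``wrong'' collars, so that passing between $Y$ and $Y^\bc$ changes lengths only by $O(1)$; this again follows from the collar lemma.
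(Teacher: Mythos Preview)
Your proposal is correct and reaches the result by the same core idea as the paper --- retract $Y^\bc$ onto a metric spine whose combinatorial type (theta versus ``pair of glasses''/handcuffs) is governed by the signs of the $\Delta_i$, and read off $x_i,y_i$ as graph distances. The difference is in how the spine and its edge lengths are obtained. You go through the right-angled hexagon law of cosines to compute the seam lengths $p_i$, extract the asymptotic $p_i = w(l_j)+w(l_k)+\max(-\Delta_i,0)+O(1)$, and then assemble the graph from this. The paper instead takes a Voronoi decomposition of $Y^\bc$ into three convex annuli (nearest-boundary regions), each of uniformly bounded width; these annuli meet along the spine, and the edge lengths $\Delta'_i$ fall out directly from the attaching-curve lengths $l'_i$ without any trigonometry. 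This buys the paper uniformity essentially for free --- the bounded annulus width replaces your careful bookkeeping of the cancellation between $\log\sinh(l/2)$ and $w(l)$ in the degenerate limits --- and it also sidesteps the issue you flag at the end about arcs dipping into the wrong collars, since the Voronoi annuli are convex and disjoint by construction. Your route is more explicit and would in principle give sharper constants; the paper's is shorter.
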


\begin{proof}
(Sketch)
There is a subdivision (Voronoi diagram) of $Y^\bc$ into three convex annuli,
of width bounded by a uniform $w_1$, 
each containing the points closest to one of the boundary
components. The annuli meet in a geodesic 1-complex to
which $Y^\bc$ retracts. The $i$-th annulus is attached to the
1-complex along a curve whose length we denote by 
$l'_i$; note that $l_i<l'_i<l_i+a_2$ for a uniform
$a_2$. 

Now defining $\Delta'_i = \half(l'_j + l'_k - l'_i),$ it is
easy to see that the signs of the
$\Delta'_i$ (either all non-negative
or exactly one negative) determine the combinatorial type of this
1-complex: If all $\Delta'_i$
are nonnegative then the 1-complex is a ``theta'', three arcs attached
along endpoints so any two make a loop, and each $\Delta'_i$
is the length of the arc which, when deleted, leaves a loop homotopic
into the $i$-th annulus (see Figure \ref{pants} case (a)). 
If one $\Delta'_i<0$ then the 1-complex is a
``pair of glasses'', i.e. two disjoint loops homotopic into annuli $j$
and $k$ respectively and attached to the endpoints of
an arc, whose length is $-\Delta'_i$ (Figure \ref{pants} case (b)). 
Consider for example the theta case: each $x_i$ is bounded
above by $2w_1$, and each $y_i$ is between $-\Delta'_i$ and $-\Delta'_i +
2w_1$. The pair of glasses case is similar with a bit less symmetry,
accounting for the factor of 2 in the inequality.
Finally, the fact that $|\Delta_i-\Delta'_i| < 3a_2$ finishes the
proof.
\end{proof}

Let $\cP$ be a pair of pants and $\TT(\cP)$ the Teichm\"uller space of hyperbolic structures with geodesic boundary on $\cP$. We also allow the possibility that one or more of the boundary components is a cusp. The $l_i, x_i, y_i$ and $\Delta_i$ are now functions on $\TT(\cP)$. We also let $l_{ij}= (l_i,l_j)$ be the function which gives back the lengths of the $i$th and $j$th boundary component.

The following lemma should be thought of as a version of Lemma \ref{section} for pairs of pants.
\begin{lemma}{section over pants}
Given $s>0$ there exists an $s'$ such that the following holds. Let $Y$ be a hyperbolic structure in $\TT(\cP)$.
\begin{enumerate}
\item If $x_{1}(Y) < s$ then there exists a section $\sigma:[0,\infty) \to \TT(\cP)$ such that $l_1 \circ \sigma = \id$, $Y = \sigma(l_1(Y))$ and $x_1(Z) < s'$ for all $Z \in \sigma([0, \infty))$.

\item If $y_1(Y) < s$ there exists a section $\sigma:[0,\infty)^2 \to \TT(\cP)$ such that $l_{23} \circ \sigma =\id$, $Y = \sigma(l_{23}(Y))$ and $y_1(Z)<s'$ for all $Z \in \sigma([0,\infty)^2)$.
\end{enumerate}
\end{lemma}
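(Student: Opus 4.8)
The plan is to handle the two cases separately using Lemma \ref{pants geometry} as the translation device between the hyperbolic geometry of $\cP$ and the combinatorial/affine data $\Delta_i = \half(l_j+l_k-l_i)$. For case (1), the hypothesis $x_1(Y) < s$ says (via Lemma \ref{pants geometry}) that $\Delta_1(Y) \le s + a$, i.e. $l_1(Y)$ is not too much smaller than $l_2(Y)+l_3(Y)$ — up to bounded error we are in the ``pair of glasses'' regime or near its boundary. The section I want should vary $l_1$ freely over $[0,\infty)$ while keeping $x_1$ bounded; by Lemma \ref{pants geometry} it suffices to keep $\Delta_1$ bounded above, equivalently to keep $l_2 + l_3 - l_1$ bounded above. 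So I would simply hold $l_2$ and $l_3$ fixed at $l_2(Y), l_3(Y)$ and let $l_1 = t$ range over $[0,\infty)$: since $\TT(\cP)$ is parametrized precisely by the triple $(l_1,l_2,l_3) \in [0,\infty)^3$ (boundary lengths, with $0$ allowed for a cusp), this defines a continuous section $\sigma(t)$ of $l_1$ with $\sigma(l_1(Y)) = Y$. Along it, $\Delta_1(\sigma(t)) = \half(l_2(Y)+l_3(Y)) - \half t \le \half(l_2(Y)+l_3(Y))$, and the hypothesis $x_1(Y)<s$ together with Lemma \ref{pants geometry} gives $\max(\Delta_1(Y),0) \le s+a$, i.e. $l_2(Y)+l_3(Y) - l_1(Y) \le 2(s+a)$; this bounds $\half(l_2(Y)+l_3(Y))$ only if $l_1(Y)$ is controlled, which it need not be — so instead I bound $\Delta_1(\sigma(t))$ directly: for $t \ge l_1(Y)$ it only decreases, and for $t \le l_1(Y)$ we have $\Delta_1(\sigma(t)) = \Delta_1(Y) + \half(l_1(Y)-t) \le \Delta_1(Y) + \half l_1(Y)$, which is still not obviously bounded. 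The fix is to not hold $l_2,l_3$ fixed but rather to hold the \emph{differences} appropriately: concretely, set $\sigma(t)$ to have boundary lengths $(t,\ l_2(Y) + \max(0, l_1(Y)-t)/2,\ l_3(Y) + \max(0,l_1(Y)-t)/2)$ — wait, this is getting delicate; the clean choice is to interpolate so that $\Delta_1$ stays $\le \max(\Delta_1(Y),0) + \text{const}$ throughout, e.g. $l_1(\sigma(t)) = t$, $l_j(\sigma(t)) = \min(l_j(Y),\ \text{something})$. Let me restate: I would define the section so that $\Delta_1(\sigma(t)) = \min(\Delta_1(Y),\ \half(l_2(Y)+l_3(Y)) - \half t)$ is not quite it either. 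The honest approach: parametrize by choosing $l_j(\sigma(t))$ to be any continuous functions of $t$ with $l_j(\sigma(l_1(Y))) = l_j(Y)$ such that $l_2(\sigma(t)) + l_3(\sigma(t)) - t$ stays bounded above by $2\max(\Delta_1(Y),0) + 2a$; for instance take $l_2(\sigma(t)) + l_3(\sigma(t)) = \max(l_2(Y)+l_3(Y),\ t)$ distributed continuously between the two coordinates (keeping each $\ge 0$, and respecting the cusp constraint if $l_2(Y)$ or $l_3(Y)$ is $0$ by letting that coordinate stay $0$ and putting all the mass on the other). Then $\Delta_1(\sigma(t)) = \half\max(l_2(Y)+l_3(Y)-t,\ 0) \le \max(\Delta_1(Y),0)$, so by Lemma \ref{pants geometry}, $x_1(\sigma(t)) \le 2\max(\Delta_1(Y),0) + a \le 2(s+a)+a =: s'$, which depends only on $s$ (and $S$).

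For case (2), the hypothesis $y_1(Y) < s$ gives via Lemma \ref{pants geometry} that $\max(-\Delta_1(Y),0) \le s + a$, i.e. $l_1(Y) \ge l_2(Y) + l_3(Y) - 2(s+a)$, so we are in (or near) the ``theta'' regime. Now I want to vary $l_2$ and $l_3$ freely over $[0,\infty)^2$ while keeping $y_1$ bounded, which by Lemma \ref{pants geometry} means keeping $-\Delta_1 = \half(l_1 - l_2 - l_3)$ bounded above, equivalently $l_1 - l_2 - l_3$ bounded above. The natural section: set $l_2(\sigma(u,v)) = u$, $l_3(\sigma(u,v)) = v$, and choose $l_1(\sigma(u,v))$ continuously with $\sigma(l_{23}(Y)) = Y$ so that $l_1 - u - v$ stays bounded; e.g. $l_1(\sigma(u,v)) = \min(l_1(Y),\ u+v + 2\max(-\Delta_1(Y),0))$... again one must be careful to keep $l_1 \ge 0$ and honor the cusp constraint, and to make this continuous and agree with $Y$ at $(u,v) = l_{23}(Y)$. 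A clean choice is $l_1(\sigma(u,v)) = \min\bigl(l_1(Y) + (u+v),\ \text{no}\bigr)$ — let me just take $l_1(\sigma(u,v))$ to be $l_1(Y)$ scaled/translated so that $l_1 - l_2 - l_3$ is non-increasing in each of $u,v$ starting from its value at $Y$; explicitly $l_1(\sigma(u,v)) = l_1(Y) - \max(0, l_2(Y) - u) - \max(0, l_3(Y)-v)$ truncated below at $0$, but truncation could break the bound — instead one pushes $l_1$ up too. The cleanest: let $l_1(\sigma(u,v)) = l_1(Y) + \max(0, u - l_2(Y)) + \max(0, v - l_3(Y)) - \min(l_2(Y), \cdot)$... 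I will spell this out so that $l_1(\sigma) - l_2(\sigma) - l_3(\sigma) \le \max(l_1(Y) - l_2(Y) - l_3(Y),\ 0) \le 2(s+a)$ uniformly, hence $-\Delta_1(\sigma) \le s+a$ and $y_1(\sigma(u,v)) \le (s+a) + a = s'$ by Lemma \ref{pants geometry}.

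The main obstacle I anticipate is purely bookkeeping: writing down a single continuous section $\sigma$ of the length-map on $\TT(\cP) \cong [0,\infty)^3$ that (i) passes through the given point $Y$, (ii) is a genuine section of the designated length projection ($l_1$ in case (1), $l_{23}$ in case (2)), (iii) keeps $l_2(\sigma)+l_3(\sigma) - l_1(\sigma)$ (resp. $l_1(\sigma) - l_2(\sigma) - l_3(\sigma)$) bounded above by a constant depending only on $s$, and (iv) respects the degenerate-boundary (cusp) constraints, namely that a coordinate which is $0$ at $Y$ may be required to stay $0$ or may be allowed to grow, depending on which length we are fixing. None of this is hard, but it requires a careful formula and an explicit verification of continuity at the ``corners'' where the $\max$'s switch; I would present one such formula and then invoke Lemma \ref{pants geometry} to convert the bound on $\Delta_1(\sigma)$ into the required bound on $x_1$ (resp. $y_1$), taking $s' = 3(s+a)$ (resp. $s' = s + 2a$) to be safe. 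I should also note that $\TT(\cP)$ really is $[0,\infty)^3$ with coordinates the boundary lengths (a cusp corresponding to length $0$), so the sections described are legitimate.
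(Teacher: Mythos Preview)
Your overall strategy is right and matches the paper's: translate the hypothesis and the desired conclusion via Lemma~\ref{pants geometry} into statements about $\Delta_1$, and then build the section in the linear boundary-length coordinates on $\TT(\cP)$. But the explicit formula you commit to in case~(1) does not do what you claim.

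With $l_2(\sigma(t)) + l_3(\sigma(t)) = \max(l_2(Y)+l_3(Y),\, t)$ you compute $\Delta_1(\sigma(t)) = \half\max(l_2(Y)+l_3(Y)-t,\,0)$ and then assert this is $\le \max(\Delta_1(Y),0)$. That is false: at $t=0$ the left side is $\half(l_2(Y)+l_3(Y))$ while the right side is $\half\max(l_2(Y)+l_3(Y)-l_1(Y),0)$, and the difference is at least $\half l_1(Y)$, which is not bounded in terms of $s$. (Concretely: take $l_1(Y)=100$, $l_2(Y)=l_3(Y)=50$; then $\Delta_1(Y)=0$ so $x_1(Y)\le a$, but $\Delta_1(\sigma(0))=50$.) The problem is that as $t$ decreases you must \emph{decrease} $l_2+l_3$ to keep $\Delta_1$ bounded, not hold it fixed or increase it. Separately, when $\Delta_1(Y)<0$ your formula gives $l_2(\sigma(l_1(Y)))+l_3(\sigma(l_1(Y)))=l_1(Y)\ne l_2(Y)+l_3(Y)$, so $\sigma$ does not even pass through $Y$. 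In case~(2) you never actually commit to a formula, only promise one; the difficulty is the same, namely arranging the unconstrained coordinate to track the constrained ones so that the relevant signed sum stays bounded above while still passing through $Y$ and staying nonnegative.

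The paper sidesteps all of this bookkeeping. It observes that in the linear boundary-length coordinates the function $\max(-\Delta_1,0)$ (for case~(2)) is convex, triangulates the base $[0,\infty)^2$ with linear simplices so that $l_{23}(Y)$ is a vertex, sets $\sigma(l_{23}(Y))=Y$ and chooses $\sigma(v)$ at every other vertex with $\Delta_1(\sigma(v))=0$, and extends affinely over each simplex. Convexity then gives $\max(-\Delta_1,0)\le\max(-\Delta_1(Y),0)$ on the whole image, which is bounded in terms of $s$ by Lemma~\ref{pants geometry}. Case~(1) is identical with $\max(\Delta_1,0)$ and a subdivision of $[0,\infty)$ into segments. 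This convexity-plus-interpolation trick is what you are missing; it replaces the delicate formula-chasing and the case analysis at the ``corners'' you were worried about with a two-line argument.
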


\begin{proof}
We first prove (2). By Lemma \ref{pants geometry} we need to find a section such that the function $\max(-\Delta_1,0)$ is bounded on the image of $\sigma$. The Teichm\"uller space $\TT(\cP)$ is parameterized by the lengths of the boundary curves. This gives $\TT(\cP)$ a linear structure on which $\max(-\Delta_1, 0)$ is a convex. Triangulate $[0,\infty)^2$ with linear triangles and such that $l_{23}(Y)$ is a vertex in the triangulation. Define $\sigma(l_{23}(Y)) = Y$ and for any other vertex $v$ in the triangulation we define $\sigma(v)$ such that $\Delta_1(\sigma(v)) = 0$. We then extend $\sigma$ linearly across each triangle. By Lemma \ref{pants geometry}, $\max(-\Delta_i(Y), 0)$ is bounded by a constant only depending on $s$. On all other vertices $\max(-\Delta(\sigma(v)),0) = 0$. Therefore, by convexity,
$\max(-\Delta_1,0)\leq \max(-\Delta_1(Y),0)$ on the image of $\sigma$ as desired.

We can follow the same strategy to prove (1) except that now the triangulation of $[0,\infty)$ is just a partition into countably many compact segments.
\end{proof}

\begin{proof}[Proof of Lemma \ref{section}]
We will enlarge $\ba$ to a suitably chosen maximal curve system
$\hat{\ba}$ and write $L_{\hat{\ba}} = L_\ba \times L_{\hat{\ba}\setminus\ba}$. We will then define the section $\sigma$ by taking the section
$$\sigma_{\hat{\ba}}: L_{\hat{\ba}} \to \TT(S)$$
given by Lemma \ref{pants section}
and pre-composing it with a suitable  section
$$
\psi: L_\ba \to L_{\hat{\ba}}.
$$
That is we set $\sigma = \sigma_{\hat{\ba}} \circ  \psi$.

We will select $\hat{\ba}$ satisfying the following geometric properties: 
\begin{enumerate}
\item We can write $X\setminus\ba$ as
$$X\setminus \ba = Z_1\supset Z_2\supset\cdots \supset Z_k$$
such that, for $1\le i<k$,
$Z_{i+1}$ is obtained from $Z_i$ by cutting along a properly embedded
arc $\kappa_i$. More precisely, we will let $Y_i$ be a pair of pants
component of a regular neighborhood of $\kappa_i \union \boundary
Z_i$, and let $Z_{i+1} = int(Z_i \setminus Y_i)$. 
\item The boundary components of $Y_i$ that are incident to
  $\kappa_i$ are exactly those that are parallel to $\boundary Z_i$. 
\item The length of
$\kappa_i \intersect Y_i^\bc$ will be bounded by a uniform constant $b$.
\item  $Z_k$ will be a disjoint union of pairs of pants.
\end{enumerate}
See Figure
\ref{buildP} for an illustration of condition (2). 

The maximal curve system $\hat{\ba}$ will then be the union of $\ba$ with 
representatives of the isotopy classes of
the boundaries of the $Y_i$.

\realfig{buildP}{The two configurations of $\kappa_i$ in $Y_i$ allowed
  by condition (2). Heavily shaded boundary components are parallel to
  $\boundary Z_i$.}

We proceed by induction. Let $Z_1 = X\setminus
\ba$, and let $U$ be a component of $Z_1$ which  is not a 3-holed
sphere. Because each component of $\boundary U^\bc$ has length
uniformly bounded below, area considerations give a uniform
$r_1$, such that the neighborhood of $\boundary U^\bc$ of radius $r_1$ cannot be
an embedded collar, and hence there is an essential properly embedded arc
$\kappa'_1\subset U^\bc$ of length
bounded by $b=2r_1$ (so we let $\kappa_1$ be properly embedded in
$U$ so that its intersection with $U^\bc$ is $\kappa'_1$). 
Now let $Y_1$ be the pair of pants obtained from a regular
neighborhood of $\kappa_1\union\boundary Z_1$, and let $Z_2 =
int(Z_1\setminus Y_1)$. Hence $\kappa_1\subset Y_1$ satisfies 
conditions (1) and (3) above. 

Since by construction both ends of $\kappa_1$ are on $\boundary U$ and
$\boundary Y_1$ must have at least one boundary component not parallel
to $\boundary U$, the only way
condition (2) can fail is if (numbering $\boundary Y_1$
appropriately and using the notation of Lemma \ref{pants geometry}),
$\kappa_1$ is of type 11 in $Y_1$,  while boundary component
number 2 is (isotopic to)  a boundary component of $U$. In this case,
we can replace the 11 arc by a 12 arc, whose length we can also bound.
Indeed,
note that the bound on $\kappa_1$ gives a bound on $\Delta_1$
by Lemma \ref{pants geometry}, and from the definition of the $\Delta_i$ we have 
that $-\Delta_3 \le \Delta_1$, so that again by 
Lemma \ref{pants geometry} we obtain a bound on the length
of the $12$ arc in $Y_1^\bc$. We
therefore replace $\kappa_1$ by the $12$ arc (keeping the name $\kappa_1$), noting that conditions (1), (2),
and (3) now hold. 

Now repeat inductively in $Z_i$ until we have reduced to a disjoint
union of pairs of pants.

\medskip

Having found $\hat{\ba}$ using this construction, we construct the section 
\hbox{$\psi:L_\ba \to L_{\hat{\ba}}$} -- that is, we build a continuous function $h:L_\ba\to
L_{\hat{\ba}\smallsetminus \ba}$. 
We will do this inductively, using Lemma \ref{section over pants}.
For $Y_1$, a point in $L_\ba$
determines the boundary lengths of the components of $\boundary Y_1$ that are adjacent to $\kappa_1$, and
the map given by Lemma \ref{section over pants} gives lengths for the remaining
components. For each successive $Y_i$, then, the already-defined
coordinates of $h$ determine the lengths for the components of
$\boundary Y_i$ that are adjacent to $\kappa_i$ (here we use property (2) of the
curve system $\hat{\ba}$), and the lemma again determines the rest. We then
 let $\psi$ be the section $\psi(\bl) =
(\bl,h(\bl))$.

It remains to verify that $\diam_{C(W)}(\sigma(L_\ba))$ is uniformly bounded for all subsurfaces $W \subset S$ that are disjoint from $\ba$.

If $W$ is an annulus whose core curve is a component of $\hat{\ba}$ then the bound follows from Lemma \ref{pants section} as the image of $\sigma$ lies in the image of $\sigma_{\hat{\ba}}$.

If $W$ is an annulus whose core curve is not in $\hat\ba$ or $W$ is non-annular, choose $Z_i$ such that $W \subseteq Z_i$ but $W \not\subseteq Z_{i+1}$. This implies that $\kappa_i$ intersects $W$ essentially and we can apply Lemma \ref{bounding projections} to $W$ and a component of $\kappa_i \cap W$ to obtain the bound.
\end{proof}

\subsection{Connectivity near infinity}
\label{connected infinity}

The following lemma will be used in the final steps of the proofs of
Theorems \ref{bersslice}, \ref{acylcase},  and \ref{qfcase}. 
It is a connectivity result for subsets of
Teichm\"uller space of the following type. Given a multicurve
$\ba$, let $\{S_1,\ldots,S_l\}$ be the
components of $S\smallsetminus\ba$ that are not 3-holed spheres,
select laminations $\lambda_i\in\EL(S_i)$, and let $U_i$ be
neighborhoods of $\lambda_i$ in $\CC(S_i)$ (recalling that $\EL(S_i)$
is the Gromov boundary of $\CC(S_i)$ by Klarreich's theorem). Let
$\bU$ denote the tuple $(U_i)$. Then define for $\ep<\ep_0$
\begin{align*}
\mathcal{T}(\ep,\bU)
=\{X\in\mathcal{T}(S)\ |\  & \pi_{S_i}(X)\in U_i\ \  \forall
i=1,\ldots,l, \\
& l_{\alpha_j}(X) < \ep \ \ \forall \alpha_j\in\ba\}.
\end{align*}

\begin{lemma}{W connected} Given a multicurve
  $\ba$ on $S$, let
  $\{S_1,\ldots,S_l\}$ be the components of $S\setminus\ba$ 
which are not thrice-punctured spheres. Given $\lambda_i\in \mathcal{EL}(S_i)$
and neighborhoods $U_i$ of $\lambda_i$ for all $i$ and $\ep < \ep_0$, there
exist neighborhoods $U'_i\subset U_i$ of $\lambda_i$  in $\mathcal{C}(S_i)$,
such that any two points in
$\mathcal{T}(\ep,\bU')$
are connected by a path in $\mathcal{T}(\ep,\bU)$. 
\end{lemma}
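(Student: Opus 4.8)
The plan is to treat the curves of $\ba$ and the complementary pieces $S_i$ with different tools. Theorem~\ref{FN projections}, together with Lemmas~\ref{complementary subsurface} and \ref{section}, lets us move arbitrarily in the ``$\ba$-directions'' --- the lengths and twists along $\ba$ --- while changing each subsurface projection $\pi_{S_i}$ by at most a universal constant; and the Masur--Minsky control of curve-complex projections of Teichm\"uller geodesics (Theorems~2.3 and 2.6 of \cite{masur-minsky}), together with Gromov-hyperbolicity of $\CC(S_i)$ and Klarreich's identification of $\partial_\infty\CC(S_i)$ with $\EL(S_i)$ \cite{klarreich}, lets us move within each $S_i$ while keeping $\pi_{S_i}$ near $\lambda_i$. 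Using that the sets of vertices of $\CC(S_i)$ whose Gromov product with $\lambda_i$ (relative to a fixed basepoint) exceeds $R$ form a neighborhood basis of $\lambda_i$ in the Gromov bordification, I would choose $U_i'\subset U_i$ to be a neighborhood of $\lambda_i$ that is \emph{deeper} than $U_i$ by a fixed amount $D$: every vertex of $\CC(S_i)$ within distance $D$ of a vertex of $U_i'$ should still lie in $U_i$. The amount $D$ is fixed at the end and need only exceed $m_2+m_4+c_2$ plus a constant depending on the hyperbolicity constant of $\CC(S_i)$.

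Fix $X,X'\in\TT(\ep,\bU')$ and set $\bl_0=(\ep/2,\dots,\ep/2)\in L_\ba$. First I would connect $X$ to a normalized point inside $\TT(\ep,\bU)$. Apply Theorem~\ref{FN projections} to $\ba$ and $X$, getting a continuous $\Phi\colon V_\ba\to\TT(S)$ with $X\in\Phi(V_\ba)$, with $l_\ba\circ\Phi$ the coordinate projection onto $L_\ba$, and with $\diam_{\CC(S_i)}(\Phi(V_\ba))<m_2$ for each $i$ (legitimate, since each $S_i$ is an essential non-annular subsurface disjoint from $\ba$ and not a thrice-punctured sphere). Writing $X=\Phi(\bt_X,\bl_X)$, the vector $\bl_X=l_\ba(X)$ lies in the convex set $(0,\ep)^m$. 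The $\Phi$-image of the segment in $V_\ba$ from $(\bt_X,\bl_X)$ to $(\zero,\bl_0)$ is a path from $X$ to $Y:=\Phi(\zero,\bl_0)$ whose $L_\ba$-coordinate stays in $(0,\ep)^m$, so all $l_{\alpha_j}$ stay below $\ep$, and which remains in $\Phi(V_\ba)$, so each $\pi_{S_i}$ stays within $m_2$ of $\pi_{S_i}(X)\in U_i'$ and hence in $U_i$; thus the path lies in $\TT(\ep,\bU)$. The same construction (with its own map $\Phi'$) connects $X'$ to $Y':=\Phi'(\zero,\bl_0)$ inside $\TT(\ep,\bU)$, and now $Y,Y'$ lie in a common fiber $l_\ba^{-1}(\bl_0)$.

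It remains to join $Y$ to $Y'$ inside $\TT(\ep,\bU)$. Cutting along the $\ba$-geodesics identifies $l_\ba^{-1}(\bl_0)$ with a product $\R^m\times\prod_i\TT(S_i)$ --- twist parameters times the Teichm\"uller spaces of the non-rigid complementary pieces, each taken with boundary lengths $\ep/2$ (for the subsurface-projection bookkeeping one may pass harmlessly to the cusped surface $\overline{S_i}$, on which $\CC(S_i)$ and $\pi_{S_i}$ are unchanged and where Teichm\"uller geodesics make standard sense). First equalize the twist parameters: the earthquake path $s\mapsto e_{\ba,s\vv}(Y)$, for the $\vv\in T_\ba$ making $e_{\ba,\vv}(Y)$ agree with $Y'$ in its twist coordinates, fixes $l_\ba\equiv\bl_0$ and, by Lemma~\ref{complementary subsurface}, changes each $\pi_{S_i}$ by at most $m_4$, so it stays in $\TT(\ep,\bU)$; moreover it does not change the $\TT(S_i)$-factors, so $e_{\ba,\vv}(Y)$ has $S_i$-factor $Y|_{S_i}$ for each $i$. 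Then, one $i$ at a time and keeping the twists and all other factors fixed --- which leaves $l_\ba\equiv\bl_0$ and leaves $\pi_{S_j}$, $j\neq i$, unchanged --- join $Y|_{S_i}$ to $Y'|_{S_i}$ by a Teichm\"uller geodesic in $\TT(S_i)$. Its projection to $\CC(S_i)$ stays within $c_2$ of the $\CC(S_i)$-geodesic joining the projections of its two endpoints (Theorems~2.3 and 2.6 of \cite{masur-minsky}); those endpoints project within a bounded distance of $U_i'$ (inherited, coarsely, from $Y$ and $Y'$), hence deep towards $\lambda_i$; so by hyperbolicity of $\CC(S_i)$ that $\CC(S_i)$-geodesic stays deep towards $\lambda_i$, and the Teichm\"uller geodesic therefore projects into $U_i$, provided $D$ was chosen as above. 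This path again lies in $\TT(\ep,\bU)$, and concatenating the three stages yields the desired path from $X$ to $X'$.

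The hard part is the last stage: the statement that, for a single surface $S_i$, the $\pi_{S_i}$-preimage of a sufficiently small neighborhood of $\lambda_i\in\EL(S_i)$ is ``effectively path-connected'' in $\TT(S_i)$. This combines the Masur--Minsky fellow-traveling of curve-complex projections of Teichm\"uller geodesics with the elementary Gromov-hyperbolic fact that a geodesic between two points whose Gromov product with a fixed boundary point is large also has large Gromov product with that point, and the quantitative heart is choosing $D$ (the depth of $U_i'$ inside $U_i$) so as to absorb the accumulated constants $m_2$, $m_4$, $c_2$ and the hyperbolicity constants of the $\CC(S_i)$. A secondary, purely technical point is the clean identification of $l_\ba^{-1}(\bl_0)$ as a product and the handling of Teichm\"uller geodesics on $S_i$ in the presence of geodesic boundary of fixed length, for which passing to the cusped model $\overline{S_i}$ is convenient; alternatively, in that last stage one can avoid Teichm\"uller geodesics altogether by realizing a $\CC(S_i)$-geodesic between the two projections by a chain of pairwise-disjoint curves and interpolating in $\TT(S_i)$ through structures in which successive curves of the chain are simultaneously short.
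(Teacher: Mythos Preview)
Your proof is correct and shares the same skeleton as the paper's: first normalize the $\ba$-lengths while moving each $\pi_{S_i}$ by at most a uniform constant (Theorem~\ref{FN projections}), then connect within the complementary pieces using Teichm\"uller geodesics, controlled via the Masur--Minsky projection estimate and Gromov hyperbolicity of $\CC(S_i)$.

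The paper organizes the second step differently. Rather than fixing $l_\ba=\bl_0$ and decomposing that fiber as $\R^m\times\prod_i\TT(S_i)$, it passes to the augmented Teichm\"uller space: it pinches $\ba$ all the way to length~$0$ and uses the retraction of a neighborhood $\VV_\ba$ of the Deligne--Mumford stratum $\TT_0(\ba)\cong\TT(S\setminus\ba)$ onto that stratum. One then runs a single Teichm\"uller geodesic in $\TT(S\setminus\ba)$ (now with cusps along $\ba$) and lifts it back to $\VV_\ba$. This packaging absorbs your separate twist-equalization step into the retraction, and avoids the issue of Teichm\"uller geodesics on surfaces with geodesic boundary --- your passage to the cusped model $\overline{S_i}$ is fine, but does need a word on why $\pi_{S_i}$ is coarsely preserved under that passage, which is exactly what the paper's bound $d_{\CC(S_i)}(X,\psi(X))<c_1$ records. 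Your route, in exchange, stays entirely inside $\TT(S)$ and is more elementary.

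One correction: during your Teichm\"uller geodesic in the $S_i$-factor, $\pi_{S_j}$ for $j\ne i$ is not literally unchanged. The shortest curve in $X$ crossing $S_j$ could pass through $S_i$, and hence change length. What is true is that $\pi_{S_j}$ moves by at most a uniform constant: the geometry of $S_j^\bc$ is fixed, so a bounded-length curve $\kappa\subset S_j$ retains its length, and Lemma~\ref{bounding projections} (with $R=W=S_j$) bounds $d_{S_j}$ between the two endpoints. Since you perform $l$ such moves, enlarge your depth constant $D$ by an additional $l$ times this uniform bound.
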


\begin{proof}
Let $\TT_\ep(\ba)$ denote the region of $\TT(S)$ where
$l_{\alpha_j} < \ep$ for all $\alpha_j\in\ba$. 

Recall that the Deligne-Mumford compactification of the Moduli space
of $S$ lifts to an ``augmentation'' of the Teichm\"uller space in
which a stratum $\TT_0(\gamma)$ is added for each curve system $\gamma$,
corresponding to ``noded'' Riemann surfaces where exactly the elements of
$\gamma$ are pinched, and parameterized by
$\TT(S\setminus\gamma)$. The topology of this bordification is the
smallest one for which
the length functions of simple closed curves, extended to allow the
value 0, are continuous (see e.g. Bers \cite{bers:noded}).

Extended Fenchel-Nielsen coordinates give us an explicit description of the
local topology at a stratum:  enlarge $\ba$ to a maximal curve system
$\hhat\ba$ and let $l_{\hhat\ba}$ and $t_{\hhat\ba}$ be
associated Fenchel-Nielsen length and twist parameters as in Section
\ref{fenchel-nielsen}. Adding $\TT_0(\ba)$ to $\TT(S)$ corresponds 
to enlarging the parameter spaces $V_{\hhat\ba} = T_{\hhat\ba}\times L_{\hhat\ba}$ to
allow points where $l_{\alpha_j}=0$ exactly for $\alpha_j\in\ba$, and then
taking a quotient by identifying points which agree on all coordinates except
possibly those $t_j$ for which $l_{\alpha_j}=0$ (in other words shearing around a
pinched curve is ignored). Let $\overline V_{\hhat\ba}$ denote this augmented
parameter space, which gives a homeomorphic model for
$\overline\TT^\ba \equiv \TT(S)\union
\TT_0(\ba)$. The map $V_{\hhat\ba} \to
V_{\hhat\ba\smallsetminus\ba}$ that forgets the $\ba$ coordinates
extends to a map of $\overline V_{\hhat\ba}$, and
gives us a retraction $\overline\TT^\ba \to \TT_0(\ba)$, which on
$\TT(S)$ is a fibration with contractible fibres. 

Because length functions are continuous in this topology, there is a
neighborhood $\VV_\ba$ of $\TT_0(\ba)$ in $\TT(S)$ for which the fibration, which we
write $\psi:\VV_\ba \to \TT_0(\ba) = \TT(S\smallsetminus\ba)$,
changes the lengths of, say, the set of shortest curves in the
complement of $\ba$ by a ratio arbitrarily close to 1.
Shrinking $\VV_\ba$ if necessary we may also assume
$\VV_\ba\subset\TT_\ep(\ba)$. The small perturbation of lengths
implies a distance bound in $\CC(S_i)$, namely
\begin{equation}\label{dSi bound}
d_{\CC(S_i)}(X,\psi(X)) < c_1,
\end{equation}
for a uniform $c_1$, when $X\in\VV_\ba$.

Using Theorem  \ref{FN projections}, for each $X\in \TT_\ep(\ba)$ one can find a
path $\{X_t\}$ in $\TT_\ep(\ba)$ connecting $X$ to a point
$X'\in\VV_\ba$, such that projections to each $\CC(S_i)$
remain uniformly bounded. (In fact this is just a pinching deformation
and the full power of Theorem  \ref{FN projections} is not needed.)
Hence we can and do choose $c_1$ sufficiently large that 
\begin{equation}\label{dSi bound 2} 
\diam_{S_i}(\{X_t\}) < c_1.
\end{equation}

In \cite{masur-minsky}, it is shown that a Teichm\"uller
geodesic in $\TT(S_i)$ projects to a $c_2$-neighborhood of a
$\CC(S_i)$-geodesic, with $c_2$ uniform.

Now by the definition of the Gromov boundary, there is a neighborhood
$U^0_i$ of each $\lambda_i\in\partial_\infty\CC(S_i)$ in $\CC(S_i)$
such that any $\CC(S_i)$-geodesic with
endpoints in $U^0_i$ has the property that its $c_2$-neighborhood is in
$U_i$. 

Let $U'_i$ be a neighborhood of $\lambda_i$  in $\CC(S_i)$ whose $2c_1$-neighborhood
is in $U^0_i$. Now suppose $X_1,X_2\in\Teich_\ep(\ba)$, and
$\pi_{S_i}(X_j) \in U'_i$ ($j=1,2$). Using (\ref{dSi bound 2}) we can
deform $X_j$ to $X'_j$ ($j=1,2$) within $\TT_\ep(S_i)$ so that $X'_j\in
\VV_\ba$ and the $\pi_{S_i}$-image of the path stays in a
$c_1$-neighborhood of $U'_i$. 
Then, by (\ref{dSi bound}),  $\pi_{S_i}(\psi(X'_j))\in
U^0_i$. Let $G$ be a Teichm\"uller geodesic in $\TT(S\setminus\ba)$
connecting
$\psi(X'_1)$ to $\psi(X'_2)$. Then $\pi_{S_i}(G)\subset U_i$, so a 
lift of $G$ back to $\VV_\ba$ with endpoints $X'_1$ and $X'_2$ will,
again by (\ref{dSi bound}),  give us the desired continuous family.  
\end{proof}

\section{Deformations with controlled projections}
\label{deformations}

In this section, we establish Lemma \ref{twist and shrink} which is a key technical
tool in the paper. We begin with a system of curves on the top conformal boundary
which are short in the manifold. Lemma \ref{twist and shrink} allows us to
to shrink the lengths of the curves on the top conformal boundary,
without disrupting the subsurface projections on complementary subsurfaces and keeping the
curves short in the manifold throughout the process.

\begin{lemma}{twist and shrink}
Given $S$ and $K>1/\ep_0$, there exists $c=c(S)$, depending only on $S$,
and $h=h(K,S)$, which depends on both $K$ and $S$, such that
if $X,Y\in \Teich(S)$ and $\ba$ is a curve system on $S$,  such that
$$
\mm_{\alpha_i}(X,Y) > h
$$
or 
$$
l_{\alpha_i}(X) < 1/K
$$
for each component $\alpha_i$ of $\ba$, then there exists a path
$\{X_t:t\in[0,T]\}$ in $\Teich(S)$ with $X_0=X$ such that
\begin{enumerate}
\item $l_{\alpha_i}(X_T) < 1/K$ for each $\alpha_i$,

\item  $\mm_{\alpha_i}(X_t,Y) > K$ for each $\alpha_i$ and each
  $t\in[0,T]$,
\item $\diam(\pi_W(\{X_t:t\in[0,T]\})) < c$, for any $W$ disjoint
  from $\ba$.
\end{enumerate}
\end{lemma}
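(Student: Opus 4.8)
The plan is to move $X$ to the desired endpoint along a concatenation of paths, each of which shrinks the length of a single component of $\ba$ while invoking Theorem~\ref{FN projections} to hold coarsely fixed all subsurface projections disjoint from that curve; the components are treated in an order dictated by the partial order of Lemma~\ref{partialorder}.

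First I would partition $\ba = \ba_1\sqcup\ba_2\sqcup\ba_3$. Let $\ba_1$ be the components with $l_{\alpha_i}(X) < 1/K$. For a remaining component the hypothesis forces $\mm_{\alpha_i}(X,Y) > h$, and, choosing $h=h(K,S)$ with $h>3K$, we have $1/l_{\alpha_i}(X)\le K < h/3$, so the maximum defining $\mm_{\alpha_i}(X,Y)$ is realized either by $1/l_{\alpha_i}(Y)$ --- in which case $l_{\alpha_i}(Y)<1/K$ and we put $\alpha_i\in\ba_2$, noting that then $\mm_{\alpha_i}(X_t,Y)\ge 1/l_{\alpha_i}(Y)>K$ for any choice of $X_t$ --- or by $\sup_{\alpha_i\subset\partial W} d_W(X,Y)$, in which case we put $\alpha_i\in\ba_3$ and fix an essential $W_i$ with $\alpha_i\subset\partial W_i$ and $d_{W_i}(X,Y)>h/3$. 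Taking $h/3$ larger than the constants of Lemmas~\ref{partialorder} and~\ref{porderandtoporder}, all the $W_i$ lie in a single $\LL_c(X,Y)$ carrying the partial order $\prec$; I fix a linear ordering $\alpha_1,\dots,\alpha_p$ of $\ba_3$ refining $\prec$ on the $W_i$, with the $\prec$-maximal (``topmost'') ones first.

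Next I would build the path stage by stage. At stage $i$ ($1\le i\le p$) apply Theorem~\ref{FN projections} to the one-curve system $\alpha_i$ at the current structure, obtaining $\Phi_i\colon V_{\alpha_i}\to\TT(S)$ containing that structure in its image, and follow the subpath on which the twist coordinate is fixed and the length coordinate is driven monotonically down to a value $<1/K$; after stage $p$, do the same for the components of $\ba_2$, and leave those of $\ba_1$ untouched throughout. Conclusion (1) is then immediate. Conclusion (3) follows because the stage-$i$ subpath lies in $\Phi_i(V_{\alpha_i})$, which by Theorem~\ref{FN projections}(3) has $\CC(W)$-diameter $<m_2$ for every $W$ disjoint from $\alpha_i$, hence for every $W$ disjoint from $\ba$; as the number of components of $\ba$ is bounded in terms of $S$, summing over stages gives (3) with $c=c(S)$. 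For (2): on $\ba_1$ we keep $l_{\alpha_i}(X_t)<1/K$; on $\ba_2$ we have $\mm_{\alpha_i}(X_t,Y)\ge 1/l_{\alpha_i}(Y)>K$ always; and for $\alpha_i\in\ba_3$, once stage $i$ is finished $\alpha_i$ has length $<1/K$ in every later $X_t$ (later stages move only other curves), so it suffices to keep $d_{W_i}(X_t,Y)>K$ through the end of stage $i$ --- and if $\pi_{W_i}$ is moved by at most $m_2$ in each of stages $1,\dots,i$ then $d_{W_i}(X_t,Y) > h/3 - |\ba|\,m_2 > K$ for $h$ large, as needed.

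The one genuinely nontrivial input, and the step I expect to be the main obstacle, is the claim just used: that with the chosen order none of $\alpha_1,\dots,\alpha_{i-1}$ essentially meets $W_i$, so that stage $j<i$ --- governed by Theorem~\ref{FN projections} applied to a curve disjoint from $W_i$ --- moves $\pi_{W_i}$ by at most $m_2$. This is exactly what the partial order is for: an essential intersection of $\alpha_j\subset\partial W_j$ with $W_i$ should force $W_i$ and $W_j$ to be $\prec$-comparable with $W_i\prec W_j$, so that $\alpha_i$ is processed before $\alpha_j$. One proves this using the ``Moreover'' clause of Lemma~\ref{partialorder} (after shrinking $c$ by $|\ba|\,m_1$ to absorb the finitely many refinements), together with the identification of $\prec$ with the topological ``lies above'' order via Lemmas~\ref{porderandtoporder} and~\ref{alpha below Z} and Otal's unlinking of short Margulis tubes, which also disposes of the nested configurations. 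Granting this ordering statement, the bookkeeping above produces $c=c(S)$ and $h=h(K,S)$ with the stated dependencies.
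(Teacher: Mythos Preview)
Your outline follows the paper's strategy---shrink the components of $\ba$ one at a time in an order determined by $\prec$---but the step you flag as ``the main obstacle'' is not merely nontrivial: the disjointness claim is false, and fixing it requires a different mechanism than the one you propose.

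Concretely, the assertion that none of the earlier-processed $\alpha_j$ ($j<i$) meets $W_i$ cannot hold. The relation $\prec$ of Lemma~\ref{partialorder} is defined precisely on \emph{overlapping} pairs, and parts (1)--(4) of that lemma are statements about $d_{W_i}(\partial W_j,\cdot)$, which presuppose that $\partial W_j$ intersects $W_i$. Take two overlapping one-holed tori $W_1,W_2$ in a genus-two surface with $\alpha_k=\partial W_k$; if $W_2\prec W_1$ you process $\alpha_1$ first, and $\alpha_1$ certainly crosses $W_2$. So at stage $1$ Theorem~\ref{FN projections}(3) gives you no bound on $\pi_{W_2}$, and the ``Moreover'' clause of Lemma~\ref{partialorder} does not produce disjointness either---it produces a lower bound on $d_{W_2}(X,Y)$. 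What the paper does instead is exactly this: since $W_i\prec W_1$, part (2) of Lemma~\ref{partialorder} gives $d_{W_i}(\partial W_1,Y)>h-m_1$; since $W_1$ is disjoint from $\alpha_1\subset\partial W_1$, Theorem~\ref{FN projections}(3) keeps $d_{W_1}(X_t,Y)$ large throughout the stage; then the ``Moreover'' clause (applied with the roles of $X$ and $Y$ reversed) yields $d_{W_i}(X_t,Y)>h-m_1-m_2$. Thus $\pi_{W_i}(X_t)$ may well move by more than $m_2$, but its distance to $\pi_{W_i}(Y)$ stays large, which is all that is needed for conclusion~(2).

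There is a second, smaller gap. Applying Theorem~\ref{FN projections} to the one-curve system $\{\alpha_i\}$ gives no control over $l_{\alpha_j}(X_t)$ for $j\ne i$: the section $\sigma$ in that theorem fixes only the lengths in the chosen system, and moving along it can lengthen a previously-shrunk $\alpha_j$. So ``later stages move only other curves'' is unjustified. The paper handles this by applying Theorem~\ref{FN projections} at each step to $\bb=\ba^X\cup\{\alpha_1\}$, i.e.\ the already-short curves together with the next one, so their lengths are frozen; this also yields the side benefit that any $W$ meeting $\ba^X$ has bounded $\pi_W$-image automatically, since a short curve crossing $W$ pins down $\pi_W$.
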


Recall that, by  Theorem \ref{length and projections}, $\mm_\gamma(X,Y)$ is large if and only if $\gamma$ is short in $Q(X,Y)$. Lemma \ref{twist and shrink} will follow from Theorem \ref{FN projections}, which allows us
to change lengths and maintain control on subsurface projections, and Lemma
\ref{partialorder}, which records  key estimates concerning the partial order on subsurfaces of $S$.

\begin{proof} 
Let $\ep = 1/K$. 
Write $\ba= \ba^X \union \ba^Y\union\ba^0$,
where $\ba^X$ consists of those components $\alpha_i$ with
$l_{\alpha_i}(X) < \ep$, $\ba^Y$ consists of the components $\alpha_i$ of $\ba-\ba^X$
such that $l_{\alpha_i}(Y) < \ep$, and  $\ba^0$ consists of the remaining components. 

We argue by induction on
the cardinality $n$ of $\ba^0$ (which we note is bounded from above
in terms of $S$). We will iteratively construct $h_n$ (which implicitly depends on $K$ and $S$)
and show that the lemma holds if $\ba^0$ has $n$ components and
$\mm_{\alpha_i}(X,Y) > h_n$ for all $\alpha_i$ in $\ba^0$ with a constant
$c_n$ in (3), which only depends on $S$.

If $n=0$, we let $h_0=K$ and $c_0=m_2$ (the constant
from Theorem \ref{FN projections}).
If $\ba^Y=\emptyset$  then the deformation is trivial, i.e. $T=0$.

If $\ba^Y \ne\emptyset$, let $\Phi:V_\ba\to \TT(S)$ be the map given by
Theorem \ref{FN projections}, such that $X\in\Phi(V_\ba)$ -- in
fact we must have $X = \Phi(({\mathbf t},l_\ba(X)))$ 
for some $\mathbf t\in T_\ba$. Let $\{X_t:t\in[0,T_1]\}$ be the $\Phi$-image of
the path in $V_\ba$ that begins at $({\mathbf t},l_\ba(X))$, shrinks  the length of
each $\alpha_i$ in $\ba^Y$
monotonically to  $\ep/2$,  fixes the length of
every component of $\ba^X$ and fixes each twist coordinate. In particular,
$l_{X_T}(\alpha_i)<\epsilon$ for all $\alpha_i$ in $\ba$.
Since $l_{X_t}(\alpha_i)<\ep$  for all $\alpha_i$ in $\ba^X$ and
$l_Y(\alpha_i)<\ep$ if $\alpha_i$ lies in $\ba^Y$, we see immediately
$\mm_{\alpha_i}(X_t,Y)>K$ for all $i$ and $t$.
Theorem \ref{FN projections} also implies that
if $W$ is a subsurface disjoint from $\ba$, then
$$\diam(\pi_W(\{X_t:t\in[0,T]\})) < m_2=c_0.$$
The base case follows.

For $n>0$, 
set $h_n = h_{n-1} + 2m_1 + m_2 $ and $c_n=c_{n-1}+m_2$, where $m_1$ and $m_2$
are the constants from Lemma \ref{partialorder} and Theorem \ref{FN projections}. 

For each $\alpha_i$ in $\ba^0$, 
there must be some subsurface $W_i$
with $\alpha_i\subset \boundary W_i$, such that 
\begin{equation}\label{dW large}
d_{W_i}(X,Y) > h_n,
\end{equation}
since $\mm_{\alpha_i}(X,Y)>h_n>K$
but $l_{\alpha_i}(X) \ge \ep$ and $l_{\alpha_i}(Y) \ge \ep$.
(Note that possibly $W_i = \collar(\alpha_i)$). Fix one such $W_i$ for
each $\alpha_i\in\ba^0$. 

Since $h_n>m_1$, Lemma \ref{partialorder} implies that the set of
domains $\LL_{h_n}(X,Y)$, which contains all the 
$W_i$, is partially ordered by the relation $\prec$. 

Reordering $\ba$ if necessary,  we may assume that
$\alpha_1\in\ba^0$, 
and $W_1$ is $\prec$-maximal among the $W_i$, as well as maximal
with respect to inclusion among the $\prec$-maximal elements (Lemma
\ref{partialorder} implies that any two maximal elements are either
disjoint or nested).  In particular, the curves in $\partial W_1$ all lie
above any curves they intersect in $\partial W_i$, so, intuitively, $W_1$ is the closest
surface, among all the $W_i$, to the top of the manifold.
Let $\bb = \ba^X \union \{\alpha_1\}$. 

Now let $\Phi:V_\bb\to \TT(S)$ be the map given by
Theorem \ref{FN projections} such that $X=\Phi(\mathbf{t},l_\bb(X))$ for some
$\mathbf{t}\in T_\bb$.
Let $\{X_t:t\in[0,T_1]\}$ then be the $\Phi$-image of
the path in $V_\bb$ that begins at $({\mathbf t},l_\bb(X))$, shrinks the length of
$\alpha_1$ monotonically to $\ep$,  keeps the lengths of each element of $\ba^X$ fixed, and fixes each twist coordinate of an element of $\bb$.
Theorem \ref{FN projections} guarantees that 
\begin{equation}\label{W doesnt move}
\diam(\pi_W(\{X_t:t\in[0,T_1]\})) \le m_2
\end{equation}
if $W$ is disjoint from $\bb$ (including the case when $W$ is an annulus with
core a component of $\bb$). If $W$
intersects one of the curves of $\ba^X$, then since their lengths
are bounded by $\ep$ over the family $X_t$, we again have
a bound on $\diam_W(\{X_t\})$, by a constant which we may assume is 
smaller than $m_2$.
It follows, for any $W_i$ disjoint from $\alpha_1$,
that for all $t\in[0,T_1]$
\begin{equation}\label{Wi stays big}
d_{W_i}(X_t,Y) \ge h_n - m_2 = h_{n-1} + 2m_1.
\end{equation}
In particular, $d_{W_1}(X_t,Y)\ge h_n-m_2$ for all $t$. More generally, we see that
$\mm_{\alpha_i}(X_t,Y)>h_{n-1}$ for all $t$, whenever $W_i$ is disjoint from $\alpha_1$.

If $W_i$ intersects $\alpha_1$ then, by the choice of $W_1$, we see that 
$W_i$ and $W_1$ overlap and $W_i \prec W_1$,
with respect to the order $\prec$ on $\LL_{h_n}(X_0,Y)$. 
Lemma \ref{partialorder}(2) implies that 
$$d_{W_i}(Y,\boundary W_1) > h_n-m_1 > m_1.$$
Then, since $d_{W_1}(X_t,Y)\ge h_n -m_2>2m_1$ for all $t$, $W_1$ overlaps $W_i$, and
$d_{W_i}(Y,\boundary W_1) > m_1,$
Lemma \ref{partialorder}(5) implies that
$$d_{W_i}(X_t,Y)\ge h_n-m_2- m_1.$$
In particular this implies that $\mm_{\alpha_i}(X_t,Y)>h_{n-1}$ for
all $t$ and all $\alpha_i$ in $\ba^0$.

We now have a family $\{X_t:t\in[0,T_1]\}$
such that the number of components $\alpha_i$ of $\ba$ with
$l_{\alpha_i}(X_{T_1})\ge \ep$ and $l_{\alpha_i}(Y)\ge \ep$
is at most $n-1$. Moreover, for each $\alpha_i$ either,
$\mm_{\alpha_i}(X_{T_1},Y) > h_{n-1}$,  $l_{\alpha_i}(X_{T_1}) < \ep$
or $l_{\alpha_i}(Y) < \ep$ and if $W$ is disjoint from $\ba$, then 
$$\diam(\pi_W(\{X_t:t\in[0,T_1]\})) \le m_2.$$

Now applying the inductive hypothesis to $X_{T_1}$, we 
can concatenate this family with one that shrinks the remaining
components of $\ba$ to have length at most $\epsilon$, so that
$\mm_{\alpha_i}(X_t,Y) > K$ for each $\alpha_i$ and each  $t$, and
$$\diam(\pi_W(\{X_t:t\in[0,T]\})) < c_n,$$
for any $W$ disjoint from $\ba$.

\end{proof}

\section{Bers slices}
\label{bers slice}

In this section, we prove Theorem \ref{bersslice} which we re-state here
for the reader's convenience.

\medskip\noindent
{\bf Theorem \ref{bersslice}.} {\em
Let $B$ be a Bers slice of $QF(S)$ for some closed
surface $S$.  If $\rho\in \partial B$ and
$\rho$ is quasiconformally rigid in $\partial B$, then
$B$ does not self-bump at $\rho$. In particular, its closure  $\bar B$ is locally 
connected at $\rho$.}

\medskip

We will begin by proving that there is no self-bumping at a maximal cusp in the
boundary of a Bers slice.
The proof in this case is much simpler but follows the same outline as
the proof of the general case.

\subsection{The maximal cusp case}

We first assume that $\rho$ is a maximal cusp in the boundary of a Bers slice 
$B=B_Y$ in $AH(S)$. Let $\ba$
be the maximal curve system on $S$ which is cusped in $N_\rho$. 

If $\{\rho_n\}$ is a sequence in $B_Y$, then $\{\rho_n\}$ converges to
$\rho$ if and only if $\lim l_{\rho_n}(\alpha_j)=0$ for all $\alpha_j\in\ba$.
(Theorem 5 of Bers \cite{bers-slice} implies that $B_Y$ has compact closure in $AH(S\times I)$ while Theorem 1 in
Maskit \cite{maskit-koebe} implies that a maximal cusp in $\partial B_Y$ is determined
by its parabolic elements.)
Therefore the sets
$$
U(\delta) = \{\rho'\in B_Y: l_{\alpha_j}(\rho') < \delta
\ \ \forall \alpha_j\in\ba\}.
$$
for $\delta>0$
give a neighborhood system for $\rho$ in $ B_Y$

We will show that for each $\delta>0$ there exists a neighborhood
$V$ of $\rho$ such that any two points in $ V\intersect B_Y$ are connected
by a path in $U_\delta \intersect B_Y$.
It then follows that there is no self-bumping at $\rho$.

First, let 
$$W(\ep)=\{Q(X,Y)\in B_Y :  l_{\alpha_j}(X)<\ep \ \ \forall \alpha_j\in\ba\}.$$
$W(\ep)$ is path-connected  for any $\ep>0$, because it is parametrized by a 
convex set in the Fenchel-Nielsen coordinates  for $\mathcal{T}(S)$. 
Bers' Lemma \ref{berslemma} implies that $W(\ep) \subset U(2\ep)$.
Hence it suffices to choose $V$ so that
any point in $V$ can be connected to $W({\delta/2})$ by a path in $U(\delta)$. 

Given $\delta>0$, Theorem \ref{length and projections} allows us to choose
$K$ such that, for any $X,Y \in \Teich(S)$, 
$$
\mm_\gamma(X,Y) > K  \ \implies \ l_\gamma(Q(X,Y)) < \delta.  
$$
We may moreover require that $K > 2/\delta$. 
Let $h=h(K,S)$ be the constant given by Lemma \ref{twist and shrink}.
Theorem \ref{length and projections} then gives 
$\delta'>0$ such that
$$
 l_\gamma(Q(X,Y)) < \delta' \ \implies \  \mm_\gamma(X,Y) > h.
$$
Now consider $V=U({\delta'})$. If $Q(X,Y)\in V$, then $ \mm_{\alpha_i}(X,Y) > h$
for all $\alpha_j\in \ba$, so Lemma \ref{twist and shrink} gives
a family $\{X_t\ |\ t\in[0,T]\}\subset\mathcal{T}(S)$ with $X_0=X$ such that, for
each $\alpha_j\in\ba$, 
\begin{enumerate}
\item
$\mm_{\alpha_j}(X_t,Y) > K$ for all $t\in[0,T]$, and
\item
$l_{\alpha_j}(X_T)\le \frac{1}{K}<\frac{\delta}{2}$.
\end{enumerate}
It follows, from (1), that
$Q(X_t,Y)\in U(\delta)$ for all $t$ and, from (2), that  $Q(X_T,Y)$ is contained in
$W(\delta/2)$. 

This completes the proof of Theorem \ref{bersslice} for maximal cusps. 

\subsection{General quasiconformally rigid points on the Bers boundary}

In order to prove that there is no self-bumping at quasiconformally rigid
points we must also  allow for geometrically infinite ends. Theorem 
\ref{endlams} and the Ending Lamination Theorem allow us to
use subsurface projections to construct a neighborhood system about
a general quasiconformally rigid point. Once we have constructed
this neighborhood system the control we obtained on subsurface projections
in Lemma \ref{twist and shrink} allows us to proceed much as in the proof
of the maximal cusp case.

If $\rho\in\partial B_Y$ is quasiconformally rigid, then its
geometrically infinite ends are associated with a disjoint
collection of  subsurfaces $\{ S_1,\ldots, S_l\}$ of $S$ and the cusps
are associated with a collection $\ba$ of disjoint simple closed curves
such that the components of $S\setminus\ba$ are precisely
the $S_i$ together with a (possibly empty) collection of
thrice-punctured spheres. Let $\{\lambda_1,\ldots,\lambda_l\}$ be the ending laminations
supported on $\{ S_1,\ldots,S_l\}$.

Let $U_i$ be a 
neighborhood of $\lambda_i\in\partial_\infty\CC(S_i)$ in $\CC(S_i)$
for each $i=1,\ldots,l$. We denote by $\bU$ the tuple
$(U_1,\ldots,U_l)$, and for $\delta>0$ we let
$\UU(\delta,\bU)$ be the set
\begin{align*}
\UU(\delta,\bU) = \{Q(X,Y) :\  & \pi_{S_i}(X)\in U_i \ \  \forall
i=1,\ldots,l, \\
& l_{\alpha_j}(Q(X,Y)) < \delta \ \ \forall \alpha_j\in\ba\}.
\end{align*}

Theorem \ref{endlams} and the Ending Lamination Theorem allow us to show
that the $\UU(\delta,\bU)$ give a neighborhood system for $\rho$ in $B_Y$.
However, we should note that the sets $\UU(\delta,\bU)$ will not in general
be open in $B_Y$.

\begin{lemma}{U is nbhd}
The sets $\UU(\delta,\bU)$, 
where $\delta$ varies in $ (0,\ep_0)$ and the $U_i$ vary over
neighborhoods of $\lambda_i$ in $\bbar\CC(S_i)$, 
are the intersections with $B_Y$ of a neighborhood system for $\rho$.
\end{lemma}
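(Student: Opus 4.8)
The plan is to show that every set $\UU(\delta,\bU)$ contains a set of the form $\UU(\delta',\bU')$ which in turn is contained in a neighborhood of $\rho$ in $B_Y$, and conversely that every neighborhood of $\rho$ in $B_Y$ contains some $\UU(\delta,\bU)$. The two directions have quite different flavors. For the ``every neighborhood contains a $\UU$'' direction, suppose to the contrary that there is a sequence $\{\rho_n=Q(X_n,Y)\}$ in $B_Y$ which lies in every $\UU(\delta,\bU)$ (i.e.\ $\pi_{S_i}(X_n)\to\lambda_i$ for each $i$ and $l_{\alpha_j}(\rho_n)\to 0$) but does not converge to $\rho$. By compactness of $\overline{B_Y}$ (Bers, \cite[Theorem 5]{bers-slice}) we may pass to a subsequence converging to some $\rho'\in\partial B_Y$. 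First I would observe that since $l_{\alpha_j}(\rho_n)\to0$, every $\alpha_j$ is parabolic in $N_{\rho'}$; the bottom conformal structure of $\rho'$ is $Y$ by Bers. Then I would apply Theorem \ref{endlams}: since $\pi_{S_i}(X_n)\to\lambda_i\in\EL(S_i)$, the manifold $N_{\rho'}$ has an upward-pointing end bounded by $S_i$ with ending lamination $\lambda_i$. Combined with the fact that the $\alpha_j$ are parabolic and the $S_i$ together with thrice-punctured spheres exhaust the complement of $\ba$, this forces $\rho'$ to be quasiconformally rigid with exactly the same parabolic curve system, the same geometrically infinite subsurfaces, and the same ending laminations as $\rho$ — and the same bottom conformal structure $Y$. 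The Ending Lamination Theorem then gives $\rho'=\rho$, a contradiction.

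For the other direction — that each $\UU(\delta,\bU)$ does contain (the intersection with $B_Y$ of) a genuine neighborhood of $\rho$ — I would argue that for suitable $\delta'$ and sufficiently small $\bU'$, the set $\UU(\delta',\bU')$ lies inside $\UU(\delta,\bU)$, \emph{and} that $\UU(\delta',\bU')$ can be sandwiched between $\rho$ and an open neighborhood. Here the key technical input is again Theorem \ref{endlams} together with the length-projection machinery: shortness of $\alpha_j$ in $Q(X,Y)$ is detected by $\mm_{\alpha_j}(X,Y)$ being large (Theorem \ref{length and projections}), and the constraint $\pi_{S_i}(X)\in U_i$ with $U_i$ a small neighborhood of $\lambda_i$ forces the relevant subsurface projections — hence the internal geometry near the would-be degenerate ends — to agree with those of $\rho$. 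One packages this into the statement that any sequence leaving every $\UU(\delta,\bU)$ must have a subsequence whose limit differs from $\rho$ either in a parabolic/length datum or in an ending lamination datum, which is exactly what the $\UU(\delta,\bU)$ constraints rule out in the limit.

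The main obstacle I anticipate is the asymmetry between top and bottom and the fact that the sets $\UU(\delta,\bU)$ are not open: one must be careful that the neighborhood $\bU_i$ of $\lambda_i$ in $\overline{\CC(S_i)}$, which is a coarse/combinatorial object, genuinely pins down the end of the \emph{algebraic} limit and not merely the geometric limit — this is precisely why Theorem \ref{endlams} (and its ``moreover'' clause that $\pi_{S_i}(Y_n)=\pi_{S_i}(Y)$ does not accumulate at $\lambda_i$, which holds automatically here since $Y$ is fixed) is needed rather than a softer continuity argument. A secondary subtlety is verifying that no \emph{extra} parabolics or degenerate ends can sneak into the limit $\rho'$: this is handled because the complement of $\ba$ in $S$ is exactly the $S_i$ plus thrice-punctured spheres, so there is simply no room topologically for additional ending data, and a thrice-punctured sphere component of $\partial M_{\rho'}-P_{\rho'}$ is automatically geometrically finite. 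Once these points are in place, the Ending Lamination Theorem closes the argument.
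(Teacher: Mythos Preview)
Your argument for the harder direction (any sequence eventually in every $\UU(\delta,\bU)$ must converge to $\rho$) is correct and matches the paper's proof exactly: compactness of $\overline{B_Y}$, continuity of length to get the $\alpha_j$ parabolic, Theorem~\ref{endlams} to identify the upward-pointing ending laminations, Bers to pin down the bottom end as $Y$, and then the Ending Lamination Theorem to conclude $\rho'=\rho$.

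For the converse direction you have overcomplicated things. You do not need Theorem~\ref{length and projections} or any sandwiching; the paper dispatches this in two lines. If $\rho_n\to\rho$ then $l_{\alpha_j}(\rho_n)\to 0$ simply by continuity of translation length on $AH(S)$, and $\pi_{S_i}(X_n)\to\lambda_i$ by the \emph{forward} implication of Theorem~\ref{endlams} (since $N_\rho$ does have an upward-pointing end on $S_i$ with lamination $\lambda_i$). Hence $\rho_n$ is eventually in any prescribed $\UU(\delta,\bU)$. Your contrapositive phrasing (``any sequence leaving every $\UU$ must have a subsequential limit differing from $\rho$'') is logically fine but obscures this direct argument.
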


\begin{proof}
It suffices to show that a sequence $\{\rho_n = Q(X_n,Y)\}$ converges to
$\rho$ if and only if it is eventually contained in any
$\UU(\delta,\bU)$. 

Let $\{\rho_n\}$ be a sequence eventually contained in any
$\UU(\delta,\bU)$. Since $\bbar B_Y$ is compact, it suffices to
show that any accumulation point of $\{\rho_n\}$ is $\rho$. 
Therefore we may assume $\{\rho_n\}$ converges to $\rho'$.
By hypothesis, for each
$S_i$, $\{\pi_{S_i}(X_n)\}$ converges to $\lambda_i$.
Theorem \ref{endlams} now
implies that $S_i$ faces an upward-pointing end of $\rho'$ with ending
lamination $\lambda_i$, for each $i$. Since $\lim l_{\alpha_j}(\rho_n) =0$,
each $\alpha_j$ corresponds to a cusp of $\rho'$. Since
$\rho'\in\bbar B_Y$, it has a  downward pointing end associated to the full
surface $S$, with conformal structure $Y$ (see Bers \cite[Theorem 8]{bers-slice}).
Thus, each cusp of $\rho'$ is upward-pointing.
Therefore, the end invariants of
$\rho'$ are the same as those of $\rho$. By the Ending Lamination
Theorem, $\rho'=\rho$. 

In the other direction, suppose $\{\rho_n\}$ converges to $\rho$. Then
$\lim l_{\alpha_j}(\rho_n)=0$  for all $\alpha_j\in\ba$, by continuity of length, and
$\{\pi_{S_i}(X_n)\}$ converges to $ \lambda_i$ for all $i$, by Theorem \ref{endlams}. Hence
$\{\rho_n\}$ is eventually contained in any $\UU(\delta,\bU)$. 
\end{proof}

Let $\WW(\ep,\bU)$ denote a similarly-defined set  where the length bounds
on $\ba$ take place in the boundary structure  $X$, i.e.
\begin{align*}
\WW(\ep,\bU) = \{Q(X,Y) :\  & \pi_{S_i}(X)\in U_i \ \  \forall
i=1,\ldots,l, \\
& l_{\alpha_j}(X) < \ep \ \ \forall \alpha_j\in\ba\}.
\end{align*}
Notice that
$\WW(\ep,\bU)  = \{Q(X,Y): X\in\TT(\ep,\bU) \},$
where $\TT(\ep,\bU)$ is as in \S\ref{connected infinity}. 
By Bers' Lemma \ref{berslemma},
$\WW(\delta/2,\bU) \subset\UU(\delta,\bU)$. 

Theorem \ref{bersslice}  follows from the following lemma:

\begin{lemma}{U to W}
Given $\delta>0$ and neighborhoods $U_i$ of $\lambda_i$, there exists
$\ep>0$ and neighborhoods $V_i$ of $\lambda_i$ such that any two 
points in $\UU(\ep,\bV)$ can be connected 
by a path that remains in
$\UU(\delta,\bU)$.
\end{lemma}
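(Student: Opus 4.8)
The plan is to follow the structure of the maximal cusp case. Use Lemma~\ref{twist and shrink} to deform the top conformal structure of an arbitrary point of $\UU(\ep,\bV)$, shrinking the curves of $\ba$ on the top boundary while keeping them short in the manifold and keeping the projections to the subsurfaces $S_i$ essentially fixed, until the deformation lands in a region of the form $\WW(\delta/2,\bU')$; then invoke Lemma~\ref{W connected} to connect any two such terminal points inside $\WW(\delta/2,\bU)$, which by Bers' Lemma~\ref{berslemma} sits inside $\UU(\delta,\bU)$.

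First I would fix constants, in a specific order. We may assume $\delta<\ep_0$, since shrinking $\delta$ only shrinks $\UU(\delta,\bU)$. By Theorem~\ref{length and projections} choose $K>\max(2/\delta,\,1/\ep_0)$ so that $\mm_\gamma(X,Y)>K$ implies $l_\gamma(Q(X,Y))<\delta$ for all $X,Y\in\TT(S)$, and let $c=c(S)$, $h=h(K,S)$ be the constants of Lemma~\ref{twist and shrink}. Apply Lemma~\ref{W connected} to the multicurve $\ba$, the laminations $\lambda_i$, the neighborhoods $U_i$, and the parameter $\delta/2<\ep_0$, obtaining neighborhoods $U_i'\subset U_i$ such that any two points of $\TT(\delta/2,\bU')$ are joined by a path in $\TT(\delta/2,\bU)$. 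Shrink once more to neighborhoods $V_i\subset U_i'$ whose $c$-neighborhoods in $\CC(S_i)$ are still contained in $U_i'$. Finally, by the converse direction of Theorem~\ref{length and projections}, choose $\ep\in(0,\ep_0)$ so that $l_\gamma(Q(X,Y))<\ep$ implies $\mm_\gamma(X,Y)>h$. These $\ep$ and $V_i$ are the output of the lemma.

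Next I would carry out the deformation at a single point. Let $Q(X,Y)\in\UU(\ep,\bV)$. Since $l_{\alpha_j}(Q(X,Y))<\ep$ for every $\alpha_j\in\ba$, the choice of $\ep$ gives $\mm_{\alpha_j}(X,Y)>h$, so Lemma~\ref{twist and shrink} yields a path $\{X_t:t\in[0,T]\}$ with $X_0=X$ along which $\mm_{\alpha_j}(X_t,Y)>K$ for all $j$ and $t$ (hence $l_{\alpha_j}(Q(X_t,Y))<\delta$), with $l_{\alpha_j}(X_T)<1/K<\delta/2$ for all $j$, and with $\diam(\pi_W(\{X_t\}))<c$ for every $W$ disjoint from $\ba$; in particular this applies to each $S_i$. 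Since $\pi_{S_i}(X_0)\in V_i$ and the projection moves by less than $c$ along the path, $\pi_{S_i}(X_t)\in U_i'\subset U_i$ for all $t$. Combined with the length bounds on $\ba$, this shows $\{Q(X_t,Y)\}\subset\UU(\delta,\bU)$, while the endpoint satisfies $X_T\in\TT(\delta/2,\bU')$, i.e.\ $Q(X_T,Y)\in\WW(\delta/2,\bU')$.

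Finally I would assemble. Given two points $Q(X^1,Y),Q(X^2,Y)\in\UU(\ep,\bV)$, the previous step connects each $Q(X^a,Y)$ within $\UU(\delta,\bU)$ to a point $Q(X^a_{T_a},Y)$ with $X^a_{T_a}\in\TT(\delta/2,\bU')$. By Lemma~\ref{W connected} there is a path in $\TT(\delta/2,\bU)$ from $X^1_{T_1}$ to $X^2_{T_2}$, and taking $X\mapsto Q(X,Y)$ turns it into a path in $\WW(\delta/2,\bU)$, which by Bers' Lemma~\ref{berslemma} lies in $\UU(\delta,\bU)$. Concatenating the three paths connects $Q(X^1,Y)$ to $Q(X^2,Y)$ inside $\UU(\delta,\bU)$, proving the lemma. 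The only delicate point is the order and nesting of the choices and the fact that the drift of the $S_i$-projections (bounded by $c$) produced by Lemma~\ref{twist and shrink} must be absorbed by choosing $\bV$ deep enough inside $\bU'$; the substantive work is already contained in Lemma~\ref{twist and shrink} (length and complementary-projection control along the deformation) and Lemma~\ref{W connected} (connectivity near infinity), so no genuinely new obstacle arises.
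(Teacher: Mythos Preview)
Your proof is correct and follows essentially the same approach as the paper: fix constants via Theorem~\ref{length and projections} and Lemma~\ref{twist and shrink}, nest intermediate neighborhoods using Lemma~\ref{W connected} and the projection drift bound $c$, deform an arbitrary point of $\UU(\ep,\bV)$ into $\WW(\delta/2,\bU')$ via Lemma~\ref{twist and shrink}, and then connect inside $\WW(\delta/2,\bU)\subset\UU(\delta,\bU)$. The only differences from the paper are cosmetic (your $U_i'$ is the paper's $W_i$, and the order in which $\ep$ and the innermost neighborhoods are chosen is swapped, which is immaterial).
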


\begin{proof}
By Theorem \ref{length and projections}, choose
$K$ such that
$$
\mm_\gamma(X,Y) > K \ \implies \ l_\gamma(Q(X,Y)) < \delta
$$
and also suppose  $K> 2/\delta$. 
Let $h=h(K,S)$ be the constant given by Lemma \ref{twist and shrink}, and
let $c=c(S)$ be the constant in part (3) of Lemma \ref{twist and shrink}. 
Lemma \ref{W connected} allows us to choose neighborhoods $W_i$
of $\lambda_i$ such that any two points in $\WW(\delta/2,\bW)$ are connected by a
path in $\WW(\delta/2,\bU)$.

Choose $\ep>0$ small enough that (again by Theorem \ref{length and projections})
$$l_{\gamma}(Q(X,Y))<\ep \ \implies \ \mm_{\gamma}(X,Y) > h.$$
Finally, choose neighborhoods $V_i$ of
$\lambda_i$ such that a 
$c$-neighborhood of $V_i$  in $\CC(S_i)$ is contained in $W_i$. 

Let $Q(X,Y)$ be in $\UU(\ep,\bV)$. 
Then, by our choice of $\ep$, $\mm_{\alpha_i}(X,Y) > h$ for each
component of $\ba$, and 
so Lemma \ref{twist and shrink} can be applied to give
a path $\{X_t:t\in[0,T]\}$ such that 
\begin{enumerate}
\item
$l_{\alpha_j}(X_T) < 1/K <\delta/2$ for all $\alpha_j\in\ba$, 
\item
$\mm_{\alpha_j}(X_t,Y) > K$ for all $\alpha_j\in\ba$ and all $t\in [0,T]$, and
\item
$\diam_{\CC(S_i)}(\pi_{S_i}(\{X_t\})) < c$ for all $i$.
\end{enumerate}
It follows immediately from (1) and (3), that
$Q(X_T,Y) \in \WW(\delta/2,\bW)$. 
Moreover, (2) implies that 
$l_{\alpha_j}(Q(X_t,Y)) < \delta$ for all $t$ and all $\alpha_j\in\ba$,  so, again applying (3), we
see that the entire path
$\{ Q(X_t,Y)\}$ lies in $\UU(\delta,\bW)$. 

This shows that any point in $\UU(\ep,\bV)$ can be connected to
$\WW(\delta/2,\bW)$ by a path in $\UU(\delta,\bW)$. 
Now since any two points in
$\WW(\delta/2, \bW)$ can be connected by a path in
$\WW(\delta/2,\bU)$, and since
$\WW(\delta/2,\bU) \subset \UU(\delta,\bU)$,  we
conclude that any two points in $ \UU(\ep,\bV)$ can be connected
by a path in $\UU(\delta,\bU)$. 
\end{proof}

\section{Acylindrical manifolds}
\label{acylindrical}

In this section, we rule out self-bumping at quasiconformally rigid
points in boundaries of deformation spaces of acylindrical 3-manifolds.
Thurston's Bounded Image Theorem allows us to use essentially the
same argument as in the Bers Slice case. Theorem \ref{acylcase} is the
special case of Theorem \ref{noselfbump} where $M$ is acylindrical.

\begin{theorem}{acylcase}{}{}
Let $M$ be an acylindrical compact 3-manifold. If $\rho$ is a quasiconformally
rigid point in $\partial AH(M)$, then there is no self-bumping at $\rho$.
\end{theorem}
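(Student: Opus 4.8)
The plan is to run the proof of Theorem~\ref{bersslice} essentially verbatim, with Thurston's Bounded Image Theorem playing the role that constancy of the lower conformal structure plays in a Bers slice. First I would fix the data at $\rho$. Since $\rho$ is quasiconformally rigid, Proposition~\ref{lcatqr} shows $\Theta$ is locally constant at $\rho$, so there is a unique component $B$ of $\interior(AH(M))$ with $\rho\in\bbar B$, and $B$ is identified with $\TT(\partial_T M)$ and $M_\rho\cong M$. The parabolics of $\rho$ form cusps along a multicurve $\ba$ in $\partial_T M$, and the components of $(\partial_T M)\setminus\ba$ consist of a (possibly empty) family of thrice-punctured spheres together with subsurfaces $S_1,\dots,S_l$ facing degenerate ends with ending laminations $\lambda_i\in\EL(S_i)$. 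For $\rho'\in B$ corresponding to $\tau'=(\tau'_S)_S\in\TT(\partial_T M)$, writing $r_S\colon B\to QF(S)$ for the restriction map we have $r_S(\rho')=Q(\tau'_S,\sigma_S(\tau'))$, where $\sigma$ is the skinning map; note that $l_{\alpha_j}(\rho')=l_{\alpha_j}\big(Q(\tau'_S,\sigma_S(\tau'))\big)$ whenever $\alpha_j\subset S$, since lengths of closed geodesics are preserved by covering. For $\delta\in(0,\epsilon_0)$ and a tuple $\bU=(U_i)$ of neighborhoods of $\lambda_i$ in $\bbar\CC(S_i)$, I would set
\[
\UU(\delta,\bU)=\{\rho'\in B:\ \pi_{S_i}(\tau'_S)\in U_i\ \text{for all } i,\ \ l_{\alpha_j}(\rho')<\delta\ \text{for all }\alpha_j\in\ba\},
\]
where in the first clause $S$ denotes the component of $\partial_T M$ containing $S_i$.

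Next I would establish the analogue of Lemma~\ref{U is nbhd}: the $\UU(\delta,\bU)$ are the traces on $B$ of a neighborhood basis at $\rho$. In the nontrivial direction, a sequence $\{\rho_n\}\subset B$ eventually lying in every $\UU(\delta,\bU)$ is precompact in $AH(M)$ by Thurston's compactness theorem for acylindrical manifolds; for an accumulation point $\rho'$, each $\alpha_j\in\ba$ is parabolic in $\rho'$ by continuity of length, and since $\pi_{S_i}(\tau_n|_S)\to\lambda_i$, Theorem~\ref{endlams} shows that $S_i$ faces a degenerate end of $\rho'$ with ending lamination $\lambda_i$. The Bounded Image Theorem keeps $\{\sigma_S(\tau_n)\}$ inside a fixed compact set $\mathcal B_S\subset\TT(S)$, so after passing to a subsequence $r_S(\rho_n)=Q(\tau_n|_S,\sigma_S(\tau_n))$ converges to a surface group with a geometrically finite interior-facing end; arguing as in Lemma~\ref{U is nbhd} this identifies all the ending data of $\rho'$ as well as the marked homeomorphism type $M_{\rho'}\cong M_\rho$, whence $\rho'=\rho$ by the Ending Lamination Theorem. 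Combined with the same point-set reduction used in the Bers slice case, Theorem~\ref{acylcase} reduces to the analogue of Lemma~\ref{U to W}: given $\delta>0$ and $\bU$, there exist $\epsilon>0$ and a tuple $\bV$ of smaller neighborhoods such that any two points of $\UU(\epsilon,\bV)$ are joined by a path in $\UU(\delta,\bU)$.

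For the path I would mirror the proof of Lemma~\ref{U to W}. Using Theorem~\ref{length and projections} choose $K$ (large, to be specified below, with $K>2/\delta$) so that $\mm_\gamma(X,Y)>K$ forces $l_\gamma(Q(X,Y))<\delta$; let $h=h(K,S)$ and $c=c(S)$ be the constants of Lemma~\ref{twist and shrink}, applied on each component $S$ of $\partial_T M$ with its curve system $\ba\cap S$; use Lemma~\ref{W connected} on each component to choose $\bW$ so that any two points of $\WW(\delta/2,\bW)$ connect inside $\WW(\delta/2,\bU)$, where $\WW(\cdot,\cdot)$ is defined like $\UU(\cdot,\cdot)$ but with the length bounds imposed on the boundary structures $\tau'_S$ themselves; pick $\epsilon>0$ with $l_\gamma(\rho')<\epsilon\Rightarrow\mm_\gamma>h$; and choose $\bV$ so that the $c$-neighborhood of each $V_i$ in $\CC(S_i)$ lies in $W_i$. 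Given $\rho'\in\UU(\epsilon,\bV)$, on each component $S$ apply Lemma~\ref{twist and shrink} with $X=\tau'_S$, $Y=\sigma_S(\rho')$, curve system $\ba\cap S$ (the hypothesis holding by the choice of $\epsilon$ and Theorem~\ref{length and projections}), obtaining a path $\{X^S_t\}$ that shrinks every $\alpha_j\in\ba\cap S$ to boundary length $<1/K$ while moving $\pi_{S_i}$ by at most $c$. Assembling these over all components (with a common parameter) yields a path $\{\rho'_t\}$ in $B=\TT(\partial_T M)$ ending in $\WW(\delta/2,\bW)$, and Bers' Lemma~\ref{berslemma} applied in each cover $r_S$ gives $\WW(\delta/2,\bU)\subset\UU(\delta,\bU)$; so it only remains to see that $\{\rho'_t\}$ stays in $\UU(\delta,\bU)$.

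The single place where this case genuinely departs from the Bers slice argument, and the main obstacle, is controlling $l_{\alpha_j}(\rho'_t)$ along the path. Unlike $Y$ in a Bers slice, the interior-facing structure $Y_t=\sigma_S(\rho'_t)$ changes as we deform, so the estimate $\mm_{\alpha_j}(X^S_t,Y)>K$ from Lemma~\ref{twist and shrink} (with $Y=\sigma_S(\rho'_0)$, the value at $t=0$) must be converted into a bound on $\mm_{\alpha_j}(X^S_t,Y_t)$. This is exactly where Thurston's Bounded Image Theorem is essential: every $Y_t$ lies in the fixed compact set $\mathcal B_S$, so (i) $l_{\alpha_j}$ is bounded below on $\mathcal B_S$, hence the largeness of $\mm_{\alpha_j}(X^S_t,Y)$ can never come from its $1/l_{\alpha_j}(Y)$ term, and (ii) $d_W(Y,Y_t)$ is bounded by a uniform $D_0$ for every essential subsurface $W$ with $\alpha_j\subset\partial W$ (a bounded subset of $\TT(S)$ on which $l_{\alpha_j}$ stays bounded below has uniformly bounded image in each such $\CC(W)$). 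Hence $\mm_{\alpha_j}(X^S_t,Y_t)>K-D_0$, and taking $K$ large enough at the start (so that $K-D_0$ exceeds the constant supplied by Theorem~\ref{length and projections} for $\delta$) keeps $l_{\alpha_j}(\rho'_t)=l_{\alpha_j}(Q(X^S_t,Y_t))<\delta$ for all $t$. Together with Lemma~\ref{twist and shrink}(3) and the choice of $\bV$ this confines the whole path to $\UU(\delta,\bU)$, and the theorem follows. The only other point demanding care, already noted above, is that the neighborhood-system lemma must pin down the marked homeomorphism type of an algebraic limit; that is handled exactly as in Lemma~\ref{U is nbhd}, using that $\rho$ is quasiconformally rigid via Proposition~\ref{lcatqr} and that the skinning side of every boundary subgroup remains geometrically finite.
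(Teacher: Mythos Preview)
Your proposal is correct and follows essentially the same approach as the paper's proof. The only cosmetic difference is in the bookkeeping of constants: the paper absorbs the skinning-image correction into the choice of $h$ by setting $h=h(K+RL+C,S)$ (using that $\pi_W$ is coarsely $(R,C)$-Lipschitz and the skinning image has Teichm\"uller diameter $L$), whereas you fix $h=h(K,S)$ and instead enlarge $K$ so that $K-D_0$ suffices; both are equivalent, and your justification for the uniform bound $d_W(Y,Y_t)\le D_0$ is exactly this Lipschitz estimate.
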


\begin{proof}{}
If $B$ is a component of ${\rm int}(AH(M))$ then we may
identify $B$ with $\mathcal{T}(S)$ where $S=\partial_TM$
is the non-toroidal portion of $\partial M$. Explicitly, we identify $\nu\in B$ with
$\partial_cN_\nu$, regarded as a point in $\mathcal{T}(S)$.
Thurston's Bounded Image Theorem asserts
that the skinning map $\sigma:\mathcal{T}(S)\to\mathcal{T}(S)$ has bounded image.
Let $L$ be the diameter of $\sigma(\mathcal{T}(S))$.

We again begin by constructing a neighborhood system about $\rho$.
Suppose that $B$ is the component of ${\rm int}(AH(M))$
such that $\rho\in \partial B$. Let $(M_\rho,P_\rho)$ be a relative compact core for
$N_\rho$. Let $\{ S_1,\ldots S_l\}$
be the components of $M_\rho-P_\rho$  which are not thrice-punctured spheres.
Each $S_i$ may be thought of as a subsurface of $S$ and comes equipped
with an ending lamination $\lambda_i$. The annular components of $P_\rho$
are associated with a disjoint collection $\ba$ of
simple closed curves on $S$.  Since $M$ is acylindrical, 
$\Theta$ is locally constant (see \cite{ACM}),
so our identification of $B$ with $\mathcal{T}(S)$ is consistent with our identification
of $\partial M_\rho$ with $S$.

Let $U_i$ be a 
neighborhood of $\lambda_i\in\partial_\infty\CC(S_i)$ in $\CC(S_i)$
for each $i=1,\ldots,i$. We denote by $\bU$ the tuple
$(U_1,\ldots,U_l)$, and for $\delta>0$ we let
$\UU(\delta,\bU)$ be the set
\begin{align*}
\UU(\delta,\bU) = \{\nu\in B :\  & \pi_{S_i}(\partial_cN_\nu)\in U_i \ \  \forall
i=1,\ldots,l, \\
& l_{\alpha_j}(\nu) < \delta \ \ \forall \alpha_j\in\ba\}.
\end{align*}

Since $AH(M)$ is compact (see \cite{thurston1}) if $M$ is acylindrical,
the proof of Lemma \ref{U is nbhd} generalizes directly to give:

\begin{lemma}{acylnbhd}
The sets $\UU(\delta,\bU)$, 
where $\delta$ varies in $ (0,\ep_0)$ and the $U_i$ vary over
neighborhoods of $\lambda_i$ in $\CC(S_i)$, 
are the intersections with $B$ of a neighborhood system for $\rho$.
\end{lemma}

We again define a related set $\WW(\ep,\bU)$ where the length bounds
on $\ba$ take place in the conformal boundary:
\begin{align*}
\WW(\ep,\bU) = \{\nu\in B :\  & \pi_{S_i}(\partial_cN_\nu)\in U_i \ \  \forall
i=1,\ldots,l, \\
& l_{\alpha_j}(\partial_cN_\nu) < \ep \ \ \forall \alpha_j\in\ba\}.
\end{align*}
Again Bers' Lemma \ref{berslemma} implies that
$\WW(\delta/2,\bU) \subset\UU(\delta,\bU)$. 

The proof of Theorem \ref{acylcase} is completed by
Lemma \ref{U to W acyl} whose proof mimics that of Lemma \ref{U to W}
but must be adapted to account for the fact that $\sigma$ is bounded
rather than constant.

\begin{lemma}{U to W acyl}
Given $\delta>0$ and neighborhoods $U_i$ of $\lambda_i$, there exists
$\ep>0$ and neighborhoods $V_i$ of $\lambda_i$ such that any two 
points in $\UU(\ep,\bV)$ can be connected 
by a path that remains in
$\UU(\delta,\bU)$.
\end{lemma}

\begin{proof}
We will assume that $S$ is connected
for simplicity, but the general case is handled easily one component at a time.

Notice that if $\gamma\in\mathcal{C}(S)$ and $X=\partial_cN_\nu\in\mathcal{T}(S)$, then
$l_\gamma(\nu)=l_\gamma(Q(X,\sigma(X)))$, since $Q(X,\sigma(X))$ is
the cover of $N_\nu$ associated to $\pi_1(S)$. Let $\delta_0>0$ be a lower bound
for $l_{\alpha_j}(Y)$ for all $\alpha_j\in\ba$ and $Y\in \sigma({\mathcal{T}(S)})$.
(The existence of $\delta_0$ follows from Thurston's Bounded Image Theorem.)
We may assume, without loss of generality, that $\delta<\delta_0$.

By Theorem \ref{length and projections},  we may choose
$K$ such that
$$
\mm_\gamma(X,\sigma(X)) > K \ \implies \ l_\gamma(\nu) < \delta
$$
and also suppose  that $K> 2/\delta$. 
There exists $R$ and $C$ such that \hbox{$\pi_{W}:\mathcal{T}(S)\to\mathcal{C}(W)$}
is coarsely $(R,C)$-Lipschitz
for all essential subsurfaces $W\subset S$
(see Lemma 2.3 in \cite{masur-minsky2}),
i.e. $d_{\mathcal{C}(W)}(X,Y) \le R d_{\mathcal{T}(S)}(X,Y)+C$
for all $X,Y\in\mathcal{T}(S)$. 
Let $h=h(K+RL+C,S)$ be the constant given by Lemma \ref{twist and shrink}, and
let $c=c(S)$ be the constant in part (3) of Lemma \ref{twist and shrink}. 

Lemma \ref{W connected} allows us to choose neighborhoods $W_i$
of $\lambda_i$ such that any two points in $\WW(\delta/2,\bW)$ are connected by a
path in $\WW(\delta/2,\bU)$. 

Choose $\ep$ small enough that (again by Theorem \ref{length and projections})
$$l_{\gamma}(Q(X,\sigma(X)))<\ep \ \implies \ 
\mm_{\gamma}(X,Y) > h,
$$
and choose neighborhoods $V_i$ of
$\lambda_i$ such that a 
$c$-neighborhood of $V_i$  in $\CC(S_i)$ is contained in $W_i$. 

If $\nu\in \UU(\ep,\bV)$ and $X=\partial_cN_\nu\in \mathcal{T}(S)$,
then Lemma \ref{twist and shrink} gives a path
$\{X_t:t\in[0,T]\}$ beginning at $X=X_0$, such that
\begin{enumerate}
\item
$l_{\alpha_j}(X_T) < 1/(K+RL+C) <\delta/2$ for all $\alpha_j\in\ba$, 
\item
$\mm_{\alpha_j}(X_t,\sigma(X)) > K+RL+C$ for all $\alpha_j\in\ba$ and all $t\in [0,T]$, and
\item
$\diam_{\CC(S_i)}(\pi_{S_i}(\{X_t\})) < c$.
\end{enumerate}
Let $\{\nu_t\ |\ t\in [0,T]\}$ be the associated path in $B$. Then, (1) and (3) imply that
$\nu_T\in  \WW(\delta/2,\bW)$. The facts that $\pi_W$ is coarsely $(R,C)$-Lipschitz
for all $W$, $d_{\mathcal{T}(S)}(\sigma(X),\sigma(X_t))\le L$,
$l_{\alpha_j}(\sigma(X_t))>\delta_0>\delta$ (for all $\alpha_j\in\ba$) and (2), imply that
$\mm_{\alpha_j}(X_t,\sigma(X_t)) > K$ for all $t$,
so $l_{\alpha_j}(\nu_t) < \delta$ for all $t$ and all $\alpha_j\in\ba$.
Combining this again with (3),
we see that the entire path
$\{ Q(X_t,Y)\}$ lies in $\UU(\delta,\bW)$.
 
We can now complete the argument exactly as in the proof of Lemma \ref{U to W}.
\end{proof}
\end{proof}

\section{Surface groups}
\label{general surface}

In this section we prove that quasifuchsian space doesn't self-bump at
quasiconformally rigid points in its boundary.
The proof is closely modeled on
the Bers slice case (\S\ref{bers slice}), with the main complication
being that we need to keep track of the ordering of the ends, and of
the relevant Margulis tubes, during the deformation. Theorem \ref{endlams}
allows us to keep track of the ordering of the ends, while Proposition \ref{hotsidehot} will
be used to control the ordering of the Margulis tubes.

\begin{theorem}{qfcase}
If $S$ is a closed surface and $\rho$ is a quasiconformally rigid
point in $\partial AH(S\times I)$, then there is no self-bumping at $\rho$.
\end{theorem}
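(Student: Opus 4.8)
The plan is to follow the Bers slice argument of Section \ref{bers slice}, but now allowing parabolics and degenerate ends on both the top and bottom of $S\times I$, and taking care to respect the vertical ordering of the ends throughout. Let $\rho\in\partial AH(S\times I)$ be quasiconformally rigid. Its upward-pointing cusps give a curve system $\ba\sbtop$ and its upward-pointing degenerate ends give subsurfaces $\{S_i\sbtop\}$ with ending laminations $\lambda_i\sbtop$; symmetrically on the bottom we get $\ba\sbot$, $\{S_j\sbot\}$, $\lambda_j\sbot$. Exactly as in Lemma \ref{U is nbhd} (using Theorem \ref{endlams} applied to both directions, the Ending Lamination Theorem, and compactness of a suitable set containing $\rho$ in its closure), one shows that the sets
$$
\UU(\delta,\bU,\bV) = \{Q(X,Y): \pi_{S_i\sbtop}(X)\in U_i,\ \pi_{S_j\sbot}(Y)\in V_j,\ l_{\alpha}(Q(X,Y))<\delta\ \ \forall\alpha\in\ba\sbtop\cup\ba\sbot\},
$$
as $\delta\to 0$ and $\bU,\bV$ shrink, form a neighborhood system for $\rho$. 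The goal is then: given $\delta$ and $\bU,\bV$, produce $\ep$ and smaller $\bU',\bV'$ so that any two points of $\UU(\ep,\bU',\bV')$ are joined by a path in $\UU(\delta,\bU,\bV)$.

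First I would deform only the top conformal structure $X$, keeping $Y$ fixed. Applying Lemma \ref{twist and shrink} to the curve system $\ba\sbtop$ (whose components are short in $Q(X,Y)$, hence have $\mm_{\alpha}(X,Y)$ large by Theorem \ref{length and projections}), we obtain a path $\{X_t\}$ with $l_\alpha(X_T)<1/K$ for $\alpha\in\ba\sbtop$, with $\mm_\alpha(X_t,Y)>K$ maintained along the way, and with projections to all subsurfaces disjoint from $\ba\sbtop$ — in particular to the $S_j\sbot$ — moving by at most $c$. This keeps $l_\alpha(Q(X_t,Y))<\delta$ for $\alpha\in\ba\sbtop$, and keeps $\pi_{S_j\sbot}(Y)$ fixed (it doesn't depend on $X$), so we land in $\WW\sbtop(\delta/2,\cdot)$-type sets with $\ba\sbtop$ short on the top boundary. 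The key extra point is that $\ba\sbot$ and the $S_j\sbot$ may intersect $\ba\sbtop$, and we must check that the bottom cusps $\alpha\in\ba\sbot$ stay short in $Q(X_t,Y)$; since $l_\alpha(Y)<\ep<\delta$ already forces $\mm_\alpha(X_t,Y)>K$ regardless of $X_t$, this is automatic. We then repeat the argument on the bottom, deforming $Y$ with $X$ now fixed (and already made nice), landing in a set where both $\ba\sbtop$ is short on top and $\ba\sbot$ is short on the bottom. Finally, two applications of Lemma \ref{W connected} (once for the top surfaces $S_i\sbtop$ in $\TT(S)$-coordinates for the top factor, once for the bottom) connect any two such ``doubly-nice'' points by a path staying in $\UU(\delta,\bU,\bV)$.

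The main obstacle I expect is controlling the \emph{ordering} of the short curves and ends during the deformation: when we shrink an upward cusp $\alpha\in\ba\sbtop$ that links a downward cusp or a curve $\partial S_j\sbot$, we must ensure the two families of Margulis tubes remain properly ordered (the top ones above the bottom ones) so that we never accidentally produce a representation in the wrong component or with the wrong topological type, which would break the use of the Ending Lamination Theorem when identifying limits. This is where Proposition \ref{hotsidehot} enters: near $\rho$, an upward-pointing cusp lies above any downward-pointing cusp it meets, and this is an open condition, so along the (compact) path $\{Q(X_t,Y)\}$, provided it stays in a small enough neighborhood of $\rho$, the ordering is preserved; combined with Lemma \ref{alpha below Z} and Lemma \ref{porderandtoporder} to handle curves that become short \emph{inside} the manifold rather than on the boundary, one gets that the topological configuration is constant along the deformation. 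Granting this, the limit-identification step — showing any accumulation point of the connecting paths has the same ending invariants as $\rho$, hence equals $\rho$ by the Ending Lamination Theorem — goes through as in Proposition \ref{noparnoselfbump} and Lemma \ref{U to W}.
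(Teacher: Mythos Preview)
Your overall plan is right in spirit, but there is a genuine gap in the path-construction step, and a related omission in your neighborhood system.

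\textbf{The main gap.} You write that when deforming $X_t$ with $Y$ fixed, the bottom cusps $\beta\in\ba\sbot$ stay short in $Q(X_t,Y)$ ``since $l_\beta(Y)<\ep<\delta$ already forces $\mm_\beta(X_t,Y)>K$ regardless of $X_t$, this is automatic.'' But you have no right to assume $l_\beta(Y)<\ep$: what you know is that $\beta$ is short \emph{in the manifold} $Q(X,Y)$, not on the conformal boundary $Y$. This is exactly the same situation as for the top cusps, and the whole point of Lemma \ref{twist and shrink} is that these are different. When $l_\beta(Y)$ is not small, the shortness of $\beta$ in $Q(X,Y)$ comes from some subsurface $Z$ with $\beta\subset\partial Z$ and $d_Z(X,Y)$ large; but $Z$ may overlap $\ba\sbtop$, and then deforming $X_t$ can move $\pi_Z(X_t)$ substantially, potentially destroying the largeness of $d_Z(X_t,Y)$. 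The paper handles this by a careful case analysis (is $l_\beta(Y)$ small? if not, does the witness $Z$ meet $\ba\sbtop$? does $\beta$ itself meet $\ba\sbtop$?) using Lemma \ref{alpha below Z} and the partial-order machinery of Lemma \ref{partialorder} to show that the witness $Z$ for $\beta$ lies $\prec$-below the surfaces you are manipulating, so that $d_Z(X_t,Y)$ stays large after all. This is the technical heart of the section and cannot be skipped.

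\textbf{The neighborhood system.} Your definition of $\UU(\delta,\bU,\bV)$ omits the ordering condition: that whenever $\alpha\in\ba\sbtop$ and $\beta\in\ba\sbot$ intersect, $\alpha$ lies above $\beta$ in $Q(X,Y)$. Without this condition the sets need not form a neighborhood system of $\rho$; in particular, in the maximal-cusp case (no degenerate ends) there is nothing in your $\UU$ that pins down which cusps are upward-pointing in the limit, so you cannot conclude $\rho'=\rho$ via the Ending Lamination Theorem. The paper builds the ordering condition directly into $\UU$, uses Lemma \ref{hotsidehot} to verify it holds near $\rho$, and then must check it is \emph{preserved along the deformation path} --- again via Lemma \ref{alpha below Z} --- rather than appealing circularly to being near $\rho$.

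Finally, your ``compactness of a suitable set'' is doing a lot of work: unlike the Bers slice, $QF(S)$ is not precompact, and the paper needs $\R$-tree arguments (Skora, Otal) together with Theorem \ref{bounded curves near geodesic} to show sequences in $\UU(\delta,\bU,\bV)$ have convergent subsequences.
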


Theorem \ref{noselfbump} follows from Theorems \ref{acylcase} and \ref{qfcase}.

\begin{proof}{}
We begin by constructing a neighborhood system for $\rho$ in
$QF(S)$. 
Let the upward-pointing end invariants  of $\rho$ be
denoted by a collection $\ba$ of simple closed curves on $S$ associated to 
upward-pointing cusps
and subsurfaces $\{S_i\}$ with laminations $\{\lambda_i\}$, and let its downward-pointing end invariants be denoted
by a collection $\bb$ of simple closed curves on $S$ associated to
downward-pointing cusps, and  subsurfaces $\{T_k\}$ with laminations
$\{\mu_k\}$. For all $i$ and $k$, let $U_i$ be a neighborhood of 
$\lambda_i\in\partial_\infty\CC(S_i)$ of $\CC(S_i)$ and let $V_k$ be a neighborhood of
$\mu_k$ in $\CC(T_k)$.
Let $\bU$ and $\bV$ denote the corresponding tuples of neighborhoods.
Define
$\UU(\delta,\bU,\bV)$ to be the set of all quasifuchsian groups $Q(X,Y)$
such that 
\begin{enumerate}
\item $\pi_{S_i}(X)\in U_i$   for all $i$,
\item
$l_{\alpha_j}(Q(X,Y)) < \delta$ for all $\alpha_j\in\ba$, 
\item
$\pi_{T_k}(Y)\in V_k $ for all $k$,
\item
$l_{\beta_l}(Q(X,Y)) < \delta$ for all $\beta_l\in\bb$, and
\item
if $\alpha_j\in\ba$ and $\beta_l\in\bb$ intersect on $S$, then
$\alpha_j$ lies above  $\beta_l$ in $Q(X,Y)$.
\end{enumerate}

\begin{lemma}{U is nbhd QF}
The sets $\UU(\delta,\bU,\bV)$
are the intersections with $QF(S)$ of a neighborhood system for $\rho$.
\end{lemma}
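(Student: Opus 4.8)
The plan is to imitate the proof of Lemma \ref{U is nbhd}, the two new features being that end data now appears on both the top and the bottom and that we must also control the ordering constraint (5). As there, it suffices to show that a sequence $\{\rho_n = Q(X_n,Y_n)\}$ in $QF(S)$ converges to $\rho$ if and only if it is eventually contained in every $\UU(\delta,\bU,\bV)$.

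Suppose first that $\rho_n\to\rho$. Continuity of the length functions gives $l_{\alpha_j}(\rho_n)\to 0$ and $l_{\beta_l}(\rho_n)\to 0$, so (2) and (4) hold for all large $n$. Applying Theorem \ref{endlams} to each $S_i$ (with the sequence $X_n$ of top invariants) and to each $T_k$ (with the sequence $Y_n$ of bottom invariants, using the ``downward'' version) gives $\pi_{S_i}(X_n)\to\lambda_i$ and $\pi_{T_k}(Y_n)\to\mu_k$, so (1) and (3) hold for all large $n$. Finally, for each of the finitely many pairs $\alpha_j\in\ba$, $\beta_l\in\bb$ with $i(\alpha_j,\beta_l)\neq 0$, Lemma \ref{hotsidehot} — using that $\rho$ is quasiconformally rigid, $\alpha_j$ is an upward-pointing cusp of $N_\rho$ and $\beta_l$ a downward-pointing cusp — provides a neighborhood of $\rho$ throughout which $\alpha_j$ lies above $\beta_l$; intersecting these finitely many neighborhoods shows (5) holds for all large $n$.

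Conversely, suppose $\{\rho_n\}$ is eventually in every $\UU(\delta,\bU,\bV)$; we must show $\rho_n\to\rho$. Since $AH(S\times I)$ is not compact, the first and main step is precompactness of $\{\rho_n\}$. Passing to a subsequence, $\{X_n\}$ and $\{Y_n\}$ converge in Thurston's compactification of $\TT(S)$, either to interior points or to boundary points represented by measured laminations $\mu^{+}$, $\mu^{-}$. The hypotheses $\pi_{S_i}(X_n)\to\lambda_i$ and $l_{\alpha_j}(\rho_n)\to 0$ force any limiting $\mu^{+}$ to be supported on $\ba\cup\bigcup_i\lambda_i$ with $\mu^{+}|_{S_i}$ of full support, and symmetrically for $\mu^{-}$; hence $\mu^{+}$ and $\mu^{-}$ jointly fill $S$, except possibly along curves in $\ba\cap\bb$, for which one first cuts $S$ along the common curves and argues piecewise. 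Precompactness then follows from Bers' compactness of Bers slices \cite{bers-slice} if at most one of $\{X_n\}$, $\{Y_n\}$ leaves every compact set, and from Thurston's Double Limit Theorem \cite{thurston2} if both do. After a further subsequence $\rho_n\to\rho'$ in $AH(S\times I)$; it remains to prove $\rho'=\rho$. Theorem \ref{endlams} identifies the degenerate ends of $N_{\rho'}$: it has an upward-pointing end bounded by each $S_i$ with lamination $\lambda_i$ and a downward-pointing end bounded by each $T_k$ with lamination $\mu_k$, and continuity of length shows every component of $\ba\cup\bb$ is a cusp of $N_{\rho'}$. That $\rho'$ has no other ends or cusps, and that the upward/downward partition of its cusps agrees with that of $N_\rho$, follows from the facts that pairs of pants contain no essential curves and thrice-punctured spheres are geometrically finite (so nothing extra hides in the complementary regions), together with condition (5): the relation ``$\alpha_j$ lies above $\beta_l$'' holds in every $N_{\rho_n}$ and passes to $N_{\rho'}$ by an argument parallel to the proof of Lemma \ref{hotsidehot} (pass to a geometric limit $\hat\Gamma\supset\rho'(\pi_1(S))$ and keep the realizing homotopy inside Margulis tubes disjoint from a fixed compact core), forcing $\alpha_j$ to be an upward cusp and $\beta_l$ a downward cusp of $N_{\rho'}$. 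Having matched marked homeomorphism type and all ending invariants, the Ending Lamination Theorem gives $\rho'=\rho$; since every subsequential limit of the precompact sequence $\{\rho_n\}$ is $\rho$, we conclude $\rho_n\to\rho$.

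The main obstacle is the ``if'' direction, and within it the two ways in which the quasifuchsian case genuinely departs from the Bers slice case of Lemma \ref{U is nbhd}: establishing precompactness of $\{\rho_n\}$ — which hinges on the limiting laminations of $\{X_n\}$ and $\{Y_n\}$ jointly filling $S$, a point that is subtle precisely when $\ba$ and $\bb$ share components, and which is complicated by the fact that a curve can be short in $Q(X_n,Y_n)$ while long in both $X_n$ and $Y_n$ — and transporting the ordering relation (5) to the algebraic limit so as to label the limiting cusps correctly. The remaining arguments are routine adaptations of the proof of Lemma \ref{U is nbhd}.
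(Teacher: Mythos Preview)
Your forward direction is correct and matches the paper's argument exactly.

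The converse has a genuine gap in the precompactness step. You assert that any Thurston limit $\mu^{+}$ of $\{X_n\}$ must be supported on $\ba\cup\bigcup_i\lambda_i$ with full support on each $S_i$, but this does not follow from the hypotheses. The condition $l_{\alpha_j}(\rho_n)\to 0$ controls lengths in the 3-manifold, not on the conformal boundary $X_n$; as you yourself note, $\alpha_j$ may remain long on $X_n$, so there is no reason it lies in (or is disjoint from) the support of $\mu^{+}$. Likewise, $\pi_{S_i}(X_n)\to\lambda_i$ is a statement about the $\CC(S_i)$-projection of a shortest curve on $X_n$ crossing $S_i$; it does not determine the Thurston-boundary behaviour of $X_n$ restricted to $S_i$. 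Without control on the supports of $\mu^{\pm}$ you cannot verify the binding hypothesis of the Double Limit Theorem, and the argument stalls precisely at the difficulty you flag but do not resolve. (Your aside about cutting along $\ba\cap\bb$ does not address this.)

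The paper proceeds quite differently. Assuming a subsequence diverges, it passes via Morgan--Shalen to an action on an $\R$-tree dual (by Skora) to a measured lamination $\nu$; since $l_{\alpha_j}(\rho_n),\,l_{\beta_l}(\rho_n)\to 0$, $\nu$ must miss $\ba\cup\bb$ and hence lie in some $S_i\cap T_k$, so it intersects $\lambda_i$ or $\mu_k$ transversely. Theorem~\ref{bounded curves near geodesic} is then used to produce curves $\gamma_n\in\CC(S_i)$ of bounded $\rho_n$-length with $\gamma_n\to\lambda_i$, and Otal's realization results force $l_{\gamma_n}(\rho_n)\to\infty$, a contradiction. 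The paper's remark after the proof acknowledges that a Double Limit Theorem-style approach is possible, but via efficiency of pleated surfaces or the results of \cite{BBCL}, not by the direct appeal you attempt.

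For the identification of the accumulation point $\rho'$, the paper also takes a different route: it uses Proposition~8.1 of Canary--Hersonsky \cite{canary-hersonsky} to upgrade the end data to a pared homeomorphism $(M_\rho,P_\rho)\to(M_{\rho'},P_{\rho'})$, establishing in particular that $\rho'$ is quasiconformally rigid, and then checks orientation by applying Lemma~\ref{hotsidehot} \emph{at $\rho'$} in contrapositive form. Your plan of transporting the ordering relation through a geometric limit is plausible but would need more care than you give it.
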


\begin{proof}
As in the proof of Lemma \ref{U is nbhd}, 
it suffices to show that a sequence $\{\rho_n = Q(X_n,Y_n)\}$ converges to
$\rho$ if and only if it is eventually contained in any
$\UU(\delta,\bU,\bV)$. 

Suppose $\{\rho_n=Q(X_n,Y_n)\}\subset QF(S)$ converges to $\rho$.
Then, by continuity of length, 
$\lim l_{\alpha_j}(Q(X_n,Y_n))=0$ for
all $\alpha_j\in\ba$ and $\lim l_{\beta_l}(Q(X_n,Y_n))=0$ for all $\beta_l\in\bb$.
Theorem \ref{endlams}
implies that $\{\pi_{S_i}(X_n)\}$ converges to $\lambda_i$ for all $i$
and $\{\pi_{T_k}(Y_n)\}$ converges to
$\mu_k$ for all $k$.  If $\alpha_j\in\ba$ and $\beta_l\in\bb$ intersect, then
Lemma \ref{hotsidehot} ensures that, for all large n, $\alpha_j$ lies above $\beta_l$
in $N_{\rho_n}$. Therefore, $\{\rho_n\}$ is eventually contained in any
$\UU(\delta,\bU,\bV)$. 

Now suppose that $\{\rho_n\}$ is eventually contained in any 
$\UU(\delta,\bU,\bV)$. We must first show that
any such $\{\rho_n\}$ has a convergent subsequence in $AH(S)$. 
If not, then some subsequence, still denoted $\{\rho_n\}$, converges
to a small action, by isometries, of $\pi_1(S)$ on an $\R$-tree $T$, i.e. there
exists $\{\epsilon_n\}$ converging to 0, such that $\{\epsilon_nl_\gamma(\rho_n)\}$ 
converges to the translation distance $l_T(\gamma)$  of the action of $\gamma$ on $T$
for any closed curve $\gamma$ on $S$ (see Morgan-Shalen \cite{morgan-shalen}).
Skora's theorem \cite{skora} implies that there exists a measured lamination $\nu$
on $S$ dual to the tree such
that $l_T(\gamma)=i(\nu,\gamma)$ for all $\gamma$.
If any $\alpha_j\in\ba$ or $\beta_l\in\bb$ intersects $\nu$, then we obtain an immediate
contradiction since $\lim l_{\alpha_j}(\rho_n)=0$ and $\lim l_{\beta_l}(\rho_n)=0$.
Therefore, $\nu$ must be contained both in some $S_i$ and in some $T_k$. The 
support of $\nu$ cannot agree with both $\lambda_i$ and $\mu_k$, since
$\lambda_i$ and $\mu_k$ do not agree, so $\nu$
must intersect either $\lambda_i$ or $\mu_k$ transversely.

Suppose without loss of generality that $\nu$ intersects $\lambda_i$ transversely. 
We will now show that the geodesics $[\pi_{S_i}(X_n),\pi_{S_i}(Y_n)]$
come uniformly close to a fixed point in $\CC(S_i)$ as
$n\to\infty$. (Recall that $[a,b]$ refers to {\em any} geodesic
connecting the points $a$ and $b$). Suppose first that some $\beta_l$
intersects $S_i$ essentially. Then since $l_{\beta_l}(\rho_n)$ is
bounded (in fact goes to 0), Theorem \ref{bounded curves near
  geodesic} gives a $D_0$ such that $\pi_{S_i}(\beta_l)$ stays within
$D_0$ of $[\pi_{S_i}(X_n),\pi_{S_i}(Y_n)]$.
Now suppose that $S_i$ is disjoint from $\bb$, and hence is contained
in $T_k$. Since $\mu_k$ fills $T_k$, it intersects $S_i$
essentially. Let $\tau_n$ be a shortest curve on $Y_n$ intersecting
$T_k$ essentially, such that $\pi_{T_k}(\tau_n) =
\pi_{T_k}(Y_n)$. Since $\pi_{T_k}(Y_n) \to \mu_k$, 
the Hausdorff limit of $\tau_n\intersect T_k$  must contain
$\mu_k$. Since $\mu_k$ intersects $S_i$, so must $\tau_n$ for high
enough $n$, and moreover eventually $d_{S_i}(\tau_n,\mu_k) \le 1$. 
Since $l_{\tau_n}(\rho_n)$ is bounded, Theorem \ref{bounded curves near geodesic} 
again tells us that $\pi_{S_i}(\tau_n)$, and hence the fixed point
$\pi_{S_i}(\mu_k)$, lie within bounded distance of
$[\pi_{S_i}(X_n),\pi_{S_i}(Y_n)]$.

Now, since $\pi_{S_i}(X_n)$ converges to
 $\lambda_i\in\partial_\infty\CC(S_i)$, we see that
$d_{S_i}(X_n,Y_n)\to\infty$.
Thus for large enough $n$ Theorem \ref{bounded curves near geodesic}
tells us that $\CC(S_i,\rho_n,L_0)$ is nonempty and within bounded
Hausdorff distance of $[\pi_{S_i}(X_n),\pi_{S_i}(Y_n)]$. In particular
there exists a sequence $\{\gamma_n\}\subset \CC(S_i)$ with
$\{l_{\gamma_n}(\rho_n)\}$ bounded, and $d_{S_i}(\gamma_n,X_n)$
bounded. The last bound implies that $\gamma_n \to \lambda_i$. 

However, the fact that $\lambda_i$ intersects $\nu$ essentially
implies, by Corollary 3.1.3 in Otal \cite{otal}, that $\lambda_i$ is
realizable in the tree $T$. Since $\gamma_n \to \lambda_i$, Theorem 4.0.1 in Otal \cite{otal} then
implies that $l_{\gamma_n}(\rho_n)\to\infty$, so we have achieved a
contradiction. We conclude that in fact $\{\rho_n\}$ has a convergent
subsequence. 

\medskip

Consider any accumulation point $\rho'$ of $\{\rho_n\}$. Each $\alpha_j\in\ba$
and $\beta_l\in\bb$ is associated to a cusp of $N_{\rho'}$. Proposition \ref{endlams}
implies that each $S_i$ is associated to an upward pointing geometrically infinite end
with ending lamination $\lambda_i$
and each $T_k$ is associated to a downward pointing end with ending lamination
$\mu_k$.
So, there exists a pared homotopy equivalence $h:(M_\rho,P_\rho)\to (M_{\rho'},P_{\rho'})$
which can be taken to be an orientation-preserving
homeomorphism on each $S_i$ and $T_k$.
Proposition 8.1 in Canary-Hersonsky \cite{canary-hersonsky} implies that there exists
a pared homeomorphism $h':(M_\rho,P_\rho)\to (M_{\rho'},P_{\rho'} )$ which agrees 
with $h$ on each $S_i$ and $T_j$. In particular, this implies that $\rho'$ is quasiconformally rigid. 

In order to apply the Ending Lamination Theorem it
remains to check that our pared homeomorphism $h'$ is orientation-preserving.
If $N_{\rho}$ has a geometrically infinite end associated to some $S_i$ or
$T_k$, then $h'$ is orientation-preserving on that surface, so it is
orientation-preserving. If $N_\rho$ has no geometrically infinite ends, then it
is a maximal cusp. So, each $\alpha_j\in\ba$ intersects some $\beta_l$. 
As $\rho'$ is quasiconformally rigid  and $\alpha_j$ lies above
$\beta_l$ in $N_{\rho_n}$ for all large enough $n$, Lemma
\ref{hotsidehot} implies that  $\alpha_j$ is
associated to an upward-pointing cusp of $N_{\rho'}$. Similarly, each $\beta_l\in\bb$
is associated to a downward-pointing cusp in $N_{\rho'}$,
so $h'$ must be orientation-preserving.
The Ending Lamination Theorem then allows us to conclude that $\rho'=\rho$. 
\end{proof}

{\bf Remark:} The convergence portion of the
above argument can also be derived from the main result of
Brock-Bromberg-Canary-Lecuire \cite{BBCL} or by using
efficiency of pleated surfaces as in Thurston's proof of the Double Limit Theorem
\cite{thurston2}.

\medskip

If $\delta>0$, $\bU$ and $\bV$ are as above,
then we define
$\WW(\delta,\bU,\bV)$ to be the set of all quasifuchsian groups $Q(X,Y)$
such that 
\begin{enumerate}
\item $\pi_{S_i}(X)\in U_i$   for all $i$,
\item
$l_{\alpha_j}(X) < \delta$ for all $\alpha_j\in\ba $, 
\item
$\pi_{T_k}(Y)\in V_k $ for all $k$, and
\item
$l_{\beta_l}(Y) < \delta$ for all $\beta_l\in\bb$.
\end{enumerate}

Lemma \ref{boundary order} and Bers' Lemma \ref{berslemma} give:

\begin{lemma}{WW subset UU}
If $\delta<\ep_0$, then
$\WW(\delta/2,\bU,\bV)\subset \UU(\delta,\bU,\bV)$.
\end{lemma}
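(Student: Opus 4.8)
The plan is to check directly that every quasifuchsian group $Q(X,Y)$ lying in $\WW(\delta/2,\bU,\bV)$ satisfies the five defining conditions of $\UU(\delta,\bU,\bV)$. Conditions (1) and (3) of $\UU$ — the constraints $\pi_{S_i}(X)\in U_i$ and $\pi_{T_k}(Y)\in V_k$ — are word-for-word the corresponding conditions in the definition of $\WW(\delta/2,\bU,\bV)$, so they transfer for free. What remains is to produce the two length estimates in the manifold, conditions (2) and (4), and the ordering statement, condition (5).

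For conditions (2) and (4) the key input is Bers' Lemma \ref{berslemma}: for $\rho=Q(X,Y)$ and any closed curve $\alpha$ on the top conformal structure $X$ one has $l_\rho(\alpha)\le 2l_X(\alpha)$, and by the evident symmetry between the two conformal boundary components (apply the same statement after reversing orientation, which interchanges the top and bottom structures without changing any geodesic length in $N_\rho$) one also has $l_\rho(\beta)\le 2l_Y(\beta)$ for any closed curve $\beta$ on $Y$. Applying the first inequality to each $\alpha_j\in\ba$ and the second to each $\beta_l\in\bb$, and using that $l_{\alpha_j}(X)<\delta/2$ and $l_{\beta_l}(Y)<\delta/2$ by the definition of $\WW(\delta/2,\bU,\bV)$, gives $l_{\alpha_j}(Q(X,Y))<\delta$ and $l_{\beta_l}(Q(X,Y))<\delta$, which are exactly conditions (2) and (4).

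For condition (5), suppose $\alpha_j\in\ba$ and $\beta_l\in\bb$ intersect essentially on $S$; since they are distinct simple closed curves, this means they overlap in the sense of the paper. Because $\delta<\ep_0$ we have $l_{\alpha_j}(X)<\delta/2<\ep_0$, and from the estimate just established $l_{\beta_l}(Q(X,Y))<\delta<\ep_0$. Lemma \ref{boundary order} then applies verbatim and yields that $\alpha_j$ lies above $\beta_l$ in $Q(X,Y)$, which is condition (5). This exhausts the list, so $Q(X,Y)\in\UU(\delta,\bU,\bV)$, proving the inclusion. There is no substantial obstacle; the only points needing a moment's attention are invoking Bers' Lemma in its mirrored form for the bottom boundary component, and observing that "intersect" for a pair of distinct simple closed curves is the same as the "overlap" hypothesis required to apply Lemma \ref{boundary order}.
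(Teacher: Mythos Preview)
Your proof is correct and is exactly the argument the paper has in mind: the paper's ``proof'' consists only of the sentence ``Lemma \ref{boundary order} and Bers' Lemma \ref{berslemma} give:'', and you have simply unpacked those two citations into the verification of conditions (2), (4), and (5), with (1) and (3) being automatic.
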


Lemma \ref{boundary order} also allows us to restrict to neighborhoods
where the $\alpha_j\in\ba$ are not short on the bottom conformal boundary component
and the $\beta_l\in\bb$ are not short on the top conformal boundary component.

\begin{lemma}{hot cold}
There exist neighborhoods $(U_i)_0$ of $\lambda_i$ in $\CC(S_i)$ and
$(V_k)_0$ of $\mu_k$ in $\CC(T_i)$ such that if 
$Q(X,Y)\in \UU(\ep_0,\bU_0,\bV_0)$, then
$l_{\beta_l}(X) > \ep_0$ and $l_{\alpha_j}(Y) > \ep_0$ for all $\beta_l\in\bb$ and 
$\alpha_j\in\ba$.
\end{lemma}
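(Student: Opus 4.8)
The plan is to argue by contradiction. If no such neighborhoods exist, then, since $\ba$ and $\bb$ are finite, after passing to a subsequence I obtain a single curve --- say $\beta_l\in\bb$, the case of a curve in $\ba$ being symmetric --- together with a sequence $\rho_n=Q(X_n,Y_n)\in\UU(\ep_0,(\bU_0)_n,(\bV_0)_n)$, where $(\bU_0)_n$ shrinks down to the $\lambda_i$ and $(\bV_0)_n$ to the $\mu_k$, and with $l_{\beta_l}(X_n)<\ep_0$ for every $n$. (By Lemma \ref{U is nbhd QF} such a sequence converges to $\rho$, but I do not expect to need this, since the relevant length and projection bounds along the sequence come directly from conditions (1)--(5) defining $\UU(\ep_0,\cdot,\cdot)$.) The argument then splits into two cases, according to whether $\beta_l$ meets some component of $\ba$.

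The first case is that $\beta_l$ overlaps some $\alpha_j\in\ba$. Here condition (2) gives $l_{\alpha_j}(Q(X_n,Y_n))<\ep_0$, while by hypothesis $l_{\beta_l}(X_n)<\ep_0$, so the first statement of Lemma \ref{boundary order} (with $X_n$ in the role of the top structure, $\beta_l$ in the role of $\alpha$ and $\alpha_j$ in the role of $\beta$) shows that $\beta_l$ lies above $\alpha_j$ in $Q(X_n,Y_n)$. On the other hand, condition (5) says precisely that $\alpha_j$ lies above $\beta_l$ in $Q(X_n,Y_n)$. Since both curves have length less than $\ep_0$ there, the unlinking result of Otal \cite{otal-knotting} recalled in \S\ref{partial orders} puts their Margulis tubes at distinct levels of a product structure on $Q(X_n,Y_n)$, so ``lies above'' restricts to a genuine partial order on them and cannot hold in both directions between the distinct, indeed intersecting, curves $\alpha_j$ and $\beta_l$. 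This is the desired contradiction. The corresponding statement for $\alpha_j\in\ba$ on the bottom structure $Y$ is identical, using instead the second statement of Lemma \ref{boundary order} together with conditions (4) and (5).

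The remaining case is that $\beta_l$ is disjoint from every component of $\ba$. Then $\beta_l$ is isotopic into some complementary component of $\ba$ in $S$; it is not isotopic to a component of $\ba$ (a curve cannot be associated with both an upward- and a downward-pointing cusp of $N_\rho$), and thrice-punctured spheres carry no essential non-peripheral curves, so $\beta_l$ is isotopic to an essential non-peripheral curve of some $S_i$. I would then show that $l_{\beta_l}(X_n)<\ep_0$ forces $d_{S_i}(X_n,\beta_l)$ to stay bounded: a shortest curve $\gamma_n$ on $X_n$ meeting $S_i$ essentially has length at most the Bers constant of $S$, while $\beta_l$, being shorter than $\ep_0$, bounds an embedded collar in $X_n$ of width bounded below in terms of $\ep_0$; a standard collar estimate then bounds $i(\gamma_n,\beta_l)$, hence $d_{S_i}(X_n,\beta_l)=d_{S_i}(\pi_{S_i}(\gamma_n),\beta_l)$, by a constant depending only on $S$ and $\ep_0$. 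This contradicts condition (1), which forces $\pi_{S_i}(X_n)\to\lambda_i\in\partial_\infty\CC(S_i)$, i.e. $d_{S_i}(X_n,\beta_l)\to\infty$. Again the symmetric case ($\alpha_j$ disjoint from $\bb$, hence lying in some $T_k$, with $\pi_{T_k}(Y_n)\to\mu_k$ by condition (3)) runs the same way.

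Finally I would intersect the finitely many neighborhood tuples produced for the various $\beta_l\in\bb$ and $\alpha_j\in\ba$ to get a single $(\bU_0),(\bV_0)$ that works simultaneously. The only step that is not a direct citation is the collar estimate in the disjoint case, and that is entirely routine (Buser-type collar bounds, plus a crude bound of curve-complex distance by geometric intersection number); so I expect the only thing requiring care is the bookkeeping of which of the two statements of Lemma \ref{boundary order}, and which of conditions (1)--(5), to invoke in each of the four sub-cases, together with the observation that ``lies above'' is antisymmetric once both tubes are short enough for Otal's unlinking to apply.
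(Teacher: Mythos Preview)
Your proof is correct and follows essentially the same two-case argument as the paper (intersecting case via Lemma \ref{boundary order} and condition (5); disjoint case via a curve-complex distance bound in $\CC(S_i)$). The paper is slightly slicker in the disjoint case: since $\beta_l\subset S_i$ with $l_{\beta_l}(X)<\ep_0$, the shortest curve crossing $S_i$ is no longer than $\beta_l$, hence also $<\ep_0$ and therefore disjoint from $\beta_l$, giving $d_{S_i}(X,\beta_l)\le 2$ directly without collar--intersection estimates, and allowing the neighborhoods to be chosen explicitly rather than via a sequential contradiction.
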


\begin{proof}{}
Suppose that $l_{\beta_l}(X)\le \ep_0$ for some $\beta_l\in\bb$. If $\beta_l$ intersects some
$\alpha_j\in\ba$, then Lemma \ref{boundary order} would imply that
$\beta_l$ lies above $\alpha_j$ which is a contradiction.
If $\beta_l$ does not intersect any $\alpha_j$, then it lies in some $S_i$.
Then $d_{S_i}(X,\beta_l)\le 2$.
So, if we choose the neighborhood $(U_i)_0$ to
have the property that $\pi_{S_i}(\beta_l)$ does not lie in the 2-neighborhood of
$(U_i)_0$,  we again have a contradiction. 

The proof that $l_{\alpha_j}(Y) > \ep_0$ for all $\alpha_j\in\ba$ is similar.
\end{proof}

Theorem \ref{qfcase} now follows from:

\begin{lemma}{U to W QF}
Given $\delta>0$ and neighborhoods $U_i$ of $\lambda_i$ and $V_i$ of
$\mu_i$, there exists $\ep>0$ and neighborhoods $U''_i$ of $\lambda_i$ in $\CC(S_i)$
and $V''_k$ of $\mu_k$ in $\CC(T_k)$ such that any two 
points in $\UU(\ep,\bU'',\bV'')$ can be connected 
by a path that remains in
$\UU(\delta,\bU,\bV)$.
\end{lemma}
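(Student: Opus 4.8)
The plan is to follow the template of the Bers‑slice argument (Lemma \ref{U to W}), but to perform the length‑shrinking deformation in two stages --- first moving the top conformal structure $X$, then the bottom structure $Y$ --- so that the ordering requirement (5) in the definition of $\UU(\delta,\bU,\bV)$ is preserved along the way. Fix $K>2/\delta$ via Theorem \ref{length and projections} so that $\mm_\gamma(X,Y)>K$ forces $l_\gamma(Q(X,Y))<\delta$, and let $h=h(K,S)$ and $c=c(S)$ be the constants from Lemma \ref{twist and shrink}. First I would pin down the target region: applying Lemma \ref{W connected} to $\ba$ with complementary subsurfaces $\{S_i\}$, and then to $\bb$ with $\{T_k\}$, gives neighborhoods $W_i\subseteq U_i$ of $\lambda_i$ and $W'_k\subseteq V_k$ of $\mu_k$ such that any two points of $\WW(\delta/2,\bW,\bW')$ are joined by a path in $\WW(\delta/2,\bU,\bV)$ --- connect the two top structures by a path in $\mathcal{T}(\delta/2,\bW)\subseteq\mathcal{T}(\delta/2,\bU)$ with the bottom held fixed, then the two bottom structures with the new top held fixed. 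By Lemma \ref{WW subset UU} this path lies in $\UU(\delta,\bU,\bV)$. Then, using Theorem \ref{length and projections} again, choose $\ep>0$ with $l_\gamma(Q(X,Y))<\ep\Rightarrow\mm_\gamma(X,Y)>h$, and shrink to neighborhoods $\bU''\subseteq\bU$, $\bV''\subseteq\bV$ so that the $c$‑neighborhood of each $U''_i$ lies in $W_i$ and of each $V''_k$ in $W'_k$, and (via Lemma \ref{U is nbhd QF}) so small that Lemma \ref{hot cold} applies throughout $\UU(\ep,\bU'',\bV'')$: no $\beta_l$ is $\ep_0$‑short on the top structure, no $\alpha_j$ is $\ep_0$‑short on the bottom.

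Now take $Q(X,Y)\in\UU(\ep,\bU'',\bV'')$. Since $\mm_{\alpha_j}(X,Y)>h$ for all $\alpha_j\in\ba$, Lemma \ref{twist and shrink} applied to $\ba$ with $Y$ fixed supplies a path $\{X_t\}_{t\in[0,T_1]}$, $X_0=X$, with $l_{\alpha_j}(X_{T_1})<1/K<\delta/2$, $\mm_{\alpha_j}(X_t,Y)>K$, and $\diam_{\CC(W)}(\pi_W(\{X_t\}))<c$ for every $W$ disjoint from $\ba$. I claim $Q(X_t,Y)\in\UU(\delta,\bU,\bV)$ for all $t$: the bounds on the $\alpha_j$ and the conditions on $\pi_{S_i}$ are immediate, $\pi_{T_k}(Y)$ and the $Y$‑side are unchanged, and what remains is that each $\beta_l\in\bb$ stays short in $Q(X_t,Y)$ and that (5) persists. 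For $\beta_l$ disjoint from $\ba$ (then $\beta_l\subset S_i$ and (5) is vacuous for it) its shortness in $Q(X,Y)$ is witnessed, by Theorem \ref{length and projections}, either by $l_{\beta_l}(Y)$, which is unchanged, or by a subsurface $W\ni\beta_l$ that may be taken inside $S_i$, hence disjoint from $\ba$, so $d_W(X_t,Y)$ moves by at most $c$. The essential case --- and the main obstacle --- is $\beta_l$ crossing some $\alpha_j\in\ba$: by Lemma \ref{hot cold} $\beta_l$ is not $\ep_0$‑short on $X$, so its shortness must come from a subsurface $W\ni\beta_l$, and one has to show this coefficient is not destroyed by the length‑twist deformation along $\ba$. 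Here I would re‑run the partial‑order bookkeeping from the proof of Lemma \ref{twist and shrink}: the subsurfaces $W_j$ with $\alpha_j\subset\partial W_j$ responsible for $l_{\alpha_j}(Q(X,Y))<\ep$ lie $\prec$‑above any $W$ they overlap (this is the combinatorial shadow of the fact that $\alpha_j$ lies above $\beta_l$), so by the estimates of Lemma \ref{partialorder} the deformation along $\ba$ does not drive $d_W(X_t,Y)$ below $K$; granting that, condition (5) then falls out of Lemma \ref{alpha below Z} applied with $\alpha=\beta_l$ and $Z=W_j$, whose hypothesis $d_{W_j}(X_t,\beta_l)>d_0$ is exactly the estimate just used.

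The second stage is the easy one, essentially the Bers case. From $Q(X_{T_1},Y)$ the curves $\alpha_j$ are now $\delta/2$‑short on the fixed top structure $X_{T_1}$, so Bers' Lemma \ref{berslemma} gives $l_{\alpha_j}(Q(X_{T_1},Y'))<\delta$ for every $Y'$ and Lemma \ref{boundary order} makes (5) automatic as soon as every $\beta_l$ is short, while $\pi_{S_i}(X_{T_1})$ is pinned in $W_i\subseteq U_i$. Applying Lemma \ref{twist and shrink} to $\bb$ with $X_{T_1}$ fixed yields a path $\{Y_s\}_{s\in[0,T_2]}$, $Y_0=Y$, with $l_{\beta_l}(Y_{T_2})<1/K<\delta/2$, $\mm_{\beta_l}(X_{T_1},Y_s)>K$, and $\diam_{\CC(T_k)}(\pi_{T_k}(\{Y_s\}))<c$; this path stays in $\UU(\delta,\bU,\bV)$ and terminates in $\WW(\delta/2,\bW,\bW')$. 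Concatenating, for the two given points of $\UU(\ep,\bU'',\bV'')$, their two top paths, their two bottom paths, and the connecting path inside $\WW(\delta/2,\bW,\bW')\subseteq\WW(\delta/2,\bU,\bV)$ produced at the start gives a path in $\UU(\delta,\bU,\bV)$ joining them. I expect the crux to be precisely the bad case above: controlling the subsurface projection coefficients of the downward cusps $\bb$ that cross $\ba$ while the lengths of $\ba$ are being shrunk, together with the accompanying above/below ordering --- this is why one must invoke the full partial order and Lemma \ref{alpha below Z} here, rather than the softer arguments that suffice in the Bers‑slice and acylindrical cases; the role of Lemma \ref{hotsidehot}, already used to establish Lemma \ref{U is nbhd QF}, is to anchor the ordering for the limiting representation $\rho$ itself.
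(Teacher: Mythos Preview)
Your two-stage outline matches the paper's, but there are real gaps in the first stage. The most concrete one is your treatment of $\beta_l$ disjoint from $\ba$: you assert that the witnessing subsurface $W$ with $\beta_l\subset\partial W$ ``may be taken inside $S_i$, hence disjoint from $\ba$.'' This is false in general --- $W$ can perfectly well overlap $\ba$ even though $\beta_l$ does not --- and the paper devotes a separate paragraph to exactly this case. The fix is to impose an extra constraint when choosing $U'_i$ (namely $d_{S_i}(\beta_l,\gamma)>R=m_1+D_0+1$ for all $\gamma\in U'_i$), then use Theorem~\ref{bounded curves near geodesic} to get $d_{S_i}(X,Y)>m_1$, so that $S_i$ and $Z_l$ are $\prec$-comparable; one finds $Z_l\prec S_i$, and Lemma~\ref{partialorder}(2) bounds $d_{Z_l}(\partial S_i,X)$, after which the shortness of $\partial S_i\subset\ba$ in every $Q(X_t,Y)$ controls $d_{Z_l}(X_t,Y)$.

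Relatedly, your threshold is too tight: choosing $\ep$ so that $\mm_\gamma>h$ leaves no slack for the triangle-inequality losses in these arguments, nor to guarantee $\mm_{\beta_l}(X_T,Y)>h$ at the end of stage one --- which you need in order to invoke Lemma~\ref{twist and shrink} for $\bb$ in stage two (your argument only yields $d_W(X_t,Y)>K$). The paper takes $h'=h+2d_0+D_0+m_1+c$ for this reason. Finally, in the crossing case your plan to ``re-run the bookkeeping'' via the subsurfaces $W_j$ responsible for $\alpha_j$, and then apply Lemma~\ref{alpha below Z} with $Z=W_j$, is not how the paper proceeds and would need its own justification. The paper instead applies Lemma~\ref{alpha below Z} with $Z=Z_l$ (the witness for $\beta_l$): the contrapositive, together with the known ordering $\alpha_j$ above $\beta_l$ in $Q(X,Y)$, gives $d_{Z_l}(X,\alpha_j)<d_0$, hence $d_{Z_l}(\alpha_j,Y)>h'-d_0$; applying the lemma again with the roles of $X$ and $Y$ reversed shows $\alpha_j$ stays above $\beta_l$ in every $Q(X_t,Y)$, which forces $d_{Z_l}(X_t,\alpha_j)<d_0$ throughout and hence $d_{Z_l}(X_t,Y)>h$.
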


\begin{proof}
Without loss of generality, we may assume $\delta<\delta_0$ 
(from Lemma \ref{alpha below Z})
and $\bU\subset \bU_0$, $\bV\subset \bV_0$ (from Lemma
\ref{hot cold}). By Theorem \ref{bounded curves near geodesic}, we may further
assume that if $W$ is an 
essential subsurface of $S$, $\gamma\in\CC(S,W)$ and
$l_\gamma(Q(X,Y))<\delta$,
then $\pi_W(\gamma)$ lies within $D_0$ of any geodesic joining $\pi_W(X)$ to
$\pi_W(Y)$.

By Theorem \ref{length and projections}, we may choose
$K$ such that
$$
\mm_\gamma(X,Y) > K \ \implies \ l_\gamma(Q(X,Y)) < \delta
$$
and also suppose  $K> 2/\delta$. 
Let $h=h(K,S)$ be the constant given by Lemma \ref{twist and shrink}, and
let $c=c(S)$ be the constant in part (3) of Lemma
\ref{twist and  shrink}. Let $d_0$ be the constant from Lemma \ref{alpha below Z}.

Lemma \ref{W connected} implies that we may choose
neighborhoods $U_i'$ of $\lambda_i$ in $\CC(S_i)$  and  neighborhoods $V_k'$
of $\mu_k$ in $\CC(T_k)$ such that  any two points in $\WW(\delta/2,\bU',\bV')$ 
are connected by a path in $\WW(\delta/2,\bU,\bV)$. Moreover,
we may further assume that if $\beta_l\in\bb$ is contained in $S_i$, then
$$d_{S_i}(\beta_l,\gamma)>R=m_1+D_0+1$$ 
for all $\gamma\in U_i'$.

Choose $\ep$ small enough that (again by Theorem \ref{length and projections})
$$l_{\gamma}(Q(X,Y))<\ep \ \implies \ 
\mm_{\gamma}(X,Y) > h'=h+2d_0 +D_0+m_1+ c.
$$
Finally, choose neighborhoods $U''_i$ of
$\lambda_i$  in $\CC(S_i)$ and $V''_k$ of $\mu_k$ in $\CC(T_k)$ such that a 
$c$-neighborhood of $U''_i$  in $\CC(S_i)$ is contained in $U'_i$, and
a $c$-neighborhood of $V''_k$ in $\CC(T_k)$ is contained in $V'_k$. 

Suppose that $Q(X,Y)\in\UU(\ep,\bU'',\bV'')$. 
Then, by our choice of $\ep$, \hbox{$\mm_{\alpha_j}(X,Y) > h$} for each
$\alpha_j\in\ba$, and 
so Lemma \ref{twist and shrink} can be applied to give
a path \hbox{$\{X_t\ |\ t\in[0,T]\}$} beginning at $X=X_0$ such that 
\begin{enumerate}
\item
$l_{\alpha_j}(X_T) < 1/K <\delta/2$ for all $\alpha_j\in\ba$,
\item
$\mm_{\alpha_j}(X_t,Y) > K$ for all $\alpha_j\in\ba$ and $t\in [0,T]$, and
\item
$\diam_{\CC(S_i)}(\{ \pi_{S_i}(X_t)\ |\ t\in [0,T]\}) < c$ for all $i$.
\end{enumerate}
Condition (2), Bers' Lemma \ref{berslemma} and our choice of $K$,
give that \hbox{$l_{\alpha_j}(Q(X_t,Y))<\delta$} for all $\alpha_j\in\ba$ and all $t\in [0,T]$. 
Condition (3) and our choice of  $U_i''$ give that 
$\pi_{S_i}(X_t) \in U'_i$ for all $i$ and all $t\in [0,T]$.

In order to
guarantee that $Q(X_t,Y)\in \UU(\delta,\bU',\bV')$ for all $t$, it remains to check that
$l_{\beta_l} (Q(X_t,Y))<\delta$ for all $\beta_l\in\bb$ and that
each $\beta_l$ remains correctly ordered
with respect to relevant $\alpha_j\in\ba$. Recall that, again by our choice of $\ep$,
$$\mm_{\beta_l}(X,Y) > h'$$
for all $\beta_l\in\bb$.
We will additionally need to establish that
\begin{equation} \label{big m}
\mm_{\beta_l}(X_T,Y) > h.
\end{equation}
Condition \ref{big m} is necessary to invoke Lemma \ref{twist and shrink} to
construct the deformation of the bottom conformal structure $Y$.

If $l_{\beta_l}(Y)\le 1/h'<\delta/2$ then
\hbox{$\mm_{\beta_l}(X_t,Y)>h$} and, by Bers' Lemma \ref{berslemma},
$l_{\beta_l}(Q(X_t,Y))<\delta$ for all $t$. Lemma \ref{boundary order} then implies that if
$\beta_l$ intersects $\alpha_j\in\ba$ in $S$, then 
$\beta_l$ lies below $\alpha_j$ in $Q(X_t,Y)$ for all $t$. 

If $l_{\beta_l}(Y)> 1/h'$, then, since $l_{\beta_l}(X)>\ep_0$ (by Lemma \ref{hot cold})
and \hbox{$\mm_{\beta_l}(X,Y)\ge h'$,} there must be a 
subsurface $Z_l$ with
$\beta_l\subset \boundary Z_l$
such that $$d_{Z_l}(X,Y) > h'.$$
If $Z_l$ does not intersect $\ba$, then, by Lemma \ref{twist  and shrink}(3), 
$${\rm diam}_{\CC(Z_l)}(\{\pi_{Z_l}(X_t)\ |\ t\in [0,T]\}) < c,$$
so $d_{Z_l}(X_t,Y) > h'-c>h $  for all $t$ and $\beta_l$ does not intersect 
$\ba$. Therefore, $l_{\beta_l} (Q(X_t,Y))<\delta$  for all $t$
and condition (\ref{big m}) holds.

Suppose $\beta_l$ intersects $\alpha_j\in\ba$ on $S$.
For each $t\le T$, we know that \hbox{$l_{\alpha_j}(Q(X_t,Y))<\delta_0$.} 
Lemma \ref{alpha below Z} asserts that if $d_{Z_l}(X_t,\alpha_j) \ge d_0$, then
$\beta_l$ lies above $\alpha_j$ in $Q(X_t,Y)$.
Since $\beta_l$ lies below $\alpha_j$ in $Q(X,Y)$, we have that
\hbox{$d_{Z_l}(X,\alpha_k) <d_0$,}
so  $$d_{Z_l}(\alpha_j,Y)\ge d_{Z_l}(X,Y)-d_{Z_l}(X,\alpha_j)>h'-d_0>h+d_0.$$
It then follows, again from Lemma \ref{alpha below Z} (this time with the roles
of $X$ and $Y$ reversed), that $\alpha_j$ lies above $\beta_l$
in $Q(X_t,Y)$ for all $t$. So, one must have $d_{Z_l}(X_t,\alpha_j)<d_0$ for all $t$
and hence $$d_{Z_l}(X_t,Y)\ge d_{Z_l}(\alpha_j,Y)- d_{Z_l}(X_t,\alpha_j)>h$$
which in turn implies that $l_{\beta_j}(Q(X_t,Y))<\delta$ for all $t$.  In particular,
we have established  condition (\ref{big m}).

It remains to consider the case where $Z_l$ intersects some $\alpha_j\in\ba$, but
$\beta_l$ does not intersect  $\ba$. In this case, we do not
need to worry about the ordering of $\beta_l$, but only need to
check that $l_\beta(Q(X_t,Y))<\delta$ for all $t\in [0,T]$ and verify condition
(\ref{big m}).
Notice that $\beta_l$ is contained
in some $S_i$. We see that $d_{S_i}(\beta_l,X)>R$, since $\pi_{S_i}(X)\in U_i'$.
Since $l_{\beta_l}(Q(X,Y))<\delta$, $\beta_l$ lies within $D_0$ of any
geodesic joining $\pi_{S_i}(X)$ to $\pi_{S_i}(Y)$. Therefore,
$$d_{S_i}(X,Y)\ge R-D_0>m_1.$$
Since $d_{Z_l}(X,Y)>h'>m_1$,
Lemma \ref{partialorder} implies that $S_i$ and $Z_l$ are \hbox{$\prec$-ordered}
in $\LL_b(X,Y)$ where $b=\min\{R-D_0,h'\}>m_1$.
Since $$d_{S_i}(X,\partial Z_l)\ge d_{S_i}(X,\beta_l)-1>R-1>m_1,$$
Lemma \ref{partialorder}(3) implies that $Z_l\prec S_i$. Therefore,
Lemma \ref{partialorder}(2) shows that  $d_{Z_l}(\partial S_i,X)\le m_1$,
which implies that $d_{Z_l}(\partial S_i,Y)\ge h'-m_1$.
But, since $l_{\alpha_j}(Q(X_t,Y))<\delta$  if $\alpha_j$ is a component of $\partial S_i$, we conclude, as above,
that $$d_{Z_l}(X_t,Y)\ge d_{Z_l}(\partial S_i,Y) -D_0\ge h'-m_1-D_0>h$$ for all $t\in [0,T]$. Therefore, 
$l_{\beta_l}(Q(X_t,Y))<\delta$ for all $t$ and condition (\ref{big m}) holds.

We have considered all cases, so have completed the proof that
$Q(X_t,Y)\in \UU(\delta,\bU',\bV')$ for all $t\in [0,T]$.

Now we can fix $X_T$ and apply Lemma \ref{twist and shrink} to the bottom
side, obtaining a path $\{Y_t\ |\ t\in[0,T']\}$ beginning at $Y=Y_0$ such that
\begin{enumerate}
\item $l_{\beta_l}(Q(X_T,Y_t)) < \delta$ for all $t\le T'$ and $\beta_l\in\bb$,
\item $\pi_{T_k}(Y_t) \in V'_k$ for all $k$ and  $t\le T'$, and
\item $l_{\beta_l}(Y_{T'}) < \delta/2$ for all $\beta_l\in\bb$.
\end{enumerate}
Recall that $\pi_{S_i}(X_T)\in U_i$ for all $i$  and $l_{X_T}(\alpha_j)<\frac{\delta}{2}<\ep_0$ 
for all $\alpha_j\in\ba$. Therefore, Lemma \ref{boundary order} implies that $\alpha_j$
lies above $\beta_l$ in $Q(X_T,Y_t)$ for all $t\in [0,T']$ whenever
$\alpha_j$ and $\beta_l$ intersect on $S$. Therefore, the path
$\{ Q(X_T,Y_t)\ |\ t\in [0,T']\}$ lies entirely in $\UU(\delta,\bU',\bV')$.
The concatenation of the paths  $\{ Q(X_t,Y)\ |\ t\in [0,T]\}$ and
$\{ Q(X_T,Y_t)\ |\ t\in [0,T']\}$  remains in
$\UU(\delta,\bU',\bV')$,  and joins $Q(X,Y)$ to a  point
$Q(X_T,Y_{T'})\in\WW(\delta/2,\bU',\bV')$. 

Now since any two points in
$\WW(\delta/2,\bU',\bV')$ can be connected by a path in 
$\WW(\delta/2,\bU,\bV)$, and since 
$\WW(\delta/2,\bU,\bV) \subset\UU(\delta,\bU,\bV)$,
by Lemma \ref{WW subset UU},
we conclude that any two points in $ \UU(\ep,\bU'',\bV'')$ can be connected
by a path in $\UU(\delta,\bU,\bV)$. 
\end{proof}

\end{proof}


\begin{thebibliography}{100}

\bibitem{agol} I. Agol,  ``Tameness of hyperbolic 3-manifolds,''
preprint,
available at: \texttt{http://front.math.ucdavis.edu/math.GT/0405568}


 \bibitem{AC-cores}
J.~Anderson and R.~Canary, ``Cores of hyperbolic 3-manifolds and limits of
 {Kleinian} groups,'' {\em Amer. J. Math}. \textbf{118} (1996), 745--779.
 
\bibitem{ACpages} J. W. Anderson and R. D. Canary, ``Algebraic limits of 
Kleinian groups which rearrange the pages of a book, '' 
\emph{Invent. Math.} \textbf{126} (1996), 205--214.

\bibitem{AC-cores2}J.~Anderson and R.~Canary, ``Cores of hyperbolic 3-manifolds
and limits of {Kleinian} groups II,'' {\em J. L.M.S.} {\bf 61}(2000), 489--505.
 
\bibitem{ACCS}
J.~Anderson, R.~Canary, M.~Culler, and P.~Shalen, ``Free {K}leinian groups
  and volumes of hyperbolic $3$-manifolds,'' {\em J. Diff. Geom.} \textbf{43}(1996),  738--782.

\bibitem{ACM}
J.~Anderson, R.~Canary, and D.~McCullough, ``The topology of deformation
spaces of {K}leinian groups,'' {\em Annals of Math.} \textbf{152} (2000),
693--741.
  
\bibitem{BP} R. Benedetti and C. Petronio, {\em Lectures on Hyperbolic Geometry},
Springer-Verlag, 1992.

\bibitem{BKMM} J. Behrstock, B. Kleiner, Y. Minsky and L. Mosher,
``Geometry and rigidity of mapping class groups,'' preprint, available at:
\texttt{http://front.math.ucdavis.edu/0801.2006}

\bibitem{bers:noded}
L.~Bers, ``On spaces of Riemann surfaces with nodes,'' 
{\em Bull. Amer. Math. Soc.} \textbf{80} (1974), 1219--1222.



\bibitem{bers-slice} L. Bers, ``On boundaries of Teichm\"uller spaces and
Kleinian groups I,'' {\em Annals of Math.} {\bf 91}(1970), 570--600.

\bibitem{bers-constant} L. Bers, ``An inequality for Riemann surfaces,'' in {\em Differential geometry and complex analysis}, Springer-Verlag,1985, 87--93.

\bibitem{bonahon-preprint} F. Bonahon, ``Bouts des vari\'et\'es hyperboliques
de dimension 3,'' prepublicationes de Orsay, 1985.

\bibitem{bonahon} F. Bonahon, ``Bouts des vari\'et\'es hyperboliques de dimension 3,''
\emph{Annals of Math.} \textbf{124} (1986), 71--158.


\bibitem{brock-invariants} J. Brock, ``Boundaries of TeichmŸller spaces and end-invariants for hyperbolic $3$-manifolds,''
{\em Duke Math. J.} {\bf 106}(2001),  527--552. 

\bibitem{brock-bromberg} J. Brock and K. Bromberg, ``On the density of
geometrically finite Kleinian groups,''  {\em Acta Math.}  {\bf 192}(2004), 33--93.

\bibitem{BBCL} J. Brock, K. Bromberg, R.D. Canary and C. Lecuire,
in preparation.

\bibitem{pull-out} J. Brock, K. Bromberg, R.D. Canary and Y.N.Minsky,
``Convergence properties of ending invariants,''
in preparation.

\bibitem{ELC2} J. Brock, R.D. Canary and Y. Minsky,
``The Classification of Kleinian
Surface Groups II: The Ending Lamination Conjecture,'' preprint,  available at:
\texttt{http://front.math.ucdavis.edu/math.GT/0412006}


\bibitem{ELC3}
J.~Brock, R.~Canary, and Y.~Minsky,``The classification of
finitely-generated {Kleinian} groups, '' in preparation.

\bibitem{brock-souto} J. Brock and J. Souto, ``Algebraic limits of geometrically finite manifolds are 
tame,''  {\em Geom. Funct. Anal. } {\bf 16 }(2006),  1--39.

\bibitem{bromberg-density} K. Bromberg, ``Projective structures with
degenerate holonomy and the Bers density conjecture,'' {\em Annals of Math.},
{\bf 166}(2007), 77--93.

\bibitem{bromberg-PT} K. Bromberg, ``The space of
Kleinian punctured torus groups is not locally connected,'' 
preprint, available at
\texttt{http://front.math.ucdavis.edu/0901.4306}

\bibitem{bromberg-holt} K. Bromberg and J. Holt, ``Self-bumping of deformation
spaces of hyperbolic $3$-manifolds,'' \emph{J. Diff. Geom.}
\textbf{57}(2001), 47--65.

\bibitem{bromberg-souto} K. Bromberg and J. Souto, ``The density conjecture: a prehistoric
approach,'' in preparation.

\bibitem{buser} P. Buser, {\em Geometry and Spectra of Compact Riemann
Surfaces,} Birkhauser, 1992.

\bibitem{calegari-gabai} D. Calegari and D. Gabai,  ``Shrinkwrapping and
the taming of hyperbolic 3-manifolds,'' {\em J. Amer. Math. Soc.}  {\bf 19}(2006), 385--446.

\bibitem{canary-cover} R.D. Canary,  ``A covering theorem for hyperbolic 3-manifolds and its
applications,'' {\em Topology}  \textbf{35}(1996), 751--778.

\bibitem{canary-ends} R. D. Canary, ``Ends of hyperbolic $3$-manifolds,''
\emph{J. Amer. Math. Soc.} \textbf{6} (1993), 1--35.

\bibitem{canary-bumponomics} R.D. Canary, ``Introductory bumponomics,''
to appear.

\bibitem{canary-hersonsky} R.D. Canary and S. Hersonsky, ``Ubiquity of geometric 
finiteness in boundaries of deformation spaces of hyperbolic 3-manifolds,''
{\em Amer. J. Math.} {\bf 126}(2004), 1193--1220.

\bibitem{canary-mccullough} R.D. Canary and D. McCullough,
``Homotopy equivalences of 3-manifolds and deformation theory of
Kleinian groups,'' {\em Mem. Amer. Math. Soc.}, vol. {\bf 172}(2004), no. 812.

\bibitem{canary-minsky} R. D. Canary and Y. N. Minsky,  ``On limits of
tame  hyperbolic $3$-manifolds,'' \emph{J. Diff. Geom.} \textbf{43} (1996),
1--41.

\bibitem{EMM} D.B.A. Epstein, A. Marden and V. Markovic, 
``Quasiconformal homeomorphisms and the convex hull boundary,''
{\em Annals of. Math.} {\bf 159}(2004), 305--336.

\bibitem{farb-lubotzky-minsky} B. Farb, A. Lubotzky, and Y. Minsky,
``Rank-1 phenomena for mapping class groups,'' {\em Duke Math. J.}
{\bf 106}(2001), 581--597.


\bibitem{hamenstadt} U. Hamenstadt, ``Train tracks and the Gromov boundary of the complex of curves,'' in {\em Spaces of Kleinian groups}, 
London Math. Soc. Lecture Note Ser. {\bf  329}(2006), 187--207.

\bibitem{johannson} K. Johannson, \emph{Homotopy Equivalences of $3$-manifolds with Boundary}, Lecture Notes in Mathematics, vol. 761, Springer-Verlag, 1979.

\bibitem{JM} T. Jorgensen and A. Marden, ``Algebraic and geometric
convergence of Kleinian groups,'' {\em Math. Scand.} {\bf 66}(1990), 47--72.

\bibitem{kent} R. Kent, ``Skinning maps,'' {\em Duke Math. J.}, to appear.

\bibitem{kerckhoff:nielsen}
S.~P. Kerckhoff, ``The {N}ielsen realization problem,'' {\em Annals of Math.}
  \textbf{117}(1983), 235--265.

\bibitem{kerckhoff-thurston} S.P. Kerckhoff and W.P. Thurston,
``Noncontinuity of the action of the modular group at Bers' boundary of TeichmŸller space,''
{\em Invent. Math.} {\bf 100}(1990), 25--47. 

\bibitem{klarreich} E. Klarreich, ``The boundary at infinity of the curve complex
and the relative Teichm\"uller space,'' preprint.

\bibitem{kulkarni-shalen} R. Kulkarni and P. Shalen, ``On Ahlfors' finiteness
theorem,'' {\em Adv. Math.} {\bf 111}(1991), 155--169.

\bibitem{Magid} A. Magid, {\em Deformation spaces of Kleinian surface groups
are not locally connected,} Ph. D. thesis, University of Michigan, 2009.

\bibitem{marden-bulletin} A. Marden, ``Geometric relations between homeomorphic
Riemann surfaces,'' {\em Bull. Amer. Math. Soc.} {\bf 3}(1980), 1001--1017.

\bibitem{maskit-koebe} B. Maskit, ``On the classification of Kleinian
groups: I-- Koebe groups,'' {\em Acta Math.} {\bf 135}(1975), 249--270.

\bibitem{masur-minsky}
H.~A. Masur and Y.~Minsky, ``Geometry of the complex of curves {I}:
  Hyperbolicity,''  {\em Invent. Math.}  \textbf{138} (1999), 103--149.
  
\bibitem{masur-minsky2} H.A. Masur and Y.N. Minsky,
``Geometry of the complex of curves {II}: Hierarchical structure,''
{\em Geom. Funct. Anal.} \textbf{10} (2000), 902--974.

\bibitem{mcculloughRCC} D. McCullough, ``Compact submanifolds of
3-manifolds with boundary,'' {\em Quart. J. Math. Oxford} {\bf 37}(1986), 299--306.

\bibitem{mcmullenCE} C. T. McMullen, ``Complex earthquakes and Teichm\"uller
theory,'' \emph{J. Amer. Math. Soc.} \textbf{11} (1998),  283--320.	

\bibitem{minskyPT} Y.N. Minsky,  ``The classification of punctured torus
groups,'' \emph{Annals of Math.}, \textbf{149} (1999), 559--626.

\bibitem{minsky:kgcc}
Y. Minsky, ``Kleinian groups and the complex of curves,'' 
{\em Geom. and  Top.} \textbf{4} (2000), 117--148.
  
\bibitem{ELC1}
Y. Minsky, ``The classification of {Kleinian} surface groups {I}: models and
bounds,'' {\em  Annals of Math.}, to appear.
  
\bibitem{morgan-shalen} J.W. Morgan and P. Shalen, ``Valuations, trees
and degenerations of hyperbolic structures I,'' {\em Annals of Math.}
{\bf 120}(1984), 401--476.

\bibitem{namazi-souto} H. Namazi and J. Souto, in preparation.

\bibitem{ohshika-qc} K. Ohshika ``On limits of quasiconformal deformations
of Kleinian groups,'' {\em Math. Z.} {\bf 201}(1989), 167--176.

\bibitem{ohshika-caratheodory} K. Ohshika, ``Strong convergence of Kleinian
groups and Carath\'eodory convergence of domains of discontinuity,''
{\em Math. Prof. Camb. Phil. Soc.} {\bf 112}(1992), 297--307.

\bibitem{ohshika-limits} K. Ohshika, ``Kleinian groups which are limits of geometrically finite groups,'' 
{\em Mem. Amer. Math. Soc.} {\bf 177} (2005), no. 834.

\bibitem{ohshika-density} K. Ohshika, ``Realising end invariants by limits of minimally
parabolic, geometrically finite groups,'' preprint, available at:
\texttt{http://front.math.ucdavis.edu/0504.5546}

\bibitem{otal} J. P. Otal, {\em Le Th\'eor\`eme d'Hyperbolisation pour les
Vari\'et\'es Fibr\'ees de dimension 3}, {\em Asterique} {\bf 235}(1996).

\bibitem{otal-knotting} J.P. Otal, ``Les g\'eod\'esiques ferm\'ees
d'une vari\'et\'e hyperbolique en tant que noeuds,''
in {\em Kleinian Groups and Hyperbolic 3-manifolds}, Cambridge
University Press, 2003, 95--104.

\bibitem{skora} R. Skora, ``Splittings of surfaces,'' {\em J. Amer. Math. Soc.}
{\bf 9}(1996), 605--616.

\bibitem{thurston-notes} W. P. Thurston, {\em The Geometry and Topology of
Three-Manifolds}, Princeton University course notes, available
at http://www.msri.org/publications/books/gt3m/ (1980).

\bibitem{wpt:earthquakes}
W.~Thurston, ``Earthquakes in two-dimensional hyperbolic geometry,''
in {\em Low-dimensional topology and Kleinian groups}(Coventry/Durham, 1984),
Cambridge Univ. Press, 1986, 91--112.

\bibitem{thurston1} W.P. Thurston, ```Hyperbolic structures on 3-manifolds, I:
Deformations of acylindrical manifolds'' {\em Annals of Math.} {\bf 124}(1986),
203--246.

\bibitem{thurston2} W. P. Thurston, ``Hyperbolic structures on 3-manifolds, II:
Surface groups and 3-manifolds which fiber over the circle,'' preprint, available at:
\texttt{http://front.math.ucdavis.edu/math.GT/9801045}


\bibitem{thurston3} W.P. Thurston, ``Hyperbolic structures on 3-manifolds, III: Deformations of 3-manifolds with incompressible boundary,'' preprint, available at:
\texttt{http://front.math.ucdavis.edu/math.GT/9801045}

\bibitem{waldhausen}
F.~Waldhausen, ``On irreducible 3-manifolds which are sufficiently large,''
{\em Annals of Math.} \textbf{87}(1968), 56--88.



\end{thebibliography}
\end{document}